\title{Bigerbes}
\author{Chris Kottke}
\address{New College of Florida, Division of Natural Sciences}
\email{ckottke@ncf.edu}
\author{Richard Melrose}
\address{MIT, Department of Mathematics}
\email{rbm@math.mit.edu}
\begin{document}

\begin{abstract}

The bigerbes introduced here give a refinement of the notion of 2-gerbes,
representing degree four integral cohomology classes of a space. Defined in
terms of bisimplicial line bundles, bigerbes have a symmetry with respect
to which they form `bundle 2-gerbes' in two ways; this structure replaces
higher associativity conditions.  We provide natural examples, including a
Brylinski-McLaughlin bigerbe associated to a principal $G$-bundle for a
simply connected simple Lie group.  This represents the first Pontryagin
class of the bundle, and is the obstruction to the lifting problem on the
associated principal bundle over the loop space to the structure group
consisting of a central extension of the loop group; in particular,
trivializations of this bigerbe for a spin manifold are in bijection with
string structures on the original manifold. Other natural examples
represent `decomposable' 4-classes arising as cup products, a universal
bigerbe on $K(\bbZ,4)$ involving its based double loop space, and the
representation of any 4-class on a space by a bigerbe involving its free
double loop space. The generalization to `multigerbes' of
arbitrary degree is also described.
\end{abstract}

\maketitle
\tableofcontents

\section*{Introduction} \label{S:intro}
\bgroup
\renewcommand{\theequation}{\arabic{equation}}

Gerbes provide a (more or less) geometric representation of integral
cohomology 3-classes on a space \cite{giraud,brylinski}. Bundle gerbes,
introduced by Murray in \cite{Murray1}, are particularly geometric and have
a well-known application in the form of the `lifting bundle gerbe',
representing the obstruction to the extension of a principal $G$-bundle to
a principal bundle with structure group a $\UU(1)$ central extension of $G$. Here we
present a direct extension of the notion of a bundle gerbe to obtain a
similar representation of integral 4-classes. These {\em bigerbes} are
special cases, in a sense more rigid, of the bundle 2-gerbes as defined by
Stevenson \cite{Stevenson2001}, which in turn are a more geometric version
of 2-gerbes as defined by Breen \cite{breen}. In particular our bigerbes
induce bundle 2-gerbes in two ways. One application of this notion is to
Brylinski-McLaughlin (bi)gerbes, corresponding to the existence of an
extension of the principal bundle over the loop space induced by a
principal $G$-bundle over the original space, to a bundle with structure
group a central extension of the loop group \cite{B-MI}.

A gerbe may be defined as a simplicial object
\cite{MurrayStevenson,Stevenson2001}. We work in the context of locally
split maps, which is to say continuous maps $\pi:Y\longrightarrow X$, with
local right inverses over an open cover of the topological space $X$. Such a map determines an
associated simplicial space, $Y^{[\bullet]}$, over $X$, formed from the
fiber products $Y^{[k]} = Y\times_X \cdots \times_X Y$:
\begin{equation}
\begin{tikzcd}[sep=scriptsize]
%X&\ar[l]	
Y & Y^{[2]} \arltwo & Y^{[3]} \arlthree & \ar[l,thick,dotted, no head].
\end{tikzcd}
\label{rbm.42}\end{equation}
This constitutes a contravariant functor $\Delta \to \mathsf{Top}/X$,
where $\Delta$ denotes the simplex category with objects 
the sets $\mathbf{n} = \set{1,\ldots,n}$
for $n \in \bbN_0$ with morphisms the order preserving maps
between these, and $\mathsf{Top}/X$ denotes the category of spaces with commuting maps to $X.$
Functions on $Y^{[\bullet]}$ admit a simplicial differential, denoted by $d$,
by taking the
alternating sum of the pull-backs, and this operation extends to line (or
circle) bundles and sections thereof by taking the alternating tensor product of the pull-backs.

A bundle gerbe on $X$ is specified in terms of the simplicial space
\eqref{rbm.42} by the prescription of a complex line bundle $L$ over
$Y^{[2]}$ such that $dL$ over $Y^{[3]}$ has a section $s$ which pulls back
to be the canonical section of $d^2L$ over $Y^{[4]}$. The important special
case of the lifting bundle gerbe is obtained when $\pi : E \to X$ is %the projection of a
a principal $G$-bundle; then %, $E$, over $X$, for which 
there is a natural map $ E^{[2]}\longrightarrow G $,
and the line bundle is the pull-back of the line bundle over $G$
associated to a given central extension of $G$ by $\bbC^*$ or $\UU(1)$.

Our notion of a bigerbe is based on a {\em split square} of maps. This
is a commutative square of locally split maps 
\begin{equation}
\begin{tikzcd}
	Y_2 \ar[d] & \ar[l] \ar[d] W \\
	X & \ar[l] Y_1
\end{tikzcd}
\label{rbm.44}\end{equation}
with the additional property that the induced map 
\begin{equation*}
W\longrightarrow Y_1\times_XY_2
\label{rbm.45}\end{equation*}
is also locally split (and in particular surjective).

Such a split square induces a bisimplicial space $W^{[\bullet,\bullet]}$ over $X:$
\begin{equation}
\begin{tikzcd}[sep=small]%[row sep=small, column sep=small]
	\ar[d,dotted, no head] &  \ar[d,dotted, no head] & \ar[d,dotted, no head] & \ar[d,dotted, no head] 
	\\ Y_2^{[3]} \ardthree & W^{[1,3]} \ardthree \ar[l] & W^{[2,3]} \ardthree \arltwo & W^{[3,3]} \ardthree \arlthree & \ar[l,dotted,no head]
	\\ Y_2^{[2]} \ardtwo & W^{[1,2]} \ardtwo \ar[l] & W^{[2,2]} \ardtwo \arltwo & W^{[3,2]} \ardtwo \arlthree & \ar[l,dotted,no head]
	\\ Y_2 \ar[d] & W^{[1,1]} \ar[d] \ar[l] & W^{[2,1]} \ar[d] \arltwo & W^{[3,1]} \ar[d] \arlthree & \ar[l,dotted,no head]
	\\ X  & Y_1 \ar[l]  & Y_1^{[2]} \arltwo  & Y_1^{[3]} \arlthree & \ar[l,dotted,no head]
\end{tikzcd}
\label{rbm.46}\end{equation}
where the left column and bottom row are the standard simplicial spaces as in
\eqref{rbm.42} and the interior spaces are 
given inductively by
\begin{equation}
	W^{[m,n]} = W^{[m,1]}\times_{Y_1^{[n]}} \times \cdots \times_{Y_1^{[n]}} W^{[m,1]} 
	\cong 
	W^{[1,n]}\times_{Y_2^{[m]}} \times \cdots \times_{Y_2^{[m]}} W^{[1,n]}.
\label{rbm.47}
\end{equation}
The result is a commutative diagram in which all rows and columns are
simplicial spaces. There are then two commuting simplicial differentials,
$d_1$ and $d_2$, corresponding to the horizontal and vertical maps, respectively.

\begin{defnstar}\label{rbm.60} A {\em bigerbe} on the bisimplicial space
  \eqref{rbm.46} corresponding to a locally split square \eqref{rbm.44} is
  specified by a (locally trivial) complex line bundle $L$ over $W^{[2,2]}$ with $d_1L$ and
  $d_2L$, over $W^{[3,2]}$ and $W^{[2,3]}$ respectively, having trivializing
    sections $s_i$, for $i=1,2$, such that $ds_i$ is the canonical
    trivialization of $d^2_iL$ and $d_2s_1=d_1s_2$.
\end{defnstar}
As for bundle gerbes, there are straightforward notions of products, inverses, pullbacks, and morphisms
of bigerbes, for which the characteristic 4-class defined below behaves naturally.

As noted above, among the natural examples is the Brylinski-McLaughlin
bigerbe. Suppose that $E \to X$ is a principal $G$-bundle over a manifold
with structure group a compact, connected, simply connected, simple Lie
group. %Choosing a base point in $E$ induces a base point in $X$. 
Then
\begin{equation}
\begin{tikzcd}%[sep=small]
 	E\ar[d]& P E\ar[l]\ar[d]\\
	X&\ar[l]P X
\end{tikzcd}
\label{rbm.49}\end{equation}
is a split square where $ P X$ and $ P E$ are the respective (based)
path spaces, the vertical arrows are projections and the horizontal arrows
are the end-point maps. In the resulting bisimplicial space $W^{[2,1]}=\Omega
E$ is the based loop space of $E$ which is a principal bundle with
structure group the based loop group $\Omega G$ of $G$. The central
extensions  
\begin{equation}
\begin{tikzcd}[sep=scriptsize]
	1 \ar[r] & \UU(1)\ar[r]&\widehat{\Omega G}\ar[r]&\Omega G \ar[r] & 1
\end{tikzcd}
%%\xymatrix{
%U(1)\ar[r]&\widehat{\Omega G}\ar[r]&\Omega G
%}
\label{rbm.50}\end{equation}
are classified by $H^3(G;\bbZ) = H^3_G(G; \bbZ) =\bbZ$, and  the associated line bundle for such a
central extension pulls back over $W^{[2,2]}=\Omega E^{[2]}$ to a line
bundle $Q$ determining a bigerbe. Here the triviality of $d_1Q$ is the
multiplicativity of the central extension, as for a lifting gerbe, whereas
the (consistent) triviality of $d_2Q$ corresponds to the so-called `fusion' property of
the central extension with respect to certain configurations of loops \cite{Stolz-Teichner2005,MR3493404,KM-equivalence}, and which is equivalent to the gerbe property with respect to the path fibration $P E^{[2]} \to E^{[2]}$. 
Incorporation of an additional `figure-of-eight' condition as in
\cite{KM-equivalence,MR3391882} --- a condition related to the simplicial
space of products of $X$ discussed in \S\ref{S:prod_simp} --- promotes this
to a bigerbe involving free loops and paths, representing the obstruction
of the lift of $LE \to LX$ to a `loop-fusion' $\wh{LG}$-bundle which is
discussed further below.

There are various 2-gerbe versions of this in the literature.  In
\cite{B-MI}, Brylinski and McLaughlin define a 2-gerbe in the sense of
Breen by pulling back the canonical gerbe on $G$ (corresponding to the
given class in $H^3(G; \bbZ)$) to $E^{[2]}$ by the difference map,
particularly in the universal case where $X = BG$. In \cite{CJMSW}, and
later \cite{MR3055700}, a similar construction was used to produce a bundle
2-gerbe in the sense of Stevenson.  Furthermore, in \cite{B-MI}, the
authors discuss a correspondence between the 2-gerbe and the problem of
extending the structure group of the free loop space $LX$ from $LG$ to
$\wh {LG}$. The bigerbe above demonstrates this correspondence explicitly.

\medskip

Returning to the simplicial space, \eqref{rbm.42}, arising from any locally
split map, the simplicial differentials extend to the \v Cech cochain
spaces over the $Y^{[k]}$ \blue{with respect to a system of `admissible' covers,
as defined in \S\ref{S:covers} and \S\ref{S:cech}, which can be taken to be arbitrarily fine.}
Then the simplicial
complex
\begin{equation}
\begin{tikzcd}[sep=scriptsize]
	0 \ar[r] &\vC^\ell(X,A)\ar[r, "d"]&\vC^\ell(Y,A)\ar[r, "d"]&\vC^\ell(Y^{[2]},A)\ar[r, "d"]&\dots
\end{tikzcd}
%\xymatrix{
%\vC^\ell(X,A)\ar[r]^-d&\vC^\ell(Y,A)\ar[r]^-d&\vC^\ell(Y^{[2]},A)\ar[r]^-d&\dots
%}
\label{rbm.51}\end{equation}
is exact (see\ Proposition~\ref{T:exact_simplicial}), with a homotopy inverse arising from local sections over an open
cover.
%This is a manifestation of the fact that the geometric realization
%of the simplicial space $Y^{[\bullet]}$ is homotopy equivalent to $X$.
The simplicial differential commutes with the \v Cech differential
resulting in a double complex.

For a bundle gerbe, the representative $c(L)$ of the Chern class of $L$ can be chosen to be a pure cocycle:
$\delta c(L)=dc(L)=0$. From the exactness of the simplicial differential
this class descends:
\begin{equation}
c(L)= - d\beta ,\quad \delta \beta =d\alpha
\quad \text{for some $\beta \in \vC^1(Y; \bbC^\ast)$, $\alpha \in \vC^2(X; \bbC^\ast)$}
%\exists \beta \in\vC^1(Y;U(1)),\quad 
\label{rbm.52}\end{equation}
and then $\DD(L)\in \vH^3(X;\bbZ)$, the image of $[\alpha
]\in\vH^2(X;\bbC^\ast)$ under the Bockstein isomorphism, is the Dixmier-Douady
class of the gerbe. This is not the original definition of the
Dixmier-Douady class of a bundle gerbe as in \cite{Murray1, MurrayStevenson};
we show that it is equivalent below in Proposition~\ref{P:Consistent} and use
the simplicial characterization to prove that a
locally split map $\pi : Y \to X$ supports a bundle gerbe with a given 3-class
on $X$ if and only if the class vanishes when pulled back to $Y$ (Theorem~\ref{T:representability_three_class}), and also to classify trivializations
of bundle gerbes (in Proposition~\ref{P:gerbe_triv}).

For a general bigerbe there is a similar \v Cech analysis in terms of the
triple complex, formed by the three commuting differentials $\delta$, $d_1$
and $d_2$, on the \v Cech spaces over the bisimplicial space
\eqref{rbm.46}. Now the Chern class $c(L) \in\vC^1(W^{[2,2]};\bbC^\ast)$ can again be
chosen to be a pure cocycle:
$ \delta c(L) =d_1c(L) =d_2c(L) =0$.
As a consequence it descends to a cocycle on $Y^{[2]}$: 
\begin{multline}
c(L)=d_2\beta_1,\quad d_1\beta_1=0,\quad \beta_1 \in\vC^1(W^{[2,1]};\bbC^\ast)\\
\delta \beta_1 = - d _2\lambda _1,\quad d_1\lambda_1=0,\quad \lambda_1 \in\vC^2(Y_1^{[2]};\bbC^\ast),
\label{rbm.54}\end{multline}
essentially as for the gerbe. Thus the image of $[\lambda_1]$ in $\vH^3(Y^{[2]};\bbZ)$ (under the Bockstein
isomorphism) is the Dixmier-Douady class of $L$ as a gerbe over $Y_1^{[2]}$.
Significantly  however, $\lambda _1$ is naturally a \emph{simplicial}
cocycle --- a pure cocycle in the $\delta,d_1$ complex --- and so $d\lambda_1 = 0$.

In view of this, the simplicial class further descends under $d_1$:
\begin{equation}
\lambda_1 =-d_1\mu_1,\quad \delta \mu_1 = d_1\gamma,\quad \delta \gamma
=0,\quad \mu_1\in\vC^2(Y_1;\bbC^\ast),\ \gamma \in\vC^3(X;\bbC^\ast).
\label{rbm.55}\end{equation}
The Bockstein image $G(L) \in \vH^4(X; \bbZ)$ of $[\gamma]$ is the characteristic 4-class
associated to the bigerbe.

The symmetry of the bigerbe allows $Y_1$ and $Y_2$ to be interchanged, but
this also reverses the sign of $G(L)$.

\begin{thmstar}[Thm~\ref{T:bigerbe_DD_class}, Thm.~\ref{T:bigerbe_representability}]
\mbox{}
\begin{enumerate}
[{\normalfont (i)}]
\item The characteristic 4-class of a bigerbe is natural with respect to pullbacks, morphisms, products, and inverses. It vanishes if and only if the bigerbe admits a trivialization, and two bigerbes have the same 4-class if and only if they are stably isomorphic.
\item 
The bisimplicial space generated by a split
  square, as in \eqref{rbm.44}, supports a bigerbe for a given class in
  $\vH^4(X;\bbZ)$ if and only if this class lifts to the $Y_i$ to be trivial,
  with primitives which when pulled back to $W$ have exact difference.
\end{enumerate}
 \end{thmstar}

For the Brylinski-McLaughlin bigerbe $Q$ associated to a principal bundle $E \to X$ with
structure group a compact, connected, simply connected and simple Lie group $G$,
the 4-class $G(Q) = [\gamma]$ is the 
%(Pontryagin) 
transgression to $X$ of the 3-class on $G$ corresponding to
a central extension $\wh {LG}$ of $L G$:
\begin{equation}
\begin{gathered}
\begin{tikzcd}
	X&\ar[l] E&\arltwo E^{[2]}\ar[r,"q"]&G,
\end{tikzcd}
%\xymatrix{
%X&\ar[l] E&\ar@<-1.5pt>[l]\ar@<1.5pt>[l] E^{[2]}\ar[r]^q&G,
%}
\\
\ \alpha  \in H^3(G;\bbZ), \quad %=\vH^2(G;\UU(1)),\
q^*\alpha =d\mu,\quad d\gamma  =\delta \mu.
\end{gathered}
\label{rbm.57}\end{equation}
\begin{thmstar}[Thm.~\ref{T:BM_bigerbe}, Thm.~\ref{T:lf_lifts_trivns}]
  The Brylinski-McLaughlin bigerbe $Q \to LE^{[2]}$ has characteristic class $G(Q) = p_1(E) \in \vH^4(X; \bbZ)$, the vanishing of which is equivalent to the existence of a `loop-fusion' (meaning fusion and figure-of-eight, see \S\ref{S:prod_simp}) $\wh{LG}$ lift  of the $LG$-bundle $LE \to LX$.
Such lifts, which are equivalent to certain trivializations of $Q$, 
%are in bijection
%with `string structures' for $E \to X$, and 
are classified by $\vH^3(X; \bbZ)$.
\end{thmstar}

In particular this applies to the spin frame bundle of a spin manifold.
There it represents the obstruction to a lift of the principal loop spin bundle
over the loop space to a loop-fusion principal bundle for the basic
central extension of the loop spin group. The obstruction is then the
Pontryagin class, usually denoted $\frac12 p_1$ because of its relation to
the Pontryagin class of the oriented orthogonal frame bundle, of the spin
bundle \cite{MR3493404,CJMSW,KM-equivalence}.
Such loop-fusion lifts are, by the above theorem, in bijection with so-called
`string structures' on the manifold (see Corollary~\ref{C:string_loop_spin}).

In addition to the Brylinski-McLaughlin bigerbes, we provide other natural
examples of bigerbes representing `decomposable' 4-classes which are the
cup product of either 2-classes or a 1-class and a 3-class (see
\S\ref{S:decomp_bigerbes}). 
Moreover we show that for a simply connected and locally contractible space
$X$, every 4-class is represented by a bigerbe with respect to the locally
split square in which the $Y_i$ are the based path spaces $PX$ and $W = PPX$ is
the based mapping space of the square into $X$ (see \S\ref{S:Path-bigerbes}).
In particular $K(\bbZ, 4)$ supports a universal bigerbe.
Likewise, for $X$ not necessarily simply connected, we show that every 4-class
is represented by a bigerbe using the free path spaces; in particular
$W^{[2,2]} = L L X$ is the double free loop space in this case.

There is a direct extension of bigerbes to `multigerbes',
higher versions of (bi)gerbes in which the locally split squares are replaced by $n$-cubes,
the line bundles satisfy simplicial conditions with respect to $n$ commuting differentials;
these represent cohomology classes of degree $n+2$. 
The symmetry of the (multi)simplicial conditions replaces the ever higher and more complex
`associativity' conditions associated to higher gerbes.
This extension is quite straightforward, and for this reason, and since we are unaware
of examples apart from decomposable and path multigerbes, we only outline the theory
briefly at the end of this paper.

In order to restrict attention to a simple category of topological spaces,
and to avoid expanding the paper further, we do not develop the theory of
connections on bigerbes here, though this will be done in a future work. We
also do not discuss here the bigerbe analogue of bundle gerbe modules or
the related theory of generalized morphisms due to Waldorf
\cite{waldorf2007more}.

Section~\ref{S:cechtheory}
%The first section 
below contains a discussion of covers and locally split maps,
which is the context for the rest of the paper, and our notation for \v Cech
theory is introduced in \S\ref{S:cech}.

Section~\ref{S:bundle_gerbes_sec} is a review of Murray's theory of bundle
gerbes (without connections), as developed from the \v Cech-simplicial point of
view, with the basic properties of bundle gerbes over split maps recalled in
\S\ref{S:bundle_gerbes}. 
The extension of the \v Cech cohomology complex to a
bicomplex over the simplicial space of a split map in \S\ref{S:simplicial_cech}
leads to an alternative definition of the Dixmier-Douady class for a bundle gerbe
in \S\ref{S:DD_class}, the classification of gerbes over a given split map in
\S\ref{S:rep_of_three}, and the classification of trivializations in
\S\ref{S:gerbe_triv}. 
Examples of bundle gerbes are recalled in
\S\ref{S:gerbe_exs}. 

A `product-simplicial' condition, which we refer to as `doubling', on bundle
gerbes is defined in \S\ref{Simp.gerbe} with particular application to the
free loop space, and the connection with results from \cite{MR3391882} is
described in \S\ref{S:lf_cohom}.

In \S\ref{S:lssquares} the basic properties of locally split squares of
maps are given, leading to the definition of bigerbes in
\S\ref{S:bigerbes}. The characteristic 4-class of a bigerbe is obtained in
\S\ref{S:bigerbe_four_class} and conversely \S\ref{S:rep_four_class}
contains a necessary and sufficient condition for representability of a
4-class over a given locally split square. 

Section \S\ref{S:bigerbe_examples} is devoted to examples, with explicit
bigerbes corresponding to decomposable classes constructed in
\S\ref{S:decomp_bigerbes}, extending some of the results of \cite{MR2674880}.
After a brief discussion of doubling for bigerbes in \S\ref{S:prod_simp_bigerbe},
our main application of bigerbes --- the Brylinski-McLaughlin
lifting gerbe --- is discussed in \S\ref{S:bm-bigerbe} and its relation
to string structures is discussed in \S\ref{S:aside}.
Further examples of path bigerbes can be found in \S\ref{S:Path-bigerbes},
and finally, we end with a brief account of multigerbes in \S\ref{S:multi}.

\subsection*{Acknowledgements} 
This material is based in part on work supported by the National Science Foundation
under Grant No.\ DMS-1440140 while the authors were in residence at the
Mathematical Sciences Research Institute in Berkeley, California, during the
Fall 2019 semester.
The first author was also supported under NSF grant DMS-1811995, and the authors would like to
acknowledge helpful conversations during the development of this material with
Konrad Waldorf, Mathai Varghese, and Ezra Getzler.

\blue{
The authors are grateful to Andr\'e Henriques for pointing out errors in the published version
of the paper which are corrected by this version, in particular by the systematic use of admissible covers throughout.}
\egroup %\paperbody 
\section{\v Cech theory}\label{S:cechtheory}
\subsection{Covers and locally split maps}\label{S:covers}

Since we will make substantial use of \v Cech theory, we start with some
conventions on spaces, open covers, and maps.
We work throughout in the standard topologist's category of compactly generated Hausdorff spaces
and continuous maps, without additional conditions unless explicitly noted.

An open cover of a topological space $X$ is a collection of open sets,
$\ecU$, for which $X = \bigcup_{U \in \ecU} U$.  Such a cover defines an {\em \'etale
space} by taking the disjoint union
\[ 
	\Et{\ecU} = \bigsqcup_{U \in \ecU} U\to X
\]
with the map to $X$ consisting of the inclusion map on each $U$. 

Note that
since the individual sets may not be connected, it is not generally possible 
to recover the collection $\ecU$ from $\Et{\ecU}$ without specifying additional
indexing information.
We regard $X$ as its own minimal cover.

If $\ecU$ and $\ecV$ are covers of $X$ and $Y$ then a {\em map of covers} is
a continuous map $g : \Et{\ecU} \to \Et{\ecV}$ where each element $U \in \ecU$
is mapped to a specific element $V \in \ecV$. In particular, there is
an underlying map of index sets.

\begin{defn}\label{rbm.72} A continuous map $\pi : Y \longrightarrow X$ of
  topological spaces is {\em locally split} if it admits continuous local
  sections; thus $\pi$ is surjective and there exists a cover $\ecU$ of $X$
  with respect to which the local sections constitute a continuous map of
  covers $s : \Et{\ecU} \to Y$ such that $\pi\circ s : \Et{\ecU} \to X$ is inclusion of
  the cover in $X.$ In particular the {\em inclusion map} of covers 
  $\Et{\ecU}\longrightarrow X$ 
is itself
 locally split.
\end{defn}

If $\ecU$ and $\ecV$ are covers of the same space $X$
then a map of covers %from $\ecV$ to $\ecU$ 
such that
\begin{equation}
\begin{tikzcd}%[sep=small]
\Et{\ecV}\ar[r] \ar[dr] & \Et{\ecU}
    \ar[d] \\ &X \end{tikzcd}
\label{E:refinement} \end{equation}
commutes makes $\ecV$ a {\em refinement} of $\ecU$; often in the literature
the underlying map of index sets is omitted but we always retain it, even if
implicitly. In this way the covers of $X$ define a 
category with refinements as morphisms.
Observe also that if $\ecV$ is a cover of $\Et{\ecU}$ considered as a
space, then $\ecV$ is a cover of $X$ as well, the composite map $\Et{\ecV}\to X$ is an inclusion map of covers, and $\ecV$
constitutes a refinement of $\ecU$.

If $f : Y \to X$ is a continuous map then the pullback
\begin{equation}
	f^\inv \ecU = \set{f^\inv(U) : U \in \ecU}
	\label{E:pullback_fixed_cover}
\end{equation}
is a cover of $Y$ and $f$ lifts to a well defined map of covers $f:\Et{f^\inv \ecU} \cong f^\inv \Et{\ecU}
\to \Et{\ecU}$. If $\ecU$ and $\ecV$ are both covers of $X$, then
\begin{equation}\label{E:cover_mutual_refinement}
	\ecU \cap \ecV= \set{U \cap V : U \in \ecU, V \in \ecV}
%\bigsqcup_{i\in \ind{}(\ecU),j\in \ind{}(\ecV)} U_i \cap V_j \to X
	\end{equation}
is a cover of $X$ mutually refining $\ecU$ and $\ecV$; indeed,  
this is the same thing as pulling back $\ecV$ to a cover of $\Et{\ecU}$ by the
inclusion map to $X$ or vice versa, and $\Et{\ecU \cap \ecV} \cong \Et{\ecU} \times_X \Et{\ecV}$.
%; note that $\ind {\ecU\times_X \ecV} = \ind \ecU \times \ind \ecV$.
%
%We will use the suggestive notation 
%\[
	%\ecU \cap \ecV:= \ecU\times_X \ecV
%\]
%to denote this mutual refinement.
%
Note that %the fiber product in this sense
\[
	\ecU^{(\ell)} 
	%= \underbrace{\ecU \times_X \cdots \times_X \ecU}_{\text{$\ell$ times}} 
	= \underbrace{\ecU \cap \cdots \cap \ecU}_{\text{$\ell$ times}}
	%= {\set{U_{a_1} \cap \cdots \cap U_{a_\ell}}
\]
gives the cover by $\ell$-fold intersections of sets in $\ecU$ (we refer to $\Et{\ecU^{(\ell)}}$ as a {\em \v Cech space})
and that there is a canonical identification 
\begin{equation}
	f^\inv (\ecU^{(\ell)}) \cong (f^\inv \ecU)^{(\ell)} 
	\label{E:pullback_fixed_cover_ell}
\end{equation}
of covers over $Y$ whenever $\ecU$ is a cover of  $X$ and $f : Y \to X$ is continuous.

\subsection{{\v Cech} cohomology} \label{S:cech}

The standard definition of \v Cech cohomology proceeds by fixing an open
cover $\ecU$ of $X$ and taking the homology $\vH^\bullet_{\ecU}(X; A)$ of the
cochain complex $(\vC^\bullet_{\ecU}(X; A),\delta) = (\Gamma(\ecU^{(\bullet
  +1)}; A),\delta)$, where $A$ is a sheaf of abelian groups on $X$,
$\Gamma(\ecU^{(\bullet+1)}; A)$ denotes the group of local sections of $A$
on the intersection cover $\ecU^{(\bullet+1)}$  and $\delta :
\Gamma(\ecU^{(\bullet)}; A) \to \Gamma(\ecU^{(\bullet+1)}; A)$ is given by
the alternating sum of the pullbacks by the various inclusion maps
$\ecU^{(\bullet+1)} \to \ecU^{(\bullet)}$.  
For our purposes, $A$ will always be a fixed topological abelian group
such as $\bbC$, $\bbC^\ast$, $\bbZ$, or $\UU(1)$ and we will work on the
sheaf of continuous maps to $A$, so that 
\[
	\vC^\bullet_{\ecU}(X; A) = \cC(\Et{\cU^{(\bullet+1)}}; A)
\]
is a space of continuous maps to $A$ from the \'etale space.
The full \v Cech cohomology is defined as the direct limit
\begin{equation}
	\vH^\bullet(X; A) = \lim_{\ecU} \vH^\bullet_{\ecU}(X; A)
\label{rbm.23}\end{equation}
under refinement.
\blue{
Here the limit is taken with respect to the directed set of covers of $X$, where $\ecU < \ecV$ if there exists
a refinement $\Et{\ecU} \to \Et{\ecV}$ inducing a pullback $\vC^\bullet_{\ecV}(X; A) \to \vC^\bullet_{\ecU}(X; A)$; while there may be many such distinct refinements, the 
associated pullbacks are chain homotopic and the
induced maps on cohomology agree.
}
We use the standard terminology of cochains, cocycles and coboundaries for elements of $\vC^\bullet(X; A)$,
and also borrow the adjectives {\em closed} and {\em exact} from de Rham theory
for cocycles and coboundaries, respectively.

\begin{added}
\begin{prop}\label{P:Cech_map}
Fix a topological abelian group $A$, and suppose a continuous map
  $f:Y \to X$ of topological spaces 
lifts to a map of covers $\wt f : \Et{\ecV} \to \Et{\ecU}$ for covers $\ecV$ and $\ecU$ of $Y$ and $X$, respectively. 
Then this
induces a chain map
\[
	\wt f^\ast : \vC_{\ecU}^\bullet(X; A) \to \vC_{\ecV}^\bullet(Y; A)
\]
which descends to the pull-back functor $f^\ast : \vH^\bullet(X; A) \to
\vH^\bullet(Y; A)$ on cohomology. 
\end{prop}

%\ck{Is this sufficiently obvious that the proof can be omitted?}
\begin{proof}%[Proof of Prop.~\ref{P:Cech_map}]
%Using \eqref{E:pullback_fixed_cover_ell} 
The map of covers induces a map of covers $\Et{\ecV^{(\ell)}} \to \Et{\ecU^{(\ell)}}$ for each $\ell$, giving 
a chain map $\vC^\ell_{\ecV}(Y; A) \to \vC^\ell_{\ecU}(X; A)$, and 
the natural maps $\Et{\ecV^{(\ell)}} \to \Et{\ecU^{(\ell)}}$ commute with the inclusions
$\Et{\ecU^{(\ell)}} \hookrightarrow \Et{\ecU^{(\ell-1)}}$
in each factor of the $\ell$-fold intersections into the original covers,
so $\wt f^\ast$ commutes with the \v Cech differential $\delta$ and defines a chain morphism.
Furthermore, 
this is well-defined with respect to refinement; specifically, if $\Et{\ecU'} \to \Et{\ecU}$ 
and $\Et{\ecV'} \to \Et{\ecV}$ are refinements such that $\wt f$ lifts to a map of covers $\wt f : \Et{\ecV'} \to \Et{\ecU'}$
then 
\[
\begin{tikzcd}
	\Et{{\ecV'}^{(\ell)}} \ar[r, "\wt f"] \ar[d]& \Et{{\ecU'}^{(\ell)}} \ar[d] 
	\\ \Et{{\ecV}^{(\ell)}} \ar[r, "\wt f"] & \Et{\ecU^{(\ell)}}	
\end{tikzcd}
\]
commutes, descending in the direct limit in cohomology to the map $f^\ast : \vH^\bullet(Y; A) \to \vH^\bullet(X; A)$.
\end{proof}
\end{added}
We proceed to define a more limited form of pullback on chains with respect to the sections
of a locally split map.
This is essential to the exactness of the simplicial complex \eqref{rbm.51} of \v Cech chains
on the simplicial space induced from a locally split map mentioned in the introduction and
discussed in detail in \S\ref{S:simplicial_cech}.
\blue{
\begin{defn}
For a locally split map $\pi : Y \longrightarrow X$,
an \emph{admissible pair of covers} consists of covers $\ecU$ of $X$ and $\ecV$ of $Y$
along with maps of covers $\wt \pi : \Et{\ecV} \to \Et{\ecU}$ and $\wt s : \Et{\ecU} \to \Et{\ecV}$
such that 
\begin{enumerate}
[{\normalfont (i)}]
\item $\wt \pi$ is a lift of $\pi$,
\item $\wt s$ is a lift of a local section $s: \Et{\ecU} \to Y$,
\item $\wt s$ is a section (right inverse) of $\wt \pi$, i.e. $\wt \pi \wt s = \Id_\ecU : \Et{\ecU} \to \Et{\ecU}$, and
\item
\label{I:admissible_pair_int_prop}
for each $U_1,U_2 \in \cU$ such that $U_1 \cap U_2 \neq \emptyset$, there exists $V_2 \in \wt\pi^\inv(U_2) \subset \cV$ such that $\wt s_{U_1} : U_1 \cap U_2 \to V_2$, i.e., the section
defined on $U_1$ restricts over $U_2$ to a map into some $V_2$ over $U_2$.
\end{enumerate}
The key consequence of \eqref{I:admissible_pair_int_prop} is that for each $\ell \geq 1$ there exist canonical 
sections $\wt s_\ell : \cU^{(\ell)} \to \cV^{(\ell)}$ of $\wt \pi : \cV^{(\ell)} \to \cU^{(\ell)}$ defined by 
\begin{equation}
	\wt s_\ell = \wt s_{U_1} : U_1 \cap U_2 \cap \cdots \cap U_\ell \to V_1 \cap V_2 \cap \cdots \cap V_\ell
	\label{E:s_ell}
\end{equation}
so that $\cU^{(\ell)}$ and $\cV^{(\ell)}$ also constitute an admissible pair.
\label{D:admissible_pair}
\end{defn}

\begin{lem}\label{Addedrbm} 
If $\pi : Y \to X$ is a locally split map with arbitrary covers $\ecU$ of $\ecV$ of $X$ and $Y$, respectively,
admitting a section $s : \Et{\ecU} \to Y$,
then there exists an admissible pair $\ecU'$ and $\ecV'$ refining $\ecU$ and $\ecV$ such that $\wt s$ lifts $s$.

In fact, if $\pi_i : Y_i \to X$, $i = 1,\ldots,N$, is a finite number of locally split maps over $X$, then 
any covers $\ecU$ of $X$ and $\ecV_i$ of $Y_i$ can together be refined to $\ecU'$ and $\ecV'_i$, respectively, constituting an admissible pair for each $i$
simultaneously, with $\wt s_i$ lifting any given sections $s_i : \Et{\ecU} \to Y_i$.
%Let $\ecU$ and $\ecV$ be arbitrary covers of the spaces 
%If $s:\Et{\ecU}\longrightarrow Y$ is a collection
%of sections of a locally split map $\pi:Y\longrightarrow X$ then any cover
%$\ecV$ of $Y$ induces canonical refinements $\cU'=s^{-1}\ecV$ of $\cU$ and
%$\ecV'=\pi^{-1}\ecU'\cap\ecV$ of $\cV$ such that $s$ and $\pi$ lift naturally to maps
%of covers $s:\Et{\ecU'}\longrightarrow \Et{\ecV'}$ and $\pi : \Et{\ecV'} \to \Et{\ecU'}$ with $s
%\pi = \Id : \Et{\ecU'} \to \Et{\ecU'}$; this construction is natural under refinement
%of $\ecV$.
\end{lem}
\begin{rmk}
Since admissible covers may be taken to refine arbitrary covers of $X$ and $Y$,
it follows that covers participating in admissible pairs are 
\emph{final} in the directed set of all covers; 
%(i.e., any cover of $X$ or $Y$ has an admissible cover which refines it); 
in particular the direct
limits $\vH^\ell(X; A) = \lim_{\ecU}\vH_\ecU^\ell(X; A)$ 
and $\vH^\ell(Y; A) = \lim_{\ecV} \vH^\ell_\ecV(Y; A)$
are equivalent to the direct
limit taken over admissible covers alone \cite[Thm.\ 1, p.\ 213]{maclane}.
Equivalently, any class in the cohomology of $X$ or $Y$ may be represented as a \v Cech 
cochain with respect to a cover which is part of an admissible pair.
\end{rmk}

\begin{proof} 
We construct the refinement in two steps, first `pulling down' $\ecV$ to get
$\ecU' = s^\inv \ecV$ and then `pulling up' $\ecU'$ to get $\ecV' = \pi^\inv \ecU' \cap \ecV$.
More precisely,
%if $\ecV$ is any cover of $Y$ and 
if
$s_U:U\longrightarrow Y$ is
  the section of $\pi$ over $U\in \ecU$ then the sets $s_U^\inv V\subset U$ for
  $V \in \ecV$, define a cover of $U$, and hence together a refinement of $\ecU$ as a
  cover of $X$. This is the cover $\ecU'=s^{-1}\ecV$ defined above. Then $s$ lifts to a
  naturally defined map of covers $\wt s: \Et{\ecU'}\to \Et{\ecV}$ given by $s_U :s_U^\inv
  V\to V$. 
%Since it is natural, we denote this lift simply as $s$. 
The pull-back $\pi^{-1}\ecU'$ of the new cover of $X$ defines a
  refinement $\ecV'=\pi^{-1}\ecU'\cap\ecV$ of $\ecV$ to which $\pi$ lifts as a
  map of covers $\wt \pi : \Et{\ecV'} \to \Et{\ecU'}$.
Moreover, as a consequence of the fact that $\pi s = \Id$ on each element $U \in \ecU'$,
it follows that $\wt s$ lifts to a map of covers
$\wt s:\Et{\ecU'}\to \Et{\ecV'}$, and $\wt \pi \wt s = \Id$.

To see that this satisfies the final condition of Definition~\ref{D:admissible_pair},
note that 
by definition each $U_1'=U_1\cap s_{U_1}^{-1}(V)$ for some
  elements $U_1 \in \ecU$ and $V\in \ecV$, respectively. Then $V'_1=V\cap
  \pi^{-1}(U_1')$ contains the image of $U'_1$ under the lifted section and
  the other elements $V'_2=V\cap\pi^{-1}(U'_2)$ contain the image of
  $U_1'\cap U_2'$ under $s_{U'_1}.$ 
%Then certainly $\pi s_\ell = \Id$ and
  %$\incl_1 \circ s_\ell = s \circ \incl_1$ where $\incl_1$ denotes the
  %first factor inclusion map. That this holds for $\incl_1$ and not for
  %$\incl_j$ for $j \neq 1$ is ultimately the reason the $s_\ell^\ast$
  %constructed below do not commute with the \v Cech differential.

Given a finite number of locally split maps $\pi_i : Y_i \to X$, we set $\ecU' = \bigcap_i s_i^\inv \ecV_i$,
which refines each $s_i^\inv \ecV_i$ and hence lifts the map of covers $\wt s_i$ defined above,
followed by $\ecV_i' = \pi_i^\inv \ecU' \cap \ecV$ for each $i$.
\end{proof}

In preparation for \S\ref{S:simplicial_cech} we make the following observations.
\begin{lem}
Denote by $Y^{[k]} = Y\times_X \cdots \times_X Y$ the $k$-fold fiber product of $\pi : Y \to X$
and fix an admissible pair of covers $(\ecU,\ecV)$ for $(X,Y)$.
Then for each $k \geq 2$,
\begin{enumerate}
[{\normalfont (i)}]
\item 
$\ecV^{[k]} := \ecV\times_{\ecU} \cdots \times_{\ecU} \ecV$, the $k$-fold fiber product of $\wt \pi : \ecV \to \ecU$,
is a cover of $Y^{[k]}$,
\item
The projection maps $\pi_j : Y^{[k]} \to Y^{[k-1]}$, for $0 \leq j \leq k-1$, lift canonically to maps of covers
$\wt \pi_j : \Et{(\ecV^{[k]})^{(\ell)}} \to \Et{(\ecV^{[k-1]})^{(\ell)}}$ for each $\ell$,
\item 
\label{I:admissible_cover_fib_prod_sections}
The sections $\wt s_\ell$ of Definition~\ref{D:admissible_pair}.\eqref{I:admissible_pair_int_prop} determine a map of covers 
$\wt s_\ell : \Et{(\ecV^{[k-1]})^{(\ell)}} \to \Et{(\ecV^{[k]})^{(\ell)}}$ for each $\ell$ such that
\begin{equation}
	\wt \pi_j \wt s_\ell = \begin{cases} \Id & j = 0 \\ \wt s_\ell \wt \pi_{j-1} & 1 \leq j \leq k-1 \end{cases}
	\label{E:simplicial_chain_contraction}
\end{equation}
\end{enumerate}
\label{L:admissible_cover_fib_prod}
\end{lem}

\begin{proof}
It is clear that $\cV^{[k]}$ is an open cover of $Y^{[k]}$.
Note that there is a canonical identification $(\cV^{[k]})^{(\ell)} \cong (\cV^{(\ell)})^{[k]}$, the latter fiber product taken over $\cU^{(\ell)}$; 
indeed, for $k = \ell = 2$ this amounts to observing that 
\[
	(V_1 \times_{U_1} V'_1) \cap (V_2 \times_{U_2} V'_2) \cong (V_1 \cap V_2) \times_{U_1 \cap U_2} (V'_1 \cap V'_2)
\]
are both identified with pairs of points $(y,y')$ such that $\pi(y) = \pi(y') \in U_1 \cap U_2$ with $y \in V_1 \cap V_2$ and $y' \in V'_1 \cap V'_2$,
with similar identification holding in general.

With this observation, the lifts $\wt \pi_j$ are simply the fiber product projections $(\cV^{(\ell)})^{[k]} \to (\cV^{(\ell)})^{[k-1]}$, defined pointwise by
$(y_0,\ldots,y_{k-1}) \mapsto (y_0,\ldots, \widehat{y_j},\ldots,y_{k-1})$, with $\wh \cdot$ denoting omission of the corresponding variable.
The section $\wt s_1 :\Et{\cV^{[k-1]}} \to \Et{\cV^{[k]}}$ is defined by $\wt s \circ \wt \pi \times 1$ (where $\wt \pi : \cV^{[k-1]} \to \cU$ and $\wt s : \cU \to \cV$) acting pointwise by 
\[
	(y_0,\ldots,y_{k-2}) \mapsto \big(\wt s(\wt \pi(y_0)),y_0,\ldots,y_{k-2}\big)
\]
and $\Et{(\cV^{[k-1]})^{(\ell)}} \to \Et{(\cV^{[k]})^{(\ell)}}$ is defined similarly by $\wt s_{\ell}\circ \wt \pi \times 1$, using the identification $(\cV^{[k]})^{(\ell)} \cong (\cV^{(\ell)})^{[k]}$,
from which \eqref{E:simplicial_chain_contraction}
%, and the fact that $d(s_\ell)^\ast + (s_\ell)^\ast d = 1$, 
follows directly.
\end{proof}

\begin{lem}
Given an arbitrary cover $\cO$ of $Y^{[k]}$, there exists an admissible pair of covers $(\cU, \cV)$ for $\pi : Y \to X$ such that $\cV^{[k]}$ refines $\cO$.
Thus the covers of the form $\cV^{[k]}$ for admissible $(\cU,\cV)$ are final in the directed set of all covers on $Y^{[k]}$, and in particular every cohomology class 
in $\vH^\bullet(Y^{[k]}; A) = \lim_{\cO} \vH^\bullet_\cO(Y^{[k]}; A)$ has a representative in $\vC^\bullet_{\cV^{[k]}}(Y^{[k]}; A)$ for some admissible pair.
\label{L:refine_Yk}
\end{lem}
\begin{proof}
Since $Y^{[k]}$ is topologized as a subspace of the product $Y^k$,
for each $O \in \cO$ there is some open $\wt O \subset Y^k$ in the product topology such that $O = \wt O \cap Y^{[k]}$, and $\wt O$ is a union of sets of the form $O_1 \times \cdots \times O_k$
for open $O_i \subset Y$.
Taking the set of all such $O_i$ over $O \in \cO$ gives a cover $\cV'$ of $Y$ such that $\cV'\times_X \cdots \times_X \cV'$ is a refinement of $\cO$, and then invoking
Lemma~\ref{Addedrbm} produces an admissible pair such that $\cV^{[k]} = \cV\times_\cU \cdots \times_\cU \cV$ refines $\cV'\times_X \cdots \times_X \cV'$ and therefore $\cO$.
\end{proof}
}

\section{Bundle gerbes} \label{S:bundle_gerbes_sec}
\subsection{Simplicial line bundles} \label{S:bundle_gerbes}

We recall the notion of a bundle gerbe \cite{Murray1}, which for our purposes
is most efficiently defined in terms of simplicial line bundles.

Denote by $Y^{[k]}$ the $k$-fold fiber product $Y\times_{\pi} \cdots
\times_{\pi} Y$, with projection maps $\pi_j : Y^{[k]} \to Y^{[k-1]}$, $j =
0,1,\ldots,k-1$, where $\pi_j(y_0,\ldots,y_{k-1}) =
(y_0,\ldots,\wh{y_j},\ldots, y_{k-1})$ omits the $j$th factor
enumerated from $0$.
Then 
\begin{equation}
\begin{tikzcd}[column sep=small]
	X&\ar[l]Y & Y^{[2]} \arltwo & Y^{[3]} \arlthree & \ar[l,thick,dotted, no head]
\end{tikzcd}
\label{rbm.73}\end{equation}
is a {\em simplicial space} with face maps $\pi_j$ and degeneracy maps
the fiber diagonal maps $Y^{[k-1]} \to Y^{[k]}$ (of which we will not make use).
%, $j = 0,\ldots,k-2$,
%where $(y_0,\ldots,y_{k-2}) \mapsto (y_0,\ldots,y_j,y_j,\ldots,y_{k-2})$.
%
More precisely, $Y^{[\bullet]}$ is a {\em simplicial space over $X$}, meaning
that all maps commute with the projections $\pi : Y^{[k]} \to X$, and $X$ itself
may be regarded as an augmentation in \eqref{rbm.73}.
For notational convenience, we set $Y^{[1]} = Y$ and $Y^{[0]} = X$, with $\pi_0
= \pi : Y^{[1]} \to Y^{[0]}$.

\begin{rmk} Our enumeration (which is geometrically natural here)
differs unfortunately from the standard convention for simplicial spaces, under which one
would typically write $Y_0 = Y$ (as the image of the $0$ simplex), $Y_1 =
Y^{[2]}$ (as the image of the 1-simplex), etc., augmented by $Y_{-1} = X$.
For consistency we use this alternative convention throughout, and beg the pardon of
readers who would prefer to use the standard one.
\end{rmk}

Given a complex line bundle $L \to Y^{[k]}$, its differential is defined to be the line bundle
\begin{equation}
	d L := \bigotimes_{i=0}^{k} \pi_i^\ast L^{(-1)^i} \to Y^{[k+1]}.
	\label{E:line_bundle_differential}
\end{equation}
Using the commutation relations between the $\pi_j$, it follows that $d^2 L = d(dL)$ 
is canonically trivial over $Y^{[k+2]}$.

\begin{rmk}
While we will mostly work with complex line bundles, we could equivalently take
$L$ to be a principal $\bbC^\ast$ or $\UU(1)$ bundle. 
At times we will use these objects interchangeably without further elaboration.
\end{rmk}

A {\em bundle gerbe} $(L,Y,X)$ as defined by Murray is equivalent to a {\em
  simplicial line bundle} on the simplicial space $Y^{[\bullet]}$ in the
sense of Brylinski and McLaughlin \cite{B-MI}; this consists of a complex
line bundle $L \to Y^{[2]}$ along with a trivialization of the bundle
\[
	d L = \pi_0^\ast L \otimes \pi_1^\ast L^{-1}\otimes \pi_2^\ast L
\]
over $Y^{[3]}$, which in turn induces the canonical trivialization of $d^2L$
when pulled back over $Y^{[4]}$. The trivialization of $dL \to Y^{[3]}$
is equivalent to the `gerbe (or groupoid) product' isomorphism
\[
	\phi : \pi_2^\ast L \otimes \pi_0^\ast L \stackrel \cong \to \pi_1^\ast L,
\]
which multiplies (composes) pairs of respective elements in the fibers
$L_{y_0,y_1}$ and $L_{y_1,y_2}$ to get elements in $L_{y_0,y_2}$; the
condition that the trivialization coincide with the canonical one on $d^2\,
L$ over $Y^{[4]}$ is equivalent to associativity of this product.

A bundle gerbe $(L,Y,X)$ is {\em trivial} if there exists a bundle $L' \to Y$
and an isomorphism $L \cong dL'$ on $Y^{[2]}$; such an isomorphism is called a {\em trivialization} of $L$. 

If 
$(L, Y, X)$ 
is a bundle gerbe on $X$, then its {\em pullback} by a continuous map $f : X' \to X$ is the bundle gerbe $(\wt f^\ast L, f^\ast Y, X')$;
here we use the naturality $f^\ast(Y\times_X Y) \cong f^\ast(Y) \times_{X'} f^\ast(Y)$ and denote the resulting
map $f^\ast Y^{[2]} \to Y^{[2]}$ by $\wt f$. 
%
%Note that the differential \eqref{E:line_bundle_differential} on line bundles commutes with pullback.
%which has the property that
%\[
	%\DD(f^\ast L) = f^\ast \DD(L).
%\]
%(See below.)
Likewise, the {\em product} of two bundle gerbes $(L_i, Y_i, X)$, $i = 1, 2$ on $X$ is given by
\[
	(L_1 \otimes L_2, Y_1 \times_X Y_2, X)
\]
where $L_1 \otimes L_2$ is shorthand for $\pr_1^\ast L_1 \otimes \pr_2^\ast L_2$ with
$\pr_i : Y_1 \times_X Y_2 \to Y_i$ denoting the projections from the fiber product.
It is straightforward to verify that this is a bundle gerbe, which we denote for simplicity as $L_1\otimes L_2$.
The definitions of product and pullback implicitly use the following
standard result which we record for later use.
\begin{lem}
Pullbacks and fiber products of locally split maps are locally split. More precisely, if $\pi : Y \to X$
is locally split and $f : X' \to X$ is continuous, then $f^\ast Y \to X'$ is locally split, and if $\pi' : Y' \to X$
is another locally split map, then $\pi\times\pi' : Y\times_X Y' \to X$ is locally split.
\label{L:ls_pullback_prod}
\end{lem}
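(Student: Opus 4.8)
The plan is to prove both assertions directly from Definition~\ref{rbm.72}, by producing explicit continuous local sections over suitable covers built from the given ones. Throughout, I write the pullback as $f^\ast Y = \set{(x',y) \in X' \times Y : f(x') = \pi(y)}$ with projection $p(x',y) = x'$, and the fiber product as $Y \times_X Y' = \set{(y,y') : \pi(y) = \pi'(y')}$ with projection $\pi\times\pi' : (y,y') \mapsto \pi(y) = \pi'(y')$.

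For the pullback, suppose $\ecU$ is a cover of $X$ over which $\pi$ admits a section $s : \Et{\ecU} \to Y$, so that on each $U \in \ecU$ one has $s_U : U \to Y$ with $\pi \circ s_U = \incl$. I would take the pulled-back cover $f^\inv \ecU$ of $X'$ from \eqref{E:pullback_fixed_cover} and, over each element $f^\inv(U)$, define $\sigma_U(x') = \bigl(x', s_U(f(x'))\bigr)$. This is continuous as a composite of continuous maps, lands in $f^\ast Y$ because $\pi\bigl(s_U(f(x'))\bigr) = f(x')$, and satisfies $p\circ\sigma_U = \incl$; hence these assemble into a section of $p$ over $f^\inv\ecU$, and $p$ is locally split.

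For the fiber product, let $\ecU$ and $\ecU'$ be covers of $X$ over which $\pi$ and $\pi'$ respectively admit sections $s_U$ and $s'_{U'}$. I would pass to the mutual refinement $\ecU\cap\ecU'$ of \eqref{E:cover_mutual_refinement} and, over each $U \cap U'$, set $\tau(x) = \bigl(s_U(x), s'_{U'}(x)\bigr)$. Since $\pi(s_U(x)) = x = \pi'(s'_{U'}(x))$, this lands in $Y\times_X Y'$; it is continuous, and $(\pi\times\pi')\circ\tau = \incl$, so $\pi\times\pi'$ is locally split. Alternatively, one may observe that $Y\times_X Y'$ is exactly the pullback $\pi^\ast Y'$ of $\pi'$ along $\pi$, deduce from the pullback case that the projection $Y\times_X Y' \to Y$ is locally split, and then compose with $\pi$; this reduces the fiber-product case to the pullback case together with the elementary fact that a composite of locally split maps is locally split (given sections $s_U$ of the base and sections $t_V$ upstairs, the refinement with elements $s_U^\inv(V)$ carries the composed sections $x \mapsto t_V(s_U(x))$, exactly as in the construction of Lemma~\ref{Addedrbm}).

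There is no serious obstacle here: the entire content is the bookkeeping of covers and the verification that the candidate sections are continuous and land in the appropriate fibered space. The only point requiring care is the choice of cover in each case --- the pullback cover $f^\inv\ecU$ for the first statement and the mutual refinement $\ecU\cap\ecU'$ (equivalently the pullback of one cover along a section, as in Lemma~\ref{Addedrbm}) for the second --- after which surjectivity and the section property are immediate.
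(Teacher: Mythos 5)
Your proof is correct; the paper records this lemma without proof as a standard fact, but the constructions you give --- the section $x'\mapsto\bigl(x',s_U(f(x'))\bigr)$ over the pulled-back cover $f^\inv\ecU$, and the product section over the mutual refinement $\ecU\cap\ecU'$ --- are precisely the ones the paper itself deploys in the proof of the analogous Lemma~\ref{L:pullback_prod_lss} for locally split squares. Nothing further is needed.
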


More generally, a {\em (strong) morphism} $(L',Y',X') \to (L, Y,X)$ of
bundle gerbes consists of a map $\wt f : Y' \to Y$ covering a map $f : X'\to X$ 
%(we do not require the local splittings to be compatible in the sense of \S\ref{S:cech}) 
along with an
isomorphism $L \cong L'$ over $\wt f^{[2]} : {Y'}^{[2]} \to Y^{[2]}$ which
intertwines the sections of $dL$ and $dL'$; a {\em (strong) isomorphism} is
a morphism for which $X = X'$ and $f = \Id$.
In particular, a morphism $f : (L', Y', X') \to (L, Y,X)$ is equivalent to an isomorphism $(L', Y', X') \cong (f^\ast L, f^\ast Y,X')$.

Finally, two gerbes 
$(L_i, Y_i, X)$, $i = 1,2$ 
are said to be {\em stably isomorphic} if $L_1\otimes L_2^\inv$ is trivial, or equivalently, there exist trivial gerbes $(T_i, Z_i, X)$ such that
\[
	L_1\otimes T_1 \cong L_2 \otimes T_2,
\]
in the sense of a strong isomorphism over a space $Z^{[2]}$ where $Z \to X$
admits maps to $Y_1$, $Y_2$, $Z_1$, and $Z_2$. 
This is strictly weaker than an isomorphism as defined above, and was introduced in \cite{MurrayStevenson} in order
to obtain a classification of bundle gerbes up to stable isomorphism by their Dixmier-Douady class.

\begin{rmk}
There is a weaker notion of gerbe morphism due to Waldorf \cite{waldorf2007more}, which naturally incorporates the theory of gerbe modules,
and has the property that the invertible morphisms are precisely the stable isomorphisms; moreover a trivialization
becomes the same thing as an isomorphism to a canonical trivial gerbe over $X$.
Because we leave the generalization to bigerbes of Waldorf's morphisms to a future work, we will not pursue this further
here.
\end{rmk}

\subsection{Simplicial \v Cech theory} \label{S:simplicial_cech}

A primary motivation for the definition of gerbes is that they represent a
class in $\vH^3(X; \bbZ) \cong \vH^2(X; \bbC^\ast)$ --- the
Dixmier-Douady class; for a bundle gerbe $(L,Y,X)$ this will be denoted
by $\DD(L)$. This class is natural with respect to products, pullbacks, and
inverses, and it determines $(L,Y,X)$ up to stable isomorphism
\cite{MurrayStevenson}. We give an alternative (though not necessarily simpler)
derivation of these facts based on a closer study of simplicial spaces; this
approach is used in the generalization to bigerbes below. In doing so we
identify the 3-classes on $X$ which can be represented by a bundle gerbe with
respect to a given locally split map $Y \to X$, and recover the classification
of the trivializations of a trivial bundle gerbe.

Consider the simplicial space, \eqref{rbm.73}, consisting of the fiber
products of a locally split map $\pi : Y \to X$,
\blue{
and fix an admissible pair of covers $(\ecU,\ecV)$.
Proposition~\ref{P:Cech_map} and Lemma~\ref{L:admissible_cover_fib_prod}} show that the induced maps $\pi_j^\ast :
\vC_{\blue{\ecV^{[k]}}}^\bullet(Y^{[k]}; A) \to \vC_{\blue{\cV^{[k]}}}^\bullet(Y^{[k+1]}; A)$ are chain maps and
the {\em simplicial differential} on \v Cech cochains is defined by
\[
d = \sum_{j=0}^{k} (-1)^j \pi_j^\ast : \vC_{\blue{\ecV^{[k]}}}^\bullet(Y^{[k]}; A) \to \vC_{{\ecV^{[k+1]}}}^\bullet(Y^{[k+1]}; A),
\]
so $d^2 = 0$ and $d \delta = \delta d.$ 

Thus, $(\vC^\bullet_{\blue{\ecV^{[\bullet]}}}(Y^{[\bullet]}; A), d, \delta)$ forms a double complex 
\begin{equation}
\begin{tikzcd}[sep=small]%[row sep=2ex, column sep=2ex]
	 {} & {} & {} & 
	\\\vC^0(Y^{[2]}) \ar[r,"\delta"]\ar[u,"d"] &\vC^1(Y^{[2]}) \ar[r, "\delta"]\ar[u,"d"] & \vC^2(Y^{[2]}) \ar[r,"\delta"]\ar[u,"d"] & {}
	\\\vC^0(Y) \ar[r,"\delta"]\ar[u,"d"] &\vC^1(Y) \ar[r, "\delta"]\ar[u,"d"] & \vC^2(Y) \ar[r,"\delta"]\ar[u,"d"] & {}
	\\\vC^0(X) \ar[r,"\delta"] \ar[u,"d"]&\vC^1(X) \ar[r, "\delta"] \ar[u,"d"]& \vC^2(X) \ar[r,"\delta"]\ar[u,"d"] & {}
	\\0 \ar[u,"d"]& 0 \ar[u,"d"]& 0 \ar[u,"d"] & {}
\end{tikzcd}
	\label{E:bundle_gerbe_double_complex}
\end{equation}
\blue{
where we have omitted the coefficient group and covers from the notation for simplicity.
}
We take the row $\vC^\bullet(Y)$ to have
vertical degree 0 in \eqref{E:bundle_gerbe_double_complex} (corresponding to its true simplicial
degree), so the row
$\vC^\bullet(X)$ has degree $-1$.
Related versions of this \v Cech-simplicial double complex appear more generally in algebraic geometry \cite{friedlander}, and 
are also discussed in \cite{B-MI} in the context of simplicial gerbes.

\begin{conv} \label{Conv:multicomplex} Our convention for double (and
  higher) complexes is that the two differentials {\em commute}, as above.
  This necessitates the introduction of a sign (depending on an ordering of
  the differentials) in the total differential, which we take to be
\[
	D = \delta + (-1)^{p} d \quad \text{on}\quad  \vC^{p}(Y^{[q]}; A).
\]
Another possible sign convention is given by changing the formal order of $d$
and $\delta$, namely $D' = d + (-1)^{q+1} \delta$ (recalling that $Y^{[q]}$
has vertical degree $q-1$). This is intertwined with $D$ via the automorphism
$(-1)^{p(q+1)}$ of the double complex.

In general, whenever we have a multicomplex $C^{p_1,\ldots,p_k}$ with $k$
commuting differentials $d_1, d_2,\ldots, d_{k}$, the total
differential will be defined inductively by
\[
\begin{gathered}
	D_k = D_{k-1} + (-1)^{p_1 +\cdots+p_{k-1}} d_k
	\\= d_1 + (-1)^{p_1} d_2 + (-1)^{p_1+p_2} d_3 + \cdots + (-1)^{p_1 +\cdots+p_{k-1}} d_k.
\end{gathered}
\]
Switching the order of the indices requires composing with an automorphism
given in each degree by an appropriate power of $-1$ as above.
\end{conv}

\begin{prop} \blue{For an admissible pair $(\ecU,\ecV)$,} the simplicial chain complex
\begin{equation}
	0 \to \vC_{\blue{\ecU}}^\ell(X; A) \stackrel d \to \vC_{\blue{\ecV}}^\ell(Y; A) \stackrel d \to \vC_{\blue{\ecV^{[2]}}}^\ell(Y^{[2]}; A) \stackrel d \to \cdots
	\label{E:exact_simplicial}
\end{equation}
is exact. In particular, a collection of local sections $s :
\Et{\ecU} \to Y$ determines a chain homotopy contraction \blue{via the maps in
Lemma~\ref{L:admissible_cover_fib_prod}.}
\label{T:exact_simplicial}
\end{prop}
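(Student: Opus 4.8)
The plan is to realize the geometric content of the statement: a system of local sections $s:\Et{\ecU}\to Y$ endows the augmented simplicial space $Y^{[\bullet]}$ with an \emph{extra degeneracy}, and a split augmented simplicial object has a contractible associated cochain complex. Concretely I would exhibit a contracting homotopy built directly from the section-pullback maps of Proposition~\ref{P:ls_Cech_map}, and verify $dh+hd=\Id$ at each fixed \v Cech degree $\ell$. First I would observe that the face map $\pi_0:Y^{[k+1]}\to Y^{[k]}$ (omitting the first factor) is the pullback of $\pi:Y\to X$ along $\pi:Y^{[k]}\to X$, hence locally split by Lemma~\ref{L:ls_pullback_prod}, and that $s$ induces a section $\sigma=\sigma^{(k)}:Y^{[k]}\to Y^{[k+1]}$, namely $\sigma(y_0,\dots,y_{k-1})=(s\pi(y_0),y_0,\dots,y_{k-1})$, well defined over the relevant cover since all factors lie over a common point of $X$. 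Applying Proposition~\ref{P:ls_Cech_map} to $\pi_0$ with section $\sigma$ yields maps $h_k=\sigma^\ast:\vC^\ell(Y^{[k+1]};A)\to\vC^\ell(Y^{[k]};A)$ satisfying $h_k\,\pi_0^\ast=\Id$ (the relation $s_\ell^\ast\pi^\ast=\Id$), and at the augmentation this is already the injectivity of $d=\pi^\ast:\vC^\ell(X)\to\vC^\ell(Y)$.

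Next I would establish the cosimplicial identities $\sigma^\ast\pi_j^\ast=\pi_{j-1}^\ast\sigma^\ast$ for $j\ge 1$. These are the crux, because $\sigma^\ast$ is not an honest chain map and, per the remark after Proposition~\ref{P:ls_Cech_map}, encodes an arbitrary first-factor preference. The key point is that the standard simplicial identities $\pi_0\pi_j=\pi_{j-1}\pi_0$ together with $\pi_j\sigma=\sigma\pi_{j-1}$ (for $j\ge 1$) exhibit the two locally split maps $\pi_0:Y^{[k+1]}\to Y^{[k]}$ and $\pi_0:Y^{[k]}\to Y^{[k-1]}$, with their sections induced by the \emph{single} section $s$, as \emph{compatible} under $\pi_j$ and $\pi_{j-1}$ in the precise sense required by Proposition~\ref{P:naturality_ls_Cech_map}. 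That proposition then delivers exactly the needed identity $\sigma^\ast\pi_j^\ast=\pi_{j-1}^\ast\sigma^\ast$.

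With these relations in hand the homotopy identity is a formal telescoping computation: writing $d=\sum_{j}(-1)^j\pi_j^\ast$ and expanding $h\,d+d\,h$, the term $h\,\pi_0^\ast=\Id$ supplies the identity while the relations $h\,\pi_j^\ast=\pi_{j-1}^\ast\,h$ for $j\ge1$ reindex the remaining sum so that it cancels the $d\,h$ contribution, giving $dh+hd=\Id$ on $\vC^\ell(Y^{[k+1]})$ and $hd=\Id$ at $\vC^\ell(X)$. A cochain complex admitting such a contraction is exact, which proves the proposition and simultaneously identifies the asserted chain-homotopy contraction as $h=\{\sigma^\ast\}$.

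The hard part will not be the homotopy algebra but the cover bookkeeping underlying it. The maps $\sigma^\ast=s_\ell^\ast$ exist only after passing to admissible covers and refining, and they fail to commute with the \v Cech differential $\delta$. The cleanest way to handle this is to work throughout in the direct limit $\vC^\ell=\lim_{\ecU}\vC^\ell_{\ecU}$, where Proposition~\ref{P:Cech_map} makes each $\pi_j^\ast$ well defined and Propositions~\ref{P:ls_Cech_map} and~\ref{P:naturality_ls_Cech_map} make $\sigma^\ast$ and all the cosimplicial identities well defined and natural under (admissible) refinement. Crucially, the failure of $\sigma^\ast$ to commute with $\delta$ is irrelevant here, since exactness of \eqref{E:exact_simplicial} is a statement about the single simplicial differential $d$ at each fixed $\ell$; I would flag this explicitly, as it is exactly the subtlety that the later triple-complex analysis must contend with.
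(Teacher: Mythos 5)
Your proof is correct and follows essentially the same route as the paper's: the same extra degeneracy $\sigma = (s\circ\pi)\times 1\times\cdots\times 1$, the same commutation identities with the face maps, and the same telescoping computation giving $d\sigma^\ast + \sigma^\ast d = \Id$ in the direct limit at each fixed \v Cech degree. The only (harmless) difference is that you justify the cochain-level identities $\sigma^\ast\pi_j^\ast = \pi_{j-1}^\ast\sigma^\ast$ by invoking the compatibility result of Proposition~\ref{P:naturality_ls_Cech_map}, whereas the paper verifies them directly as identities of maps of the pulled-back covers and then descends to the limit.
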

\noindent Compare the exactness of the de Rham complex in \S8 of \cite{Murray1}.
This may be understood as a manifestation of the fact that the geometric realization of the simplicial
set $Y^{[\bullet]}$ is known to be homotopy equivalent to $X$.

\blue{
\begin{proof}
The chain contraction is defined by 
\[
	\wt s_\ell^\ast : \vC^\ell_{\ecV^{[k]}}(Y^{[k]}; A) \to \vC^\ell_{\ecV^{[k-1]}}(Y^{[k-1]}; A)
\]
for $k \geq 2$,
where $\wt s_\ell$ is the map of covers in Lemma~\ref{L:admissible_cover_fib_prod}.\eqref{I:admissible_cover_fib_prod_sections},
and at the bottom by $\wt s_\ell : \vC^\ell_\ecV(Y; A) \to \vC^\ell_\ecU(X; A)$ where $\wt s_\ell$ is as in Definition~\ref{D:admissible_pair}.\eqref{I:admissible_pair_int_prop}.
That $\wt s_\ell^\ast$ forms a chain homotopy retraction follows directly from \eqref{E:simplicial_chain_contraction}.
\end{proof}
}
\begin{rmk} It is important that the maps $s_\ell^\ast$ do {\em not}
  generally commute with the \v Cech differential; in other words, we do
  not obtain a chain map, and in particular we do not claim that
  $s_\ell^\ast$ descends to cohomology.
Indeed the lift, $s_{\ell}$, of $s$ in \eqref{E:s_ell}
  corresponds to an (arbitrary) preference for the map corresponding to the
  first factor, $U_1$, in the $\ell$-fold intersection.
\end{rmk}

\subsection{Dixmier-Douady class of a gerbe} \label{S:DD_class}
The setting for 
our analysis of the \v Cech cohomology of a bundle gerbe is 
the truncated complex
\begin{equation}
\begin{tikzcd}[sep=small]%[row sep=2ex, column sep=2ex]
	 0& 0 & 0& 
	\\\vZ_{\blue{\ecV^{[2]}}}^0(Y^{[2]}; A) \ar[r,"\delta"]\ar[u,"d"] &\vZ_{\blue{\ecV^{[2]}}}^1(Y^{[2]}; A) \ar[r, "\delta"]\ar[u,"d"] & \vZ_{\blue{\ecV^{[2]}}}^2(Y^{[2]}; A) \ar[r,"\delta"]\ar[u,"d"] & {}
	\\\vC_{\blue{\ecV}}^0(Y; A) \ar[r,"\delta"]\ar[u,"d"] &\vC_{\blue{\ecV}}^1(Y; A) \ar[r, "\delta"]\ar[u,"d"] & \vC_{\blue{\ecV}}^2(Y; A) \ar[r,"\delta"]\ar[u,"d"] & {}
\end{tikzcd}
	\label{E:gerbe_double_cplx}
\end{equation}
where we have omitted the bottom row of \eqref{E:bundle_gerbe_double_complex}, and where
\[
\begin{aligned}
	\vZ_{\blue{\ecV^{[2]}}}^\ell(Y^{[2]}; A) &:= \kernel \bset{ d : \vC_{\blue{\ecV^{[2]}}}^\ell(Y^{[2]}; A) \to \vC_{\blue{\ecV^{[3]}}}^\ell(Y^{[3]}; A)}
	\\&= \img\bset{d : \vC_{\blue{\ecV}}^\ell(Y; A) \to \vC_{\blue{\ecV^{[2]}}}^\ell(Y^{[2]}; A)}.
\end{aligned}
\]
Denote by
\[
	\vH^\bullet_Z(Y^{[2]}; A) := \blue{\lim_{\cV^{[2]}}}H_{\blue{\ecV^{[2]}}}^\bullet(\vZ^\bullet(Y^{[2]}; A), \delta)
\]
the \v Cech cohomology of the simplicially trivial classes on $Y^{[2]}$, or the horizontal
cohomology of the top row in \eqref{E:gerbe_double_cplx},
\blue{taken in the direct limit over covers of the form $\ecV^{[2]}$ for admissible pairs}.
For later use we note the following result.
\begin{lem}
There is a natural Bockstein isomorphism $\vH^\bullet_Z(Y^{[2]}; \bbC^\ast) \cong \vH^{\bullet + 1}_Z(Y^{[2]}; \bbZ)$.
\label{L:bockstein_simplicial}
\end{lem}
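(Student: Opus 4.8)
The plan is to obtain the isomorphism as the connecting homomorphism of the long exact sequence attached to the exponential short exact sequence of coefficient groups
\[
0 \to \bbZ \to \bbC \to \bbC^\ast \to 0, \qquad \bbC \ni z \mapsto e^{2\pi i z} \in \bbC^\ast,
\]
but \emph{restricted} to the subcomplexes $\vZ^\bullet(Y^{[2]};-)$ of $d$-closed cochains. Two things must be established for this to work: that the coefficient sequence remains short exact after passing to $d$-closed cochains, and that the middle complex $\bigl(\vZ^\bullet(Y^{[2]};\bbC),\delta\bigr)$ is acyclic in the relevant degrees. Granting the standard fact that the sheaf of continuous $\bbC$-valued functions is soft, so that $\vH^q(Z;\bbC)=0$ for all $q\geq 1$ and every space $Z$ (equivalently, the ordinary Bockstein $\vH^q(Z;\bbC^\ast)\cong\vH^{q+1}(Z;\bbZ)$ that the paper already uses), the argument is then formal.

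First I would record, for an arbitrary coefficient group $A$, the short exact sequence of $\delta$-complexes
\[
0 \to \vC^\ell(X;A) \xrightarrow{d} \vC^\ell(Y;A) \xrightarrow{d} \vZ^\ell(Y^{[2]};A) \to 0, \qquad \ell \geq 0.
\]
Here injectivity on the left and exactness in the middle are exactly the portion of Proposition~\ref{T:exact_simplicial} (simplicial exactness) through $Y^{[2]}$, while surjectivity on the right is the definition $\vZ^\ell(Y^{[2]};A)=\img(d)$. Since $d$ and $\delta$ commute this is a sequence of complexes in $\ell$, and its long exact sequence in $\delta$-cohomology contains the segment
\[
\vH^\ell(Y;A) \to \vH^\ell_Z(Y^{[2]};A) \to \vH^{\ell+1}(X;A).
\]
Taking $A=\bbC$ and using $\vH^{q}(X;\bbC)=\vH^{q}(Y;\bbC)=0$ for $q\geq 1$, this squeeze forces $\vH^\ell_Z(Y^{[2]};\bbC)=0$ for every $\ell\geq 1$.

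Next I would show that $z\mapsto e^{2\pi i z}$ induces a short exact sequence of $\delta$-complexes
\[
0 \to \vZ^\ell(Y^{[2]};\bbZ) \to \vZ^\ell(Y^{[2]};\bbC) \to \vZ^\ell(Y^{[2]};\bbC^\ast) \to 0.
\]
Injectivity and exactness in the middle pass up from the cochain level because $\bbZ\hookrightarrow\bbC$ is injective and $d$ commutes with the coefficient maps, so a $d$-closed $\bbC$-cochain mapping to $0$ in $\bbC^\ast$ already has $d$-closed integral preimage. The surjectivity onto the $d$-closed cochains is the crux: given $c\in\vZ^\ell(Y^{[2]};\bbC^\ast)$, refine the cover and lift it to a full cochain $\tilde c\in\vC^\ell(Y^{[2]};\bbC)$ (possible since the covering map $\bbC\to\bbC^\ast$ admits continuous local logarithms, the same fact underlying the ordinary Bockstein). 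Then $d\tilde c$ maps to $d\,c=0$ in $\bbC^\ast$, so $d\tilde c$ takes values in $\bbZ$ and is manifestly $d$-closed; applying Proposition~\ref{T:exact_simplicial} now with $A=\bbZ$ gives $d\tilde c=d\xi$ for some $\xi\in\vC^\ell(Y^{[2]};\bbZ)$, whence $\tilde c-\xi\in\vZ^\ell(Y^{[2]};\bbC)$ is a genuine $d$-closed lift of $c$.

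Finally, the long exact sequence of this second short exact sequence of complexes contains
\[
\vH^\ell_Z(Y^{[2]};\bbC) \to \vH^\ell_Z(Y^{[2]};\bbC^\ast) \xrightarrow{\ \beta\ } \vH^{\ell+1}_Z(Y^{[2]};\bbZ) \to \vH^{\ell+1}_Z(Y^{[2]};\bbC),
\]
and the vanishing of $\vH^{\geq 1}_Z(Y^{[2]};\bbC)$ established above kills the two outer terms for $\ell\geq 1$, making the connecting map $\beta$ the asserted isomorphism in the range of degrees used in \eqref{rbm.54}--\eqref{rbm.55}. Naturality is immediate, since both short exact sequences of complexes and all of their connecting maps are natural with respect to pullback and morphisms of the underlying locally split map. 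The step I expect to be the main obstacle is precisely the surjectivity onto the $d$-closed cochains in the second sequence: everything else is homological bookkeeping, whereas that step is where one must combine the cochain-level surjectivity of the exponential with the simplicial exactness in $\bbZ$-coefficients to promote an arbitrary lift to a $d$-closed one.
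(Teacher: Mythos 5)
Your proof is correct and follows essentially the same route as the paper: exhibit the exponential coefficient sequence as a short exact sequence of $\delta$-complexes on the $d$-closed cochains, show $\vZ^\bullet(Y^{[2]};\bbC)$ is acyclic (in positive degrees), and let the long exact sequence collapse to the Bockstein. The only cosmetic difference is in the surjectivity step: the paper gets it instantly by writing $\vZ^\ell(Y^{[2]};\bbC^\ast)=d\,\vC^\ell(Y;\bbC^\ast)$ and lifting on $Y$ before applying $d$, whereas you lift on $Y^{[2]}$ and then correct by an integral cochain via simplicial exactness --- both are valid.
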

\begin{proof}
Regarding the chain complexes $\vZ_{\blue{\ecV^{[2]}}}^\bullet(Y^{[2]}; A)$ for an abelian group
$A$ as the image under $d$ of $\vC_{\blue{\ecV}}^\bullet(Y; A)$, it follows both that the
coefficient sequence
\begin{equation}
	0 \to \vZ_{\blue{\ecV^{[2]}}}^\bullet(Y^{[2]}; \bbZ) \to \vZ_{\blue{\ecV^{[2]}}}^\bullet(Y^{[2]}; \bbC) \stackrel{\exp(2\pi i \cdot)}{\to} \vZ_{\blue{\ecV^{[2]}}}^\bullet(Y^{[2]}; \bbC^\ast) \to 0
	\label{E:Bockstein_ses}
\end{equation}
is short exact, and that $\vZ_{\blue{\ecV^{[2]}}}^\bullet(Y^{[2]}; \bbC)$ is acyclic, from which
the long exact sequence for \eqref{E:Bockstein_ses} degenerates to the
Bockstein isomorphism.
\end{proof}

\begin{thm}
The total cohomology of the double complex \eqref{E:gerbe_double_cplx} is isomorphic to $\vH_\ecU^\bullet(X; A)$.
\label{T:total_cohom_just_X}
\end{thm}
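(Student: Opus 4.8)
The plan is to compute the total cohomology by first passing to cohomology in the vertical ($d$) direction, using that \eqref{E:gerbe_double_cplx} is concentrated in only two vertical degrees (the row $\vC^\bullet(Y;A)$ in degree $0$ and the row $\vZ^\bullet(Y^{[2]};A)$ in degree $1$), so that the spectral sequence obtained by filtering by columns is especially tractable. The entire nontrivial input will be the exactness of the simplicial complex, Proposition~\ref{T:exact_simplicial}.

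First I would analyze the columns. The $\ell$th column is the single map $d : \vC^\ell(Y;A) \to \vZ^\ell(Y^{[2]};A)$, with zero immediately above and nothing below. By the very definition of $\vZ^\ell(Y^{[2]};A)$ as $\img\bset{d : \vC^\ell(Y;A) \to \vC^\ell(Y^{[2]};A)}$, this map is surjective, so the vertical cohomology vanishes in the top degree. Its kernel is $\kernel\bset{d : \vC^\ell(Y;A) \to \vC^\ell(Y^{[2]};A)}$, which by exactness of \eqref{E:exact_simplicial} at the $\vC^\ell(Y;A)$ stage is exactly $\img\bset{d : \vC^\ell(X;A) \to \vC^\ell(Y;A)}$; and since that map is injective (exactness at the initial $\vC^\ell(X;A)$ stage, i.e.\ the leading $0$ in \eqref{E:exact_simplicial}), it identifies the bottom vertical cohomology of the $\ell$th column with $\vC^\ell(X;A)$. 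Thus after taking vertical cohomology the double complex collapses to the single row $\vC^\bullet(X;A)$ sitting in vertical degree $0$.

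I would then check the one point requiring care: that the horizontal differential induced on this surviving row is precisely the \v Cech differential $\delta$ on $\vC^\bullet(X;A)$, and not some variant carrying an extra sign or correction term. This holds because the identifying isomorphism is the injection $d : \vC^\ell(X;A) \hookrightarrow \vC^\ell(Y;A)$, and $d$ commutes with $\delta$ by the double complex structure, so the induced differential is $\delta$ on the nose (with the sign convention of Convention~\ref{Conv:multicomplex} contributing only an overall, degree-dependent sign that does not affect cohomology). Taking horizontal cohomology then produces $\vH^\ell(X;A)$ at position $(\ell,0)$. Since the resulting $E_2$ page is concentrated in the single row of vertical degree $0$, the spectral sequence degenerates, and the total cohomology in total degree $n$ is $\vH^n(X;A)$; the degree bookkeeping is clean precisely because $\vC^\bullet(X;A)$ occupies vertical degree $0$, so total degree agrees with \v Cech degree. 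For a two-row complex one could equally avoid spectral sequences and trace total cocycles and coboundaries directly, but the only genuine content is the vertical exactness supplied by Proposition~\ref{T:exact_simplicial}; accordingly the main (and modest) obstacle is simply the verification that the induced horizontal map is exactly $\delta$.
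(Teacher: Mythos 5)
Your proposal is correct and follows essentially the same route as the paper: the paper also runs the $(d,\delta)$ spectral sequence of \eqref{E:gerbe_double_cplx}, using Proposition~\ref{T:exact_simplicial} to collapse the columns to $\vC^\bullet(X;A)$ in vertical degree $0$ at $E_1$, whence $E_2 = \vH^\bullet(X;A)$ and degeneration. Your extra verification that the induced horizontal differential is $\delta$ (via the injection $d:\vC^\ell(X;A)\hookrightarrow\vC^\ell(Y;A)$ commuting with $\delta$) is a detail the paper leaves implicit, but nothing differs in substance.
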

\begin{proof}
Owing to exactness of the columns, the $(d,\delta)$ spectral sequence of \eqref{E:gerbe_double_cplx} 
%beginning with the $d$ differential 
degenerates at the $E_1$ page to 
\[
\begin{tikzcd}[row sep=2ex, column sep=2ex]
	 0& 0 & 0& 
	\\0  & 0  & 0 & {}
	\\\vC_\ecU^0(X) \ar[r,"\delta"] &\vC_\ecU^1(X) \ar[r, "\delta"] & \vC_\ecU^2(X) \ar[r,"\delta"] & {}
\end{tikzcd}
\]
and therefore stabilizes at $E_2$ to the cohomology $\vH_\ecU^\bullet(X; A)$.
\end{proof}

Next we will show that a bundle gerbe is represented by a pure cocycle in the double complex
\eqref{E:gerbe_double_cplx} concentrated at $\vZ^1(Y^{[2]}; \bbC^\ast)$, so
with $\delta c(L) = 0$ and $dc(L) = 0$ and this descends to the Dixmier-Douady class.

\begin{prop}\label{P:bundle_gerbe_double_class}
 A bundle gerbe $(L, Y,X)$ has Chern class represented by $c(L) \in \vZ_{\ecV^{[2]}}^1(Y^{[2]}; \bbC^\ast)$
\blue{for some admissible pair of covers $(\ecU,\ecV)$};
in particular,
\begin{equation}
c(L)\in\vH_Z^1(Y^{[2]}; \bbC^\ast)\cong \vH_Z^2(Y^{[2]}; \bbZ).
\label{rbm.75}\end{equation}
Conversely, any such class determines a bundle gerbe, and $L$
admits a trivialization if and only if $[c(L)]\in d\vH^1(Y; \bbC^\ast)$.
\end{prop}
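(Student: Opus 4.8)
The plan is to unpack the definition of a bundle gerbe directly in terms of the double complex \eqref{E:gerbe_double_cplx}, exploiting that the columns of that complex are exact by Proposition~\ref{T:exact_simplicial}. First I would recall that a line bundle $L \to Y^{[2]}$ is classified by its Chern class, which under the Bockstein isomorphism $\vH^1(Y^{[2]};\bbC^\ast) \cong \vH^2(Y^{[2]};\bbZ)$ is represented by a \v Cech cocycle $c(L) \in \vC^1(Y^{[2]};\bbC^\ast)$ with $\delta c(L) = 0$. The content of the gerbe condition is that $dL$ admits a trivializing section over $Y^{[3]}$, which at the level of Chern classes means $d\,c(L) = 0$ in $\vC^1(Y^{[3]};\bbC^\ast)$; so $c(L)$ lies in the kernel of $d$, i.e.\ $c(L) \in \vZ^1(Y^{[2]};\bbC^\ast)$. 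I would note that the $d^2$-condition on $d^2 L$ is automatic once we work at the level of the cocycle representative, since $d^2 c(L) = 0$ identically. This establishes that $c(L) \in \vH^1_Z(Y^{[2]};\bbC^\ast)$, and the isomorphism \eqref{rbm.75} with $\vH^2_Z(Y^{[2]};\bbZ)$ is exactly Lemma~\ref{L:bockstein_simplicial}.

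For the converse, I would start from a class in $\vH^1_Z(Y^{[2]};\bbC^\ast)$, represent it by a cocycle $c$ with $\delta c = 0$ and $d\,c = 0$, and build the gerbe by taking $L \to Y^{[2]}$ to be the line bundle with Chern class the Bockstein image of $c$. The condition $d\,c = 0$ guarantees that $dL$ is topologically trivial over $Y^{[3]}$, hence admits a trivializing section $s$; the fact that $ds$ is forced to be the canonical trivialization of $d^2 L$ over $Y^{[4]}$ follows because $d^2 L$ is canonically trivial and any two sections of a trivial bundle differ by a function, which the cocycle condition constrains appropriately. The care here is to choose the section $s$ consistently, but since $d^2 c = 0$ automatically there is no further obstruction at the level of the defining data.

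For the trivialization statement, I would argue that $L$ is trivial as a bundle gerbe precisely when $L \cong dL'$ for some line bundle $L' \to Y$, which at the level of Chern classes says $c(L) = d\,c(L')$ for some $c(L') \in \vC^1(Y;\bbC^\ast)$ representing the Chern class of $L'$. Since $c(L')$ may be any class in $\vH^1(Y;\bbC^\ast)$, this is exactly the condition that $[c(L)] \in d\,\vH^1(Y;\bbC^\ast)$, giving the stated criterion. The one subtlety to address is the passage between isomorphism classes of line bundles and their Chern-class representatives: I must check that the equation $c(L) = d\,c(L')$ holding in cohomology can be promoted to an actual bundle isomorphism $L \cong dL'$. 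This relies on the fact that over these spaces the Chern class (equivalently the Bockstein image in $\vH^2(\,\cdot\,;\bbZ)$) is a complete invariant of line bundles, so that cohomological equality of Chern classes yields bundle isomorphism.

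\textbf{Main obstacle.} The step I expect to require the most care is the converse direction: promoting cohomological data (a cocycle $c$ with $\delta c = d\,c = 0$) into an honest bundle gerbe $(L,Y,X)$ with a \emph{genuine} trivializing section $s$ of $dL$ whose further differential is literally the canonical trivialization of $d^2 L$, rather than merely cohomologically so. The cleanest route is to observe that the classification of line bundles by Chern class lets us translate between the two pictures without loss, reducing every geometric condition in Definition~\ref{rbm.60}'s gerbe analogue to the corresponding cocycle identity, each of which is implied by $\delta c = d\,c = 0$ together with the identity $d^2 = 0$ for the simplicial differential.
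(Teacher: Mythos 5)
There is a genuine gap in the forward direction, at exactly the step you treat as immediate. You assert that the existence of a trivializing section of $dL$ over $Y^{[3]}$ "at the level of Chern classes means $dc(L) = 0$ in $\vC^1(Y^{[3]};\bbC^\ast)$." It does not: a trivialization of the line bundle $dL$ only says that the cochain $dc(L)$ is a \v Cech coboundary, $dc(L) = \delta\gamma$ for some $\gamma \in \vC^0(Y^{[3]};\bbC^\ast)$, not that it vanishes. To produce a representative of the Chern class that is literally a pure cocycle ($\delta c(L) = dc(L) = 0$) you must modify $c(L)$ by a $\delta$-coboundary, and the paper's proof shows how: the condition that the section of $dL$ induce the canonical trivialization of $d^2L$ is precisely $d\gamma = 0$, whence by the exactness of the simplicial complex (Proposition~\ref{T:exact_simplicial}) there is $\beta$ with $d\beta = \gamma$, and replacing $c(L)$ by $c(L) - \delta\beta$ kills $dc(L)$ without changing the bundle $L$. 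Your parallel claim that "the $d^2$-condition on $d^2L$ is automatic ... since $d^2c(L)=0$ identically" gets this backwards: the associativity condition is a nontrivial hypothesis on the chosen section (equivalently, $d\gamma = 0$), and it is exactly what makes the correction possible. Without it the statement is false, since there would be no way to choose $c(L)$ in $\vZ^1(Y^{[2]};\bbC^\ast)$. One also needs to check (as the paper does) that the residual ambiguity in this choice is by $\delta$ of $d$-closed $0$-cochains, so that the class in $\vH^1_Z(Y^{[2]};\bbC^\ast)$ is well defined.

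The converse and the trivialization criterion in your proposal are essentially the paper's arguments, though the converse is stated too loosely: a section $s$ of $dL$ does not automatically satisfy $ds = $ canonical; rather, because $d\alpha = 0$ on the nose the bundle $dL$ is assembled with transition functions identically $1$, and one takes $s$ to be the resulting tautological section, for which the $d^2$ compatibility can then be checked directly. The identification \eqref{rbm.75} via Lemma~\ref{L:bockstein_simplicial} and the final equivalence $[c(L)] \in d\vH^1(Y;\bbC^\ast)$ are handled as in the paper.
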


\begin{proof} As a complex line bundle, $L$ gives rise to a Chern cocycle $c(L) \in \vC_\cO^1(Y^{[2]}, \bbC^\ast)$ \blue{for some cover $\cO$ of $Y^{[2]}$,
which by Lemma~\ref{L:admissible_cover_fib_prod} may be assumed to be of the form $\cO = \ecV^{[2]}$ for an admissible pair}, and $dc(L) \in \vC_{\ecV^{[3]}}^1(Y^{[3]}, \bbC^\ast)$
represents the bundle $dL$ on $Y^{[3]}$. 
The trivialization of $dL$ is encoded by $\gamma \in \vC_{\blue{\ecV^{[3]}}}^0(Y^{[3]}, \bbC^\ast)$ 
such that $\delta \gamma = dc(L)$, and the fact that this induces the canonical trivialization of $d^2 L$ on $Y^{[4]}$ means
that $d \gamma = 0$. 
Thus by exactness we can alter $c(L)$ by $\delta$ applied to a $d$-preimage of $\gamma$ to arrange that $dc(L) = 0$. 

Altering $c(L) \in \vZ_{\blue{\ecV^{[2]}}}^1(Y^{[2]}; \bbC^\ast)$ by $\delta \beta$ for $\beta \in \vZ_{\blue{\ecV^{[2]}}}^0(Y^{[2]}; \bbC^\ast)$ amounts
to applying an automorphism to $L \to Y^{[2]}$ which does not change the trivialization of $dL \to Y^{[3]}$, 
so the Chern class in $\vH^1_Z(Y^{[2]}; \bbC^\ast)$ is well-defined.

Conversely, given $\alpha \in \vZ^1_{\blue{\ecV^{[2]}}}(Y^{[2]}; \bbC^\ast)$ representing a
cocycle with respect to some fixed open cover \blue{$\ecV^{[2]}$ of
  $Y^{[2]}$ associated to an admissible pair}, the usual construction uses
$\alpha$ on \blue{$(\ecV^{[2]})^{(2)}$} to assemble
a line bundle $L \to Y^{[2]}$ out of trivial bundles on $V \in \blue{\ecV^{[2]}}$.
Then since $d\alpha = 0$ it follows that $dL$ is assembled trivially out of trivial bundles 
on the open cover $\ecV^{[3]}$ of $Y^{[3]}$, and hence
is globally trivial (with the trivialization agreeing with the canonical one on $d^2 L$).

Finally, $L$ admits a trivialization $L \cong dQ$ for some $Q \to Y$, if and only if $c(L) = dc(Q) \in \vH^1_Z(Y^{[2]}; \bbC^\ast)$ where $c(Q) \in H_{\blue{\ecV}}^1(Y; \bbC^\ast)$
is the Chern class of $Q$.
\end{proof}

\begin{defn} The {\em Dixmier-Douady class} of a bundle gerbe $(L, Y,X)$ is
  the image $\DD(L)\in\vH^2(X;\bbC^\ast) = \lim_\ecU \vH^2_{\ecU}(X; \bbC^\ast) \cong \vH^3(X; \bbZ)$ of the
  hypercohomology class of $c(L) \in \vZ_{\blue{\ecV^{[2]}}}^1(Y^{[2]}; \bbC^\ast)$ in the
  double complex \eqref{E:gerbe_double_cplx} and is obtained
explicitly by a zig-zag construction
\begin{equation}
\begin{tikzcd}[sep=small]
	 0
	 \\{c(L)} \ar[cm bar-to,u] \ar[cm bar-to,r]& 0
	\\ \beta \ar[cm bar-to,u, "-d"] \ar[cm bar-to,r,"\delta"] & \delta\beta \ar[cm bar-to,u] \ar[cm bar-to,r] & 0
	\\ & \DD(L) \ar[cm bar-to,u,"d"] \ar[cm bar-to,r] & 0 \ar[cm bar-to,u]
\end{tikzcd}
	\label{E:gerbe_zig_zag}
\end{equation}
\label{D:dixmier_douady}
\end{defn}
\noindent The sign in $- d\beta = c(L)$ arises from the fact that the total
differential $D$ involves the term $-d$ on that column according to
Convention~\ref{Conv:multicomplex}. 

Note that if $\pi_i : Y_i \to X_i$, $i = 1,2$ are locally split maps which
are intertwined by $f : X_1 \to X_2$ and $\wt f: Y_1 \to Y_2,$ 
%even without requiring that the $\pi_i$ are compatible by $f$ and $\wt f$ as in \S\ref{S:covers}, 
then \v Cech cochain maps determined by $\wt f^{[k]}
: Y_1^{[k]} \to Y_2^{[k]}$ as in Proposition~\ref{P:Cech_map} together form
a morphism $\vC^\bullet(Y_2^{[\bullet]}; A) \to
\vC^\bullet(Y_1^{[\bullet]}; A)$ of double complexes, in that the
$(\wt f^{[k]})^\ast$ commute with $\delta$ and $d$.
\begin{added}
Indeed, we may fix an admissible pair $(\ecU_2,\ecV_2)$ for $(Y_2,X_2)$ 
and then invoke Lemma~\ref{Addedrbm} to obtain a refinement 
of $(f^\inv \ecU_2,\wt f^\inv \ecV_2)$
by an admissible pair
$(\ecU_1,\ecV_1)$,
and then it follows that there are maps of covers $\wt f^{[k]} : \Et{\ecV_1^{[k]}} \to \Et{\ecV_2^{[k]}}$
the pullbacks of which intertwine the two double complexes.
\end{added}

\begin{prop}\label{P:Consistent} The Dixmier-Douady class as defined above coincides with the
  definition given by Murray and is natural with respect to inverse,
  product, and pullback; it vanishes if and only if the gerbe is trivial.
\end{prop}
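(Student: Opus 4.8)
The plan is to establish the four assertions of Proposition~\ref{P:Consistent} separately, treating consistency with Murray's definition first and then deducing the remaining properties from the zig-zag construction of Definition~\ref{D:dixmier_douady}. For \emph{consistency} with Murray, I would recall that Murray's Dixmier-Douady class is obtained by choosing a local trivialization of $L$ over a cover $\ecV$ of $Y^{[2]}$, whose transition functions give a \v Cech representative, and observing that the gerbe product isomorphism $\phi$ forces these transition data to assemble (via the connecting maps in \eqref{E:Bockstein_ses}) into a $\delta$-closed integral $2$-cochain on $X$ after descent. The point is that both constructions extract precisely the hypercohomology class of $c(L)$ in the double complex \eqref{E:gerbe_double_cplx}; having shown in Theorem~\ref{T:total_cohom_just_X} that the total cohomology is $\vH^\bullet(X; A)$, it suffices to verify that the two explicit zig-zags land in the same class, which is a matter of checking they differ by a total coboundary $D(\cdot)$.

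For \emph{naturality under pullback}, I would invoke the remark immediately preceding the proposition: an intertwining pair $(f, \wt f)$ induces a morphism $\vC^\bullet(Y_2^{[\bullet]}; A) \to \vC^\bullet(Y_1^{[\bullet]}; A)$ of double complexes commuting with both $\delta$ and $d$. Since the zig-zag of \eqref{E:gerbe_zig_zag} is built entirely from these two differentials and their exactness-based preimages, a chain map of double complexes carries the whole zig-zag for $(L, Y, X)$ to the corresponding zig-zag for the pullback gerbe, hence $f^\ast \DD(L) = \DD(\wt f^\ast L)$ on cohomology. For \emph{products}, the key observation is that $c(L_1 \otimes L_2) = \pr_1^\ast c(L_1) + \pr_2^\ast c(L_2)$ as Chern cocycles (additively, in $\bbC^\ast$-valued cochains), so by linearity of the zig-zag construction the resulting classes add; combined with pullback naturality applied to the projections $\pr_i$, this gives $\DD(L_1 \otimes L_2) = \DD(L_1) + \DD(L_2)$, and \emph{inverses} follow since $L^{-1}$ has Chern cocycle $-c(L)$.

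For the \emph{triviality} criterion I would argue that $\DD(L) = 0$ in $\vH^2(X; \bbC^\ast)$ precisely when the hypercohomology class of $c(L)$ vanishes, which by the degeneration in Theorem~\ref{T:total_cohom_just_X} happens exactly when $c(L)$ is a total coboundary $D(\cdot)$ in \eqref{E:gerbe_double_cplx}. Chasing what such a primitive means at the spot $\vZ^1(Y^{[2]}; \bbC^\ast)$, and using the exactness of the columns (Proposition~\ref{T:exact_simplicial}) to normalize, one finds this is equivalent to $c(L) \in d\,\vH^1(Y; \bbC^\ast)$, which by the final sentence of Proposition~\ref{P:bundle_gerbe_double_class} is exactly the condition that $L$ admit a trivialization $L \cong dQ$.

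The main obstacle I expect is the consistency-with-Murray step: one must carefully match the sign and ordering conventions of the zig-zag in Definition~\ref{D:dixmier_douady} (which uses the total differential $D = \delta + (-1)^p d$ of Convention~\ref{Conv:multicomplex}) against Murray's original transition-function derivation, and verify that the Bockstein isomorphism of Lemma~\ref{L:bockstein_simplicial} is applied compatibly at the level of $\vZ^\bullet$. The other three properties are essentially formal consequences of functoriality and linearity of the double-complex zig-zag, so once the framework of Theorem~\ref{T:total_cohom_just_X} is in place they should present no serious difficulty.
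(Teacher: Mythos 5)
Your handling of naturality under pullback, products, and inverses, and of the triviality criterion, is correct and consistent with the paper: the paper dispatches naturality in one line as following from the definition, and obtains the triviality statement from Proposition~\ref{P:bundle_gerbe_double_class} together with the isomorphism \eqref{E:bundle_gerbe_cohom_iso}; your direct chase of a total $D$-primitive of $c(L)$ at the spot $\vZ^1(Y^{[2]};\bbC^\ast)$ is an equivalent and perfectly serviceable route to the same conclusion.

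The gap is in the consistency-with-Murray step, which you correctly flag as the main obstacle but for which your plan is not executable as written. First, you have misrecalled Murray's definition: it does not begin with a local trivialization of $L$ over a cover of $Y^{[2]}$ (that only produces the Chern cocycle $c(L)$, which is the starting point of the \emph{paper's} definition). Murray chooses local sections $s : \Et{\ecU} \to Y$ over a cover of $X$, pulls $L$ back to $\Et{\ecU^{(2)}}$ via $s^2$, picks a nonvanishing section $\sigma$ of $L' = (s^2)^\ast L$ there, and uses the gerbe product to form $g = \delta\sigma \in \vC^2_{\ecU}(X;\bbC^\ast)$, with associativity giving $\delta g = 0$. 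Consequently Murray's cochain lives on the nerve of a cover of $X$ and appears nowhere in the double complex \eqref{E:gerbe_double_cplx} over $Y^{[\bullet]}$, so your proposed comparison --- checking that ``the two explicit zig-zags differ by a total coboundary $D(\cdot)$'' --- is not well-posed: the two zig-zags inhabit different complexes. The missing idea is the paper's device of treating the inclusion $\Et{\ecU}\to X$ of a good cover as a locally split map in its own right, forming the associated double complex $(\vC^\bullet(\Et{\ecU^{(\bullet)}};\bbC^\ast),\delta,d)$, and transporting $c(L)$ into it along the section-induced maps $s^k : \Et{\ecU^{(k)}} \to Y^{[k]}$, which assemble into a morphism of simplicial spaces over $X$ and hence of double complexes. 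For a good cover the $(\delta,d)$ spectral sequence of that complex degenerates at $E_1$ to the ordinary \v Cech complex $\vC^\bullet_{\ecU}(X;\bbC^\ast)$ with the simplicial differential identified with $\delta$, its total cohomology is $\vH^\bullet_{\ecU}(X;\bbC^\ast)$, and Murray's $\sigma$ and $g$ are precisely the entries of a zig-zag computing the image of $[L']$ there; naturality of the identification of total cohomology with $\vH^\bullet(X;\bbC^\ast)$ then yields $[g]=\DD(L)$. Without some such transport mechanism between the two double complexes, your argument never actually connects the two definitions, and no amount of sign-matching will substitute for it.
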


\begin{proof} 
The (well-known) naturality properties follow directly from the preceding remark.
To see the coincidence of our definition with that of Murray
given in \cite{Murray1}, we first recall the latter.

  Suppose $s : \ecU \to Y$ is a set of local sections of the locally split
  map, and consider the pullback $L' = (s^2)^\ast L$ to $\ecU^{(2)}$ of $L$
  via the map $s^2 : \ecU^{(2)} \to Y^{[2]}.$ Since $L$ is locally trivial,
  this cover can be refined so that $L'$ is trivial over each component,
  and so has a nonvanishing section $\sigma : \ecU^{(2)} \to L'.$ The
  trivialization of $dL \to Y^{[3]}$ pulls back to give a trivialization of
  $\delta L' = (s^3)^\ast dL \to \ecU^{(3)}$ which allows $g := \delta
  \sigma$ to be regarded as a cochain $g : \ecU^{(3)} \to \bbC^\ast$ and the
  associativity condition over $Y^{[4]}$ implies that $g$ is closed, hence
  $[g] \in \vH_{\ecU}^2(X; \bbC^\ast)\cong H^3(X; \bbZ)$ is defined to be the
  Dixmier-Douady class.

To see that this is equivalent to Definition~\ref{D:dixmier_douady}, it suffices to show that
$[g]$ represents the image of $c(L)$ in the total cohomology of the double
complex $(\vC^\bullet(\Et{\ecU^{(\bullet)}};\bbC^\ast), \delta, d)$, where we use
$\Et{\ecU} \to X$ itself as the locally split map.
For convenience we suppose that $\ecU$ is a `good cover', meaning that each element
of $\ecU^{(\ell)}$ is contractible for each $\ell$.
Note that by this contractibility,
the \v Cech cohomology $\vH^\bullet(\Et{\ecU^{(\ell)}}; \bbC^\ast)$ of the space $\Et{\ecU^{(\ell)}}$ is trivial except in degree $0$
where
\[
	\vH^0(\Et{\ecU^{(\ell)}}; \bbC^\ast) = \Gamma(\ecU^{(\ell)}; \bbC^\ast) = \vC^{\ell-1}_{\ecU}(X; \bbC^\ast).
\]
Thus the $(\delta, d)$ spectral sequence of the double complex
\[
\begin{tikzcd}[sep=small]
	{} &{}  &{}  &
	\\ \vC^0(\Et{\ecU^{(3)}}) \ar[u, "d"] \ar[r, "\delta"] & \vC^1(\Et{\ecU^{(3)}}) \ar[u, "d"] \ar[r, "\delta"] &\vC^2(\Et{\ecU^{(3)}}) \ar[u, "d"] \ar[r, "\delta"] & {}
	\\ \vC^0(\Et{\ecU^{(2)}}) \ar[u, "d"] \ar[r, "\delta"] & \vC^1(\Et{\ecU^{(2)}}) \ar[u, "d"] \ar[r, "\delta"] & \vC^2(\Et{\ecU^{(2)}}) \ar[u, "d"] \ar[r, "\delta"] &{}
	\\ \vC^0(\Et{\ecU}) \ar[u, "d"] \ar[r, "\delta"] & \vC^1(\Et{\ecU}) \ar[u, "d"] \ar[r, "\delta"] &\vC^2(\Et{\ecU}) \ar[u, "d"] \ar[r, "\delta"] &{}
\end{tikzcd}
\]
degenerates at the $E_1$ page to 
\[
\begin{tikzcd}[sep=small]
	{} &  & 
	\\ \vH^0(\Et{\ecU^{(3)}}) = \vC_{\ecU}^2(X) \ar[u, "d=\delta"] & 0 & 0
	\\ \vH^0(\Et{\ecU^{(2)}}) = \vC_{\ecU}^1(X) \ar[u, "d=\delta"] & 0 & 0
	\\ \vH^0(\Et{\ecU}) = \vC_{\ecU}^0(X) \ar[u, "d=\delta"] & 0 & 0
\end{tikzcd}
\]
with the simplicial differential now identified with the \v Cech differential
on $\vC_{\ecU}^\bullet(X)$, and then stabilizes at $E_2$ to give
$\vH_{\ecU}^\bullet(X; \bbC^\ast) \cong H^{\bullet+1}(X; \bbZ)$.
The image of $[L'] \in \vC^1(\Et{\ecU^{(2)}}; \bbC^\ast)$ in the total cohomology
$H^3(X; \bbZ)$ is therefore equivalently represented by its image in $\vH^0(\ecU^{(3)}; \bbC^\ast) = \vC_{\ecU}^2(X; \bbC^\ast)$
on the $E_1$ page above, and Murray's construction gives an explicit zig-zag
\[
\begin{tikzcd}[sep=small]
	0
	\\ g \ar[cm bar-to,u] \ar[cm bar-to,r] & 0
	\\ \sigma \ar[cm bar-to,u] \ar[cm bar-to,r] & {[L']} \ar[cm bar-to,u] \ar[cm bar-to,r] & 0
\end{tikzcd}
\]
realizing $[g]$ as $\DD(L)$.
\end{proof}

\subsection{Representability of 3-classes} \label{S:rep_of_three}

We proceed to give a characterization of the classes in $H^3(X; \bbZ)$ which are represented
by bundle gerbes $(L,Y,X)$ for a given locally split map $Y \to X.$
Note that the augmented double complex
\begin{equation}
\begin{tikzcd}[row sep=2ex, column sep=2ex]
	 0& 0 & 0& 
	\\\vZ_{\ecV^{[2]}}^0(Y^{[2]}) \ar[r,"\delta"]\ar[u,"d"] &\vZ_{\ecV^{[2]}}^1(Y^{[2]}) \ar[r, "\delta"]\ar[u,"d"] & \vZ_{\ecV^{[2]}}^2(Y^{[2]}) \ar[r,"\delta"]\ar[u,"d"] & {}
	\\\vC_{\ecV}^0(Y) \ar[r,"\delta"]\ar[u,"d"] &\vC_{\ecV}^1(Y) \ar[r, "\delta"]\ar[u,"d"] & \vC_{\ecV}^2(Y) \ar[r,"\delta"]\ar[u,"d"] & {}
	\\\vC_{\ecU}^0(X) \ar[r,"\delta"]\ar[u,"d"] &\vC_{\ecU}^1(X) \ar[r, "\delta"]\ar[u,"d"] & \vC_{\ecU}^2(X) \ar[r,"\delta"]\ar[u,"d"] & {}
	\\0 \ar[u] & 0 \ar[u] & 0 \ar[u] & {}
\end{tikzcd}
	\label{E:aug_double_complex}
\end{equation}
\blue{
for any admissible pair $(\ecU,\ecV)$
}
has exact columns and therefore trivial total cohomology. 
Since the $(\delta, d)$ spectral sequence of this complex (beginning with the horizontal differential)
must necessarily stabilize at the $E_3$ page (as there are only three rows),
it follows that
the $E_2$ differentials
are necessarily isomorphisms, which we record in the following form 
\blue{
after passing to the direct limit over covers in cohomology.
}

\begin{thm}
There are isomorphisms
\begin{equation}
	\kernel\bset{\pi^\ast : \vH^{k+1}(X; A) \to \vH^{k+1}(Y; A)} \cong \vH_Z^{k}(Y^{[2]}; A)/ \vH^k(Y; A)
	\label{E:SS_isom}
\end{equation}
for each $k \in \bbN$ and coefficient group $A$; these isomorphisms are natural with respect to pullback by maps
\[
\begin{tikzcd}
 	Y_1 \ar[r,"\wt f"] \ar[d, "\pi_1"] & Y_2 \ar[d,"\pi_2"]
	\\ X_1 \ar[r, "f"] & X_2
\end{tikzcd}
\]
of locally split spaces, and also with respect to the Bockstein
isomorphisms $\vH_Z^{k+1}(Y^{[2]}; \bbZ) \cong \vH_Z^k(Y^{[2]}; \bbC^\ast)$
and $\vH^{k+1}(X; \bbZ) \cong \vH^k(X; \bbC^\ast)$.
\label{T:bundle_gerbe_cohom_iso}
\end{thm}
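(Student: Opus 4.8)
The plan is to obtain \eqref{E:SS_isom} by identifying the surviving differential of the row-filtration spectral sequence of \eqref{E:aug_double_complex}, which is exactly the spectral sequence whose degeneration is described in the paragraph preceding the statement. First I would record that the columns of \eqref{E:aug_double_complex} are exact three-term complexes: injectivity of $d\colon\vC^\ell(X)\to\vC^\ell(Y)$ and exactness at $\vC^\ell(Y)$ are Proposition~\ref{T:exact_simplicial}, while surjectivity onto $\vZ^\ell(Y^{[2]})$ is its defining description as $\img\{d\colon\vC^\ell(Y)\to\vC^\ell(Y^{[2]})\}$. Since the complex is bounded (three rows) with exact columns, its total complex is acyclic, so the $(\delta,d)$ spectral sequence beginning with $\delta$ converges to zero. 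Its $E_1$ page is the three rows $\vH^\bullet(X;A)$, $\vH^\bullet(Y;A)$, $\vH_Z^\bullet(Y^{[2]};A)$ in simplicial degrees $-1,0,1$, with $d_1$ induced by the simplicial differential $d$; with only three rows and zero abutment, the middle $E_2$ row must vanish and the single remaining $d_2$, running from the bottom row to the top row, must be an isomorphism.

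Next I would identify the two ends of this $d_2$. As nothing lies below it, the bottom $E_2$ term is $\kernel\{d_1\colon\vH^{k+1}(X;A)\to\vH^{k+1}(Y;A)\}$, and since the differential out of $Y^{[0]}=X$ is $\pi_0^\ast=\pi^\ast$, this is $\kernel\{\pi^\ast\}$. Dually, as nothing lies above it, the top $E_2$ term is the cokernel of $d_1\colon\vH^k(Y;A)\to\vH_Z^k(Y^{[2]};A)$, namely $\vH_Z^k(Y^{[2]};A)/\img(d)$, which is the quotient abbreviated $\vH_Z^k(Y^{[2]};A)/\vH^k(Y;A)$ in \eqref{E:SS_isom}. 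The isomorphism $d_2$ between them is then the assertion, the index shift being accounted for by the two-step rise in simplicial degree against the one-step drop in \v Cech degree.

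To make the map explicit --- which is what renders the naturality statements transparent --- I would unwind $d_2$ as the usual staircase, dual to the Dixmier--Douady zig-zag \eqref{E:gerbe_zig_zag}: a class in $\kernel\{\pi^\ast\}$ is represented by a $\delta$-cocycle $\gamma\in\vC^{k+1}(X;A)$ with $d\gamma=\delta\beta$ for some $\beta\in\vC^k(Y;A)$, this $\beta$ existing precisely because $\pi^\ast[\gamma]=0$, whereupon $d\beta\in\vZ^k(Y^{[2]};A)$ is $\delta$-closed and its class modulo $\img(d)$ is the image; the signs here are those dictated by Convention~\ref{Conv:multicomplex}. Naturality under a commuting square of locally split maps is then formal: the pullbacks $(\ol f^{[k]})^\ast$ form a morphism of the double complexes \eqref{E:aug_double_complex} commuting with both $\delta$ and $d$, as noted just before Proposition~\ref{P:Consistent}, hence a morphism of spectral sequences intertwining the two copies of $d_2$. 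Compatibility with the Bockstein isomorphisms follows from Lemma~\ref{L:bockstein_simplicial} and the standard Bockstein on $X$, since the staircase is assembled from connecting maps that commute with the defining coefficient sequences.

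Since the degeneration argument is already supplied before the statement, I do not anticipate a genuine difficulty; the only real care is in the third step --- matching the abstract $d_2$ to the explicit zig-zag with the correct signs, and recording that the target is the quotient by the \emph{image} of $d$, so that $\vH_Z^k(Y^{[2]};A)/\vH^k(Y;A)$ is read as $\vH_Z^k(Y^{[2]};A)/\img\{d\colon\vH^k(Y;A)\to\vH_Z^k(Y^{[2]};A)\}$, the kernel of this $d$ being $\img\pi^\ast$ by the vanishing of the middle $E_2$ row.
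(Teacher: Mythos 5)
Your proposal is correct and follows the same route as the paper: the paper's entire argument is the paragraph preceding the theorem, namely that the augmented double complex \eqref{E:aug_double_complex} has exact columns and hence acyclic total complex, and that the three-row $(\delta,d)$ spectral sequence therefore forces the $E_2$ differential from $\kernel\pi^\ast$ to $\vH_Z^k(Y^{[2]};A)/\vH^k(Y;A)$ to be an isomorphism. Your additional unwinding of $d_2$ as the zig-zag and the naturality via morphisms of double complexes and Lemma~\ref{L:bockstein_simplicial} just makes explicit what the paper leaves to the remark following the theorem and the discussion before Proposition~\ref{P:Consistent}.
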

\begin{rmk}
It is reasonable to call the isomorphism \eqref{E:SS_isom} {\em transgression} from classes on $X$ to (equivalence classes of) classes on $Y^{[2]}$; the map
is realized at the chain level by the zig-zag \eqref{E:gerbe_zig_zag}.
\end{rmk}

In particular there is a natural isomorphism
\begin{equation}
\begin{gathered}
	\kernel\set{\pi^\ast : \vH^3(X; \bbZ) \to \vH^3(Y; \bbZ)} \cong \vH_Z^2(Y^{[2]}; \bbZ)/ \vH^2(Y; \bbZ)
	\\\cong \vH_Z^1(Y^{[2]}; \bbC^\ast)/ \vH^1(Y; \bbC^\ast)
	\label{E:bundle_gerbe_cohom_iso}
\end{gathered}
\end{equation}
under which
the Chern class $c(L) \in
\vH_Z^1(Y^{[2]}; \bbC^\ast)/d\vH^1(Y; \bbC^\ast)$
is the image 
of 
$\DD(L)$ 
in $\vH^3(X; \bbZ)$.
We see again that $\DD(L) = 0$ if and only if $c(L) \in d\vH^1(Y; \bbC^\ast)$, which
by Proposition~\ref{P:bundle_gerbe_double_class} holds precisely when $L$ is
trivial.
In combination with Proposition~\ref{P:bundle_gerbe_double_class} this proves the following result.

\begin{thm}
A class $\alpha \in H^3(X; \bbZ)$ is represented by a bundle gerbe $(L, Y, X)$ for a given locally split map $\pi : Y \to X$
if and only if $\pi^\ast \alpha = 0 \in H^3(Y; \bbZ)$.
\label{T:representability_three_class}
\end{thm}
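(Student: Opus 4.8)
The plan is to obtain both implications as a formal consequence of the natural isomorphism established in Theorem~\ref{T:bundle_gerbe_cohom_iso}, together with the reconstruction of a bundle gerbe from a Chern cocycle in Proposition~\ref{P:bundle_gerbe_double_class}. The key input is the specialization \eqref{E:bundle_gerbe_cohom_iso}, which gives a natural isomorphism
\[
	\kernel\set{\pi^\ast : \vH^3(X; \bbZ) \to \vH^3(Y; \bbZ)} \cong \vH_Z^1(Y^{[2]}; \bbC^\ast)/d\vH^1(Y; \bbC^\ast)
\]
under which the image of the Chern class $c(L)$ is exactly the Dixmier-Douady class $\DD(L)$. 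Since the whole theorem is an equivalence between ``$\alpha$ is a Dixmier-Douady class for $\pi$'' and ``$\alpha \in \kernel\set{\pi^\ast}$'', essentially all that remains is to unwind this identification in both directions.

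For the forward direction I would argue as follows: if $\alpha = \DD(L)$ for some bundle gerbe $(L, Y, X)$, then by Proposition~\ref{P:bundle_gerbe_double_class} its Chern class $c(L)$ represents a well-defined element of $\vH_Z^1(Y^{[2]}; \bbC^\ast)$, and by \eqref{E:bundle_gerbe_cohom_iso} the image of $c(L)$ under the isomorphism is $\DD(L) = \alpha$. As that image lies by construction in $\kernel\set{\pi^\ast}$, we conclude $\pi^\ast \alpha = 0$. For the converse, suppose $\pi^\ast \alpha = 0$, so $\alpha$ determines a class in $\kernel\set{\pi^\ast}$. Applying the inverse of \eqref{E:bundle_gerbe_cohom_iso}, $\alpha$ corresponds to a class in the quotient $\vH_Z^1(Y^{[2]}; \bbC^\ast)/d\vH^1(Y; \bbC^\ast)$; I would choose any representative lying in $\vH_Z^1(Y^{[2]}; \bbC^\ast)$ and feed it into the converse half of Proposition~\ref{P:bundle_gerbe_double_class} to produce a bundle gerbe $(L, Y, X)$ whose Chern class is that representative. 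Then \eqref{E:bundle_gerbe_cohom_iso} gives $\DD(L) = \alpha$, exhibiting $\alpha$ as the class of a bundle gerbe over $\pi : Y \to X$.

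The hard part is not in this bookkeeping but in the ingredients already assembled, and the only genuinely substantive step invoked above is the surjectivity used in the converse: lifting from the quotient to an honest simplicially closed cocycle and reconstructing from it a line bundle $L \to Y^{[2]}$ with a compatible trivialization of $dL$. This is precisely where the condition $d c(L) = 0$ does its work, ensuring $dL$ is assembled from trivial bundles over a pulled-back cover of $Y^{[3]}$ and hence is globally trivial in a way agreeing with the canonical trivialization of $d^2 L$; it is exactly the content of the converse assertion of Proposition~\ref{P:bundle_gerbe_double_class}. Once that reconstruction and the identification of $\DD(L)$ with the image of $c(L)$ are granted, the theorem follows with no further computation.
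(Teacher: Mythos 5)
Your proposal is correct and follows essentially the same route as the paper, which deduces the theorem exactly by combining the natural isomorphism \eqref{E:bundle_gerbe_cohom_iso} (under which $\DD(L)$ is the image of $c(L)$) with the converse direction of Proposition~\ref{P:bundle_gerbe_double_class} to reconstruct a gerbe from a pure cocycle representative. No gaps; the bookkeeping you describe is precisely the paper's argument.
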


\begin{rmk} Another direct (and more geometric) way to show
  Theorem~\ref{T:representability_three_class} is to use $B\PU(H)$ as a
  $K(\bbZ, 3)$, where $H$ is an infinite dimensional separable Hilbert
  space. Here $\PU(H) = \UU(H)/\UU(1)$ denotes the projective unitary group
  and by Kuiper's theorem $\UU(H)$ is contractible, making $\PU(H)$ a
  $K(\bbZ,2)$. Thus $\alpha \in H^3(X; \bbZ)$ is classified by a map (up to homotopy) to
$B\PU(H)$ and represented by a $\PU(H)$ bundle $E \to X$. If $\pi^\ast
\alpha = 0 \in H^3(Y; \bbZ)$, it follows that $\pi^\ast E \to Y$ admits a
global section $s : Y \to \pi^\ast E$.  Then on $Y^{[2]}$ the shift map
composed with $s^{[2]}$ determines a map $\chi : Y^{[2]} \to \PU(H)$, along
which the universal line bundle can be pulled back to give a simplicial
bundle $L = \chi^\ast \UU(H) \to Y^{[2]}$ with $\DD(L) = \alpha$.
\end{rmk}

\subsection{Classification of trivializations} \label{S:gerbe_triv}
Suppose $(L, Y, X)$ is a trivial gerbe.
There is an action on the set of trivializations of $L$ by $H^2(X; \bbZ)$ (in the form of equivalence classes of line bundles) as follows.
Given 
a line bundle 
$P \to Y$ 
trivializing $L$, so $dP \cong L$, and $\alpha \in H^2(X; \bbZ)$
representing a line bundle $Q \to X$,
the bundle $P \otimes \pi^\ast Q = P \otimes dQ \to Y$ is another trivialization
of $L$ in light of the fact that $d^2 Q$ is canonically trivial.
\begin{prop}
Let $(L,Y,X)$ be a trivial gerbe. 
Then the set of trivializations of $L$ is a torsor for the group $\img\set{\pi^\ast :H^2(X; \bbZ) \to H^2(Y; \bbZ)}$.
\label{P:gerbe_triv}
\end{prop}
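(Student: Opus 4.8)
The set is nonempty since $L$ is assumed trivial, so the plan is to exhibit a simply transitive action of $G := \img\{\pi^\ast : H^2(X;\bbZ) \to H^2(Y;\bbZ)\}$. First I would record that the construction preceding the statement---$P \mapsto P \otimes \pi^\ast Q$ for a line bundle $Q \to X$ with $c(Q) = \beta \in H^2(X;\bbZ)$---does produce a new trivialization, because $d(\pi^\ast Q) = \pi_0^\ast\pi^\ast Q \otimes \pi_1^\ast\pi^\ast Q^{-1}$ is canonically trivial (as $\pi\pi_0 = \pi\pi_1$ on $Y^{[2]}$) and this canonical trivialization satisfies the $d^2$ condition. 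Since the isomorphism class of $\pi^\ast Q$ depends only on $\pi^\ast\beta \in G \subseteq H^2(Y;\bbZ)$, this descends to an action of $G$ on isomorphism classes of trivializing bundles. Freeness is then immediate: if $P \otimes \pi^\ast Q \cong P$ then $\pi^\ast Q$ is trivial, so the acting element $\pi^\ast\beta = c(\pi^\ast Q) = 0$ in $G$. Note that the action of the whole of $H^2(X;\bbZ)$ is \emph{not} free---the subgroup $\ker\pi^\ast$ acts trivially---which is precisely why the torsor group is the image.

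The substance is transitivity. Given two trivializations $\phi_i : L \cong dP_i$, $i=1,2$, I would set $R = P_1 \otimes P_2^{-1} \to Y$ and observe that $\phi_1\circ\phi_2^{-1}$ furnishes a trivialization of $dR \cong dP_1 \otimes (dP_2)^{-1}$; because each $\phi_i$ intertwines the gerbe product of $L$ with the canonical trivialization of $d^2 P_i$, this trivialization of $dR$ satisfies the $d^2$ (cocycle) condition, so that $R$ carries a \emph{descent datum}. In \v Cech terms, writing $c(R) = [r] \in \vH^1(Y;\bbC^\ast) \cong H^2(Y;\bbZ)$, the descent datum says $dr = \delta\sigma$ with $\sigma \in \vZ^0(Y^{[2]};\bbC^\ast)$, whence $d\,c(R) = 0$ in $\vH_Z^1(Y^{[2]};\bbC^\ast)$. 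The claim thus reduces to the descent statement that such an $R$ is pulled back from $X$, equivalently that $c(R) \in G$.

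To prove descent I would invoke the spectral sequence of \S\ref{S:rep_of_three}. The augmented complex \eqref{E:aug_double_complex} has exact columns, hence trivial total cohomology; taking $\delta$-cohomology first, the $E_1$ page at \v Cech degree $1$ yields the three-term complex $\vH^1(X;\bbC^\ast) \xrightarrow{\pi^\ast} \vH^1(Y;\bbC^\ast) \xrightarrow{d} \vH_Z^1(Y^{[2]};\bbC^\ast)$, whose cohomology at the central term sits in $E_2$. Since this term neither receives nor emits a nonzero $d_2$ differential (the adjacent rows on the relevant side vanish), it must already be zero for $E_3 = E_\infty = 0$; that is, $\ker\{d : \vH^1(Y;\bbC^\ast) \to \vH_Z^1(Y^{[2]};\bbC^\ast)\} = \img\{\pi^\ast : \vH^1(X;\bbC^\ast)\to\vH^1(Y;\bbC^\ast)\}$, which under the Bockstein of Theorem~\ref{T:bundle_gerbe_cohom_iso} is exactly $G$. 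Hence $c(R) \in G$, so $R \cong \pi^\ast Q$, giving $P_1 \cong P_2 \otimes \pi^\ast Q$ and transitivity. The main obstacle is precisely this central exactness: Theorem~\ref{T:bundle_gerbe_cohom_iso} as stated records only the $d_2$-isomorphism \eqref{E:SS_isom} between the outer rows, so I must separately extract vanishing of the central $E_2$ term from the degeneration argument. As a self-contained alternative I would descend $R$ geometrically: over a cover $\ecU$ of $X$ with sections $s_U$, set $Q_U = s_U^\ast R$, use the trivialization of $dR$ pulled back along $(s_U,s_V) : U\cap V \to Y^{[2]}$ as gluing data $g_{UV}$, verify the cocycle identity from the $d^2$ condition on $Y^{[3]}$, and conclude by effectiveness of descent for the locally split $\pi$ that the $Q_U$ glue to $Q\to X$ with $\pi^\ast Q \cong R$.
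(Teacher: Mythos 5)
Your argument is essentially the paper's: both reduce transitivity to the observation that the difference of two trivializing bundles is killed by $d$, and then invoke the identity \eqref{E:triv_image}, $\kernel d=\img\pi^\ast$ at $\vH^2(Y;\bbZ)$, extracted from the augmented double complex \eqref{E:aug_double_complex} (you work one degree lower with $\bbC^\ast$ coefficients, which is the same thing via Bockstein); your extra care about well-definedness and freeness of the action is fine. One caveat: your stated justification for the vanishing of the central $E_2$ term is not correct. In the $(\delta,d)$ spectral sequence the middle row \emph{does} both emit a $d_2$ into the bottom row and receive a $d_2$ from the top row, so $E_3=E_\infty=0$ by itself only yields $\kernel d_2=\img d_2$ at that spot, not $E_2=0$. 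The correct (and immediate) reason is that the exact columns of \eqref{E:aug_double_complex} form a short exact sequence of $\delta$-complexes $0\to\vC^\bullet(X)\to\vC^\bullet(Y)\to\vZ^\bullet(Y^{[2]})\to 0$, whose long exact cohomology sequence gives exactness at $\vH^\bullet(Y)$ directly; this is what is meant by the central $E_2$ term vanishing identically. With that repair (or with your self-contained descent argument, which also works) the proof goes through exactly as in the paper.
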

\begin{proof}
Clearly the action of $H^2(X; \bbZ)$ factors through its image in $H^2(Y;
\bbZ)$; to see that this image acts transitively suppose $P \to Y$ and $P' \to
Y$ are two trivializations of $L$, represented by classes $[P], [P'] \in
\vH^2(Y; \bbZ)$.
Then $d([P] - [P']) = 0 \in H^2_Z(Y^{[2]}; \bbZ)$, and from the $(\delta,d)$
spectral sequence for \eqref{E:aug_double_complex}, the $E_2$ term associated
to $\vC^2(Y; \bbZ)$ of which must vanish identically, it follows that
\begin{equation}
	\kernel\set{d: \vH^2(Y; \bbZ) \to \vH^2_Z(Y^{[2]}; \bbZ)}
	= \img\set{\pi^\ast : \vH^2(X; \bbZ) \to \vH^2(Y; \bbZ)},
	\label{E:triv_image}
\end{equation}
and hence $[P] - [P'] = \pi^\ast [Q]$ for some $[Q] \in \vH^2(X; \bbZ)$, represented
by a line bundle $Q \to X$.
\end{proof}

\subsection{Decomposable and universal gerbes} \label{S:gerbe_exs}

One consequence of Theorem~\ref{T:representability_three_class} is the existence of the {\em decomposable
  gerbes} of \cite{MR2674880}. Given a 3-class on $X$ which is the cup
product $\alpha \cup \beta$ of $\alpha \in H^2(X; \bbZ)$ and $\beta \in
H^1(X; \bbZ)$, we may take $\pi : Y \to X$ to be the circle bundle with
Chern class $c(Y) = \alpha$, and then since $Y$ is canonically trivial when
pulled back to itself, it follows that $\pi^\ast (\alpha \cup \beta) =
0\cup \pi^\ast \beta = 0 \in H^3(Y; \bbZ)$, so by
Theorem~\ref{T:representability_three_class} the following is immediate.

\begin{prop}
For every $\alpha \in H^2(X; \bbZ)$ and $\beta \in H^1(X; \bbZ)$, the circle bundle $Y \to X$
with $c(Y) = \alpha$ supports a bundle gerbe $(L, Y, X)$ with $\DD(L) = \alpha \cup \beta$. 
\label{P:decomp_gerbe}
\end{prop}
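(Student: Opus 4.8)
The plan is to apply Theorem~\ref{T:representability_three_class} directly, so that the entire task reduces to exhibiting a locally split map $\pi : Y \to X$ over which the class $\alpha \cup \beta$ pulls back to zero. The natural candidate, dictated by the shape of the decomposition, is the circle bundle $\pi : Y \to X$ with Chern class $c(Y) = \alpha$.

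First I would check that $\pi : Y \to X$ is locally split in the sense of Definition~\ref{rbm.72}. Any principal $\UU(1)$-bundle admits local trivializations over an open cover $\ecU$ of $X$, and each such trivialization furnishes a continuous local section; these assemble into a map of covers $s : \Et{\ecU} \to Y$ with $\pi\circ s$ the inclusion of the cover, so the circle bundle falls within the scope of the theory.

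The key geometric observation is that the pullback $\pi^\ast Y \to Y$ carries the canonical diagonal section $y \mapsto (y,y)$, and is therefore trivial as a principal $\UU(1)$-bundle. Consequently $\pi^\ast \alpha = c(\pi^\ast Y) = 0 \in H^2(Y; \bbZ)$. By naturality of the pullback on cohomology (Proposition~\ref{P:Cech_map}) and the naturality of the cup product, it then follows that
\[
\pi^\ast(\alpha \cup \beta) = \pi^\ast \alpha \cup \pi^\ast \beta = 0 \cup \pi^\ast \beta = 0 \in H^3(Y; \bbZ).
\]

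With $\pi^\ast(\alpha \cup \beta) = 0$ in hand, Theorem~\ref{T:representability_three_class} immediately produces a bundle gerbe $(L, Y, X)$ over $\pi : Y \to X$ with Dixmier-Douady class $\DD(L) = \alpha \cup \beta$, which is the desired conclusion. I expect no genuine obstacle: the only points requiring care are the local splitness of a circle bundle (standard) and the triviality of a principal bundle pulled back along its own projection (a general fact witnessed by the diagonal section). Both are routine, which is precisely why the statement follows immediately from the representability theorem rather than requiring a fresh construction.
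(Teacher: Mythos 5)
Your argument is correct and is essentially identical to the paper's: the paper also takes $Y \to X$ to be the circle bundle with $c(Y)=\alpha$, notes that $Y$ is canonically trivial when pulled back to itself so that $\pi^\ast(\alpha\cup\beta)=0\cup\pi^\ast\beta=0$, and then invokes Theorem~\ref{T:representability_three_class}. The only difference is that you spell out the routine verifications (local splitness of a circle bundle, the diagonal section witnessing triviality of $\pi^\ast Y$) which the paper leaves implicit.
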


\begin{rmk} Note that \cite{MR2674880} goes further for $X$ a smooth manifold by 
constructing a connection 
on the gerbe from a connection on $Y$ and a function $u \in
C^\infty(X; \UU(1))$ representing $\beta$.
\end{rmk}

In fact, the image of $\alpha \in H^2(X; \bbZ)$ in $H_Z^1(Y^{[2]};
\bbZ)/H^1(Y; \bbZ)$ with respect to the isomorphism
\eqref{E:bundle_gerbe_cohom_iso} has a geometric interpretation
that will be of use in the construction of decomposable bigerbes in
\S\ref{S:decomp_bigerbes}.

\begin{lem}
Let $\pi : Y \to X$ be a circle bundle with $c(Y) = \alpha \in H^2(X; \bbZ)$. Then the image of $\alpha$
under the isomorphism \eqref{E:bundle_gerbe_cohom_iso} coincides with the pullback to $Y^{[2]}$ of the generator
of $H^1(\UU(1); \bbZ)$ by the {\em shift map}
\begin{equation}
	\chi : Y^{[2]} \to \UU(1),
	\qquad y_2 = \chi(y_1,y_2)y_1 \quad \text{for $(y_1,y_2) \in Y^{[2]}$.}
	\label{E:shift_map}
\end{equation}
\label{L:cohom_transgr_circle_bundle}
\end{lem}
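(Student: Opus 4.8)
The plan is to realize the transgression isomorphism \eqref{E:SS_isom} (in the case $k=1$, $A=\bbZ$) at the cochain level and to exhibit an explicit integral primitive whose simplicial differential is exactly $\chi^\ast u$, where $u \in \vH^1(\UU(1);\bbZ)$ is the generator. First I would check that $\alpha = c(Y)$ indeed lies in $\kernel\set{\pi^\ast : \vH^2(X;\bbZ)\to\vH^2(Y;\bbZ)}$: the pullback circle bundle $\pi^\ast Y = Y^{[2]}\to Y$ carries the tautological (diagonal) section $y\mapsto(y,y)$ and is therefore trivial, so $\pi^\ast\alpha = c(\pi^\ast Y)=0$. This is the same observation underlying Proposition~\ref{P:decomp_gerbe}, and it is what makes the image of $\alpha$ under \eqref{E:SS_isom} well defined.

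Next I would recall, from the spectral sequence argument for Theorem~\ref{T:bundle_gerbe_cohom_iso} (compare the zig-zag \eqref{E:gerbe_zig_zag} and the augmented complex \eqref{E:aug_double_complex}), that the $E_2$ differential computing \eqref{E:SS_isom} is given by the following recipe: choose a \v Cech cocycle $a\in\vC^2(X;\bbZ)$ representing $\alpha$; since $\pi^\ast\alpha=0$ there is $b\in\vC^1(Y;\bbZ)$ with $\delta b = \pi^\ast a$; and then the image of $\alpha$ in $\vH^1_Z(Y^{[2]};\bbZ)/\vH^1(Y;\bbZ)$ is represented by $db = \pi_0^\ast b - \pi_1^\ast b\in\vZ^1(Y^{[2]};\bbZ)$. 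Thus it suffices to produce an integral primitive $b$ on $Y$ with $\delta b = \pi^\ast a$ for which $db$ agrees with $\chi^\ast u$ modulo $\delta\vZ^0(Y^{[2]};\bbZ)$ and $d\,\vC^1(Y;\bbZ)$, the ambiguity permitted by the quotient.

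The primitive $b$ I would build from the shift map itself. Fixing a good cover $\set{U_i}$ of $X$ with sections $\sigma_i:U_i\to Y$ and transition functions $g_{ij}:U_i\cap U_j\to\UU(1)$ determined by $\sigma_j = g_{ij}\sigma_i$, the class $\alpha = c(Y)$ is represented by $a_{ijk} = \hat g_{jk} - \hat g_{ik} + \hat g_{ij}$ for real lifts $\hat g_{ij}$ of $g_{ij}$. On $Y$ the shift map \eqref{E:shift_map} provides fibre coordinates $\tau_i = \chi(\sigma_i\circ\pi,\,\cdot\,):\pi^{-1}U_i\to\UU(1)$ obeying $\tau_i = g_{ij}\tau_j$ over overlaps, and with real lifts $\hat\tau_i$ one sets $b_{ij} = \hat g_{ij} + \hat\tau_j - \hat\tau_i$. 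A short computation shows $b_{ij}$ is integer valued and $\delta b = \pi^\ast a$ (the $\hat\tau$ terms cancel). Computing $db$ on $Y^{[2]}$, the $\hat g_{ij}$ depend only on the base point and hence drop out, leaving $(db)_{ij}(y_1,y_2) = \bigl(\hat\tau_j(y_2)-\hat\tau_j(y_1)\bigr)-\bigl(\hat\tau_i(y_2)-\hat\tau_i(y_1)\bigr)$; since $\tau_i(y_2)/\tau_i(y_1)=\chi(y_1,y_2)$ independently of $i$, each bracket is a real lift of $\chi(y_1,y_2)$, so $db$ is precisely a \v Cech representative of the winding class $\chi^\ast u$ (up to the conventional choice of sign/orientation of the generator $u$). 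By the recipe of the previous paragraph this identifies the image of $\alpha$ with $[\chi^\ast u]$.

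I expect the main obstacle to be exactly the point that makes the lemma nontrivial: the fibre coordinate $\tau_i$ has degree one on each $\UU(1)$ fibre, so it does \emph{not} admit a real lift over all of $\pi^{-1}U_i$, and the cover of $Y$ needed to define the $\hat\tau_i$ is not pulled back from $X$. Tracking this refinement, and verifying that the discrepancy between the resulting representative of $db$ and a fixed \v Cech representative of $\chi^\ast u$ lands in $\delta\vZ^0(Y^{[2]};\bbZ) + d\,\vC^1(Y;\bbZ)$ and so vanishes in the quotient, is where the real bookkeeping lies; it is precisely this fibre winding that $\chi^\ast u$ encodes. As a conceptual check, and an alternative route that avoids most of the bookkeeping, one can instead invoke the naturality of \eqref{E:SS_isom} recorded in Theorem~\ref{T:bundle_gerbe_cohom_iso}: both the transgression and the shift map are natural under pullback, and $\alpha = f^\ast c_1$ for a classifying map $f$, so the identity reduces to the universal circle bundle, where it becomes the classical statement that the transgression of the Euler class is the generator of the fibre.
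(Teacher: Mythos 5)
Your overall strategy --- realize the transgression \eqref{E:SS_isom} at the cochain level and exhibit a primitive built from the shift map --- is the same as the paper's, but your insistence on working with $\bbZ$ coefficients creates a genuine gap that you name yourself and then do not close. The cochain $b_{ij} = \hat g_{ij} + \hat\tau_j - \hat\tau_i$ is not defined on $\pi^{-1}(U_i\cap U_j)$: the fibre coordinate $\tau_i$ has winding number one on each fibre, so $\hat\tau_i$ exists only after refining to a cover of $Y$ that is \emph{not} pulled back from $X$, and on that refined cover the expression $\hat\tau_j(y_2)-\hat\tau_j(y_1)$ is no longer a single well-defined lift of $\chi(y_1,y_2)$ --- the choice of branch jumps as $(y_1,y_2)$ moves between elements of the refined cover, and reassembling these jumps into the degree-one cocycle representing $\chi^\ast u$ is precisely the content of the lemma, not bookkeeping that can be deferred. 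As written, the ``short computation'' showing $\delta b = \pi^\ast a$ and the identification of $db$ with $\chi^\ast u$ both presuppose objects that do not exist on the stated cover. Your fallback via naturality and reduction to the universal bundle is conceptually sound but is asserted in one sentence; it requires independently identifying \eqref{E:SS_isom} with the classical transgression for $E\UU(1)\to B\UU(1)$, which is essentially the statement being proved in the universal case.

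The paper's proof avoids all of this by passing to $\UU(1)$ coefficients at the outset (legitimate by the Bockstein compatibility recorded in Theorem~\ref{T:bundle_gerbe_cohom_iso}): under $H^1(\UU(1);\bbZ)\cong H^0(\UU(1);\UU(1))$ the generator becomes the identity map, so one only has to show that the image of $c(Y)\in \vH^1(X;\UU(1))$ is represented by $\chi$ itself as a class in $\vH^0(Y^{[2]};\UU(1))$. Then the global trivialization $1\times\chi$ of $\pi^\ast Y \to Y$ supplies a genuine $\UU(1)$-valued $0$-cochain $\gamma$ on $Y$ (the ratio of the pulled-back local trivialization to $\chi$) with $\delta\gamma = d\alpha$ and $d\gamma = \chi^\inv$, with no lifts to $\bbR$ and no refinement of covers. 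If you want to salvage your argument, the cleanest repair is exactly this coefficient change; otherwise you must carry out the cover refinement and branch-tracking you flagged, which is where the entire difficulty of the lemma lives.
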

\begin{proof}
This is easiest to see with $\UU(1)$ coefficients. With respect to the
isomorphism $H^1(\UU(1); \bbZ) \cong H^0(\UU(1); \UU(1))$ the generator
corresponds to the identity map, so it suffices to show that the image of
$c(Y) \in H^1(X; \UU(1))$ is represented by $\chi$, itself regarded as a class in
$H^0(Y^{[2]}; \UU(1))$. 

Let $\alpha \in \vC_{\ecU}^1(X; \UU(1))$ represent $c(Y)$; explicitly, we may take $\alpha$ to be defined
with respect to a cover $i : \ecU \to X$ with respect to which $Y$ is (locally) trivialized by $h : i^\ast Y \to \ecU \times \UU(1)$, 
%\[
	%h_a : Y\rst_{U_a} \stackrel \cong \to U_a\times \UU(1),
%\]
and we may abuse notation to write $\alpha = \delta h$, meaning $\alpha :
\ecU^{(2)} \to \UU(1)$ is defined so that $\delta h = 1\times \alpha :
\ecU^{(2)} \times \UU(1) \to \ecU^{(2)} \times \UU(1)$. Now $\pi^\ast Y = Y^{[2]} \to Y$ is globally trivialized by $1\times \chi$, to which $\pi^\ast h$ may be compared
to write $\pi^\ast h = \gamma \chi$ for $\gamma \in \vC^0_{\pi^\inv \ecU}(Y; \UU(1))$ and then 
\[
	d\alpha = \pi^\ast \alpha = \delta \pi^\ast h = \delta \gamma \delta \chi = \delta \gamma \in \vC^1_{\pi^\inv \ecU}(Y; \UU(1)).
\]
Finally, it follows by a straightforward computation %by unwinding the definitions 
that 
$d \gamma = \chi^\inv$ in $\vC^0_{\pi^\inv \ecU}(Y^{[2]}; \UU(1))$,
and then the result follows in observance of Convention~\ref{Conv:multicomplex}.
%Indeed, for $y \in \pi^\ast U_a$, it follows from the definition that 
%\[
	%\gamma_a(y) : z \stackrel{\chi^\inv}\mapsto (y, yz) \stackrel{h_a} \mapsto (\pi(y), h_a(yz)) \stackrel{\mathrm{pr}_1}{\mapsto} h_a(yz);
%\]
%in particular, $\gamma_a(y) = h_a(y)$. Then for $(y_1,y_2) \in \pi^\ast U_a \subset Y^{[2]}$, 
%\ck{Check signs and conventions!}
%\[
	%d\gamma(y_1,y_2) = h_a(y_1)h_a(y_2)^\inv = h_a(y_1)h_a(y_1\chi(y_1,y_2))^\inv = \chi(y_1,y_2)^\inv.
%\]
\end{proof}

It also follows from Theorem~\ref{T:representability_three_class} that for a
connected, locally contractible, space $X$, a gerbe can be constructed
representing any integral three class using the (based) path fibration $ P
X \to X$. Indeed, the hypotheses on $X$ imply that the end-point map $PX \to X$ is locally split, and since $ PX$ is contractible, any 3-class on $X$ vanishes
when lifted to $ PX$. The fiber product $ P^{[2]} X$ may be identified with
the based loop space $ \Omega X$, and the isomorphism
\eqref{E:bundle_gerbe_cohom_iso} takes the form 
\[ 
	H^3(X; \bbZ) \cong \vH_Z^1(\Omega X; \bbC^\ast), 
\]
from which we recover the following well-known result.
\begin{thm}\label{T:univ_gerbe} For a connected,
  locally contractible, space $X$ each $\alpha \in H^3(X; \bbZ)$,
  corresponds to a unique bundle gerbe $L \to \Omega X$ (up to simplicial
  isomorphisms of the line bundle) with $\DD(L) = \alpha$.
\end{thm}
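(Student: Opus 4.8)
The plan is to obtain both existence and uniqueness by specializing to $\pi : PX \to X$ the two structural results already in hand for a single locally split map: the representability criterion Theorem~\ref{T:representability_three_class} and the transgression isomorphism \eqref{E:bundle_gerbe_cohom_iso}. For existence, I would argue as follows. The hypotheses on $X$ (connected, locally contractible) make the endpoint evaluation $\pi : PX \to X$ locally split, so Theorem~\ref{T:representability_three_class} applies: a class $\alpha \in \vH^3(X;\bbZ)$ is represented by a bundle gerbe over $PX \to X$ if and only if $\pi^\ast\alpha = 0 \in \vH^3(PX;\bbZ)$. Since $PX$ is contractible, $\vH^3(PX;\bbZ) = 0$, so this condition is automatic and every $\alpha$ is represented by some simplicial line bundle $L \to P^{[2]}X \cong \Omega X$ with $\DD(L) = \alpha$.

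For uniqueness I would feed the contractibility of $Y = PX$ into \eqref{E:bundle_gerbe_cohom_iso},
\[
\ker\set{\pi^\ast : \vH^3(X;\bbZ) \to \vH^3(PX;\bbZ)} \cong \vH_Z^1(\Omega X;\bbC^\ast)/\vH^1(PX;\bbC^\ast).
\]
Contractibility forces $\vH^3(PX;\bbZ) = 0$, so the kernel on the left is all of $\vH^3(X;\bbZ)$, and it forces $\vH^1(PX;\bbC^\ast) = 0$, so the quotient on the right collapses. Thus transgression upgrades to a genuine isomorphism $\vH^3(X;\bbZ) \cong \vH_Z^1(\Omega X;\bbC^\ast)$ carrying $\DD(L)$ to the Chern class $c(L)$. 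In particular, two gerbes $L, L' \to \Omega X$ with $\DD(L) = \DD(L') = \alpha$ necessarily have equal Chern classes $c(L) = c(L') \in \vH_Z^1(\Omega X;\bbC^\ast)$.

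The last step is to invoke Proposition~\ref{P:bundle_gerbe_double_class}, by which the class in $\vH_Z^1(\Omega X;\bbC^\ast)$ is a complete invariant of the simplicial line bundle: equality of these classes (recalling that altering $c(L)$ by $\delta\beta$ with $\beta \in \vZ^0$ is precisely an automorphism of $L \to Y^{[2]}$ fixing the trivialization of $dL$) yields a simplicial isomorphism $L \cong L'$. This is exactly the asserted uniqueness up to simplicial isomorphisms of the line bundle, and combined with the surjectivity established above it gives the stated bijection.

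The genuinely load-bearing step is the double use of contractibility of $PX$: it supplies both the vanishing $\vH^3(PX;\bbZ) = 0$ needed for surjectivity of $\DD$ and the vanishing $\vH^1(PX;\bbC^\ast) = 0$ that kills the indeterminacy quotient in \eqref{E:bundle_gerbe_cohom_iso}, thereby turning transgression into an isomorphism rather than a mere surjection onto a quotient. Justifying these two vanishings requires homotopy invariance of \v Cech cohomology for the (compactly generated Hausdorff, locally contractible) spaces at hand, together with the verification — already flagged in the text — that local contractibility of $X$ makes $\pi : PX \to X$ locally split and that $P^{[2]}X \cong \Omega X$; once these are in place the argument is formal and no further computation is needed.
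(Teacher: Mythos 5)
Your proposal is correct and follows essentially the same route as the paper: existence via Theorem~\ref{T:representability_three_class} applied to the locally split endpoint map $PX \to X$ with $\pi^\ast\alpha = 0$ automatic by contractibility, and uniqueness by noting that contractibility of $PX$ collapses the quotient in \eqref{E:bundle_gerbe_cohom_iso} to an isomorphism $H^3(X;\bbZ) \cong \vH_Z^1(\Omega X;\bbC^\ast)$, with Proposition~\ref{P:bundle_gerbe_double_class} identifying this group with simplicial line bundles up to simplicial isomorphism. Nothing further is needed.
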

\noindent This `canonical gerbe' on the loop space goes back at least to Brylinski \cite{brylinski}.
Murray defines a bundle gerbe version in \cite{Murray1} under the assumption that $X$ is 2-connected;
a hypothesis which is removed in \cite{CJMSW}.

In particular, since $K(\bbZ,3)$ may be realized as a CW complex, its path
space carries a universal gerbe.

The simplicial structure on $\Omega X$ coming from $P^{[k]} X$ is related to
what has been called the {\em fusion product} in the literature
\cite{Stolz-Teichner2005, MR2980505, MR3493404, MR3391882}.
A point $\gamma = (\gamma_0,\gamma_1,\gamma_2) \in P^{[3]} X$ consists of three
paths with common endpoints and so defines three loops, $\ell_i = \pi_i
\gamma \in \Omega X = P^{[2]} X$, $i = 0,1,2$, by the simplicial maps, and
we say $\ell_1 = (\gamma_0,\gamma_2)$ is the {\em fusion product} of
$\ell_2=(\gamma_0,\gamma_1)$ and $\ell_0 = (\gamma_1,\gamma_2)$.

A {\em fusion structure} on a line bundle $L \to \Omega X$ is a collection of associative isomorphisms
\[
	L_{\ell_1} \cong L_{\ell_2} \otimes L_{\ell_0}
\]
for all such triples, which is equivalent to a simplicial line bundle structure
on $L$ with respect to $P^{[\bullet]} X$.
In this language then, Theorem~\ref{T:univ_gerbe} shows that fusion line
bundles on $\Omega X$, which are equivalent to bundle gerbes $(L, PX, X)$, are
classified by $H^3(X; \bbZ)$ (see also Waldorf's related results in \cite{MR2980505}).

\section{Doubling and the free loop space} \label{S:prod_simp}
\subsection{Simplicial bundle gerbes and figure-of-eight} \label{Simp.gerbe}
Replacing the simplicial line bundle in the definition of a bundle gerbe
with a bundle gerbe over $X_2$ of a simplicial space $X_\bullet$ leads to the notion of a {\em simplicial
  bundle gerbe}, which has been defined by Stevenson \cite{Stevenson2001}
and is the setting for his definition of bundle 2-gerbes.  
Here we consider a more limited `product-simplicial' version, which we
call simply \emph{doubled}, of this theory, not yet to obtain a version of
2-gerbes as we shall do in \S\ref{S:bundle_bigerbes} below, but rather to
promote the examples of bundle gerbes involving the based loop space
$\Omega X$ to those involving the free (unbased) loop space $L X$ by
satisfying an additional condition with respect to the simplicial space
$\set{X^k : k \in \bbN}$ of products, with face maps the projections; this
space is often denoted by $EX$ in the literature.

While we specialize to this simplicial space of products below, we proceed for the moment in some generality for 
an arbitrary simplicial space $X_\bullet$, where we continue to use our unusual enumeration
 convention.
Suppose then that $(L, Y_2, X_2)$ is a bundle gerbe over $X_2$.
Using products, inverses and pullbacks, we may define the gerbe
\[
	\pa L := \pi_0^\ast L \otimes \pi_1^\ast L^\inv \otimes \pi_2^\ast L
\]
over $X_3$, where $\pi_j : X_3 \to X_2$ for $j = 0,1,2$ are the face maps of the simplicial space.
\begin{defn}
A {\em simplicial trivialization} of a bundle gerbe $L$ over $X_2$ is a trivialization of the bundle gerbe $\pa L$
over $X_3$.
It follows by naturality that for such a gerbe the Dixmier-Douady class $\DD(L) \in H^3(X_2; \bbZ)$ satisfies
\begin{equation}
	\pa \DD(L) := \sum_{j=0}^2 (-1)^j \pi_j^\ast \DD(L) = \DD(\pa L) = 0 \in H^3(X_3; \bbZ).
	\label{E:simp_trivn_DD}
\end{equation}
Note that the gerbe $\pa L$ is defined a priori with respect to the locally split map
\begin{equation}
	\pi_0^\ast Y_2\times_{X_3} \pi_1^\ast Y_2 \times_{X_3} \pi_2^\ast Y_2 \to X_3.
	\label{E:simplicial_product_space}
\end{equation}
However, using the notion of gerbe morphism, we may specialize to the setting in which there exists a locally split map $Y_3\to X_3$ for some fixed space $Y_3$, along
with lifts $\wt \pi_j : Y_3 \to Y_2$ of the $\pi_j : X_3 \to X_2$ for $j = 0,1,2$. 
Indeed, it then follows that $Y_3$ maps through the product space \eqref{E:simplicial_product_space},
and we may require that
\begin{equation}
	\wt \pi_0^\ast L \otimes \wt \pi_1^\ast L^\inv \otimes \wt \pi_2^\ast L \to Y_3^{[2]}
	\label{E:alt_paL}
\end{equation}
is trivial as a bundle gerbe over $X_3$, where we continue to denote the extensions of $\wt \pi_j$ as maps from
$Y_3^{[2]}$ to $Y_2^{[2]}$ by the same notation.
When such data is available, we will abuse notation by referring to \eqref{E:alt_paL} itself as $\pa L$ (as these are (strongly) isomorphic as bundle gerbes over $X_3$) and 
a trivialization of \eqref{E:alt_paL} is a simplicial trivialization of $L$.
Explicitly, this then is the data of a line bundle $S \to Y_3$ such that $dS \cong \pa L$, as summarized
in the diagram
\begin{equation}
\begin{tikzcd}[sep=small]
%&d_2s=\Id& Y_2^{[4]}\ardfour\\
%\bbC\ar[r,"s"]&d_2L\ar[r]& Y_2^{[3]}\ardthree & & \\%\arlthree Y_3^{[3]}\ardthree&\\%\pa s=\Id\\%=d_3^2s_3 = 1\\
&L\ar[r]& Y_2^{[2]}\ardtwo & \arlthree Y_3^{[2]}\ardtwo&\pa L\cong dS\ar[l]
& \\%\ar[l,"s_3"]\bbC\\
&& Y_2\ar[d] & \arlthree Y_3\ar[d]&\ar[l]S\\
&X_1&\arltwo X_2& \arlthree X_3.
\end{tikzcd}
\label{rbm.30}\end{equation}
By naturality of the Dixmier-Douady class, the conclusion \eqref{E:simp_trivn_DD} remains valid.
\label{D:simplicial_trivn}
\end{defn}

\begin{rmk}
We do not require that the split maps $Y_\bullet \to X_\bullet$ 
be compatible by the $\pi_j$ in the sense of \S\ref{S:cech}.
We also do {\em not} require that $Y_\bullet$ extend to form (part of) a
simplicial space over $X_\bullet$, as indeed our example of interest will
not.  By contrast, in the setting of the bigerbes defined in
\S\ref{S:bundle_bigerbes} below, we will employ a {\em bisimplicial space} of {\em compatible} locally
split maps.
\end{rmk}

We now specialize to the case in which $X_\bullet = X^\bullet$ consists of products of a fixed space $X$.
As a special case of the fiber product
construction over the unique map $\pi : X \to \ast$ to a 1-point space,
this map is globally split, with section $s : \ast \mapsto x_\ast \in X$
for any choice of $x_\ast \in X$. 
The \v Cech theory constructions of
\S\ref{S:covers} and \S\ref{S:simplicial_cech} give a map $s^\ast : \vC_{\blue{\ecU^k}}^\bullet(X^k; A) \to
\vC_{\blue{\ecU^{k-1}}}^\bullet(X^{k-1}; A)$ which in this case does commute with the \v Cech
differential (since $s$ is global), and hence descends to a chain homotopy
contraction for each $\ell$ of the cohomology complex
\begin{equation}
	0 \to \vH^\ell(X; A) \stackrel \pa \to \vH^\ell(X^2; A) \stackrel \pa \to \vH^\ell(X^3; A) \stackrel \pa \to \cdots
	\label{E:product_cohomology_seq}
\end{equation}
which is therefore exact. 
(This is a reflection of the well-known fact that the geometric realization $\abs{E X}$ of the simplicial set $EX$ is contractible.) 
Indeed, denoting by $s = s\times 1\times \cdots \times 1 : X^k \to X^{k+1}$ the map $(x_0,\ldots,x_{k-1}) \mapsto (x_\ast,x_0,\ldots,x_{k-1})$, it follows that
$s^\ast \pa + \pa s^\ast = 1$ on $\vH^\bullet(X^k; A)$.
Note that throughout this section and below, we denote this product simplicial differential
by $\pa = \sum_{j=0}^{k-1} (-1)^j \pi_j^\ast$ rather than $d$ to avoid
confusion whenever both appear together.

As a consequence of \eqref{E:product_cohomology_seq} and \eqref{E:simp_trivn_DD}, 
we have the following result.

\begin{prop}
For a gerbe $(L, Y_2, X^2)$ with simplicial trivialization
over $X^3$, the Dixmier-Douady class of $L$ descends from $X^2$ to $X$ itself, so
\[
	\DD(L) \in H^3(X; \bbZ)
\]
is well-defined.
\label{P:prod_simp_gerbe}
\end{prop}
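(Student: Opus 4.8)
The plan is to read off the descent directly from the exactness of the product-simplicial cohomology complex \eqref{E:product_cohomology_seq} in degree $\ell = 3$ (with $A = \bbZ$), using the vanishing $\pa\DD(L) = 0$ supplied by the simplicial trivialization. The two ingredients are already in hand: on the one hand, the existence of a simplicial trivialization means $\pa L$ is a trivial bundle gerbe over $X^3$, so by naturality of the Dixmier--Douady class (Proposition~\ref{P:Consistent}) and \eqref{E:simp_trivn_DD} we have $\pa\DD(L) = \DD(\pa L) = 0$ in $\vH^3(X^3;\bbZ)$; on the other hand, \eqref{E:product_cohomology_seq} is exact, with the explicit contracting homotopy $s^\ast$ arising from the global section $s : \ast \mapsto x_\ast$ of $X \to \ast$ and satisfying $s^\ast\pa + \pa s^\ast = 1$ on each $\vH^\bullet(X^k; A)$.

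First I would apply this homotopy identity to the class $\DD(L) \in \vH^3(X^2;\bbZ)$, obtaining
\[
	\DD(L) = s^\ast\bigl(\pa\DD(L)\bigr) + \pa\bigl(s^\ast\DD(L)\bigr) = \pa\bigl(s^\ast\DD(L)\bigr),
\]
since the first term vanishes by the paragraph above. Thus $\DD(L) = \pa\eta$ with $\eta := s^\ast\DD(L) \in \vH^3(X;\bbZ)$, which exhibits the descended class explicitly and shows it is well-defined as an element of $\vH^3(X;\bbZ)$. For uniqueness I would invoke exactness of \eqref{E:product_cohomology_seq} at its left-hand end, i.e.\ injectivity of $\pa : \vH^3(X;\bbZ) \to \vH^3(X^2;\bbZ)$: any two preimages of $\DD(L)$ differ by an element of $\ker\pa = 0$, so $\eta$ is the unique lift.

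There is essentially no analytic or geometric obstacle here, as the entire content has been front-loaded into the exactness of \eqref{E:product_cohomology_seq} and the naturality of $\DD$. The only point requiring genuine care is the bookkeeping behind $\pa\DD(L) = 0$: one must check that the simplicial-trivialization hypothesis is being invoked with the sign conventions of $\pa = \sum_{j}(-1)^j\pi_j^\ast$ and that naturality of $\DD$ is applied uniformly across the three face maps $\pi_j : X^3 \to X^2$, so that the integral identity $\pa\DD(L) = \DD(\pa L)$ holds on the nose. Once that verification is recorded, the descent and its uniqueness are immediate from the homotopy formula.
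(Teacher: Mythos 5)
Your proof is correct and follows essentially the same route as the paper, which derives the proposition directly from the vanishing $\pa\DD(L)=\DD(\pa L)=0$ of \eqref{E:simp_trivn_DD} together with the exactness of \eqref{E:product_cohomology_seq}. Making the lift explicit as $\eta = s^\ast\DD(L)$ via the contracting homotopy $s^\ast\pa+\pa s^\ast=1$, and getting uniqueness from injectivity of $\pa$ at the left end of the exact sequence, is exactly the mechanism the paper invokes.
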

\noindent
We refer to such a gerbe as a \emph{doubled gerbe}.
\begin{rmk}
When $X_\bullet = X^{[\bullet]}$ is a more general simplicial space of fiber products of a locally split map $X = X_1 \to X_0$,
Stevenson in \cite{Stevenson2001} defines additional conditions for a {\em simplicial gerbe}, including higher associativity
conditions over $X_4$ and $X_5$ under which the class of a bundle gerbe further descends
to a degree four cohomology class on $X_0$ 
%(which in the case we will consider is a single point), 
and such
an object is defined to be a {\em bundle 2-gerbe} on $X_0$.
Here we only use the simplicial condition to descend the 3-class from $X_2 = X^2$ to $X_1 = X$, and will not make use of these
additional conditions.
\end{rmk}

The locally split map of present interest consists of the free (unbased) path space 
\[
	IX = \cC([0,1]; X) \to X^2,
	\quad \gamma \mapsto \big(\gamma(0),\gamma(1)\big),
\]
mapping to $X^2$ by the evaluation map on both endpoints. 
Instead of the fiber products of the pullbacks of $IX$ to $X^3$ 
we will take $Y_3 = IX$ also,
with evaluation map 
\[
	IX \to X^3, 
	\quad \gamma \mapsto \big(\gamma(0),\gamma(\tfrac 1 2),\gamma(1)\big),
\]
mapping to the midpoint as well as the endpoints.
For disambiguation, 
we will often distinguish these two incarnations of the free path space by writing
them as $I_{2}X$ and $I_{3}X$, respectively.
The three liftings $\wt \pi_i$ of the projection maps $\pi_j : X^3 \to X^2$
taking $\gamma\in I_{3}X$ to $I_{2}X$ are obtained by reparameterizing to obtain
the three paths
\[
	\wt \pi_1 \gamma(t) = \gamma(t),
	\quad
	\wt \pi_2\gamma(t) = \gamma(\tfrac12t),
	\quad \text{and} \quad
	\wt \pi_0\gamma(t) = \gamma\big(\tfrac12(1+t)\big). 
\]
\begin{rmk}
While it is possible to continue to the right, with $Y_n = I_n X$ mapping to $X^n$
by evaluating along $n$ points, the need to reparameterize paths to define the lifts $\wt \pi_i$
means that these do not satisfy the simplicial relations, so we do not in fact
obtain simplicial spaces $Y_\bullet^{[k]}$ over $X_\bullet$. 
In particular,
the associated maps $\pa$ on line bundles do not form a complex, i.e.,
$\pa^2 L$ is not canonically trivial, except at the bottom level.
\end{rmk}

Observe that $Y_2^{[2]} = I^{[2]}_{2}X$ may be naturally identified
with the free loop space, $LX = \cC(\bbR/2\pi\bbZ; X)$, while the space $Y_3^{[2]} = I^{[2]}_{3}X$ consists of pairs of paths which coincide at their midpoint in addition
to their endpoints. 
The latter may be identified with those loops $\ell$ in $LX$ for which $\ell(\pi/2) = \ell(3\pi/2)$, 
which we call {\em figure-of-eight} loops, and we accordingly denote the {\em figure-of-eight loop (sub)space} by
\[
	L_8X \cong I^{[2]}_{3}X.
\]

In fact, in this case the product doubling condition for a gerbe over $X^2$ can be strengthened.
\begin{lem}
A gerbe $(L,IX, X^2)$ or $(L, IX, X^3)$ is trivial if and only if $L \to
LX$ (resp. $L \to L_8 X$) is trivial as a line bundle.
In particular, a gerbe $(L, I X, X^2)$ is doubled if and only if
$\pa L \to L_8X$ is a trivial line bundle.
\label{L:prod_simp_LX}
\end{lem}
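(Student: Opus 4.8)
The plan is to reduce everything to a single geometric input --- that the two half-maps $\pi_0,\pi_1 : Y^{[2]}\to Y$ of the path fibration $Y = IX \to X^2$ (resp.\ $I_3X\to X^3$) are freely homotopic --- and then feed this into the cohomological machinery of \S\ref{S:DD_class}--\S\ref{S:rep_of_three}. First I would dispatch the ``in particular'' clause: by Definition~\ref{D:simplicial_trivn} a gerbe $(L,IX,X^2)$ is doubled exactly when the gerbe \eqref{E:alt_paL} over $X^3$ associated to $\pa L$, whose underlying line bundle lives over $Y_3^{[2]} = L_8X$, is trivial, and this is precisely the $X^3$ case of the main equivalence applied to $\pa L$. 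Since the $X^2$ and $X^3$ cases are formally identical --- replacing endpoint evaluation by endpoint-and-midpoint evaluation, and $LX$ by $L_8X$ --- it suffices to treat $(L,IX,X^2)$.

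The key observation is that the evaluation $\mathrm{ev}_0 : IX\to X$ is a homotopy equivalence (with inverse the inclusion of constant paths) and that $\mathrm{ev}_0\circ\pi_0 = \mathrm{ev}_0\circ\pi_1 : LX\to X$, both sending a loop to its basepoint; hence $\pi_0\simeq\pi_1$ as maps $LX\to IX$. Consequently $\pi_0^\ast\cong\pi_1^\ast$ on isomorphism classes of line bundles, so for every $L'\to IX$ the bundle $dL' = \pi_0^\ast L'\otimes\pi_1^\ast(L')^\inv$ over $LX$ is trivial, and more generally the simplicial differential $d = \pi_0^\ast-\pi_1^\ast$ vanishes on $\vH^\bullet(IX;A)\to\vH^\bullet(LX;A)$. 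The forward implication is then immediate: if the gerbe is trivial then $L\cong dL'$ for some $L'\to IX$, and the previous sentence shows that $dL'$, hence $L$, is trivial as a line bundle.

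For the converse I would argue cohomologically. By Proposition~\ref{P:bundle_gerbe_double_class} (together with Proposition~\ref{P:Consistent}) the gerbe is trivial if and only if $[c(L)]$ lies in $d\vH^1(IX;\bbC^\ast)$ inside $\vH^1_Z(LX;\bbC^\ast)$, whereas $L$ is trivial as a line bundle if and only if $[c(L)]$ maps to $0$ under the natural map $j : \vH^1_Z(LX;\bbC^\ast)\to\vH^1(LX;\bbC^\ast)$. Because $j\circ d = 0$ by the vanishing of $d$ on ordinary cohomology just established, one always has $d\vH^1(IX;\bbC^\ast)\subseteq\ker j$, so the content of the converse is the reverse inclusion $\ker j\subseteq d\vH^1(IX;\bbC^\ast)$, equivalently that the induced map $\vH^1_Z(LX;\bbC^\ast)/d\vH^1(IX;\bbC^\ast)\to\vH^1(LX;\bbC^\ast)$ is injective. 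Through the transgression isomorphism \eqref{E:bundle_gerbe_cohom_iso} of Theorem~\ref{T:bundle_gerbe_cohom_iso} this is exactly the statement that the transgression $\ker\bigl(\pi^\ast : \vH^3(X^2;\bbZ)\to\vH^3(IX;\bbZ)\bigr)\to\vH^2(LX;\bbZ)$, $\DD(L)\mapsto c(L)$, is injective.

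This injectivity is the main obstacle. I would establish it by comparing the double complex \eqref{E:gerbe_double_cplx} with the integral simplicial spectral sequence $E_1^{p,q} = \vH^q(Y^{[1+p]};\bbZ)\Rightarrow\vH^{p+q}(X^2;\bbZ)$: the homotopy $\pi_0\simeq\pi_1$ forces $d_1 : \vH^2(IX)\to\vH^2(LX)$ to vanish, so that $E_2^{1,2} = \ker\bigl(d_1 : \vH^2(LX)\to\vH^2(Y^{[3]})\bigr)$ injects into $\vH^2(LX;\bbZ)$ and the class of $c(L)$ in $E_2^{1,2}$ is precisely its ordinary Chern class. The delicate point is to promote ``$c(L)=0$ in $\vH^2(LX)$'' to ``$\DD(L)=0$ in $\vH^3(X^2)$'' rather than merely to a statement about lower filtration; I expect to do this by running the zig-zag defining $\DD(L)$ against the cochain homotopy afforded by $\pi_0\simeq\pi_1$ together with the contraction of Proposition~\ref{T:exact_simplicial}, showing that a representative of $c(L)$ which is exact as an ordinary cochain can be modified, without leaving $\vZ^1(LX)$ modulo $\delta\vZ^0(LX)$, into the image under $d$ of a closed cochain on $IX$. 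This is the concrete incarnation of the loop-fusion regression principle that a fusion line bundle which is trivial as a line bundle is trivial as a fusion bundle.
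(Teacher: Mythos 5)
Your treatment of the forward implication and of the ``in particular'' clause matches the paper's: the paper likewise uses the retraction of $I_kX$ onto $X$ --- equivalently your homotopy $\pi_0\simeq\pi_1$ obtained by composing with the evaluation equivalence $I_kX\simeq X$ --- to conclude that every line bundle $L'\to I_kX$ is pulled back from $X$ and hence that $dL'\to I_k^{[2]}X$ is trivial as a line bundle, and it records the same fact cohomologically as the vanishing of $d$ on $\vH^2(I_kX;\bbZ)$ via \eqref{E:triv_image}. Up to that point your argument is correct and essentially identical to the paper's, including the observation that the $k=2$ and $k=3$ cases are the same and that the doubling statement is the $X^3$ case applied to $\pa L$.

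The problem is the converse. You reformulate it as the injectivity of $j:\vH^1_Z(LX;\bbC^\ast)\to\vH^1(LX;\bbC^\ast)$ (equivalently, via \eqref{E:bundle_gerbe_cohom_iso}, injectivity of the transgression from $\kernel\pi^\ast\subseteq\vH^3(X^2;\bbZ)$ into $\vH^2(LX;\bbZ)$), you correctly identify this as ``the main obstacle'', and then you do not prove it: the final paragraph is a strategy (``I expect to do this by running the zig-zag\dots'') whose ``delicate point'' --- promoting vanishing of the ordinary Chern class to vanishing in $\vH^1_Z$ rather than merely to a drop in filtration --- is exactly the content at issue and is left unresolved. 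This is a genuine gap, and moreover the statement you are aiming at is stronger than what the lemma requires. A trivialization in the sense of the paper is an isomorphism $L\cong dL'$ of line bundles over $Y^{[2]}$; so if $L\to LX$ is trivial as a line bundle one takes $L'$ to be the trivial bundle on $IX$ and the gerbe is trivial by definition, which is why the paper's proof only argues the nontrivial direction. By contrast, global injectivity of $j$ would say that no nonzero class in $\vH^1_Z(LX;\bbC^\ast)$ dies in ordinary cohomology --- essentially injectivity of the free-loop transgression, precisely the kind of statement the loop-fusion refinement of \S\ref{S:lf_cohom} exists to circumvent --- and it is established neither in the paper nor in your sketch. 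You should either invoke the definition of triviality directly, or, if you insist that trivializations intertwine the simplicial sections, supply the short correction of the isomorphism $L\cong d\underline{\bbC}$ using exactness of the simplicial complex (Proposition~\ref{T:exact_simplicial}) rather than attempt the injectivity of $j$.
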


\begin{proof}
Retraction of paths onto their initial points determines a deformation retract of
$I_kX$ onto $X$, with respect to which the two simplicial maps
$$
\begin{tikzcd}[sep=scriptsize]I^{[2]}_kX \arrtwo & I_kX\end{tikzcd}
$$
both become identified with the evaluation map $I_k^{[2]} X \to X$ at a single parameter value.
Thus every line bundle $P \to I_kX$ is isomorphic to a bundle $Q$ pulled back
from $X$, and then $dP \cong Q \otimes Q^\inv \to I^{[2]}_k X$ is isomorphic to a
trivial bundle.
This result is independent of $k$.

Alternatively, we may use the equality \eqref{E:triv_image} proved in
Proposition~\ref{P:gerbe_triv}, which here takes the form 
\[
\begin{gathered}
	\kernel\set{d : H^2(I_kX; \bbZ) \cong H^2(X; \bbZ) \to H_Z^2(I_k^{[2]}X; \bbZ)}
	\\= \img\set{\pi^\ast \cong \Delta^\ast : H^2(X^k; \bbZ) \to H^2(X; \bbZ) \cong H^2(I_kX; \bbZ)}
	= H^2(X; \bbZ),
\end{gathered}
\]
since with respect to the retraction $IX \simeq X$ the map $\pi : I_kX \to X^k$
is identified with the diagonal map $\Delta : X \to X^k$. 
Since $\Delta^\ast$ is surjective on cohomology, it follows that $d
\equiv 0 : H^2(I_kX; \bbZ) \to H^2_Z(I_k^{[2]}; \bbZ)$, so every trivial gerbe
is in fact trivial as a line bundle.
\end{proof}
\begin{rmk}
In other words, there are no `nontrivial trivial gerbes' with respect to the path spaces.
This seems at first confusing in light of Proposition~\ref{P:gerbe_triv}, since
the classifying set $\img\set{\pi^\ast : H^2(X^2; \bbZ) \to H^2(IX; \bbZ)} \cong H^2(X; \bbZ)$ of gerbe trivializtions may well be nontrivial, yet these facts are not inconsistent.
Indeed, while the only trivial gerbe with respect to $IX \to X^2$ is the equivalence class
of the trivial line bundle on $LX$, the set of {\em gerbe trivializations} of this trivial gerbe
may itself be nontrivial.

On the other hand, we could restrict consideration to {\em doubled
trivializations}, meaning line bundles $P \to I_2X$ with $dP = L$ such that
$\pa P \to I_3X$ is a trivial bundle.
The set of these doubled trivializations is indeed trivial, since
under the retractions $I_k X \simeq X$, the reparameterization maps $\wt \pi_j
: I_3X \to I_2 X$ become the identity, and the operator $\pa \cong \Id^\ast -
\Id^\ast + \Id^\ast$ likewise becomes the identity, so triviality of $\pa P$
implies triviality of $P$ itself.

The extension of this notion of doubling will be important in the
setting of the Brylinski-McLaughlin bigerbe in \S\ref{S:bm-bigerbe}.
\end{rmk}

From the point of view of fusion line bundles on loop space, the
doubling property corresponds to the `figure-of-eight' condition,
as defined in \cite{MR3391882,KM-equivalence}.
The following definition is therefore just a repackaging of the above in a different language.
\begin{defn}
A {\em loop-fusion} structure on a line bundle $L \to LX$ is a fusion structure, meaning a trivialization of $dL \to I^{[3]}_{2}X$ inducing the canonical trivialization of $d^2 L \to I^{[4]}_{2}X$, along with the {\em figure-of-eight} condition that $\pa L \to L_8X \cong I^{[2]}_3 X$ is trivial as a line bundle.
An isomorphism of loop-fusion line bundles is a line bundle isomorphism which intertwines the fusion structures.
\label{D:loop-fusion_bundle}
\end{defn}

%\ck{Check equivalence class conditions}
\begin{thm}
The following are naturally in bijection:
\begin{enumerate}
[{\normalfont (i)}]
\item The set of doubled gerbes $(L, IX, X^2)$ up to strong isomorphism.
\item The set of loop-fusion line bundles on $LX$ up to isomorphism.
\item $H^3(X; \bbZ)$.
\end{enumerate}
\label{T:lf_bijection}
\end{thm}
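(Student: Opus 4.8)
The plan is to prove the two bijections separately, treating the identification of (i) with (ii) as an unwinding of definitions and concentrating the real work on the bijection with $H^3(X;\bbZ)$. First I would observe that (i) $\Leftrightarrow$ (ii) is essentially definitional: by Lemma~\ref{L:prod_simp_LX} a doubled gerbe $(L, IX, X^2)$ is exactly a line bundle $L \to I^{[2]}_2 X = LX$ equipped with the gerbe (fusion) structure over $I^{[3]}_2 X$ and satisfying the property that $\pa L \to L_8 X \cong I^{[2]}_3 X$ is a trivial line bundle, which is precisely the data of a loop-fusion line bundle in the sense of Definition~\ref{D:loop-fusion_bundle}. Strong isomorphisms of gerbes over the fixed map $IX \to X^2$ are line bundle isomorphisms over $LX$ intertwining the fusion structures, hence isomorphisms of loop-fusion bundles, so (i) and (ii) coincide as sets and it remains to put (ii) in bijection with $H^3(X;\bbZ)$.

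Next I would set up the map and reduce to a group-theoretic statement. The set (ii) carries an abelian group structure under tensor product, with identity the trivial loop-fusion bundle and with $\pa(L_1\otimes L_2)=\pa L_1\otimes\pa L_2$ again trivial, and Proposition~\ref{P:prod_simp_gerbe} assigns to each object its descended class $\Phi(L) = \DD(L) \in H^3(X;\bbZ)$, well defined on isomorphism classes by naturality of $\DD$ (Proposition~\ref{P:Consistent}) and additive in $L$. Thus $\Phi$ is a group homomorphism, and it suffices to show it is surjective with trivial kernel.

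For surjectivity, given $\alpha \in H^3(X;\bbZ)$ I would form $\pa\alpha = \pi_0^*\alpha - \pi_1^*\alpha \in H^3(X^2;\bbZ)$ and realize it by a gerbe over $IX \to X^2$. Under the deformation retraction $IX \simeq X$ that retracts paths to their initial points, the projection $\pi : IX \to X^2$ becomes the diagonal $\Delta$, so $\pi^*\pa\alpha \cong \Delta^*(\pi_0^*\alpha - \pi_1^*\alpha) = \alpha - \alpha = 0$ in $H^3(IX;\bbZ)$; by Theorem~\ref{T:representability_three_class} there is a gerbe $(L, IX, X^2)$ with $\DD(L) = \pa\alpha$. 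Since the maps $\pa$ of \eqref{E:product_cohomology_seq} form a complex, $\DD(\pa L) = \pa\DD(L) = \pa^2\alpha = 0$, so $\pa L$ is a trivial gerbe over $X^3$ and Lemma~\ref{L:prod_simp_LX} shows $\pa L \to L_8 X$ is a trivial line bundle; hence $L$ is doubled, and as $\pa$ is injective by exactness of \eqref{E:product_cohomology_seq} its descended class is the unique $\pa$-preimage of $\pa\alpha$, namely $\alpha$.

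The hard part will be the triviality of the kernel, which I would phrase as: a doubled gerbe with $\Phi(L)=0$ is isomorphic to the trivial loop-fusion bundle. (Given this, for two doubled gerbes with the same class their difference $L_1 \otimes L_2^{-1}$ lies in the kernel, forcing $L_1 \cong L_2$.) Injectivity of $\pa$ gives $\DD(L)=0$ on $X^2$, so by Proposition~\ref{P:Consistent} $L$ is a trivial gerbe and $L \cong dP$ for some $P \to IX$. The delicate step is to upgrade $P$ to a \emph{doubled} trivialization: using the torsor structure of Proposition~\ref{P:gerbe_triv}, trivializations differ by line bundles pulled back from $X^2$, that is, under $IX \simeq X$, by arbitrary classes in $H^2(X)$, while under the retractions $I_k X \simeq X$ the reparameterizations $\wt\pi_j$ become the identity so that $[\pa N]=[N]$; replacing $P$ by a suitable $P\otimes N$ I can arrange $[\pa P]=0$, making $\pa P \to L_8 X$ trivial. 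Then, exactly as in the remark following Lemma~\ref{L:prod_simp_LX}, since $\pa \cong \Id^* - \Id^* + \Id^*$ becomes the identity under the retraction, triviality of $\pa P$ forces triviality of $P$, whence $L = dP$ is the trivial loop-fusion bundle. I expect this final step --- ruling out ``nontrivial trivial doubled gerbes'' by controlling the fusion structure through the retraction argument --- to be the crux; naturality of all three identifications in $X$ then follows from the functoriality of the path-space construction and of $\DD$.
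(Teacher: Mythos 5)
Your proposal is correct, and your treatment of (i) $\Leftrightarrow$ (ii) matches the paper's exactly: both reduce it to Lemma~\ref{L:prod_simp_LX} and Definition~\ref{D:loop-fusion_bundle}. For the bijection with $H^3(X;\bbZ)$ you take a genuinely different, more hands-on route. The paper's proof is essentially a reduction to Theorem~\ref{T:lf_trans}: loop-fusion line bundles up to isomorphism are classified by $\vH^2_{\mathrm{lf}}(LX;\bbZ)$, and that group is identified with $H^3(X;\bbZ)$ by the spectral-sequence/exactness argument of \S\ref{S:lf_cohom} applied to the three maps $I_3^{[k]}X\to I_2^{[k]}X$. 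You instead treat the descended Dixmier--Douady class as a group homomorphism and check surjectivity and injectivity separately: surjectivity via Theorem~\ref{T:representability_three_class} applied to $\pa\alpha$ (using the retraction $IX\simeq X$ to identify $\pi$ with the diagonal, then $\pa^2=0$ together with Proposition~\ref{P:Consistent} and Lemma~\ref{L:prod_simp_LX} to get the doubling), and injectivity via the torsor structure of Proposition~\ref{P:gerbe_triv} together with the observation that the reparameterization maps become the identity under the retraction. Both arguments rest on the same two pillars --- exactness of \eqref{E:product_cohomology_seq} and the collapse of everything on $I_kX$ under the retraction to $X$ --- so in substance you re-derive the degree-three case of Theorem~\ref{T:lf_trans} in geometric line-bundle language rather than quoting the \v Cech statement; the paper's route buys uniformity in degree and coefficients, yours buys independence from \S\ref{S:lf_cohom} and an explicit construction of the representing doubled gerbe. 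One small streamlining in your injectivity step: there is no need to first arrange $[\pa P]=0$ and then deduce $[P]=0$; since $\pi^\ast:H^2(X^2;\bbZ)\to H^2(IX;\bbZ)\cong H^2(X;\bbZ)$ is surjective (it is $\Delta^\ast$ under the retraction), you may directly choose the trivializing bundle $P$ in its torsor orbit to be trivial as a line bundle, and the conclusion follows.
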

\begin{proof}
Equivalence of the first two is a consequence of Lemma~\ref{L:prod_simp_LX} and Definition~\ref{D:loop-fusion_bundle}. 
Doubled gerbes $(L, IX, X^2)$ are classified by their Dixmier-Douady class,
which descends to $X$, as noted above,
and that every element in $H^3(X; \bbZ)$ is represented by a doubled gerbe (equivalently, loop-fusion line bundle) $L \to LX$ follows from Theorem~\ref{T:lf_trans} below.
\end{proof}

\begin{rmk}
The figure-of-eight structure is weaker than other conditions that have
been considered in the literature, such as thin homotopy equivariance in
\cite{MR2980505}, or reparameterization equivariance in
\cite{KM-equivalence}, which likewise identify categories of fusion line
bundles on $LX$ with gerbes on $X$.
\end{rmk}

\subsection{Loop-fusion cohomology} \label{S:lf_cohom}

In fact, applying the above considerations to \v Cech theory in place of line bundles leads to a general result, which recovers
the main theorem in our previous paper \cite{MR3391882}.
There we defined {\em loop-fusion cohomology} on $L X$, which in the present
language is equivalent to the group
\[
	\vH_{\mathrm{lf}}^\ell(L X; A) = \kernel \set{\pa : \vH^\ell_Z(LX; A) \to \vH_Z^\ell(L_8X; A)}.
\]
In particular, the set $\vH_{\mathrm{lf}}^2(LX; \bbZ)$ classifies loop-fusion line bundles up to isomorphism.
\begin{thm}[\cite{MR3391882}]
For each $\ell \in \bbN$ and topological abelian group $A$, there is an isomorphism 
\[
	\vH^\ell(X; A) \cong \vH_{\mathrm{lf}}^{\ell-1}(L X; A).
\]
\label{T:lf_trans}
\end{thm}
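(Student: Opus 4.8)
The plan is to realize the isomorphism as a composite of two transgressions --- one for the free path fibration $I_2X \to X^2$ and one for $I_3X \to X^3$ --- glued together by the exactness of the product complex \eqref{E:product_cohomology_seq} and the naturality of the transgression isomorphism \eqref{E:SS_isom}. Throughout I assume the standing local contractibility hypotheses that make the evaluation maps $\mathrm{ev}_k : I_kX \to X^k$ locally split. First I would apply Theorem~\ref{T:bundle_gerbe_cohom_iso} with $k = \ell-1$ to these two locally split maps, whose double fiber products are $I_2^{[2]}X \cong LX$ and $I_3^{[2]}X \cong L_8X$, obtaining transgression isomorphisms
\[
	T_2 : \kernel\set{\mathrm{ev}_2^* : \vH^\ell(X^2;A) \to \vH^\ell(I_2X;A)} \xrightarrow{\ \cong\ } \vH^{\ell-1}_Z(LX;A)\big/d\,\vH^{\ell-1}(I_2X;A)
\]
and similarly $T_3$ landing in $\vH^{\ell-1}_Z(L_8X;A)/d\,\vH^{\ell-1}(I_3X;A)$. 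The retraction of paths onto their initial points, already used in Lemma~\ref{L:prod_simp_LX}, makes both face maps $I_k^{[2]}X \rightrightarrows I_kX$ homotopic to a single evaluation, so $d \equiv 0$ on $\vH^\bullet(I_kX;A)$ and both denominators vanish; the same retraction identifies $\mathrm{ev}_k$ with the diagonal $\Delta_k : X \to X^k$ up to homotopy, so that the domain of $T_2$ is simply $\kernel\set{\Delta_2^*}$.

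Next I would invoke the exactness of \eqref{E:product_cohomology_seq}: the first map $\pa : \vH^\ell(X;A) \to \vH^\ell(X^2;A)$ is injective with image $\kernel\set{\pa : \vH^\ell(X^2;A) \to \vH^\ell(X^3;A)}$, giving $\vH^\ell(X;A) \cong \kernel\set{\pa}$. Since both projections $X^2 \to X$ split the diagonal, $\Delta_2^* \circ \pa = 0$, so this subspace already lies in $\kernel\set{\Delta_2^*}$, the domain of $T_2$. It therefore remains to show that $T_2$ carries $\kernel\set{\pa} \subseteq \kernel\set{\Delta_2^*}$ isomorphically onto $\vH_{\mathrm{lf}}^{\ell-1}(LX;A) = \kernel\set{\pa : \vH^{\ell-1}_Z(LX;A) \to \vH^{\ell-1}_Z(L_8X;A)}$.

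The crucial step is the compatibility of $T_2$ and $T_3$ with the figure-of-eight structure. The reparameterization lifts $\wt \pi_j : I_3X \to I_2X$ cover the projections $\pi_j : X^3 \to X^2$, and precisely because they cover the $\pi_j$ they induce maps $L_8X \to LX$ of the fiber-product simplicial spaces, hence pullbacks $\wt \pi_j^* : \vH^{\ell-1}_Z(LX;A) \to \vH^{\ell-1}_Z(L_8X;A)$. For each $j$ the pair $(\wt \pi_j,\pi_j)$ is a morphism of locally split maps, so the naturality clause of Theorem~\ref{T:bundle_gerbe_cohom_iso} yields a commuting square relating $\pi_j^*$ upstairs to $\wt \pi_j^*$ downstairs through $T_2$ and $T_3$. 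Taking the alternating sum over $j$ produces a commuting square intertwining $\pa$ on $\vH^\ell(X^\bullet;A)$ with the figure-of-eight $\pa$ on $\vH^{\ell-1}_Z$; restricting to $\kernel\set{\Delta_2^*}$ and $\kernel\set{\Delta_3^*}$ and passing to kernels of $\pa$ then gives $T_2\big(\kernel\set{\pa}\big) = \vH_{\mathrm{lf}}^{\ell-1}(LX;A)$, completing the identification.

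I expect the main obstacle to be exactly this last naturality step. The reparameterizations $\wt \pi_j$ do not assemble into a simplicial map --- they fail the simplicial identities, and $\pa^2$ is not canonically trivial beyond the bottom level --- so one cannot feed a single bisimplicial or triple complex into one spectral sequence; instead the argument must proceed square-by-square for each $\wt \pi_j$ individually, checking that each genuinely defines a morphism of locally split spaces to which \eqref{E:SS_isom} applies and that the induced maps preserve the $Z$-subcomplexes defining $\vH_Z$. The delicate bookkeeping is to confirm that $\pa$ really restricts between the relevant kernels, so that the bottom kernel is exactly $\vH_{\mathrm{lf}}^{\ell-1}(LX;A)$ and not a larger group.
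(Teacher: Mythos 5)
Your proposal is correct and follows essentially the same route as the paper's proof: apply the naturality of the transgression isomorphism \eqref{E:SS_isom} separately to each of the three squares $(\wt\pi_j,\pi_j)$ relating $I_3X \to X^3$ to $I_2X \to X^2$, use the retraction $I_kX \simeq X$ to kill the denominators $d\,\vH^{\ell-1}(I_kX;A)$ and to identify $\mathrm{ev}_k$ with the diagonal, and invoke exactness of \eqref{E:product_cohomology_seq} to identify $\kernel\pa \subset \vH^\ell(X^2;A)$ with $\vH^\ell(X;A)$. The obstacle you flag at the end is handled exactly as you suggest --- the paper never assembles the $\wt\pi_j$ into a simplicial map, only uses naturality one square at a time before taking the alternating sum.
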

\noindent
It is additionally shown in \cite{MR3391882} that the isomorphism descends via
the forgetful map $\vH^\bullet_{\mathrm{lf}}(L X; A) \to \vH^\bullet(LX;
A)$ to the transgression homomorphism $\vH^\ell(X; A) \to \vH^{\ell-1}(L X; A)$;
recall that the latter
is defined by composing the pullback along the evaluation map $S^1 \times LX \to X$ with the pushforward along the projection $S^1\times LX \to LX$ (given by cap product with
the fundamental class of $S^1$).

\begin{proof}
%For simplicity of notation, we write $Y_2 = I_{X^2} X$ and $Y_3 = I_{X^3} X$.
%
The result follows from naturality of the isomorphism \eqref{E:SS_isom}, and exactness of \eqref{E:product_cohomology_seq}.
Applied to the three maps from $I_3^{[k]}X$ to $I_2^{[k]}X$ this yields an isomorphism (omitting
the coefficient group for brevity)
\begin{equation}
\begin{gathered}
	\kernel \pa \cap \kernel\pi^\ast \subset \vH^\ell(X^2) 
	\\ \cong \kernel \set{\pa : \vH^{\ell-1}_Z(I_2^{[2]}X)/d\vH^{\ell-1}(I_2 X) 
	\to \vH^{\ell-1}_Z(I_3^{[2]}X)/d\vH^{\ell-1}(I_3 X)}
	\label{E:lf_trans_initial_iso}
\end{gathered}
\end{equation}
However, as noted in the proof of Lemma~\ref{L:prod_simp_LX}, the deformation
retraction of the free path spaces $I_k X$ onto $X$ implies that $d :
\vH^{\ell-1}(I_kX; A) \to \vH^{\ell-1}(I_k^{[2]} X; A)$ is trivial, so the
quotients in \eqref{E:lf_trans_initial_iso} disappear.
Moreover, by exactness of \eqref{E:product_cohomology_seq}, the kernel of $\pa$
in $\vH^\ell(X^2; A)$ is the image of $\vH^\ell(X; A)$ and this is
automatically in the kernel of $\pi^\ast : \vH^\ell(X^2; A) \to \vH^\ell(I_2X; A)$
under the retraction $I_2X \simeq X$, so \eqref{E:lf_trans_initial_iso}
simplifies to
\[
	\vH^\ell(X; A) \cong \kernel \set{\pa : \vH^{\ell-1}_Z(I_2^{[2]}X; A) \to \vH^{\ell-1}_Z(I_3^{[2]}X; A)} = \vH_{\mathrm{lf}}^{\ell-1}(L X; A)
\]
as claimed.
\end{proof}

\section{Bundle bigerbes} \label{S:bundle_bigerbes}
\subsection{Locally split squares} \label{S:lssquares}

Bigerbes as introduced below are based on the following notion.
\begin{defn}
A commutative diagram 
\begin{equation}
\begin{tikzcd}
	Y_2 \ar[d,"\pi_2"'] & W \ar[l] \ar[d] 
	\\ X & Y_1 \ar[l,"\pi_1"]
\end{tikzcd}
\label{E:lssquare}
\end{equation}
is a {\em locally split square} if $Y_i \to X$, $i = 1,2$ and the induced
map $W \to Y_1\times_X Y_2$ are locally split.
\label{D:lssquare}
\end{defn}
\noindent There is manifest symmetry in the definition.
\blue{Note that in fact all four maps in \eqref{E:lssquare} are locally split; indeed, if $s_1 : \Et{\ecU} \to Y_1$ are local sections of $\pi_1$ for some cover $\ecU$ of $X$,
then $1\times (\wt s_1 \circ \pi_2) : \Et{\pi_2^\inv\ecU} \to Y_1\times_X Y_2$ form local sections of the projection $Y_1\times_X Y_2 \to Y_2$; moreover,
refining this cover $\pi_2^\inv \ecU$ of $Y_2$ if necessary, these may then be composed with local sections of $W \to Y_1 \times_X Y_2$ to obtain local sections of $W \to Y_2$, and similarly for $Y_1$.}

%\ck{Needs a stronger lemma: given a locally split square, then all four maps are locally split, and there exist
%covers (refining any given covers) such that each appropriate pair is admissible.}

\begin{added}
\begin{defn}
Given a locally split square as above, an \emph{admissible set of covers} is a set of covers $\cU_X$, $\cU_{Y_1}$, $\cU_{Y_2}$, and $\cU_{W}$ for $X$, $Y_1$, $Y_2$, and $W$, respectively
for which each pair $(\cU_X, \cU_{Y_1})$, $(\cU_X, \cU_{Y_2})$, $(\cU_{Y_1}, \cU_W)$, and $(\cU_{Y_2}, \cU_W)$ is an admissible pair and such that the following four diagrams
commute:
\[
\begin{tikzcd}[sep=small]
	\Et{\cU_{Y_2}} \ar[d] & \Et{\cU_{W}} \ar[l] \ar[d]
	\\ \Et{\cU_X} & \Et{\cU_{Y_1}} \ar[l]
\end{tikzcd}
\quad
\begin{tikzcd}[sep=small]
	\Et{\cU_{Y_2}} \ar[r] & \Et{\cU_W}
	\\ \Et{\cU_X} \ar[r] \ar[u] & \Et{\cU_{Y_1}} \ar[u]
\end{tikzcd}
\]\[
\begin{tikzcd}[sep=small]
	\Et{\cU_{Y_2}} \ar[d] \ar[r] & \Et{\cU_W}  \ar[d]
	\\ \Et{\cU_X} \ar[r]  & \Et{\cU_{Y_1}}
\end{tikzcd}
\quad
\begin{tikzcd}[sep=small]
	\Et{\cU_{Y_2}}  & \Et{\cU_W}  \ar[l] 
	\\ \Et{\cU_X} \ar[u]   & \Et{\cU_{Y_1}} \ar[l] \ar[u]
\end{tikzcd}
\]
\label{D:admissible_set}
\end{defn}

\begin{lem}
Given any set of covers for the spaces in a locally split square $(X, Y_1, Y_2, W)$, there exist refinements constituting an admissible set.

In fact, given any finite set of locally split squares over $X$ which may share some of the same maps $\pi_i : Y_i \to X$ and any initial set of covers of these,
there exists a set of covers for all spaces which forms an admissible set for each square seperately, and for which the section maps lift any initially prescribed local sections of the maps $Y_i \to X$ or $W \to Y_1 \times_X Y_2$.  
\label{L:admissible_set_exists}
\end{lem}
\begin{proof}
Let $\cU'_\bullet$, $\bullet \in \set{X,Y_1,Y_2,W}$ be the initial arbitrary set of covers for a given square.
Without loss of generality by the stronger statement of Lemma~\ref{Addedrbm}, we may assume that $(\cU'_X, \cU'_{Y_i})$, $i =1,2$ form an admissible pair lifting
prescribed local sections $s_1$ and $s_2$ of $\pi_1$ and $\pi_2$.
 %indeed, we may first replace $\cU'_X$ by a refinement admitting sections 
%$s_i : \cU'_X \to Y_i$, $i = 1,2$, then replace it again by $\cU'_X \cap s_1^\inv(\cU'_{Y_1}) \cap s_2^\inv(\cU'_{Y_2})$, and then replace the $\cU'_{Y_i}$  by $\cU'_{Y_i} \cap \pi_i^\inv(\cU'_X)$,
%which furnishes the outcome of Lemma~\ref{Addedrbm} for both pairs of spaces.
%
Under this condition, it follows that $\cU'_{Y_i}$ support sections $\sigma_i$ of the fiber product projections $Y_1 \times_X Y_2 \to Y_i$ 
defined by 
$\sigma_1 := 1 \times (s_2 \circ \wt \pi_1)$
and 
$\sigma_2 := (s_1 \circ \wt \pi_2) \times 1$,
respectively (here $1$ denotes the inclusion of the cover $1 : \cU'_{Y_i} \to Y_i$), giving a diagram
\[
\begin{tikzcd}
	\Et{\cU'_{Y_2}} \ar[d,shift left,"\wt \pi_2"] \ar[r, "\sigma_2"] & Y_1\times_X Y_2 
	\\\Et{\cU'_X} \ar[u,shift left, "\wt s_2"] \ar[r, shift left,"\wt s_1"] & \Et{\cU'_{Y_1}} \ar[u,swap, "\sigma_1"] \ar[l,shift left, "\wt \pi_1"]
\end{tikzcd}
\]
for which the following three squares commute:
\[
\begin{tikzcd}[sep=small]
	\Et{\cU'_{Y_2}} \ar[r] & Y_1\times_X Y_2 
	\\\Et{\cU'_X} \ar[r] \ar[u]  & \Et{\cU'_{Y_1}}  \ar[u]
\end{tikzcd}
\quad 
\begin{tikzcd}[sep=small]
	\Et{\cU'_{Y_2}} \ar[r] \ar[d] & Y_1\times_X Y_2  \ar[d]
	\\\Et{\cU'_X} \ar[r]  & Y_1  
\end{tikzcd}
\quad
\begin{tikzcd}[sep=small]
	{Y_2}  & Y_1\times_X Y_2 \ar[l]
	\\\Et{\cU'_X} \ar[u]  & \Et{\cU'_{Y_1}} \ar[u] \ar[l]
\end{tikzcd}
\]
We also fix an initial cover $\cU'_{Y_1\times_X Y_2}$ of $Y_1 \times_X Y_2$ which supports a prescribed section $t : \Et{\cU'_{Y_1\times_X \times Y_2}} \to W$ of the locally split map $p : W \to Y_1\times_X Y_2$. 

This is the starting point, and wet set $\cU^0_\bullet = \cU'_\bullet$, for $\bullet \in \set{X,Y_1,Y_2,Y_1\times_X Y_2,W}$, where the superscript denotes the step in the process. 
As in the proof of Lemma~\ref{Addedrbm}, we proceed to refine the covers in two steps, first ``pulling down'' by the sections, and then ``pulling up'' by the projections.

\noindent\textit{Step 1}: Starting at the top right, we subsequently set $\cU^1_W = \cU^0_W$, $\cU^1_{Y_1\times_X Y_2} = \cU^0_{Y_1\times_X Y_2} \cap t^\inv(\cU^1_W)$, 
$\cU^1_{Y_i} = \cU^0_{Y_i} \cap \sigma_i^\inv(\cU^1_{Y_1\times_X Y_2})$, and 
\begin{equation}
\begin{aligned}
	\cU^1_X &= \cU^0_X \cap (\sigma_1 \circ \wt s_1)^\inv(\cU^1_{Y_1 \times_X Y_2})
	\cong \cU^0_X \cap \wt s_1^\inv(\cU^1_{Y_1})
	\\&\cong \cU^0_X \cap (\sigma_2 \circ \wt s_2)^\inv(\cU^1_{Y_1 \times_X Y_2})
	\cong \cU^0_X \cap \wt s_2^\inv(\cU^1_{Y_2})
\end{aligned}
	\label{E:ad_set_step1_isos}
\end{equation}
after which we have maps
of covers constituting the commutative diagram
\begin{equation}
\begin{tikzcd}
	&& \Et{\cU^1_W}
	\\ \Et{\cU^1_{Y_2}} \ar[r, "\wt \sigma_2"] & \Et{\cU^1_{Y_1\times_X Y_2}} \ar[ur, "\wt t"]
	\\ \Et{\cU^1_X} \ar[u, "\wt s_2"] \ar[r, "\wt s_1"] & \Et{\cU^1_{Y_1}} \ar[u, "\wt \sigma_1"]
\end{tikzcd}
	\label{E:square_stage_1}
\end{equation}
and which each constitute sections the appropriate map $Y_i \to X$, $Y_1\times_X Y_2 \to Y_i$, or $W \to Y_1\times_X Y_2$. 
The only nontrivial assertion is the horizontal identifications in \eqref{E:ad_set_step1_isos}, but this follows easily from viewing the refinements as pullbacks in light of naturality of pullback.
Alternatively it can be checked directly: given $U \in \cU^0_X$ mapping by $\wt s_1$ into $V_1 \in \cU^0_{Y_1}$, say, the latter becomes refined into sets of the form $V_1 \cap \sigma_1^\inv(W)$ 
for $W \in \cU^1_{Y_1\times_X Y_2}$, and then $U$ decomposes into sets of the form $U\cap \wt s_1^\inv(V_1 \cap \sigma_1^\inv(W)) = U \cap (\sigma_1 \circ \wt s_1)^\inv(W)$. 
For the stronger version involving multiple split squares over $X$, we note that $\cU^1_{Y_i}$ and then subsequently $\cU^1_X$ may be further refined while retaining the diagram \eqref{E:square_stage_1};
in particular, we can replace these by the mutual refinements of the Step 1 covers for each locally split square in which the spaces are involved.

\noindent\textit{Step 2}: Starting at the bottom left, we subsequently set 
$\cU^2_X = \cU^1_X$, 
$\cU^2_{Y_i} = \cU^1_{Y_i} \cap \pi_i^\inv(\cU^2_X)$, 
$\cU^2_{Y_1\times_X Y_2} = \cU^1_{Y_1\times_X Y_2} \cap (\cU^2_{Y_1} \times_{\cU^2_X} \cU^2_{Y_2})$,
and $\cU^2_W = \cU^1_W \cap p^\inv(\cU^2_{Y_1\times_X Y_2})$,
after which we have maps of covers
\[
\begin{tikzcd}
	&& \Et{\cU^2_W} \ar[dl,shift left, "\wt p"]
	\\ \Et{\cU^2_{Y_2}} \ar[r,shift left, "\wt \sigma_2"] \ar[d, shift left,"\wt \pi_2"] & \Et{\cU^2_{Y_1\times_X Y_2}} \ar[d,shift left, "\wt \pr_1"] \ar[l,shift left, "\wt \pr_2"] \ar[ur,shift left, "\wt t"]
	\\ \Et{\cU^2_X} \ar[u,shift left, "\wt s_2"] \ar[r,shift left, "\wt s_1"] & \Et{\cU^2_{Y_1}} \ar[l,shift left, "\wt \pi_1"] \ar[u,shift left, "\wt \sigma_1"]
\end{tikzcd}
\]
such that each subsequent pair is admissible.
Indeed, the only assertion to check
 %is that $\wt \sigma_i$ and $\pr_i$ lift appropriately along with the last condition for 
admissibility of the pair $(\cU^2_{Y_i}, \cU^2_{Y_1\times_X Y_2})$,
%condition on $\ell$-fold intersections, 
since $\cU^2_{Y_1\times_X Y_2}$ is defined in a different way here compared to Lemma~\ref{Addedrbm}.

To check this, first note that $\cU^2_{Y_1 \times_X Y_2}$ is in general finer than either $\cU^1_{Y_1\times_X Y_2} \cap \pr_i^\inv(\cU^2_{Y_i})$ for $i = 1,2$, and in fact it factors through both 
of these since $\pr_1^\inv(\cU^2_{Y_1}) = \cU^2_{Y_1} \times_X Y_2$ and $\pr_2^\inv(\cU^2_{Y_2}) = Y_1 \times_X \cU^2_{Y_2}$. 
It follows from this that $\wt \pr_i$ lift to $\cU^2_{Y_1\times_X Y_2}$ and $\wt \pi_1 \circ \wt \pr_1 = \wt \pi_2 \circ \wt \pr_2$.
To see that $\wt\sigma_i$ lift, consider for example a $V \in \cU^1_{Y_2}$ mapping by $\wt \sigma_2$ to $W \in \cU^1_{Y_1\times_X Y_2}$ after step 1. 
In Step 2, $V$ is refined into sets of the form $V \cap \pi_2^\inv(U)$ for $U \in \cU^2_X$, and we have a diagram
\[
\begin{tikzcd}[sep=small]
	V \cap \pi_2^\inv(U) \ar[r, "\wt \sigma_2"] \ar[d, "\wt \pi_2"] & W
	\\ U \ar[r, "\wt s_1"] & V' \cap \pi_1^\inv(U)
\end{tikzcd}
\]
for some $V' \cap \pi_1^\inv(U) \in \cU^2_{Y_1}$.
But then $\wt \sigma_2$ lifts canonically into a map 
\[
	\wt \sigma_2 : V \cap \pi_2^\inv(U) \to W \cap (V \times_{U} V') \in \cU^2_{Y_1\times_X Y_2}.
\]
Finally, to see that the final (pairwise intersection) property for admissibility holds, take $V_1,V_2 \in \cU^2_{Y_2}$ with non-empty intersection. 
By construction the $V_i$ are associated to $U_i \in \cU^2_X$ for $i = 1,2$, and by the intersection property for $\wt s_1$ it follows that 
$(\wt s_1)_{U_1} : U_1 \cap U_2 \to V'_1 \cap V'_2$ (more precisely, the restriction to $U_1 \cap U_2$ of $\wt s_1$ as defined on $U_1$) maps into some $V'_2 \in \cU^2_{Y_1}$ over $U_2$.
It follows then that the restriction to $V_1 \cap V_2$ of $\wt \sigma_2$ (as defined on $V_1$), in fact maps into $(W \cap V_1 \times_{U_1} V'_1) \cap (W \cap V_2 \times_{U_2} V'_2)$.
In case multiple split squares over $X$ are being considered, note that the refinements in Step 2 are mutually consistent, so there are no additional considerations to this step in this case.

The admissible set is given by $\cU_\bullet = \cU^2_\bullet$ for $\bullet \in \set{X,Y_1,Y_2,W}$, with maps of covers between the $\cU_{Y_i}$ and $\cU_W$ given by 
compositions $\wt \pr_i \circ \wt p$ and $\wt t \circ \wt \sigma_i$.
The required commutativity of the four squares follows from the like commutativity of the squares in step 2 for the spaces $X$, $Y_1$, $Y_2$, and $Y_1\times_X Y_2$. 
\end{proof}
\end{added}

As in \S\ref{S:bundle_gerbes}, let $Y_i^{[k]}$ be the $k$-fold fiber
product $Y_i \times_X \cdots \times_X Y_i$ for $i = 1,2.$ Then
$Y_1^{[\bullet]}$ and $Y_2^{[\bullet]}$ each form simplicial spaces over
$X,$ giving the bounding column and row in \eqref{E:locally_split_2d_diagram} below.

Setting $W^{[1,1]}=W$ and
\[
	W^{[1,k]} = W \times_{Y_1} \cdots \times_{Y_1} W,
	\quad \text{and} \quad
	W^{[k,1]} = W\times_{Y_2} \cdots \times_{Y_2} W,
\]
with projection maps $W^{[1,k]} \to W^{[1,k-1]}$ and $W^{[k,1]} \to
W^{[k-1,1]}$ gives simplicial spaces over $Y_1$ and $Y_2$ 
extending above and to the right of $W^{[1,1]}$ in \eqref{E:locally_split_2d_diagram}.

That the rest of the quadrant can then be filled out unambiguously by fiber
products is a consequence of the following result.

\begin{prop}
For each $n$ and $m$, there is a natural isomorphism
\[
	W^{[m,n]} := \overbrace{W^{[m,1]}\times_{Y_1^{[m]}} \cdots \times_{Y_1^{[m]}} W^{[m,1]}}^{n\text{ times}} 
	\cong 
	\overbrace{W^{[1,n]}\times_{Y_2^{[n]}} \cdots \times_{Y_2^{[n]}} W^{[1,n]}}^{m\text{ times}}.
\]
\label{P:fiber_quadrant}
\end{prop}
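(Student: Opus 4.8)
The plan is to prove the isomorphism by induction, reducing the general statement to two base facts: the definition of $W^{[m,1]}$ and $W^{[1,n]}$ as iterated fiber products, and the associativity/commutativity of fiber products over a fixed base. The key observation is that both sides of the claimed isomorphism are, after unwinding the definitions, iterated fiber products of copies of $W=W^{[1,1]}$, and the content of the proposition is that these two iterated constructions produce the same space. I would first treat the special cases $m=1$ and $n=1$, where the statement holds by definition (with the convention $Y_i^{[1]}=Y_i$), and then argue the general case by setting up a canonical map between the two presentations.

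First I would establish that the left-hand side, $W^{[m,1]}\times_{Y_1^{[m]}}\cdots\times_{Y_1^{[m]}}W^{[m,1]}$ ($n$ factors), is naturally identified with the set of tuples of $mn$ points of $W$ arranged in an $m\times n$ grid, subject to the compatibility conditions that points in a common column agree after projection to $Y_2$ (this is the $\times_{Y_2}$ condition defining each $W^{[m,1]}$) and points in a common row agree after projection to $Y_1$ (this is the $\times_{Y_1^{[m]}}$ condition linking the factors). Symmetrically, the right-hand side is the set of the same grids of $mn$ points of $W$ with the row and column roles interchanged in the order of construction. Since a point of $W$ projects to both a point of $Y_1$ and a point of $Y_2$, and the grid conditions are symmetric under transposition, the two descriptions define the same subspace of $W^{mn}$ (the $mn$-fold Cartesian product), giving the natural isomorphism. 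I would make this precise by exhibiting the canonical map on points and checking it is a homeomorphism, using that fiber products are computed as subspaces of Cartesian products with the subspace topology.

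The cleanest way to carry this out rigorously is by a double induction: fixing $m$, induct on $n$, using the inductive step
\[
	W^{[m,n]} \cong W^{[m,n-1]}\times_{Y_1^{[m]}} W^{[m,1]}
\]
together with the symmetric decomposition, and appealing to Lemma~\ref{L:ls_pullback_prod} to ensure all the maps involved remain locally split so that the fiber products are well-behaved. At each stage the required compatibility reduces to the commutativity of the locally split square \eqref{E:lssquare} and the naturality of pullbacks of fiber products, $f^\ast(A\times_B C)\cong f^\ast A\times_{f^\ast B} f^\ast C$, used already in the definitions of product and pullback of bundle gerbes.

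The main obstacle will be bookkeeping rather than conceptual: one must keep careful track of \emph{which} fiber product is being taken over \emph{which} base at each step, since the same space $W^{[m,n]}$ is being built up column-by-column on one side and row-by-row on the other, and the fiber-product bases $Y_1^{[m]}$ and $Y_2^{[n]}$ themselves grow with the induction. The essential point that makes the two orders agree is the interchange law for iterated fiber products over independent bases, and verifying that the two grids carry the same subspace topology of $W^{mn}$; once the $m\times n$ grid picture is set up explicitly, the transposition symmetry makes the identification transparent and the naturality in $m$ and $n$ is immediate from the functoriality of fiber products.
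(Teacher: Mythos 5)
Your proposal is correct and takes essentially the same route as the paper, whose proof consists precisely of identifying both sides with the subspace of $W^{mn}$ of grids $(w_{i,j})$ whose rows map to common points of $Y_1$ and whose columns map to common points of $Y_2$ over a fixed $x \in X$. The double-induction scaffolding and the appeal to Lemma~\ref{L:ls_pullback_prod} that you add are harmless but not needed once the grid description is in hand.
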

\begin{proof}
Both sides may be identified with the set of tuples $(w_{i,j} : 1 \leq i \leq
m,\ 1 \leq j \leq n) \in W^{mn}$ such that for each $i$,
$(w_{i,1},\ldots,w_{i,n})$ all map to a fixed $y_{1,i} \in Y_1$ and for each
$j$, $(w_{1,j},\ldots,w_{m,j})$ all map to a fixed $y_{2,j} \in Y_2$, and where
every $y_{1,i}$ and $y_{2,j}$ sit over a fixed $x \in X$.
\end{proof}

The spaces $W^{[\bullet,\bullet]}$ in the resulting diagram
\begin{equation}
\begin{tikzcd}[row sep=small, column sep=small]
	\ar[d,dotted, no head] &  \ar[d,dotted, no head] & \ar[d,dotted, no head] & \ar[d,dotted, no head] 
	\\ Y_2^{[3]} \ardthree & W^{[1,3]} \ardthree \ar[l] & W^{[2,3]} \ardthree \arltwo & W^{[3,3]} \ardthree \arlthree & \ar[l,dotted,no head]
	\\ Y_2^{[2]} \ardtwo & W^{[1,2]} \ardtwo \ar[l] & W^{[2,2]} \ardtwo \arltwo & W^{[3,2]} \ardtwo \arlthree & \ar[l,dotted,no head]
	\\ Y_2 \ar[d] & W^{[1,1]} \ar[d] \ar[l] & W^{[2,1]} \ar[d] \arltwo & W^{[3,1]} \ar[d] \arlthree & \ar[l,dotted,no head]
	\\ X  & Y_1 \ar[l]  & Y_1^{[2]} \arltwo  & Y_1^{[3]} \arlthree & \ar[l,dotted,no head]
\end{tikzcd}
	\label{E:locally_split_2d_diagram}
\end{equation}
form a {\em bisimplicial space} over $X$, meaning a functor
$\Delta^\mathrm{op}\times \Delta^\mathrm{op} \to \mathsf{Top}/X$ where
$\Delta$ is the simplex category. In particular, $W^{[m,\bullet]}$ and
$W^{[\bullet,n]}$ are simplicial spaces over $Y_1^{[m]}$ and $Y_2^{[n]}$,
respectively, and the squares commute for consistent choices of maps. For
notational convenience, we also set $W^{[k,0]} = Y_1^{[k]}$, $W^{[0,k]} =
Y_2^{[k]}$, and $W^{[0,0]} = X$.

\subsection{Bigerbes} \label{S:bigerbes}

If $L \to W^{[m,n]}$ is a line bundle over one of the spaces in
\eqref{E:locally_split_2d_diagram} then its two simplicial differentials are
\[
\begin{aligned}
	d_1L &= \bigotimes_{i = 0}^m (\pi^1_i)^\ast L^{(-1)^i} \to W^{[m+1,n]}
	\quad \text{and} 
	\\ d_2L &= \bigotimes_{i = 0}^n (\pi^2_i)^\ast L^{(-1)^i} \to W^{[m,n+1]},
\end{aligned}
\]
where $\pi^1_j : W^{[m+1,n]} \to W^{[m,n]}$, $0 \leq j \leq m$
and $\pi^2_j : W^{[m,n+1]} \to W^{[m,n]}$, $0 \leq j \leq n$ denote the fiber
projection maps.
The bundles $d_1d_1L$ and $d_2d_2 L$ are canonically trivial, and there is a
natural isomorphism $d_1d_2 L \cong d_2 d_1 L$.

\begin{defn} A {\em bigerbe} consists of a
  locally split square $(W,Y_2,Y_1,X)$, a line bundle $L \to W^{[2,2]}$,
  and trivializations of $d_1 L$ and $d_2 L$, which induce the same
  trivialization of $d_1d_2 L$ and which induce the canonical
  trivializations of $d_1^2 L$ and $d_2^2L$.
We denote the bigerbe by $(L, W, Y_2, Y_1, X)$ or simply $L.$

For reasons that will become clear below, the order of the spaces $Y_1$ and $Y_2$, or equivalently the orientation of the square \eqref{E:lssquare},
is part of the data of the bigerbe.
\label{D:bundle_bigerbe}
\end{defn}

\begin{defn}\label{D:bigerbe_trivn}
If $Q_1 \to W^{[1,2]}$ and $Q_2 \to W^{[2,1]}$ are line bundles which are
simplicial with respect to $d_2$ and $d_1$, respectively --- so $d_2 Q_1$ over $W^{[1,3]}$
  is equipped with a trivialization inducing the canonical trivialization
  of $d^2_2 Q_1$ and similarly for $Q_2$ --- then $d_1 Q_1 \otimes d_2 Q_2^\inv$ %over $W^{[2,2]}$ 
  has a canonical bigerbe structure.
 A bigerbe $L$ is said to be {\em trivial} if
\[ L \cong d_1
	  Q_1\otimes d_2 Q_2^\inv, 
\] 
for $Q_1$ and $Q_2$ as above, with the isomorphism identifying the bigerbe structure on $L$ with the canonical
one on $d_1 Q_1 \otimes d_2Q_2^\inv$;
such an isomorphism is referred to as a {\em trivialization} of $L$.
\end{defn}
\noindent In particular, $L$ is trivial if either
\begin{enumerate} [{\normalfont (i)}]
\item $L \cong d_1 P$ where $P \to
    W^{[1,2]}$ is a line bundle with trivialization $d_2 P \cong \ul \bbC$
    inducing the canonical trivialization of $d_2^2 P$, or
\item $L \cong
    d_2 Q$ where $Q \to W^{[2,1]}$ is a line bundle with trivialization
    $d_1 Q \cong \ul \bbC$ inducing the canonical trivialization of $d_1^2
    Q$,
\end{enumerate}
as in either case we can take the trivial bundle on the other factor.

As for ordinary bundle gerbes, we proceed to define pullbacks and products of bigerbes.
\begin{lem}
If $(W,Y_2,Y_1,X)$ and $(W',Y'_2,Y'_1,X)$ are locally split squares over $X$ and $f : X' \to X$ is a continuous map, then
\begin{enumerate}
[{\normalfont (i)}]
\item\label{I:pullback_prod_lss_one} $\big(f^\ast(W),f^\ast(Y_2),f^\ast(Y_1),X'\big)$ is a locally split square over $X'$, and
\item\label{I:pullback_prod_lss_two} $(W\times_X W',Y_2\times_X Y'_2, Y_1\times_X Y'_1,X)$ is a locally split square over $X$.
\end{enumerate}
\label{L:pullback_prod_lss}
\end{lem}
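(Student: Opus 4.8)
The plan is to reduce both statements to Lemma~\ref{L:ls_pullback_prod}, together with the elementary fact that a composite of locally split maps is again locally split (given a section $s$ of the first map over a cover $\ecU$ and a section of the second over a cover of its domain, the composite, over the refinement of $\ecU$ obtained by pulling back the latter cover along $s$, furnishes local sections). Recall that by Definition~\ref{D:lssquare} a locally split square amounts to the three maps $Y_1 \to X$, $Y_2 \to X$, and $p : W \to Y_1\times_X Y_2$ all being locally split; so in each case I must verify exactly these three conditions for the new square, with commutativity being inherited functorially and requiring no separate argument.

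For part (i), the maps $f^\ast Y_i \to X'$ are locally split directly by the pullback half of Lemma~\ref{L:ls_pullback_prod}. For the third condition I would first invoke the canonical identification $f^\ast(Y_1\times_X Y_2) \cong f^\ast Y_1 \times_{X'} f^\ast Y_2$ (naturality of fiber products under pullback, as already used in \S\ref{S:bundle_gerbes}). Under this identification the induced map $f^\ast W \to f^\ast Y_1 \times_{X'} f^\ast Y_2$ is precisely the pullback of $p : W \to Y_1\times_X Y_2$ along the projection $f^\ast(Y_1\times_X Y_2) \to Y_1\times_X Y_2$; indeed both describe the space of pairs $(x',w)$ lying over a common point of $X$. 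Hence it is locally split, again by Lemma~\ref{L:ls_pullback_prod}.

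For part (ii), the maps $Y_i\times_X Y'_i \to X$ are locally split by the fiber-product half of Lemma~\ref{L:ls_pullback_prod}. For the third condition I would reorder factors to identify the target $(Y_1\times_X Y'_1)\times_X(Y_2\times_X Y'_2) \cong Z\times_X Z'$, where $Z = Y_1\times_X Y_2$ and $Z' = Y'_1\times_X Y'_2$, and identify the induced map with $p\times_X p' : W\times_X W' \to Z\times_X Z'$. The key step is that this fiber product of two locally split maps (over $X$, but with distinct targets) is locally split, which is not literally Lemma~\ref{L:ls_pullback_prod}; so I would factor it through $W\times_X Z'$ as
\[
	W\times_X W' \xrightarrow{\ \Id_W \times_X p'\ } W\times_X Z' \xrightarrow{\ p \times_X \Id_{Z'}\ } Z\times_X Z'.
\]
A direct comparison of points shows the first arrow is the pullback of $p'$ along the projection $W\times_X Z' \to Z'$ and the second is the pullback of $p$ along $Z\times_X Z' \to Z$, so each is locally split by Lemma~\ref{L:ls_pullback_prod} and the composite is locally split.

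The routine verifications are the point-set identifications of the various iterated fiber products and the recognition of the two factored maps in part (ii) as honest pullbacks; this factorization is the only step not handed to us verbatim by Lemma~\ref{L:ls_pullback_prod}, and is where I would be most careful. Alternatively one could construct local sections of $p\times_X p'$ directly from local sections of $p$ and $p'$ over a mutually refining cover, but the factorization keeps the argument within the already-established machinery.
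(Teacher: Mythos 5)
Your proof is correct, and for part (i) it is essentially the paper's argument: both recognize $f^\ast(Y_1\times_X Y_2)\cong f^\ast Y_1\times_{X'}f^\ast Y_2$ and transport the data of $p:W\to Y_1\times_X Y_2$ across the pullback (the paper writes the pulled-back covers and sections $f^\inv\ecU_i$, $f^\ast s^i$, $\wt f^\inv\ecV$, $\wt f^\ast t$ explicitly rather than citing Lemma~\ref{L:ls_pullback_prod}, but the content is the same). For part (ii) you take a genuinely different, slightly more roundabout route. The paper simply reorders factors to $(Y_1\times_XY_2)\times_X(Y_1'\times_XY_2')$ and exhibits the cover $\ecV\times_X\ecV'$ with the section $t\times t'$ to $W\times_X W'$ in one step. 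You instead observe, correctly, that the fiber-product half of Lemma~\ref{L:ls_pullback_prod} does not literally apply to $p\times_X p'$ (the two maps have different targets $Z$ and $Z'$), and you repair this by factoring through $W\times_X Z'$, identifying each factor as a pullback, and invoking closure of local splitness under composition. Both arguments are sound; the paper's is shorter because constructing the product section directly is immediate, while yours has the virtue of staying entirely inside the already-proved machinery of Lemma~\ref{L:ls_pullback_prod} at the cost of needing the (true, but not stated in the paper) composition lemma. Your identifications of the relevant iterated fiber products are all correct.
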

\begin{proof}
By hypothesis there are covers $\ecU_i \to X$ admitting sections $s^i : \ecU_i
\to Y_i$ of $\pi_i : Y_i\to X$ and a cover $\ecV\to Y_1\times_X Y_2$ admitting
a section $t : \ecV\to W$ of the universal map $p : W \to Y_1\times_X Y_2$. 

Pullback of these by $f$ gives covers $f^\inv \ecU_i \to X'$ and sections
$f^\ast s^i : f^\inv \ecU_i \to f^\ast Y_i$, with $f^\ast s^i = 1\times s^i
\circ f : f^\inv \ecU_i \to X'\times_X Y_i = f^\ast Y_i$, where the section is
composed with the lift $f : f^\inv \ecU_i \to \ecU_i$ and $1$ denotes the
inclusion map $f^\inv \ecU_i \to X'$ of covers.
Similarly, if $\wt f : f^\ast (Y_1\times_X Y_2) \to Y_1\times_X Y_2$
denotes the natural lift over $f$, then $\wt f^\inv \ecV\to
f^\ast(Y_1\times_X Y_2)$ supports the section $\wt f^\ast t : \wt f^\inv
\ecV\to f^\ast W$ of the natural map $f^\ast W \to f^\ast(Y_1\times_X Y_2)
\cong (f^\ast Y_1)\times_{X'} (f^\ast Y_2)$, proving
\eqref{I:pullback_prod_lss_one}.

For the fiber product, the cover $\ecU_i \cap \ecU'_i \to X$ admits sections
$s^i\times (s')^i$ of $Y_i\times_X Y'_i$, and then
$(Y_1\times_XY'_1)\times_X (Y_2 \times_X Y_2') \cong (Y_1 \times_X Y_2)
\times_X (Y_1'\times_X Y'_2)$ may be equipped with the cover $\ecV\times_X
\ecV'$, which admits the section $t \times t'$ to $W\times_X W'$, proving
\eqref{I:pullback_prod_lss_two}.
\end{proof}

\begin{defn} If $L \to W^{[2,2]}$ is a bigerbe with respect to the locally
  split square $(W,Y_2,Y_1,X)$, and $f : X' \to X$ is a continuous map,
  then the {\em pullback} of $L$ is the bigerbe $\wt f^\ast L \to
  f^\ast(W^{[2,2]})$ with respect to the locally split square 
$\big(f^\ast(W),f^\ast(Y_2),f^\ast(Y_1),X'\big)$.

If $L = (L,W,Y_2,Y_1,X)$ and $L' = (L',W',Y_2',Y_1',X)$ are bigerbes on $X$, then the
{\em product} of $L$ and $L'$ is the bigerbe 
\begin{equation*}
(L\otimes L',W\times_X W',Y_2\times_X Y'_2, Y_1\times_X Y'_1, X). 
\label{rbm.79}\end{equation*}
\label{D:pullback_product}
\end{defn}

Next we define (strong) morphisms and stable isomorphisms for bigerbes.
A {\em morphism} of locally split squares $(W',Y_2',Y_1',X') \to (W,Y_2,Y_1,X)$
is a collection of maps from each space in the first square to the
corresponding space in the second, with each of the relevant squares commuting.
As for bundle gerbes we do not require compatibility in the sense of
\S\ref{S:cech} of the locally split maps of the first square with those of the
second.
By naturality of fiber products, these maps extend to maps ${W'}^{[m,n]} \to
W^{[m,n]}$ for each $(m,n) \in \bbN_0^2$ commuting with the various fiber
projections in both directions.
By abuse of notation we will denote all such maps by a single letter, say $f:
{W'}^{[m,n]} \to W^{[m,n]}$.
\begin{defn}
If $(L,W,Y_2,Y_1,X)$ and $(L',W',Y'_2,Y'_1,X')$ are bigerbes over $X$ and $X'$ respectively, then a {\em (strong) morphism}
from $L'$ to $L$ consists of a morphism $f : (W',Y_2',Y_1',X') \to (W,Y_2,Y_1,X)$ and an isomorphism $L' \cong f^\ast L$ over ${W'}^{[2,2]}$ 
which intertwines the sections of $d_i L$ and $d_i f^\ast L$ for $i = 1,2$.
A {\em (strong) isomorphism} is a morphism for which $X = X'$ and $f = \Id : X \to X'$.

Finally, a {\em stable isomorphism} of bigerbes $L$ and $L'$ over $X$ is a (strong) isomorphism
\[
	L\otimes T \cong L'\otimes T
\]
where $T$ and $T'$ are trivial bigerbes.
\label{D:bigerbe_morphisms}
\end{defn}

\subsection{The 4-class of a bigerbe} \label{S:bigerbe_four_class}

\begin{added}
We proceed to define the cohomology 4-class of a bigerbe using \v Cech theory as in \S\ref{S:simplicial_cech} and \S\ref{S:DD_class}, starting
with the following generalization of Lemma~\ref{L:admissible_cover_fib_prod}.
\begin{lem}
Fix an admissible set of covers for a locally split square $(X,Y_1,Y_2,W)$.
%let $Y_i^{[j]}$ and $W^{[m,n]}$ denote the sequence of spaces filling out the locally split square diagram by fiber products.
%
Then 
\begin{enumerate}
[{\normalfont (i)}]
\item 
\[
	\cU_{W}^{[m,n]} := \cU_{W}^{[m,1]} \times_{\cU_{Y_1^{[m]}}} \cdots \times_{\cU_{Y_1^{[m]}}} \cU_{W}^{[m,1]} 
	\cong \cU_W^{[1,n]}\times_{\cU_{Y_2}^{[n]}} \cdots \times_{\cU_{Y_2}^{[n]}} \cU_W^{[1,n]}
\]
is a well-defined cover of $W^{[m,n]}$ for each $m$ and $n$.
\item
The various projection maps $\pi^1_i : W^{[m,n]} \to W^{[m-1,n]}$ and $\pi_j^2 : W^{[m,n]} \to W^{[m,n-1]}$ 
for $0 \leq i \leq m-1$ and $0 \leq j \leq n - 1$ lift canonically
to maps of covers 
$\wt \pi^1_i : \Et{(\cU_W^{[m,n]})^{(\ell)}} \to \Et{(\cU_W^{[m-1,n]})^{(\ell)}}$ 
and
$\wt \pi^2_j : \Et{(\cU_W^{[m,n]})^{(\ell)}} \to \Et{(\cU_W^{[m,n-1]})^{(\ell)}}$ 
for each $\ell$.
\item 
The lifted sections on the admissible set of covers determine maps of covers
$\wt s^1_\ell : \Et{(\cU_W^{[m-1,n]})^{(\ell)}} \to \Et{(\cU_W^{[m,n]})^{(\ell)}}$ 
and
$\wt s^2_\ell : \Et{(\cU_W^{[m,n-1]})^{(\ell)}} \to \Et{(\cU_W^{[m,n]})^{(\ell)}}$ 
for each $\ell$
such that $\wt s^1_\ell$ commutes with the projections $\wt \pi^2_j$ and satisfies 
\[
	\wt \pi^1_j \wt s^1_\ell = \begin{cases} 1 & j = 0 \\ \wt s^1_\ell \wt \pi^1_{j-1} & 1 \leq j \leq m \end{cases}
\]
and similarly for $\wt s^2_\ell$ with respect to $\wt \pi^1_j$ and $\wt \pi^2_j$. 
\end{enumerate}
\label{L:triple_complex}
\end{lem}

\begin{proof}
The same argument as in Proposition~\ref{P:fiber_quadrant}
shows that $\cU_W^{[m,n]}$ is well-defined, namely both sides are identified with tuples
\[
	(w_{i,j} \in W_{i,j} : 1 \leq i \leq m, \ 1 \leq j \leq n), \qquad W_{i,j} \in \cU_W
\]
such that for each $i$, $(w_{i,1},\ldots,w_{i,n})$ all map to a fixed $y_{1,i} \in V_{1,i}$ for some $V_{1,i} \in \cU_{Y_1}$,
and for each $j$, $(w_{1,j},\ldots,w_{m,j})$ all map to a fixed $y_{2,j} \in V_{2,j}$ for some $V_{2,j} \in \cU_{Y_2}$, with 
$(y_{1,1},\ldots,y_{1,m}) \in V_{1,1} \times_{U} \cdots \times_{U} V_{1,m} \in \cU_{Y_1^{[m]}}$ and $(y_{2,1},\ldots,y_{2,n}) \in V_{2,1} \times_U \cdots \times_U V_{2,n} \in \cU_{Y_2^{[n]}}$.

Then the arguments in 
Lemma~\ref{L:admissible_cover_fib_prod} apply, viewing each row or column of \eqref{E:locally_split_2d_diagram}
as a simplicial space, with commutativity of the vertical and horizontal maps of covers deriving from their commutativity at the bottom level for 
the locally split square.
\end{proof}
\end{added}

The abelian groups of \v Cech cochains
$\vC_{\blue{\ecU}}^\ell(W^{[j,k]}; A) \blue{:=\vC_{\ecU_W^{[j,k]}}^\ell(W^{[j,k]}; A)}$ %$= \lim_{\ecU} \vC_{\ecU}^\ell(W^{[j,k]}; A)$ 
are
defined for each $\ell$, $j$, and $k \in \bbN_0$.%, by passage to the direct limit over covers of $W^{[j,k]}$. 
Using Lemma~\ref{L:triple_complex} we may define 
\[
\begin{aligned}
	d_1 &= \sum_{j=0}^{m} (-1)^j(\pi^1_j)^\ast : \vC_{\blue{\ecU}}^{\ell}(W^{[m,n]}; A) \to \vC_{\blue{\ecU}}^\ell(W^{[m+1,n]}; A), \quad \text{and}
	\\ d_2 &= \sum_{j=0}^{n} (-1)^j(\pi^2_j)^\ast : \vC_{\blue{\ecU}}^{\ell}(W^{[m,n]}; A) \to \vC_{\blue{\ecU}}^\ell(W^{[m,n+1]}; A)
\end{aligned}
\]
which are differentials commuting with one another and with the \v Cech
differential $\delta.$ Thus $(\vC_{\blue{\ecU}}^\bullet(W^{[\bullet,\bullet]}; A), \delta, d_1, d_2)$ forms a
triple complex, and combining 
\blue{Lemma~\ref{L:triple_complex} and 
Proposition~\ref{T:exact_simplicial}}
leads to the following result.

\begin{prop} 
\blue{
An admissible set of covers determines commuting}
%compatible sections of $Y_i \to X$ and $W \to Y_i$ determine
 homotopy contractions for the $d_i$ which commute with the other
 simplicial differential, and for each fixed $\ell$ and $k$, the subcomplex
\begin{equation*}
\pns{\kernel\bset{d_1 : \vC_{\blue{\ecU}}^\ell(W^{[k,\bullet]}) \to
 \vC_{\blue{\ecU}}^\ell(W^{[k+1,\bullet]})}, d_2}
\label{rbm.61}\end{equation*}
is exact and similarly with indices reversed.
\label{P:triple_complex_exactness}
\end{prop}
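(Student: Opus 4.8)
The plan is to produce, for each fixed \v Cech degree $\ell$ and each fixed first index $k$, an explicit homotopy contraction of the vertical ($d_2$) simplicial complex along column $k$ of \eqref{E:locally_split_2d_diagram}, arranged so as to commute with the horizontal differential $d_1$. Restricting such a contraction to $\ker d_1$ then yields the asserted exactness in exactly the way the contraction of Proposition~\ref{T:exact_simplicial} yields exactness there; the statement with indices reversed follows by the manifest symmetry of Definition~\ref{D:lssquare}.

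First I would fix the sections. By Definition~\ref{D:lssquare} the map $W \to Y_1\times_X Y_2$ is locally split, so it admits a local section $t$, and by Lemma~\ref{L:lssquare_sections} this may be taken together with sections $s^i$ of $\pi_i : Y_i \to X$ so that the horizontal and vertical splittings are compatible. Column $k$ is precisely the simplicial space of the locally split map $\rho_k : W^{[k,1]} \to Y_1^{[k]} = W^{[k,0]}$, where $W^{[k,1]} = W\times_{Y_2}\cdots\times_{Y_2} W$ and $\rho_k$ applies $p : W \to Y_1$ in each factor. The crucial point is that $t$ and $s^2$ assemble into a section of $\rho_k$ built \emph{factorwise},
\[
\sigma_k : Y_1^{[k]}\ni (y^1_1,\ldots,y^1_k)\longmapsto \big(t(y^1_1,s^2(x)),\ldots,t(y^1_k,s^2(x))\big)\in W^{[k,1]},\quad x = \pi_1(y^1_i),
\]
so that, because each factor is produced by the same rule, $\sigma_k$ intertwines the horizontal face maps: $\pi^1_j\circ \sigma_k = \sigma_{k-1}\circ \pi^1_j$ for all $j$. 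Running the construction \eqref{E:simplicial_section_defn}--\eqref{E:section_projection_commutativity_idents} in the $d_2$ direction for $\rho_k$, with $\sigma_k$ playing the role of the section, produces maps $s_\ell^\ast : \vC^\ell(W^{[k,n]}) \to \vC^\ell(W^{[k,n-1]})$ with $d_2 s_\ell^\ast + s_\ell^\ast d_2 = 1$.

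The key step is to check that this vertical contraction commutes with $d_1$. Since the vertical simplicial section inserts a new first vertical factor via $\sigma_k$, and $\sigma_k$ intertwines the $\pi^1_j$ as above, the squares relating the simplicial sections on columns $k$ and $k+1$ to the horizontal face maps commute. As $s^\ast_\ell$ is defined by \eqref{rbm.28} as pull-back along the first-vertical-factor map, it therefore commutes with each $(\pi^1_j)^\ast$, and hence with $d_1 = \sum_j (-1)^j (\pi^1_j)^\ast$. This is entirely consistent with the Remark following Proposition~\ref{P:ls_Cech_map}: the contraction fails to commute with $\delta$, whose combinatorics on $\ell$-fold intersections single out the first factor, but it does commute with the transverse simplicial differential $d_1$, which is assembled from pull-backs along bisimplicial face maps respecting the factorwise vertical section. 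Granting the commutation, exactness of $(\ker d_1, d_2)$ is immediate: if $c \in \ker d_1$ with $d_2 c = 0$, then $c = (d_2 s_\ell^\ast + s_\ell^\ast d_2) c = d_2(s_\ell^\ast c)$, while $d_1(s_\ell^\ast c) = s_\ell^\ast(d_1 c) = 0$, so $s_\ell^\ast c$ is the desired primitive lying in $\ker d_1$.

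The main obstacle is the commutation claim of the previous paragraph: one must verify, at the level of covers and of the maps $s_\ell^\ast$ of Proposition~\ref{P:ls_Cech_map} --- which are only defined after passing to admissible refinements and which do \emph{not} commute with $\delta$ --- that no analogous obstruction arises for $d_1$. This reduces to the factorwise identity $\pi^1_j \circ \sigma_k = \sigma_{k-1}\circ \pi^1_j$ together with the stability of admissible covers under the horizontal face maps, which is precisely what the compatibility of the splittings in Lemma~\ref{L:lssquare_sections} secures. The $d_1$-contractions commuting with $d_2$, and the reversed exactness statement, then follow verbatim upon interchanging $(Y_1,d_1)$ with $(Y_2,d_2)$ and using $s^1$ in place of $s^2$.
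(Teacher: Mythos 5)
Your argument is correct and is essentially the paper's own proof, just written out in more detail: both use the compatible sections of Lemma~\ref{L:lssquare_sections} to build the contraction of Proposition~\ref{P:ls_Cech_map}/Theorem~\ref{T:exact_simplicial} factorwise along each column, so that it intertwines the transverse face maps and hence commutes with $d_1$, after which exactness of the subcomplex $(\ker d_1, d_2)$ follows by restricting the contraction. The explicit factorwise formula for $\sigma_k$ and the verification that the obstruction affecting $\delta$ does not arise for $d_1$ are exactly the points the paper leaves implicit.
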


Just as a bundle gerbe has a Dixmier Douady class in $H^3(X; \bbZ)$, a bigerbe
determines a characteristic class in $H^4(X; \bbZ).$ To see this, consider the 
truncation of the 
triple complex $(\vC_{\blue{\ecU}}^\bullet(W^{[\bullet,\bullet]}; A), \delta, d_1, d_2)$
which we denote by $(\vZ_{\blue{\ecU}}^\bullet(W^{[\bullet,\bullet]}; A), \delta, d_1, d_2)$, where 
\[
	\vZ_{\blue{\ecU}}^\ell(W^{[j,k]}; A) =
	\begin{cases}
	   \vC_{\blue{\ecU}}^\ell(W^{[j,k]}; A), & [j,k] = [1,1]
	\\ \kernel d_1 \subset \vC_{\blue{\ecU}}^\ell(W^{[2,1]}; A) & [j,k] = [2,1]
	\\ \kernel d_2 \subset \vC_{\blue{\ecU}}^\ell(W^{[1,2]}; A) & [j,k] = [1,2]
	\\ \kernel d_1 \cap \kernel d_2 \subset \vC_{\blue{\ecU}}^\ell(W^{[2,2]}; A) & [j,k] = [2,2]
	\\ 0 & \text{otherwise}
	\end{cases}
\]
Suppressing the \v Cech direction, we may depict the truncated complex as
\begin{equation}
\begin{tikzcd}[row sep=small, column sep=small]
	0 & 0
	\\ \vZ_{\blue{\ecU}}^\bullet(W^{[1,2]}; A) \ar[u,"d_2"] \ar[r, "d_1"] & \vZ_{\blue{\ecU}}^\bullet(W^{[2,2]}; A) \ar[u, "d_2"] \ar[r, "d_1"] & 0
	\\ \vC_{\blue{\ecU}}^\bullet(W^{[1,1]}; A) \ar[u, "d_2"] \ar[r, "d_1"] & \vZ_{\blue{\ecU}}^\bullet(W^{[2,1]}; A) \ar[u, "d_2"] \ar[r, "d_1"] & 0.
\end{tikzcd}
	\label{E:bigerbe_triple_complex}
\end{equation}
In particular, the leftmost column and bottom row of \eqref{E:bigerbe_triple_complex} are taken to have $d_i$ degree 
$0$.
Then following Convention~\ref{Conv:multicomplex}, the total differential on \eqref{E:bigerbe_triple_complex} is
\begin{equation}
	D = \delta + (-1)^\ell d_1 + (-1)^{\ell + m + 1} d_2 
	\quad \text{on $\vZ_{\blue{\ecU}}^\ell(W^{[m,n]}; A)$}
	\label{E:triple_complex_total_diff}
\end{equation}
since $\vC_{\blue{\ecU}}^\bullet(W^{[m,n]})$ occupies the $(m-1,n-1)$ coordinate in the $(d_1,d_2)$ plane.

Employing a spectral sequence argument twice immediately yields the following result.

\begin{prop}
The triple complex 
%$\big(\vC^\bullet(W^{[\bullet,\bullet]}; A), \delta, d_1,d_2\big)$ or 
$\big(\vZ_{\blue{\ecU}}^\bullet(W^{[\bullet,\bullet]}; A), \delta, d_1, d_2\big)$ has
total cohomology isomorphic to the ordinary cohomology $\vH_{\blue{\ecU}}^\bullet(X; A)$ of $X$.
\label{P:triple_complex_trivial}
\end{prop}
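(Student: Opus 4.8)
The plan is to run the spectral sequence of \eqref{E:bigerbe_triple_complex} in such a way that one simplicial direction is collapsed first, reducing the triple complex to the bundle-gerbe double complex \eqref{E:gerbe_double_cplx} for the locally split map $Y_1 \to X$, to which Theorem~\ref{T:total_cohom_just_X} then applies directly. Concretely, I would filter by the sum of the $\delta$-degree and the $d_1$-degree, so that the only part of the total differential $D$ preserving the filtration is $d_2$; the $E_1$-page is then the $d_2$-cohomology of \eqref{E:bigerbe_triple_complex}.

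The heart of the argument is computing this $d_2$-cohomology, and this is exactly where the two exactness statements enter. In the left-hand ($d_1$-degree $0$) column the two-term complex $\vC^\bullet(W^{[1,1]}) \to \vZ^\bullet(W^{[1,2]})$, with differential $d_2$, is the low end of the simplicial complex of $W \to Y_1$, which is locally split by Lemma~\ref{L:lssquare_sections}; by Proposition~\ref{T:exact_simplicial} the truncation term $\vZ^\bullet(W^{[1,2]}) = \kernel d_2$ equals $\img d_2$, so the top cohomology vanishes, while the bottom cohomology is $\kernel\set{d_2 : \vC^\bullet(W^{[1,1]}) \to \vC^\bullet(W^{[1,2]})} \cong \vC^\bullet(Y_1)$. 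In the right-hand ($d_1$-degree $1$) column, $\vZ^\bullet(W^{[2,1]}) \to \vZ^\bullet(W^{[2,2]})$ takes place inside $\kernel d_1$, and the exactness of $\pns{\kernel d_1 \subset \vC^\bullet(W^{[2,\bullet]}),\, d_2}$ furnished by Proposition~\ref{P:triple_complex_exactness} again kills the top and identifies the bottom with the $d_1$-cocycles $\vZ^\bullet(Y_1^{[2]}) = \kernel\set{d_1 : \vC^\bullet(Y_1^{[2]}) \to \vC^\bullet(Y_1^{[3]})}$. Thus the $E_1$-page is concentrated in $d_2$-degree $0$, with surviving entries $\vC^\bullet(Y_1)$ and $\vZ^\bullet(Y_1^{[2]})$.

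I would then check that the differential induced on $E_1$ is, using commutativity of $d_1$ and $d_2$ together with compatibility of the projections $W^{[2,1]} \to Y_1^{[2]}$ with $W \to Y_1$, precisely the simplicial differential of $Y_1^{[\bullet]}$; consequently the $E_1$-page equipped with its residual $\delta$ and $d_1$ differentials is exactly the bundle-gerbe double complex \eqref{E:gerbe_double_cplx} for $Y_1 \to X$. Since the filtration is finite in each total degree, the spectral sequence converges, and the pages from $E_1$ onward are precisely those of \eqref{E:gerbe_double_cplx}, whose total cohomology is $\vH^\bullet(X; A)$ by Theorem~\ref{T:total_cohom_just_X}; this is the second spectral sequence. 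By the symmetry of the locally split square one could equally collapse $d_1$ first and land on the gerbe complex for $Y_2 \to X$. The main obstacle is the $d_2$-cohomology computation itself --- specifically, confirming that the truncation is arranged so that the $d_2$-degree $1$ entries die and the $d_2$-degree $0$ entries are exactly $\vC^\bullet(Y_1)$ and $\vZ^\bullet(Y_1^{[2]})$, which is the precise purpose of Propositions~\ref{T:exact_simplicial} and \ref{P:triple_complex_exactness} --- together with verifying that the induced $E_1$-differential carries no correction terms beyond the naive simplicial one.
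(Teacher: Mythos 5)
Your proposal is correct and follows essentially the same route as the paper: the paper rolls $\delta$ and $d_1$ into a single total differential $D_1$, uses the exactness of $d_2$ (from Proposition~\ref{T:exact_simplicial} and Proposition~\ref{P:triple_complex_exactness}) to collapse the $(D_1,d_2)$ double complex onto the truncated gerbe complex \eqref{E:gerbe_double_cplx} for $Y_1 \to X$, and then invokes Theorem~\ref{T:total_cohom_just_X}, which is precisely your filtration argument written more tersely. Your column-by-column identification of the $E_1$ page is a correct and somewhat more explicit account of the step the paper summarizes as ``by exactness of $d_2$.''
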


\begin{proof} The total differential \eqref{E:triple_complex_total_diff} of
  the $(\delta,d_1,d_2)$ triple complex can be written as $D = D_1 +
  (-1)^{\ell + m+1} d_2$ on $\vC_{\blue{\ecU}}^\ell(W^{[m,n]}; A)$, where $D_1 = \delta +
  (-1)^\ell d_1$ is the total differential of the $(\delta,d_1)$ double
  complex.
By exactness
of $d_2$, the total cohomology of the $(D_1,d_2)$ double complex is isomorphic
to the cohomology of the $D_1$ (double) complex $\vC_{\blue{\ecU}}^\bullet(Y_1^{[\bullet]}; A)$, which in turn is isomorphic to
$\vH_{\blue{\ecU}}^\bullet(X; A)$ as in Theorem~\ref{T:total_cohom_just_X}.
\end{proof}

\begin{lem}\label{L:Cech_bigerbe_class}
 The line bundle $L \to W^{[2,2]}$ of a bigerbe determines a
  pure cocycle $c(L) \in \vZ_{\blue{\ecU}}^1(W^{[2,2]}; \bbC^\ast)$ in the triple complex
  \eqref{E:bigerbe_triple_complex} \blue{for some admissible set of covers}, and conversely any line bundle with
  $c(L) \in \vZ_{\blue{\ecU}}^1(W^{[2,2]}; \bbC^\ast)$ determines a bigerbe. 
Moreover, the pure
  cocycle $c(L) \in \vZ_{\blue{\ecU}}^1(W^{[2,2]}; \bbC^\ast)$  is a coboundary if and
  only if $L$ admits a trivialization.
\end{lem}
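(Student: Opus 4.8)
The plan is to mirror the proof of Proposition~\ref{P:bundle_gerbe_double_class} inside the triple complex \eqref{E:bigerbe_triple_complex}, replacing the single descent step there with a two-stage descent that exploits the compatibility $d_2 s_1 = d_1 s_2$ built into the definition of a bigerbe. First I would record the chain-level data. As a line bundle, $L \to W^{[2,2]}$ has a Chern cocycle $c(L) \in \vC^1(W^{[2,2]}; \bbC^\ast)$ with $\delta c(L) = 0$, while $d_1 c(L)$ and $d_2 c(L)$ are Chern cocycles for $d_1 L$ and $d_2 L$. The trivializing sections $s_1, s_2$ are encoded by $\gamma_1 \in \vC^0(W^{[3,2]}; \bbC^\ast)$ and $\gamma_2 \in \vC^0(W^{[2,3]}; \bbC^\ast)$ with $\delta\gamma_i = d_i c(L)$; the requirement that $s_i$ induce the canonical trivialization of $d_i^2 L$ gives $d_1\gamma_1 = 0$ and $d_2\gamma_2 = 0$, exactly as $d\gamma = 0$ in the proof of Proposition~\ref{P:bundle_gerbe_double_class}, while the compatibility $d_2 s_1 = d_1 s_2$ translates to $d_2\gamma_1 = d_1\gamma_2 \in \vC^0(W^{[3,3]}; \bbC^\ast)$.

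The crux is to produce $\eta \in \vC^0(W^{[2,2]}; \bbC^\ast)$ with $d_1\eta = \gamma_1$ and $d_2\eta = \gamma_2$ \emph{simultaneously}; granting this, $c(L) - \delta\eta$ is the desired pure cocycle, since $d_i(c(L) - \delta\eta) = \delta\gamma_i - \delta d_i\eta = 0$ and $\delta$-closedness is preserved, and subtracting a \v Cech coboundary only alters the local representative of $L$ (and correspondingly $\gamma_1, \gamma_2$), leaving the bigerbe unchanged. To construct $\eta$ I would first invoke the homotopy contraction for $d_1$ from Proposition~\ref{P:triple_complex_exactness} to solve $d_1\eta_0 = \gamma_1$, possible because $d_1\gamma_1 = 0$. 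Setting $\psi := d_2\eta_0 - \gamma_2 \in \vC^0(W^{[2,3]}; \bbC^\ast)$, the commutation $d_1 d_2 = d_2 d_1$ together with $d_2\gamma_1 = d_1\gamma_2$ gives $d_1\psi = d_2\gamma_1 - d_1\gamma_2 = 0$, and $d_2\psi = 0$ since $d_2^2 = 0$ and $d_2\gamma_2 = 0$. Thus $\psi$ is $d_2$-closed and lies in $\kernel d_1$, so by the exactness of the complex $\bigl(\kernel d_1,\, d_2\bigr)$ in Proposition~\ref{P:triple_complex_exactness} we may write $\psi = d_2\xi$ with $\xi \in \vC^0(W^{[2,2]}; \bbC^\ast)$ and $d_1\xi = 0$. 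Then $\eta := \eta_0 - \xi$ satisfies $d_1\eta = \gamma_1$ and $d_2\eta = \gamma_2$. This two-stage descent is the main obstacle: it is exactly where the symmetric compatibility axiom of a bigerbe is consumed, and the signs dictated by Convention~\ref{Conv:multicomplex} must be tracked to keep $d_1 d_2 = d_2 d_1$ consistent.

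For the converse, given a line bundle $L$ with $c(L) \in \vZ^1(W^{[2,2]}; \bbC^\ast)$, the vanishing $d_1 c(L) = 0$ exhibits $d_1 L$ as assembled trivially from trivial bundles, precisely as in Proposition~\ref{P:bundle_gerbe_double_class}, furnishing a trivialization $s_1$ inducing the canonical trivialization of $d_1^2 L$; symmetrically $d_2 c(L) = 0$ gives $s_2$. Since both $d_2 s_1$ and $d_1 s_2$ are then the identity chain-level trivialization of $d_1 d_2 L \cong d_2 d_1 L$, they agree, so $(L, W, Y_2, Y_1, X)$ is a bigerbe in the sense of Definition~\ref{D:bundle_bigerbe}.

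Finally, for the triviality statement I would unwind Definition~\ref{D:bigerbe_trivn}. A trivialization $L \cong d_1 Q_1 \otimes d_2 Q_2^{\inv}$ with $Q_1 \to W^{[1,2]}$ simplicial with respect to $d_2$ and $Q_2 \to W^{[2,1]}$ simplicial with respect to $d_1$ makes each of $Q_1, Q_2$ a bundle gerbe, so by Proposition~\ref{P:bundle_gerbe_double_class} their Chern classes are pure cocycles $c(Q_1) \in \vZ^1(W^{[1,2]}; \bbC^\ast)$ and $c(Q_2) \in \vZ^1(W^{[2,1]}; \bbC^\ast)$. In the truncated complex \eqref{E:bigerbe_triple_complex} these satisfy $D c(Q_1) = \pm d_1 c(Q_1)$ and $D c(Q_2) = \pm d_2 c(Q_2)$, whence $c(L) = d_1 c(Q_1) - d_2 c(Q_2) = D\bigl(\pm c(Q_1) \mp c(Q_2)\bigr)$ is a total coboundary. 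Conversely, a total coboundary $c(L) = D(\rho, q_1, q_2)$ with $\rho \in \vZ^0(W^{[2,2]})$, $q_1 \in \vZ^1(W^{[1,2]})$, $q_2 \in \vZ^1(W^{[2,1]})$ forces $q_1, q_2$ to be Chern cocycles of simplicial line bundles $Q_1, Q_2$, and reading off the $\vZ^1(W^{[2,2]})$ component yields $L \cong d_1 Q_1 \otimes d_2 Q_2^{\inv}$ up to the isomorphism encoded by $\delta\rho$, i.e.\ a trivialization; the signs are again governed by Convention~\ref{Conv:multicomplex}.
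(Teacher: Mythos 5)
Your proof is correct and follows essentially the same route as the paper: encode the bigerbe data $(c(L),\gamma_1,\gamma_2)$ as a cocycle in the triple complex, use exactness of the simplicial directions to normalize $c(L)$ to a pure cocycle, and identify total coboundaries with trivializations via Definition~\ref{D:bigerbe_trivn}. Your two-stage construction of a single $\eta$ with $d_1\eta=\gamma_1$ and $d_2\eta=\gamma_2$ is in fact more explicit than the paper's one-line appeal to exactness (which tacitly requires the $d_i$-preimages to be chosen compatibly), and it correctly isolates where the axiom $d_2s_1=d_1s_2$ is consumed.
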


\begin{proof}
The line bundle $L$ is represented 
by its `transition' Chern class 
on some cover, \blue{which by arguments along the lines of 
Lemma~\ref{L:refine_Yk} can be assumed to be of the form $\cU_{W^{[2,2]}}$ for some admissible set of covers,}
and hence by an element $c(L) \in \vC_{\blue{\ecU}}^1(W^{[2,2]}; \bbC^\ast)$ such
that $\delta c(L) = 0$. 
The simplicial trivializations of $d_i L$, $i =
1,2$ are represented by elements $\alpha_1 \in \vC_{\blue{\ecU}}^0(W^{[3,2]})$ and
$\alpha_2 \in \vC_{\blue{\ecU}}^0(W^{[2,3]})$ such that $d_i \alpha_i = 0$, $\delta
\alpha_i = d_i c(L)$, and $\pa(d_2 \alpha_1 - d_1 \alpha_2) = 0$.
In other words, 
the triple $(c(L), -\alpha_1,\alpha_2)$ forms a cocycle in the triple complex $(\vC_{\blue{\ecU}}^\bullet(W^{[\bullet,\bullet]}; \bbC^\ast),\delta, d_1,d_2)$.
Now, by exactness, we may obtain $d_i$ preimages $\beta_i$ of the $\alpha_i$, 
%which satisfy $d_1 \beta_2 = 0$, $d_2 \beta_1= 0$, 
and then $c(L)$ can be altered by the image under $\delta$ of the $\beta_i$
to obtain a pure cocycle, which we again denote by $c(L) \in \vZ_{\blue{\ecU}}^1(W^{[2,2]}; \bbC^\ast).$

A coboundary for $c(L)$ in the triple complex consists of a triple $(\alpha, \beta, \gamma)$ where $\alpha \in \vZ_{\blue{\ecU}}^0(W^{[2,2]})$, 
$\beta \in \vZ_{\blue{\ecU}}^1(W^{[1,2]})$ and $\gamma \in \vZ_{\blue{\ecU}}^1(W^{[2,1]})$ such that $\delta \beta = 0$ and $\delta \gamma = 0$, and
\begin{equation}
	D(\alpha, \beta,\gamma) = \delta \alpha - d_1 \beta + d_2 \gamma = c(L).
	\label{E:bigerbe_trivial_coboundary}
\end{equation}
But this amounts precisely to saying that $\beta$ and $\gamma$ determine $d_1$ and $d_2$ simplicial line bundles 
$Q \to W^{[2,1]}$, 
and 
$P \to W^{[1,2]}$,
such that $L$ is isomorphic (with isomorphism determined by $\alpha$) to $d_1P \otimes d_2 Q^\inv$, i.e., $L$ is trivial.
Conversely, a trivialization of the bigerbe $L$ determines such a coboundary \eqref{E:bigerbe_trivial_coboundary}.
\end{proof}

\begin{defn}
Let $(L,W,Y_2,Y_1,X)$ be a bigerbe over $X$.
 %with respect to the locally split square $(X,Y_1,Y_2, W)$. 
The {\em characteristic 4-class}
of $L$ is the image $G(L) \in H^4(X; \bbZ) \cong H^3(X; \bbC^\ast)$ of the hypercohomology class of $c(L) \in \vZ_{\blue{\ecU}}^1(W^{[2,2]}; \bbC^\ast)$
in the triple complex \eqref{E:bigerbe_triple_complex}.\end{defn}
\noindent For an explicit zig-zag construction of $G(L)$ from $c(L)$, see
\eqref{E:bigerbe_zig_one} and \eqref{E:bigerbe_zig_two} in the proof of
Theorem~\ref{T:bigerbe_representability} below.
\label{D:bigerbe_char_class}

Because of the need to introduce signs in the $(\delta,d_1,d_2)$ total
complex following Convention~\ref{Conv:multicomplex}, the sign of the class $G(L)
\in H^4(X; \bbZ)$ depends on the order of $Y_1$ and $Y_2$, which is to say the
orientation of the locally split square.
In particular, reversing the roles of $Y_1$ and $Y_2$ while keeping the bundle $L \to W^{[2,2]}$ fixed determines a bundle gerbe $L'$ with 
class $G(L') = -G(L)$.
Indeed, this follows from the fact that 
the total differential $D' = \delta + (-1)^\ell d_2 + (-1)^{\ell+n+1}d_1$
on $\vC_{\blue{\ecU}}^\ell(W^{[m,n]})$,
where we have interchanged the roles of $d_1$ and $d_2$, is intertwined with $D = \delta + (-1)^\ell d_2 + (-1)^{\ell+m+1}d_2$
by the automorphism $(-1)^{(m+1)(n+1)}$ of the triple complex. 
In particular this amounts to multiplication by $-1$ on $\vZ_{\blue{\ecU}}^\bullet(W^{[2,2]})$, exchanging $c(L)$ and $-c(L)$.

Alternatively, from the explicit zig-zag \eqref{E:bigerbe_zig_one} and \eqref{E:bigerbe_zig_two} it follows that $c(L)$ is the 
double transgression of $G(L) \in H^4(X; \bbZ)$ (in the sense of the isomorphism \eqref{E:SS_isom}) first to $H^3_Z(Y_1^{[2]}; \bbZ)$
and then to $H_Z^2(W^{[2,2]}; \bbZ)$.
The transgression the other way, first to $Y_2^{[2]}$ and then to $W^{[2,2]}$ has the opposite image $-c(L)$.

\begin{thm}
The characteristic 4-class $G(L)$ vanishes if and only if $L$ is trivial as a bigerbe, 
and is natural with respect to pullback, product and inverses in that
\[
	G(f^\ast L) = f^\ast G(L), 
	\quad G(L_1\otimes L_2) = G(L_1) + G(L_2),
	\quad G(L^\inv) = - G(L)
\]
A morphism $f : (L', W',Y_2',Y_1',X') \to (L, W,Y_2,Y_1,X)$
of bigerbes induces an equality $f^\ast G(L) = G(L')$, and 
two bigerbes $L$ and $L'$ over $X$ satisfy $G(L) = G(L')$ if and only if they are stably isomorphic.
\label{T:bigerbe_DD_class}
\end{thm}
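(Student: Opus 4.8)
The plan is to reduce every assertion to the two facts already in hand: Lemma~\ref{L:Cech_bigerbe_class}, which presents $c(L)$ as a pure cocycle in the triple complex that is a total coboundary precisely when $L$ is trivial, and Proposition~\ref{P:triple_complex_trivial}, which identifies the total cohomology of $\big(\vZ^\bullet(W^{[\bullet,\bullet]};A),\delta,d_1,d_2\big)$ with $\vH^\bullet(X;A)$. The vanishing statement is then immediate: by Definition~\ref{D:bigerbe_char_class}, $G(L)$ is the image (followed by the Bockstein isomorphism) of the total cohomology class of $c(L)$ under Proposition~\ref{P:triple_complex_trivial}, so $G(L)=0$ if and only if $c(L)$ is a total coboundary, which by Lemma~\ref{L:Cech_bigerbe_class} holds if and only if $L$ admits a trivialization. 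This parallels the bundle gerbe case treated in Proposition~\ref{P:Consistent}.

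For naturality I would first observe that each of the three operations induces a morphism of triple complexes commuting with $\delta$, $d_1$, and $d_2$. A continuous $f:X'\to X$ induces maps $(f^\ast W)^{[m,n]}\to W^{[m,n]}$ compatible with all fiber projections in both directions, hence a cochain morphism by Proposition~\ref{P:Cech_map} respecting both simplicial differentials, exactly as in the note following Definition~\ref{D:dixmier_douady}. Since the identification of Proposition~\ref{P:triple_complex_trivial} arises from the iterated spectral sequence collapsing onto the corner $\vC^\bullet(X;A)$, it is natural for such morphisms, so $c(f^\ast L)=f^\ast c(L)$ yields $G(f^\ast L)=f^\ast G(L)$. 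For the product, the projections $W\times_X W'\to W$ and $W\times_X W'\to W'$ both cover $\Id_X$ and satisfy $c(L_1\otimes L_2)=\pr_1^\ast c(L_1)+\pr_2^\ast c(L_2)$ (writing the $\bbC^\ast$ group additively), whence $G(L_1\otimes L_2)=G(L_1)+G(L_2)$; the inverse case is the instance $c(L^\inv)=-c(L)$. A (strong) morphism $f$ supplies an isomorphism $L'\cong f^\ast L$, and since isomorphic bigerbes have cohomologous Chern cocycles we conclude $G(L')=G(f^\ast L)=f^\ast G(L)$, isomorphism-invariance of $G$ being the case $f=\Id$.

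Finally I would deduce the stable-isomorphism classification from the additivity of $G$ together with the vanishing statement. If $L\otimes T\cong L'\otimes T''$ with $T,T''$ trivial, then applying $G$ and using $G(T)=G(T'')=0$ gives $G(L)=G(L')$. Conversely, if $G(L)=G(L')$ then $G\big(L\otimes (L')^\inv\big)=G(L)-G(L')=0$, so $L\otimes (L')^\inv$ is trivial by the vanishing statement. Since $(L')^\inv\otimes L'$ has vanishing Chern cocycle it too is trivial by Lemma~\ref{L:Cech_bigerbe_class}, and tensoring the trivial bigerbe $L\otimes (L')^\inv$ with $L'$ gives a (strong) isomorphism $L\otimes\big((L')^\inv\otimes L'\big)\cong \big(L\otimes (L')^\inv\big)\otimes L'$, exhibiting $L$ and $L'$ as stably isomorphic.

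The routine parts are the chain-level identities $c(f^\ast L)=f^\ast c(L)$, additivity under products, and $c(L^\inv)=-c(L)$, all of which follow from functoriality of the Chern cocycle. The step requiring the most care is the converse direction of the stable-isomorphism statement: one must track the several fiber-product base spaces ($W\times_X W'$ and its reorderings) to verify that the symmetric-monoidal reassociation producing $L\otimes T\cong L'\otimes T''$ is a genuine strong isomorphism over a common space, compatible with the simplicial trivializations of $d_1$ and $d_2$. Establishing the naturality of the Proposition~\ref{P:triple_complex_trivial} identification, while not difficult, also needs to be made explicit through naturality of the iterated spectral sequence, exactly as recorded for the double-complex case in Theorem~\ref{T:bundle_gerbe_cohom_iso}.
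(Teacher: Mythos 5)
Your proposal is correct and follows essentially the same route as the paper: vanishing via Lemma~\ref{L:Cech_bigerbe_class}, naturality via the induced morphism of triple complexes and naturality of the spectral sequences of Proposition~\ref{P:triple_complex_trivial}, additivity at the level of Chern cocycles, and the stable-isomorphism classification deduced from additivity plus the vanishing statement. The paper's own proof is in fact terser than yours on the converse direction of the stable-isomorphism claim, so your extra care in assembling $L\otimes\big((L')^\inv\otimes L'\big)\cong\big(L\otimes(L')^\inv\big)\otimes L'$ over a common fiber-product base is a welcome elaboration rather than a deviation.
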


\begin{proof}
That $G(L) = 0$ if and only if $L$ admits a trivialization was proved in Lemma~\ref{L:Cech_bigerbe_class}.
The pullback of a locally split square over $X$ by a continuous map $f : X' \to X$ induces
natural maps $f^\ast W^{[m,n]} \to W^{[m,n]}$ commuting with each $\pi^i_j$, and thus
a map 
$f^\ast \vC_{\blue{\ecU}}^\bullet(W^{[\bullet,\bullet]}) \to
\vC_{\blue{\ecU'}}^\bullet(f^\ast W^{[\bullet,\bullet]})$
of triple complexes, \blue{where $\ecU'_\bullet$ is an admissible set of covers refining $f^\inv(\ecU_\bullet)$.}
The naturality of $G$ with respect to pullbacks and morphisms is then a consequence
of the naturality of the spectral sequences which identify the total cohomology of the triple complex
with the cohomology of $X$ and $X'$, respectively.
Naturality with respect to products and inverses is a direct consequence of the fact that we can take $[L^\inv] = -c(L)$
and $[L_1\otimes L_2] = [\pr_1^\ast L_1] + [\pr_2^\ast L_2]$ as representatives.
Finally, if $L$ and $L'$ are stably isomorphic, then $G(L) = G(L')$ by
triviality and products, and conversely if $G(L) = G(L')$, then $L^\inv
\otimes L' = T$ is trivial, from which a stable isomorphism $L\otimes T
\cong L'$ may be constructed.
\end{proof}

\subsection{Representability of 4-classes} \label{S:rep_four_class}

To characterize those 4-classes which are represented by bigerbes over a given
locally split square we follow a similar argument to that in \S\ref{S:rep_of_three},
though it is necessary in this case to go further in a spectral sequence for the triple
complex. 
Consider the augmented triple complex
\begin{equation}
\begin{tikzcd}[row sep=small, column sep=small]
	0 & 0 & 0
	\\ \vZ^\bullet(Y_2^{[2]}) \ar[r,"d_1"] \ar[u,"d_2"] & \vZ^\bullet(W^{[1,2]}) \ar[u,"d_2"] \ar[r, "d_1"] & \vZ^\bullet(W^{[2,2]}) \ar[u, "d_2"] \ar[r, "d_1"] & 0
	\\ \vC^\bullet(Y_2) \ar[r,"d_1"] \ar[u,"d_2"] & \vC^\bullet(W^{[1,1]}) \ar[u, "d_2"] \ar[r, "d_1"] & \vZ^\bullet(W^{[2,1]}) \ar[u, "d_2"] \ar[r, "d_1"] & 0
	\\ \vC^\bullet(X) \ar[r,"d_1"] \ar[u,"d_2"] & \vC^\bullet(Y_1) \ar[u, "d_2"] \ar[r, "d_1"] & \vZ^\bullet(Y_1^{[2]}) \ar[u, "d_2"] \ar[r, "d_1"] & 0
\end{tikzcd}
	\label{E:bigerbe_triple_complex_aug}
\end{equation}
with the leftmost column and bottom row considered as degree $-1$ for $d_1$ and $d_2$, respectively.

\begin{lem}
Fix $\ell \geq 1$ and an abelian group $A$.
Suppose $[\alpha] \in \vH^\ell(X; A)$ satisfies $\pi_i^\ast [\alpha] = 0 \in \vH^\ell(Y_i; A)$ for $i = 1,2$. 
Then there is a well-defined {\em transgression class} defined by
\begin{equation}
	\Tr [\alpha] = [d_1 \beta_2 - d_2 \beta_1] \in \vH^{\ell-1}(W; A) / \big(\vH^{\ell-1}(Y_1;A)\oplus \vH^{\ell-1}(Y_2;A)\big)
	\label{E:four_class_transgression}
\end{equation}
where $\beta_i \in \vC_{\blue{\ecU}}^{\ell-1}(Y_i; A)$ are any elements satisfying $\delta\beta_i = \pi_i^\ast \alpha \in \vC_{\blue{\ecU}}^\ell(Y_i; A)$
for a representative $\alpha \in \vC_{\blue{\ecU}}^\ell(X; A)$ \blue{for an appropriate admissible set of covers $\ecU_\bullet$.}
%, and the quotient is by the image of the homomorphism $\begin{bmatrix} d_2 & d_1 \end{bmatrix} : H^{\ell-1}(Y_1;A)\oplus H^{\ell-1}(Y_2;A) \to H^{\ell-1}(W; A)$.
\label{L:four_class_transgression}
\end{lem}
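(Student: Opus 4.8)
The plan is to unwind the bisimplicial notation and then verify the two requirements implicit in the statement: that $d_1\beta_2 - d_2\beta_1$ is closed, and that its class is independent of all the choices modulo the stated subgroup. Write $q_1 : W \to Y_1$ and $q_2 : W \to Y_2$ for the two structure maps of the square, so that $\pi_1 q_1 = \pi_2 q_2$, a common map $W \to X$ that I denote $\pi_W$. Since $Y_2 = W^{[0,1]}$ and $Y_1 = W^{[1,0]}$, each of $d_1$ and $d_2$ reduces on these spaces to a single face pullback, namely $d_1\beta_2 = q_2^\ast\beta_2$ and $d_2\beta_1 = q_1^\ast\beta_1$, both landing in $\vC^{\ell-1}(W; A) = \vC^{\ell-1}(W^{[1,1]}; A)$; thus the proposed representative is the cochain $q_2^\ast\beta_2 - q_1^\ast\beta_1$.

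First I would check closedness. The primitives $\beta_i$ exist precisely because of the hypothesis $\pi_i^\ast[\alpha] = 0$, and all pullbacks are well defined on the direct-limit cochain groups and commute with $\delta$ by Proposition~\ref{P:Cech_map}. Using $\delta\beta_i = \pi_i^\ast\alpha$ and contravariance, $\delta(q_2^\ast\beta_2 - q_1^\ast\beta_1) = (\pi_2 q_2)^\ast\alpha - (\pi_1 q_1)^\ast\alpha = \pi_W^\ast\alpha - \pi_W^\ast\alpha = 0$, so the cochain is a cocycle and defines a class in $\vH^{\ell-1}(W; A)$.

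Next I would establish independence of the two choices. Fixing the cocycle $\alpha$, any two primitives of $\pi_i^\ast\alpha$ differ by a closed cochain $\zeta_i \in \vC^{\ell-1}(Y_i; A)$, and the representative then changes by $q_2^\ast\zeta_2 - q_1^\ast\zeta_1$, which lies in the image of $H^{\ell-1}(Y_1; A)\oplus H^{\ell-1}(Y_2; A)$ in $\vH^{\ell-1}(W; A)$ under $q_1^\ast$ and $q_2^\ast$ --- exactly the subgroup quotiented out in \eqref{E:four_class_transgression}. To handle the choice of representative of $[\alpha]$, replace $\alpha$ by $\alpha + \delta\eta$ with $\eta \in \vC^{\ell-1}(X; A)$; then $\beta_i + \pi_i^\ast\eta$ are admissible primitives for the new cocycle, and with these the transgression cochain is literally unchanged since $q_2^\ast\pi_2^\ast\eta = \pi_W^\ast\eta = q_1^\ast\pi_1^\ast\eta$. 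Combined with the previous step, this shows $\Tr [\alpha]$ is independent of the representative as well, so the class in the quotient is well defined.

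The whole argument is a short diagram chase, so there is no serious obstacle; the one point demanding care --- and the step I would write out most explicitly --- is the bookkeeping of the two ambiguities, confirming that each produces closed cochains pulled back from $Y_1$ and $Y_2$ and hence lands in the quotient subgroup rather than merely in $\vH^{\ell-1}(W; A)$.
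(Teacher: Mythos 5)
Your proof is correct and follows essentially the same route as the paper's: closedness of $d_1\beta_2 - d_2\beta_1$ via commutativity of the square, ambiguity in the primitives landing in the image of $H^{\ell-1}(Y_1;A)\oplus H^{\ell-1}(Y_2;A)$, and absorption of a change of representative $\alpha \mapsto \alpha + \delta\eta$ into the shift $\beta_i \mapsto \beta_i + \pi_i^\ast\eta$. The only cosmetic difference is that you unwind $d_1$, $d_2$ into the explicit face maps $q_2^\ast$, $q_1^\ast$, which the paper leaves in simplicial notation.
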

\begin{rmk}
This transgression can be understood as the $W^{[1,1]}$ component of the $E_2$
page differential of the $(\delta, D_{12})$ spectral sequence of $(\vC_{\blue{\ecU}}^\bullet(W^{[\bullet,\bullet]}),
\delta, D_{12})$ applied to $[\alpha]$, where we have rolled up $d_1$ and $d_2$
into a total differential $D_{12} = d_1 \pm d_2$.

In fact, to observe the sign convention discussed in
Convention~\ref{Conv:multicomplex} we should properly define $\Tr[\alpha]$
as the class $[(-1)^{\ell+1} d_2 \beta_1 + (-1)^{\ell+1} d_1 \beta_2]$ where
$\delta \beta_1 = (-1)^\ell d_1 \alpha$ and $\delta \beta_2 = (-1)^{\ell+1} d_2 \alpha$, but then cancellation of the
two factors of $(-1)^{\ell+1}$ and exchanging $\beta_1$ with $-\beta_1$ makes this equivalent to the definition given above.
\end{rmk}
\begin{proof}
With $\alpha$, $\beta_1$, and $\beta_2$ as above \blue{for a fixed admissible set of covers}, it follows that $d_1 \beta_2 - d_2 \beta_1$ is a cocycle 
since
\[
	\delta (d_1 \beta_2 - d_2 \beta_1) = d_1 \pi_2^\ast \alpha - d_2 \pi_1^\ast \alpha 
	= d_1 d_2 \alpha - d_2 d_1 \alpha = 0.
\]
Another choice of representative $\alpha' = \alpha + \delta \gamma$ can be incorporated 
as a different choice $\beta'_i = \beta_i + d_i \gamma$ of the $\beta_i$; moreover if
$\beta'_i \in \vC_{\blue{\ecU}}^{\ell-1}(Y_i)$ are another choice of bounding chains for $\pi_i\alpha$,
then $\delta(\beta_i - \beta'_i) = 0$ and
\[
	(d_1 \beta_2 - d_2 \beta_1)
	- (d_1 \beta'_2 - d_2 \beta'_1)
	= d_1(\beta_2 - \beta'_2) + d_2(\beta'_1 - \beta_1)
\]
is in the image under $\begin{bmatrix} d_2 & d_1 \end{bmatrix}$ of $\vH_{\blue{\ecU}}^{\ell-1}(Y_1) \oplus \vH_{\blue{\ecU}}^{\ell-1}(Y_2)$.
\end{proof}

\begin{thm}
A locally split square $(W,Y_2,Y_1,X)$ supports a bigerbe with a
given class $[\alpha] \in H^4(X; \bbZ)$
if and only if
\begin{enumerate}
[{\normalfont (i)}]
\item $\pi_i^\ast [\alpha] = 0 \in H^4(Y_i, \bbZ)$ for $i = 1,2$ and
\item $\Tr [\alpha] = 0 \in H^3(W; \bbZ)/\big(H^3(Y_1; \bbZ)\oplus H^3(Y_2; \bbZ)\big)$.
\end{enumerate}
\label{T:bigerbe_representability}
\end{thm}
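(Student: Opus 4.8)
The plan is to follow the strategy of Theorem~\ref{T:representability_three_class}, but to iterate the transgression isomorphism \eqref{E:SS_isom} twice while keeping track of the simplicial structure. By Lemma~\ref{L:Cech_bigerbe_class} a bigerbe over the given square is the same thing as a pure cocycle $c(L) \in \vZ^1(W^{[2,2]}; \bbC^\ast)$, and by Definition~\ref{D:bigerbe_char_class} together with Proposition~\ref{P:triple_complex_trivial} its class $G(L)$ is the image of $c(L)$ in the total cohomology $\vH^3(X; \bbC^\ast) \cong \vH^4(X; \bbZ)$ of the truncated complex \eqref{E:bigerbe_triple_complex}. Thus $[\alpha]$ is representable exactly when it lies in the image of the map from (classes of) pure corner cocycles to $\vH^4(X; \bbZ)$, and conditions (i) and (ii) should be precisely the obstructions to constructing such a corner cocycle by a zig-zag descending from $X$, as in \eqref{rbm.54}--\eqref{rbm.55}.

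For necessity, suppose $L$ is a bigerbe with $G(L) = [\alpha]$. The zig-zag \eqref{rbm.54}--\eqref{rbm.55} yields $\gamma \in \vC^3(X; \bbC^\ast)$ representing $G(L)$ together with $\mu_1$ satisfying $\delta\mu_1 = d_1\gamma = \pi_1^\ast\gamma$, so $\pi_1^\ast G(L) = 0$; running the symmetric zig-zag through $Y_2$ gives $\pi_2^\ast G(L) = 0$, which is (i). With (i) available the transgression $\Tr[\alpha]$ of Lemma~\ref{L:four_class_transgression} is defined, and the middle step $\delta\beta_1 = -d_2\lambda_1$ of the same zig-zag --- with $\lambda_1 = -d_1\mu_1 \in \vZ^2(Y_1^{[2]}; \bbC^\ast)$ representing the first transgression to $Y_1^{[2]}$ --- says precisely that the pullback of $\lambda_1$ along $p : W^{[2,1]} \to Y_1^{[2]}$ is $\delta$-exact. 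A short computation using $\lambda_1 = -d_1\mu_1$ and $d_1 d_2 = d_2 d_1$ identifies $p^\ast[\lambda_1]$ with $-d_1\Tr[\alpha]$, so that $d_1 \Tr[\alpha] = 0 \in \vH^3(W^{[2,1]}; \bbZ)$; combined with the degree-three form of \eqref{E:triv_image} for the locally split map $W \to Y_2$, namely $\kernel\{d_1 : \vH^3(W) \to \vH^3_Z(W^{[2,1]})\} = \img\{\vH^3(Y_2) \to \vH^3(W)\}$, this places $\Tr[\alpha]$ in $\img \vH^3(Y_1) + \img \vH^3(Y_2)$, which is (ii).

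For sufficiency, I would run the zig-zag in reverse. Condition (i) for $i=1$ and the transgression \eqref{E:SS_isom} for $\pi_1 : Y_1 \to X$ produce $\mu_1 \in \vC^2(Y_1; \bbC^\ast)$ with $\delta\mu_1 = \pi_1^\ast\gamma$ and the pure $\delta$-$d_1$ cocycle $\lambda_1 = -d_1\mu_1$ representing the first transgression $\tau_1 \in \vH^2_Z(Y_1^{[2]}; \bbC^\ast)$. Reversing the equivalence established above --- that (ii) is equivalent to $p^\ast\tau_1$ lying in the image of $\vH^3(Y_1)$, hence to the existence of a representative $\lambda_1$ of $\tau_1$ with $p^\ast\lambda_1 = d_2\lambda_1$ a coboundary --- I then solve $\delta\beta_1 = -d_2\lambda_1$ with $\beta_1 \in \vZ^1(W^{[2,1]}; \bbC^\ast)$ and set $c(L) = d_2\beta_1 \in \vZ^1(W^{[2,2]}; \bbC^\ast)$. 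By construction the zig-zag from $c(L)$ back to $X$ returns $\gamma$, so $G(L) = [\alpha]$, and by Lemma~\ref{L:Cech_bigerbe_class} this pure cocycle is the Chern class of a bigerbe on the square.

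The main obstacle --- and the only place the genuine triple-complex structure is used --- is to carry out the second transgression \emph{inside} the $d_1$-closed (simplicial) subcomplex rather than merely in ordinary cohomology: one must solve $\delta\beta_1 = -d_2\lambda_1$ with $\beta_1$ itself $d_1$-closed, and one must reconcile condition (ii), which is naturally a class in $\vH^3(W)/\big(\vH^3(Y_1)\oplus \vH^3(Y_2)\big)$, with the obstruction $p^\ast\tau_1$, which a priori lives in $\vH^3_Z(W^{[2,1]})$. Both are handled by Proposition~\ref{P:triple_complex_exactness}: since $-d_2\lambda_1 = d_1(d_2\mu_1)$ is visibly $d_1$-exact, the exactness of $d_2$ on $\kernel d_1$ provides a $d_1$-closed $\delta$-primitive, and it is exactly the quotient by $\vH^3(Y_1)\oplus \vH^3(Y_2)$ in the definition of $\Tr$ that absorbs the ambiguity in the primitives $\beta_i$ and in the representative $\lambda_1$. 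Equivalently, one verifies that the two successive higher differentials of the $(\delta, d_1, d_2)$ spectral sequence of the augmented complex \eqref{E:bigerbe_triple_complex_aug}, whose exact columns (by Proposition~\ref{P:triple_complex_exactness} and Proposition~\ref{T:exact_simplicial}) force trivial total cohomology, realize precisely conditions (i) and (ii); keeping this bookkeeping consistent is the technical heart of the proof.
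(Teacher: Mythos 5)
Your overall route is the paper's: both directions are zig-zags in the truncated $(\delta,d_1,d_2)$ complex, necessity by descending $c(L)$ through $W^{[2,1]}\to Y_1^{[2]}\to Y_1\to X$ (the paper does this slightly differently, lifting $c(L)$ to $\gamma\in\vC^1(W)$ with $d_1d_2\gamma=c(L)$ and then descending $\delta\gamma$ to produce $\beta_i\in\vC^2(Y_i)$ with $d_1\beta_2-d_2\beta_1=\delta\gamma$ exact on the nose, which avoids your appeal to the degree-three form of \eqref{E:triv_image}), and sufficiency by reversing it. Your construction $c(L)=d_2\beta_1$ with $\beta_1=\mp d_1\gamma$ is literally the paper's $c(L)=-d_1d_2\gamma$.

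The genuine problem is in your final paragraph, at the step you yourself call the technical heart. You assert that because $-d_2\lambda_1=d_1(d_2\mu_1)$ is $d_1$-exact, ``the exactness of $d_2$ on $\kernel d_1$ provides a $d_1$-closed $\delta$-primitive.'' It does not: Proposition~\ref{P:triple_complex_exactness} produces $d_1$- and $d_2$-primitives, never $\delta$-primitives, and if the equation $\delta\beta_1=-d_2\lambda_1$ were solvable for purely simplicial reasons then hypothesis (ii) would be vacuous, contradicting the ``only if'' half of the theorem. The $\delta$-primitive comes from (ii) and from nowhere else: vanishing of $\Tr[\alpha]$ lets you adjust the $\beta_i\in\vC^2(Y_i)$ so that $d_1\beta_2-d_2\beta_1=\delta\gamma$ for some $\gamma\in\vC^1(W)$, and then $\beta_1:=\mp d_1\gamma$ satisfies $\delta\beta_1=\mp d_1(d_1\beta_2-d_2\beta_1)=\pm d_2d_1\beta_1=-d_2\lambda_1$ and is $d_1$-closed for free since $d_1^2=0$. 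Your third paragraph gestures at this (``(ii) is equivalent to \dots\ $d_2\lambda_1$ a coboundary''), but the intermediate claim there --- that (ii) is equivalent to ``$p^\ast\tau_1$ lying in the image of $\vH^3(Y_1)$'' --- is not a well-formed statement (image under which map, in which group?), and the coboundary must moreover be exhibited in $\vZ^\bullet(W^{[2,1]})$, not merely in $\vC^\bullet(W^{[2,1]})$, which the explicit primitive $\mp d_1\gamma$ accomplishes but a bare exactness assertion does not. Fix the justification of this one step and the rest of your argument goes through.
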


\begin{proof}
By naturality of the Bockstein isomorphism, it suffices to work one degree lower with $\bbC^\ast$ coefficients.
Thus suppose $\alpha \in \vC_{\blue{\ecU}}^3(X; \bbC^\ast)$ represents $[\alpha]$. 
Since by hypothesis $\Tr[\alpha]$ vanishes, there exist representatives $\beta_i \in \vC_{\blue{\ecU}}^2(Y_i; \bbC^\ast)$ such that $[d_1 \beta_2 - d_2 \beta_1] = 0 \in \vH_{\blue{\ecU}}^2(W; \bbC^\ast)$;
thus $d_1\beta_2 - d_2 \beta_1 = \delta \gamma$
for $\gamma \in \vC_{\blue{\ecU}}^1(W; \bbC^\ast)$.
Then we claim $d_1 d_2 \gamma = d_2 d_1 \gamma \in \vZ_{\blue{\ecU}}^1(W^{[2,2]}; \bbC^\ast)$ is a pure cocycle and that a bigerbe $L \to W^{[2,2]}$ with 
\[
	c(L) = - d_1 d_2\gamma
\]
satisfies $G(L) = [\alpha]$.
Indeed, it is obvious that $d_i(d_1 d_2 \gamma) = 0 $ for $i = 1,2$; moreover $\delta d_1 d_2 \gamma = d_1 d_2 \delta \gamma = d_1 d_2 (d_1 \beta_2 - d_2 \beta_1) = 0$
as well, so by Lemma~\ref{L:Cech_bigerbe_class}, $d_1d_2 \gamma = - c(L) \in \vZ_{\blue{\ecU}}^1(W^{[2,2]}; \bbC^\ast)$ for a bigerbe $L \to W^{[2,2]}$.

To see that $G(L) = \alpha$, we follow the proof of Proposition~\ref{P:triple_complex_trivial}, carefully observing the sign convention
\eqref{E:triple_complex_total_diff} and observe that
\begin{equation}
\begin{tikzcd}[row sep=small]
	{c(L)}
	\\ - d_1 \gamma \ar[cm bar-to, u,"d_2"] \ar[cm bar-to, r,"D_{1}= \delta-d_1"] & (d_2 d_1 \beta_1, 0)
	\\ & (-d_1 \beta_1,0) \ar[cm bar-to, u,"-d_2"] 
\end{tikzcd}
	\label{E:bigerbe_zig_one}
\end{equation}
is a zig-zag which identifies $- d_1 \beta_1 \in \vZ_{\blue{\ecU}}^2(Y_1^{[2]}; \bbC^\ast)$ as a pure cocycle representing the image of $c(L)$ in the $E_1$ page 
of the $(d_2, D_1 = \delta \pm d_1)$ spectral sequence of the triple complex \eqref{E:bigerbe_triple_complex} which collapses to 
the $D_1$ cohomology of $\vZ_{\blue{\ecU}}^\bullet(Y_1^{[\bullet]}; \bbC^\ast)$. 
Then, as in \eqref{E:gerbe_zig_zag},
\begin{equation}
\begin{tikzcd}[sep=small]
	-d_1 \beta_1
	\\ -\beta_1 \ar[cm bar-to,u,"d_1"] \ar[cm bar-to,r,"\delta"] & -d_1 \alpha
	\\ & \alpha \ar[cm bar-to,u,"-d_1"] 
\end{tikzcd}
	\label{E:bigerbe_zig_two}
\end{equation}
is a further zig-zag which identifies $\alpha \in \vC_{\blue{\ecU}}^3(X; \bbC^\ast)$ as the image of $c(L)$ in the $E_1$ page of the $(d_1,\delta)$ spectral sequence
of $\vZ_{\blue{\ecU}}^\bullet(Y_1^{[\bullet]}; \bbC^\ast)$ representing the class $G(L) = [\alpha]$.

Conversely, to show necessity of this condition, suppose that $L \to W^{[2,2]}$ is a
bigerbe.
As shown in Lemma~\ref{L:Cech_bigerbe_class} this generates a \v
Cech cocycle, $\lambda =c(L)\in \vZ_{\blue{\ecU}}^1(W^{[2,2]}; \bbC^\ast)$ \blue{with respect to some admissible set of covers}, with values
in $\bbC^*$ which is a pure cocycle in the triple complex:
\begin{equation}
\delta \lambda =d_1\lambda =d_2\lambda =0.
\label{rbm.37}\end{equation}

Using the exactness of the simplicial complexes we may pull this back
under the two homotopy contractions $(s_1^1)^\ast$ and $(s_2^2)^\ast$ giving 
\begin{equation}
\gamma  \in \vC_{\blue{\ecU}}^1(W; \bbC^\ast),\quad d_1d_2\gamma =d_2d_1\gamma =\lambda .
\label{rbm.38}\end{equation}
Consider the \v Cech differential $\delta \gamma \in \vC_{\blue{\ecU}}^2(W; \bbC^\ast).$
The images of this, 
\begin{equation*}
d_1\delta \gamma =\delta d_1\gamma\in\vC_{\blue{\ecU}}^2(W^{[2,1]},\bbC^\ast)
\quad \text{and}\quad 
d_2\delta \gamma \in \vC_{\blue{\ecU}}^2(W^{[1,2]};\bbC^\ast)
\label{rbm.41}\end{equation*}
are pure cocycles in the triple complex, since $\lambda$ is closed. 
Thus $d_2\delta \gamma$ descends to a uniquely defined \v Cech cocycle $\mu
_2\in\vC_{\blue{\ecU}}^2(Y_2^{[2]},\bbC^\ast)$ with $d_1\mu_2=d_2\delta \gamma$. 
Note that $\delta \mu_2 = 0$ by injectivity of $d_1$ at the bottom level.
Under
$(s^2_2)^\ast$
this in turn pulls back to $\beta
_2\in\vC_{\blue{\ecU}}^2(Y_2;\bbC^\ast)$ with $d_2\beta_2=\mu_2.$ Now $d_2(\delta\gamma
-d_1\beta_2)=0$ by construction, so there is a unique $\beta_1 \in \vC_{\blue{\ecU}}^2(Y_1; \bbC^\ast)$ such that
\begin{equation}
-d_2\beta_1=\delta\gamma -d_1\beta_2.
\label{rbm.40}\end{equation}
It follows that 
$\mu_1 = d_1 \beta_1$ satisfies $d_1 \mu_1 = 0$ and $\delta \mu_1 = 0$
(by injectivity of $d_2$ on $\vC_{\blue{\ecU}}^2(Y_1^{[2]}; \bbC^\ast)$ and the fact that $\delta d_2 \mu_1 = \delta d_2d_1 \beta_1 = -\delta^2 d_1 \gamma = 0$).

Thus $\delta \beta_2$ and $\delta\beta_1$ descend, from $Y_2$ and $Y_1$
respectively, to define 
cocycles in $\vC_{\blue{\ecU}}^3(X; \bbC^\ast)$; moreover these must be the same
cocycle $\alpha \in \vC_{\blue{\ecU}}^3(X; \bbC^\ast)$
by injectivity of $d_1$ and $d_2$ and the fact that $d_1 \delta \beta_2 = d_2 \delta \beta_1$, so
this represents the 4-class of the bigerbe. This shows that the 
difference $d_1\beta _2-d_2\beta _1$ is exact and the criterion therefore holds. 
\end{proof}

There is an analogue of Proposition~\ref{P:gerbe_triv} classifying trivializations of bundle gerbes.

\begin{prop}
The trivializations of a bundle bigerbe $(L, W, Y_2, Y_1, X)$ form a torsor for the group
\begin{equation}
\begin{aligned}
	&\img\set{d_2 :\vH_Z^2(Y_1^{[2]}; \bbZ) \to \vH_Z^2(W^{[2,1]}; \bbZ)}
	\\&\oplus 
	\img\set{d_1 :\vH_Z^2(Y_2^{[2]}; \bbZ) \to \vH_Z^2(W^{[1,2]}; \bbZ)}
	\label{E:bigerbe_triv}
\end{aligned}
\end{equation}
where $\vH_Z^\bullet$ denotes the $\delta$ cohomology of the associated space in the diagram \eqref{E:bigerbe_triple_complex_aug}.
\label{P:bigerbe_trivn}
\end{prop}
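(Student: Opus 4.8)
The plan is to follow the template of Proposition~\ref{P:gerbe_triv}, replacing the single simplicial complex by the triple complex \eqref{E:bigerbe_triple_complex_aug}. First I would record the cohomological description of a trivialization. By Definition~\ref{D:bigerbe_trivn} a trivialization is a pair $(Q_1,Q_2)$, with $Q_1 \to W^{[1,2]}$ simplicial for $d_2$ and $Q_2 \to W^{[2,1]}$ simplicial for $d_1$, together with an isomorphism $L \cong d_1 Q_1 \otimes d_2 Q_2^\inv$. Passing to Chern classes this is exactly a pair $[Q_1] \in \vH_Z^2(W^{[1,2]}; \bbZ)$, $[Q_2] \in \vH_Z^2(W^{[2,1]}; \bbZ)$ satisfying $d_1[Q_1] - d_2[Q_2] = c(L)$ in $\vH_Z^2(W^{[2,2]}; \bbZ)$, where I use Lemma~\ref{L:Cech_bigerbe_class} to represent $L$ by the pure cocycle $c(L)$.

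Next I would define the action of the group \eqref{E:bigerbe_triv}. Given $[S] \in \vH_Z^2(Y_1^{[2]}; \bbZ)$ and $[T] \in \vH_Z^2(Y_2^{[2]}; \bbZ)$, the element $(d_2[S], d_1[T])$ acts by $(Q_1, Q_2) \mapsto (Q_1 \otimes d_1 T,\ Q_2 \otimes d_2 S)$. This preserves the data of a trivialization: the bundle $Q_1 \otimes d_1 T$ stays $d_2$-simplicial because $d_2 d_1 T = d_1 d_2 T$ is trivialized (as $T$ is $d_2$-simplicial), and likewise $Q_2 \otimes d_2 S$ stays $d_1$-simplicial; moreover $d_1(Q_1 \otimes d_1 T) = d_1 Q_1 \otimes d_1^2 T$ and $d_2(Q_2 \otimes d_2 S)^\inv = d_2 Q_2^\inv \otimes d_2^2 S^\inv$ differ from $d_1 Q_1$ and $d_2 Q_2^\inv$ only by the canonically trivial bundles $d_1^2 T$, $d_2^2 S$, so the trivializing isomorphism persists. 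Freeness is then immediate: the action shifts $([Q_1],[Q_2])$ by $(d_1[T], d_2[S])$, and since the two summands of \eqref{E:bigerbe_triv} are by definition $\img d_1$ and $\img d_2$, an element fixing a trivialization is the identity.

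The substance is transitivity. The difference of two trivializations is $([R_1],[R_2]) = ([Q_1]-[Q_1'], [Q_2]-[Q_2'])$ with $d_1[R_1] = d_2[R_2]$, equivalently a trivialization of the trivial bigerbe, and the goal is to place $[R_1] \in \img\set{d_1 : \vH_Z^2(Y_2^{[2]}) \to \vH_Z^2(W^{[1,2]})}$ and $[R_2] \in \img\set{d_2 : \vH_Z^2(Y_1^{[2]}) \to \vH_Z^2(W^{[2,1]})}$. Here I would use that the rows and columns of \eqref{E:bigerbe_triple_complex_aug} are exact, by Proposition~\ref{T:exact_simplicial} and Proposition~\ref{P:triple_complex_exactness}, so that the augmented triple complex has trivial total cohomology. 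The exact top row and right column are short exact sequences of $\delta$-cochain complexes, and their long exact sequences in $\delta$-cohomology yield the bigerbe analogue of \eqref{E:triv_image}, namely $\ker\set{d_1 : \vH_Z^2(W^{[1,2]}) \to \vH_Z^2(W^{[2,2]})} = \img\set{d_1 : \vH_Z^2(Y_2^{[2]}) \to \vH_Z^2(W^{[1,2]})}$ and the statement obtained by interchanging the roles of $d_1,d_2$ and $Y_1,Y_2$. Once it is known that $d_1[R_1] = d_2[R_2] = 0$, these two identities place $[R_1]$ and $[R_2]$ in the required images.

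The main obstacle is precisely the vanishing of $[\kappa] := d_1[R_1] = d_2[R_2] \in \vH_Z^2(W^{[2,2]}; \bbZ)$, equivalently that $\img d_1 \cap \img d_2 = 0$ there; this is the point at which the full three-dimensional structure of the triple complex is needed and where the argument genuinely exceeds the gerbe case, whose analogous term vanished automatically by sitting in the middle of a three-term truncation. I would run the $(\delta,d_1,d_2)$ spectral sequence of \eqref{E:bigerbe_triple_complex_aug} and track the difference class through the same double zig-zag \eqref{E:bigerbe_zig_one}--\eqref{E:bigerbe_zig_two} used to compute $G(L)$, showing that the two iterated transgressions of $[\kappa]$, first to $\vH_Z^3(Y_1^{[2]})$ and $\vH_Z^3(Y_2^{[2]})$ and then to $\vH^4(X)$, are forced to vanish by the simplicial trivialization data carried by $Q_1$ and $Q_2$, so that $[\kappa]$ itself vanishes by injectivity of the combined transgression coming from the trivial total cohomology. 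Carrying out this compatibility between the simplicial structures and the transgression, rather than the formal torsor bookkeeping of the first three steps, is the crux of the proof.
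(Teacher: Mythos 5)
Your first three steps track the paper's argument: the cohomological description of a trivialization as a pair $([Q_1],[Q_2])$ with $d_1[Q_1]-d_2[Q_2]=c(L)$, the action of the group \eqref{E:bigerbe_triv}, and the reduction of transitivity, via the exactness of the rows and columns of \eqref{E:bigerbe_triple_complex_aug}, to showing $d_1[R_1]=d_2[R_2]=0$. The gap is at the step you yourself flag as the crux: the claim that $\img d_1\cap\img d_2=0$ in $\vH^1_Z(W^{[2,2]};\bbC^\ast)$ is false, and the mechanism you propose cannot establish it. The combined transgression to $\vH^4(X;\bbZ)$ is not injective --- its kernel is exactly $\img d_1+\img d_2$, the classes of trivial bigerbes --- so showing that the iterated transgressions of $[\kappa]$ vanish only returns you to the hypothesis. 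Concretely, for any line bundle $N\to W^{[1,1]}$ the pair $(d_2N,\,d_1N)$ is a trivialization of the trivial bigerbe (since $d_1d_2N\otimes(d_2d_1N)^{-1}$ is canonically trivial), and $\kappa=d_1[d_2N]=d_2[d_1N]$ lies in $\img d_1\cap\img d_2$ but is generally nonzero: for $X$ a point, $Y_1=Y_2=S^1$, $W=T^2$ and $N$ the fundamental bundle, $\kappa$ is the nonzero class $(x_1-x_0)\cup(y_1-y_0)$ on $W^{[2,2]}=T^4$, while $\vH^2_Z(Y_2^{[2]};\bbZ)=0$, so the group \eqref{E:bigerbe_triv} cannot absorb $[d_2N]$.

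What this shows is that the difference of two literal trivializations lands in \eqref{E:bigerbe_triv} only after identifying $(Q_1,Q_2)$ with $(Q_1\otimes d_2N,\,Q_2\otimes d_1N)$, which induce the same splitting of $L$; the residual intersection $\img d_1\cap\img d_2$ consists precisely of the classes $d_1d_2[N]$ killed by this identification. The paper's own proof elides this point: it asserts at the outset that the difference cocycles $\alpha_i$ satisfy $d_1\alpha_i=d_2\alpha_i=0$ for both $i$, whereas for an arbitrary pair of trivializations one only gets $d_1\alpha_1=d_2\alpha_2$; once the vanishing is granted, its use of chain-level exactness and injectivity of $d_1,d_2$ at the bottom row is essentially your long-exact-sequence step. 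So your proposal correctly isolates the genuine difficulty that distinguishes this from Proposition~\ref{P:gerbe_triv}, but the route you sketch for resolving it is not available; the repair is to build the above equivalence of trivializations into the statement and argument, not to prove that the intersection vanishes.
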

\noindent In specific cases, as for the Brylinski-McLaughlin bigerbe in \S\ref{S:bm-bigerbe}, this may be simplified further.

\begin{proof} If $(Q_1,Q_2)$ and $(Q'_1,Q'_2)$ are two trivializations of a
  bigerbe $L$, then $P_1 = Q'_1 \otimes Q_1^\inv \to W^{[1,2]}$ and $P_2 =
  Q'_2 \otimes Q_2^\inv \to W^{[2,1]}$ are line bundles represented by \v
  Cech cocycles $\alpha_i = c(P_i)$ satisfying $d_1 \alpha_i = d_2 \alpha_i
  = \delta \alpha_i = 0$ for $i = 1,2$.
Exactness of the rows and columns of \eqref{E:bigerbe_triple_complex} gives the existence
of $\beta_1 \in \vZ^1(Y_2^{[2]}; \bbC^\ast)$ and $\beta_2 \in \vZ^1(Y_1^{[2]}; \bbC^\ast)$
satisfying $d_2 \beta_1 = \alpha_1$ and $d_1 \beta_2 = \alpha_2$, $d_i \beta_i = 0$ and (by injectivity) $\delta \beta_i = 0$.
It is straightforward to see that $[\alpha_1] \in \img\set{d_1 : \vH_Z^1(Y_2^{[2]}; \bbC^\ast) \to \vH_Z^1(W^{[1,2]}; \bbC^\ast)}$
is well-defined independent of choices, and similarly for $[\alpha_2]$. 
Conversely, given elements in \eqref{E:bigerbe_triv} corresponding to line bundles on $W^{[1,2]}$ and $W^{[2,1]}$
coming from simplicial line bundles $Y_2^{[2]}$ and $Y_1^{[2]}$, respectively, a trivialization $(Q_1,Q_2)$ may be altered
to give a different trivialization of the same bigerbe.
\end{proof}

\section{Examples of bigerbes} \label{S:bigerbe_examples}
\subsection{Decomposable bigerbes} \label{S:decomp_bigerbes}

As for the decomposable bundle gerbes discussed in
\S\ref{S:decomp_bigerbes}, we consider the special classes of bigerbes
corresponding to decomposable classes in $H^4(X; \bbZ).$ These are either
of the form $\alpha_1 \cup \alpha_2$ with the $\alpha_i \in H^2(X; \bbZ)$
or of the form $\rho\cup \alpha$ with $\rho\in H^1(X;\bbZ)$ and $\alpha \in
H^3(X; \bbZ).$ Stuart Johnson in his PhD thesis,
\cite{johnson2003constructions}, makes related constructions in the setting
of 2-gerbes.

From Theorem~\ref{T:bigerbe_representability} it follows that if, for $i =
1,2$, $\alpha _i\in H^2(X;\bbZ)$ and $\pi_i : Y_i \to X$, are locally split
maps such that $\pi_i^\ast \alpha_i = 0 \in H^{2}(Y_i; \bbZ)$ then the cup
product $\alpha _1\cup\alpha _2$ is represented by a bigerbe over the
locally split square $(Y_1\times_X Y_2, Y_2, Y_1, X)$.
Indeed, in \v Cech theory if $\rho _i\in\vC^1(Y_i;\bbZ)$ are primitives for the
$\pi_i^\ast \alpha_i$ then $\rho _1\cup\alpha_2$ and $\alpha_1 \cup\rho_2$ are primitives for $\alpha _1\cup\alpha _2$ on $Y_1$ and $Y_2$, respectively, and pulled
back to $Y_1\times_XY_2$ their difference, $\alpha_1\cup\rho_2-\rho_1
\cup\alpha_2$ has primitive $\rho _1\cup\rho _2$.

If the spaces $Y_i$ are the total spaces of circle bundles
representing the 2-classes the bigerbe is given quite explicitly in terms
of the classifying line bundle, for decomposed 2-forms, over the torus.

\begin{lem}\label{L:bimult_fund_bundle} The fundamental line bundle on
  $\bbT^2$ (with Chern class generating $H^2(\bbT^2; \bbZ) = \bbZ$) has a
  `bimultiplicative' representative $S \to \bbT^2$, meaning there are
  natural isomorphisms between fibers $S_{x_1 + x_2, y} \cong
  S_{x_1, y}\otimes S_{x_2,y}$ and $S_{x, y_1 +
    y_2} \cong S_{x, y_1} \otimes S_{x,y_1}$ such
  that
  \begin{equation} \begin{tikzcd}
S_{x_1+x_2,y_1+y_2}
      \ar[r] \ar[d] & S_{x_1,y_1+y_2}\otimes
      S_{x_2,y_1+y_2} \ar[d]
      \\ S_{x_1+x_2,y_1}\otimes S_{x_1+x_2,y_2}
      \ar[r] & S_{x_1,y_1}\otimes S_{x_2,y_1}\otimes
      S_{x_1,y_2}\otimes S_{x_2,y_2}
\end{tikzcd}
    \label{E:fund_line_bundle_comm}
\end{equation}
commutes.
 \end{lem}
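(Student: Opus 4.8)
The plan is to realize the fundamental bundle on $\bbT^2$ by an explicit automorphy factor, so that the two multiplicative structures and the commuting square \eqref{E:fund_line_bundle_comm} can be read off directly from representatives; conceptually this exhibits $S$ as a \emph{biextension} of the two circle factors by $\UU(1)$, associated to the canonical bicharacter pairing $\pi_1(S^1_\theta) = \bbZ$ with $S^1_\zeta = \bbR/\bbZ$. Writing $\bbT^2 = (\bbR/\bbZ)_\theta \times (\bbR/\bbZ)_\zeta$, I would set
\[
	S = \bigl(\bbR_\theta \times (\bbR/\bbZ)_\zeta \times \bbC\bigr)/\bbZ,
	\qquad k\cdot(\theta,\zeta,v) = \bigl(\theta + k,\ \zeta,\ e^{2\pi i k\zeta} v\bigr),
\]
the $\bbZ$-quotient acting only on the $\theta$-coordinate. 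The first step is to check the cocycle identity $j_{k+k'}(\theta,\zeta) = j_k(\theta + k',\zeta)\,j_{k'}(\theta,\zeta)$ for $j_k = e^{2\pi i k\zeta}$, which holds since $k \in \bbZ$, so that $S$ is a well-defined line bundle over $\bbT^2$.

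Next I would confirm that $c_1(S)$ generates $H^2(\bbT^2;\bbZ) = \bbZ$. The quickest route is to note that $A = 2\pi i\,\theta\,d\zeta$ descends to a connection on $S$, being invariant under the $\bbZ$-action up to the gauge transformation $g = e^{2\pi i k\zeta}$ (indeed $A(\theta+k) = A(\theta) + 2\pi i k\,d\zeta = A(\theta) + g^\inv dg$), with curvature $dA = 2\pi i\,d\theta\wedge d\zeta$ and hence Chern number $\pm 1$; equivalently, the restriction of $S$ to $\{\bar\theta\}\times (\bbR/\bbZ)_\zeta$ is flat with holonomy $e^{2\pi i\zeta}$, which winds once as $\zeta$ traverses the circle. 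Either computation identifies $S$ with the fundamental bundle, up to orientation.

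The two multiplications are then defined on representatives. For fixed $\zeta$ and chosen lifts $\theta_1,\theta_2 \in \bbR$, set
\[
	\phi_1 \colon S_{\theta_1+\theta_2,\zeta} \to S_{\theta_1,\zeta}\otimes S_{\theta_2,\zeta},
	\qquad [(\theta_1+\theta_2,\zeta,v)] \mapsto [(\theta_1,\zeta,v)]\otimes[(\theta_2,\zeta,1)],
\]
and for a fixed lift $\theta$ define $\phi_2\colon S_{\theta,\zeta_1+\zeta_2}\to S_{\theta,\zeta_1}\otimes S_{\theta,\zeta_2}$ by the analogous formula splitting the $\bbC$-factor. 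I would verify that each is independent of the lift: changing $\theta_i \mapsto \theta_i + k$ rescales the relevant fibre coordinate by $e^{2\pi i k\zeta}$ on both sides by the defining relation, while for $\phi_2$ the common shift of the single lift $\theta$ rescales by $e^{2\pi i k(\zeta_1+\zeta_2)}$ consistently. This is exactly where the cocycle does its work, and is the point at which one must confirm that the two partial products are \emph{simultaneously} well defined.

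Finally, the commuting square is checked by transporting $[(\theta_1+\theta_2,\zeta_1+\zeta_2,v)]$ along both routes: $\phi_1$ followed by $\phi_2\otimes\phi_2$, versus $\phi_2$ followed by $\phi_1\otimes\phi_1$. Because each map merely distributes a chosen $\theta$-lift or the value $\zeta$ additively into independent tensor slots while multiplying the $\bbC$-components, both routes produce
\[
	[(\theta_1,\zeta_1,v)]\otimes[(\theta_2,\zeta_1,1)]\otimes[(\theta_1,\zeta_2,1)]\otimes[(\theta_2,\zeta_2,1)]
\]
after reordering factors, so \eqref{E:fund_line_bundle_comm} commutes; the same bookkeeping gives associativity of each $\phi_i$ and naturality (continuity) in the parameters. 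I expect the main obstacle to be conceptual rather than computational: one might fear that a nonzero Chern class obstructs a globally consistent bimultiplicative structure. The resolution is that the automorphy factor $(k,\zeta)\mapsto e^{2\pi i k\zeta}$ is \emph{bilinear} in the $\theta$-winding and in $\zeta$, and it is precisely this bilinearity that makes both partial products well defined and the square commute, while its nontriviality as a cocycle is what produces the generating Chern class.
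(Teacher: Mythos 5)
Your proposal is correct and takes essentially the same route as the paper: both realize $S$ as a quotient of a trivial bundle over a cover of $\bbT^2$ by an explicit exponential automorphy factor and then read off the two partial products and the commutativity of \eqref{E:fund_line_bundle_comm} on representatives. If anything your version is the more careful one --- the bilinearity of $(k,\zeta)\mapsto e^{2\pi i k\zeta}$ in the winding number and in $\zeta$ is exactly what makes both products simultaneously well defined, and your lift-independence checks supply details that the paper's sketch leaves implicit.
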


\begin{proof} Line bundles over $\bbT^2 \cong \bbR^2/ \bbZ^2$ are naturally
  identified with $\bbZ^2$ equivariant line bundles over the universal cover,
  $\bbR^2$.
We equip the trivial bundle $\bbR^2 \times \bbC$ with the $\bbZ^2$ action
covering translation via
\begin{equation*}
(n,m)\cdot (x,y,z) = (x + n, y + m, e^{2\pi imx}z),
	\quad (n,m) \in \bbZ^2,\ (x,y,z) \in \bbR^2\times \bbC
\label{rbm.80}\end{equation*}
and let $S \to \bbT^2$ be the quotient bundle. 

The vertical and horizontal identifications in \eqref{E:fund_line_bundle_comm} correspond to the
invariance of the bilinear maps
\begin{equation*}
\begin{aligned}
(x,y_1,z_1)\times(x,y_2,z_2)&\longmapsto (x,y_1+y_2,z_1z_2),
\\(x_1,y,z_1)\times(x_2,y,z_2)&\longmapsto (x_1+x_2,y,z_1z_2)
\end{aligned}
\end{equation*}
which commute under the $\bbZ_2$ actions.
Moreover, the $\bbZ_2$-action corresponds to parallel transport along the
circles with
respect to the invariant connection $d+2\pi iy dx$,
the curvature of which is the fundamental class in $H^2(\bbT^2; \bbZ).$ 
\end{proof}

\begin{prop}\label{C:decomp_bigerbe}
For a decomposed 4-class $\alpha_1\cup\alpha _2\in H^4(X;\bbZ)$, with the
$\alpha _i\in H^2(X;\bbZ)$ represented by circle bundles
$Y_i\longrightarrow X$, the pullback under the product of the difference maps
$\chi_i:Y_i^{[2]} = Y_i\otimes Y_i\longrightarrow \UU(1)$, 
\[
L = (\chi_1\times \chi_2)^\ast S \to W^{[2,2]} = Y_1^{[2]}\times_X Y_2^{[2]},
\]
defines a bigerbe $(L,Y_1\times_X Y_2,Y_2,Y_1,X)$ with characteristic class $G(L)
= \alpha_1\cup \alpha_2$.
\end{prop}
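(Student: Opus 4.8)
The plan is to verify first that $L$ is a well-defined bigerbe, and then to compute $G(L)$ by carrying out the double transgression of its Chern class explicitly, with the bimultiplicative structure of $S$ from Lemma~\ref{L:bimult_fund_bundle} doing the work at each stage. I would begin by recording the structure of the square. Since $W = Y_1\times_X Y_2$, the induced map $W \to Y_1\times_X Y_2$ is the identity, so the square is locally split as soon as the circle bundles $\pi_i : Y_i \to X$ are, which they are. By Proposition~\ref{P:fiber_quadrant} one identifies $W^{[m,n]} \cong Y_1^{[m]}\times_X Y_2^{[n]}$, in particular $W^{[2,2]} \cong Y_1^{[2]}\times_X Y_2^{[2]}$, so $\chi_1\times\chi_2$ is a well-defined map $W^{[2,2]} \to \bbT^2$ and $L = (\chi_1\times\chi_2)^\ast S$ makes sense.

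To see that $L$ is a bigerbe in the sense of Definition~\ref{D:bundle_bigerbe}, I would first note that each shift map is a simplicial cocycle: the defining relation \eqref{E:shift_map} gives $\chi_i(y,y'') = \chi_i(y,y')\,\chi_i(y',y'')$ on $Y_i^{[3]}$, i.e.\ $d\chi_i = 1$. Over $W^{[3,2]} \cong Y_1^{[3]}\times_X Y_2^{[2]}$ the fibre of $d_1 L$ at $(y_1,y_1',y_1''; y_2,y_2')$ is $S_{\chi_1(y_1',y_1''),\zeta}\otimes S_{\chi_1(y_1,y_1''),\zeta}^{-1}\otimes S_{\chi_1(y_1,y_1'),\zeta}$ with $\zeta = \chi_2(y_2,y_2')$; applying the first-argument multiplicativity $S_{\theta_1\theta_2,\zeta}\cong S_{\theta_1,\zeta}\otimes S_{\theta_2,\zeta}$ of Lemma~\ref{L:bimult_fund_bundle} together with the cocycle relation for $\chi_1$ collapses this to the canonically trivial bundle, yielding the trivialization $s_1$; symmetrically the second-argument multiplicativity yields $s_2$ for $d_2 L$. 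That $d_i s_i$ is the canonical trivialization of $d_i^2 L$ is the associativity of each one-variable multiplicative structure (inherited from the group law in the construction of $S$), and the crucial identity $d_2 s_1 = d_1 s_2$ on $d_1 d_2 L$ is exactly the commuting square \eqref{E:fund_line_bundle_comm}. This step, while routine, is where the full force of the bimultiplicative compatibility is needed.

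Finally I would compute $G(L)$. Taking Chern classes gives $c(L) = (\chi_1\times\chi_2)^\ast c(S)$, and since the explicit model of Lemma~\ref{L:bimult_fund_bundle} exhibits $c(S)$ as the generator of $H^2(\bbT^2;\bbZ)$, which under K\"unneth is the external product of the generators $u \in H^1(\UU(1);\bbZ)$, we obtain $c(L) = \chi_1^\ast u \cup \chi_2^\ast u \in \vZ^2(W^{[2,2]};\bbZ)$. I would then run the two zig-zags \eqref{E:bigerbe_zig_one} and \eqref{E:bigerbe_zig_two} realizing $G(L)$ as the double transgression of $c(L)$. Choosing integral primitives $\rho_i \in \vC^1(Y_i;\bbZ)$ with $\delta\rho_i = \pi_i^\ast\alpha_i$ (which exist since $\pi_i^\ast\alpha_i = 0$), Lemma~\ref{L:cohom_transgr_circle_bundle} identifies $\chi_i^\ast u$, up to sign, with the transgression $-d_i\rho_i$ of $\alpha_i$. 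A \v Cech Leibniz computation, using that $\pi_1^\ast\alpha_2$ is $d_1$-constant on $Y_1^{[2]}$ and that $\chi_2^\ast u$ is $d_2$-constant on $W^{[2,2]}$, then shows that \eqref{E:bigerbe_zig_one} carries $c(L)$ to the representative $\chi_1^\ast u \cup \pi_1^\ast\alpha_2$ on $Y_1^{[2]}$, and that \eqref{E:bigerbe_zig_two} carries this down to the cocycle $\alpha_1\cup\alpha_2$ on $X$; equivalently, this reproduces the primitives $\rho_1\cup\pi_1^\ast\alpha_2$, $\pi_2^\ast\alpha_1\cup\rho_2$ and $\rho_1\cup\rho_2$ appearing in the representability criterion of Theorem~\ref{T:bigerbe_representability}. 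Hence $G(L) = \alpha_1\cup\alpha_2$.

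The main obstacle I expect is the sign and convention bookkeeping in the last step: matching the orientation of $c(S)$ against the signs introduced by the total differential of the triple complex (Convention~\ref{Conv:multicomplex}) along the two zig-zags, and keeping the Leibniz rule for the simplicial differentials $d_i$ consistent throughout. The geometric content, by contrast, is carried entirely by the bimultiplicativity of $S$ in the bigerbe verification and by the two applications of Lemma~\ref{L:cohom_transgr_circle_bundle} in the transgression.
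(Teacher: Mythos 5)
Your proposal is correct and follows essentially the same route as the paper's (very terse) proof: the bisimplicial conditions for $L$ are exactly the pullback of the bimultiplicative relations of Lemma~\ref{L:bimult_fund_bundle}, and $G(L)=\alpha_1\cup\alpha_2$ follows from Lemma~\ref{L:cohom_transgr_circle_bundle} together with the cup-product primitives $\rho_1\cup\alpha_2$, $\alpha_1\cup\rho_2$, $\rho_1\cup\rho_2$ already identified before the proposition. You have simply filled in the details (the fibrewise verification of the trivializations and the explicit zig-zags) that the paper leaves implicit.
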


\begin{proof} The bimultiplicative relations of Lemma~\ref{L:bimult_fund_bundle} correspond under pullback by
  $\chi_1\times \chi_2$ to the bisimplicial conditions for $L$, and
  that $G(L) = \alpha_1 \cup \alpha_2$ is a consequence of
  Lemma~\ref{L:cohom_transgr_circle_bundle}.
\end{proof}

Similarly if $\rho \in H^1(X; \bbZ)$ and $\alpha \in H^3(X; \bbZ)$ the
representability condition is satisfied by the fiber product square given
by any locally split maps $\pi_i : Y_i \to X$, $i = 1,2$ such that
$\pi_1^\ast \rho = 0 \in H^{1}(Y_1; \bbZ)$ and $\pi_2^\ast \alpha = 0 \in
H^3(Y_2; \bbZ)$.

Taking the `logarithmic' covering $\wt X\to X$ corresponding to $\rho$,
meaning the pullback of the universal cover of $\UU(1)$ by a homotopy class
of maps $X \to \UU(1)$ representing $\rho$ and a bundle gerbe $(L, Y, X)$
with $\DD(L) = \alpha \in H^3(X; \bbZ)$, there is again a direct
construction of a bigerbe for the fiber product square.

\begin{prop}\label{T:geometric_bigerbe_3plus1}
 If $(L,Y,X)$ is a bundle gerbe with Dixmier-Douady class $\alpha \in H^3(X; \bbZ)$ and $\wt X \to X$ is the
  logarithmic cover corresponding to a class $[\rho]\in H^1(X;\bbZ)$
    represented by $\rho:X\longrightarrow \UU(1)$ then the line bundle
\[
	L^{\chi} \to \wt X^{[2]} \times_X Y^{[2]},
\]
where $\chi:\wt X\times_X\wt X\longrightarrow \bbZ$ is the fiber-shift map,
defines a bigerbe 
\[
	(L^\chi,\wt X \times_X Y,Y, \wt X, X) 
	\quad \text{with} \quad
	G(L^\chi)=\rho \cup\alpha \in H^4(X; \bbZ).
\]
\end{prop}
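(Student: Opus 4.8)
The plan is to treat $(\wt X \times_X Y,\, Y,\, \wt X,\, X)$ as the fiber-product square associated to the principal $\bbZ$-bundle $\wt X \to X$ and to the gerbe projection $Y \to X$, and then to read off the bigerbe conditions from the simplicial behaviour of the shift map. First I would record that $\wt X \to X$, being a covering map (the pullback of $\bbR \to \UU(1)$ along $\rho$), is locally split, that $Y \to X$ is locally split since $(L,Y,X)$ is a bundle gerbe, and that the induced map $W \to \wt X \times_X Y$ is here the identity; thus Lemma~\ref{L:pullback_prod_lss} (or Lemma~\ref{L:ls_pullback_prod}) shows this is a locally split square. Under the identifications of Proposition~\ref{P:fiber_quadrant} one has $W^{[m,n]} \cong \wt X^{[m]}\times_X Y^{[n]}$, and in particular $W^{[2,2]} \cong \wt X^{[2]}\times_X Y^{[2]}$; the bundle $L^\chi$ is the fibrewise $\chi$-th tensor power of the pullback of $L$ from $Y^{[2]}$, where $\chi \colon \wt X^{[2]} \to \bbZ$ is the locally constant fibre-shift.

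The key structural observation is that $\chi$ is a \emph{simplicial cocycle}: additivity of the $\bbZ$-action gives $\chi(x_0,x_1)+\chi(x_1,x_2)=\chi(x_0,x_2)$ on $\wt X^{[3]}$, which is exactly $d_1\chi = 0$. Since $L$ is constant in the $\wt X$ direction, it follows that $d_1 L^\chi = L^{\otimes d_1\chi}$ is \emph{canonically} trivial on $W^{[3,2]}$, giving the trivialisation $s_1$. In the other direction $\chi$ is constant, so $d_2 L^\chi \cong (dL)^{\otimes \chi}$ on $W^{[2,3]}$, which the gerbe product trivialisation $s$ of $dL$ trivialises as $s_2 = s^{\otimes\chi}$. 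I would then verify the remaining conditions of Definition~\ref{D:bundle_bigerbe}: $s_1$ induces the canonical trivialisation of $d_1^2 L^\chi$ because $d_1\chi=0$; $s_2$ induces the canonical one on $d_2^2 L^\chi$ because gerbe associativity says $s$ does so for $d^2 L$; and $d_2 s_1 = d_1 s_2$ since $d_1\chi=0$ forces both sides to be the canonical trivialisation of $d_1 d_2 L^\chi \cong d_2 d_1 L^\chi$. This exhibits $L^\chi$ as a bigerbe.

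For the characteristic class I would pass to \v Cech cohomology with $\bbC^\ast$ coefficients, where by the Bockstein the integral class $\rho\cup\alpha$ is represented by $\rho\cup\alpha'$ for a cocycle $\alpha'\in\vC^2(X;\bbC^\ast)$ representing the Dixmier--Douady class of $L$. The Chern cocycle of the $\chi$-th power is $c(L^\chi) = c(L)^{\chi}$, a pure cocycle in $\vZ^1(W^{[2,2]};\bbC^\ast)$ by Lemma~\ref{L:Cech_bigerbe_class}. The most direct route is to verify the representability criterion of Theorem~\ref{T:bigerbe_representability} for $\rho\cup\alpha$ and to match the resulting data with $c(L)^\chi$. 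Here $\pi_1^\ast(\rho\cup\alpha) = 0$ on $\wt X$ since the logarithmic cover is built to trivialise $\rho$, and $\pi_2^\ast(\rho\cup\alpha) = 0$ on $Y$ since $\pi^\ast\alpha = 0$ by Theorem~\ref{T:representability_three_class}. Choosing $g\in\vC^0(\wt X;\bbZ)$ with $\delta g = \pi_1^\ast\rho$ (so that $d_1 g = \chi$) and $\beta\in\vC^1(Y;\bbC^\ast)$ with $\delta\beta = \pi_2^\ast\alpha'$ (so that $d_2\beta = -c(L)$, as in Definition~\ref{D:dixmier_douady}), the cup products $g\cup\alpha'$ and $\rho\cup\beta$ are primitives for $\pi_i^\ast(\rho\cup\alpha')$ on $\wt X$ and $Y$ respectively, and on $W$ their difference is the coboundary of $g\cup\beta$. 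This is the $(1,3)$-analogue of the decomposable recipe preceding Lemma~\ref{L:bimult_fund_bundle}, and it shows $\Tr(\rho\cup\alpha) = 0$.

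The hard part will be the final identification. By the necessity construction in Theorem~\ref{T:bigerbe_representability}, the class $\rho\cup\alpha$ is realised by the pure cocycle $-d_1 d_2(g\cup\beta)$, and using $d_1 g = \chi$ (the $\bbZ$-cover analogue of Lemma~\ref{L:cohom_transgr_circle_bundle}) together with $d_2\beta = -c(L)$ one computes $-d_1 d_2(g\cup\beta) = \pm\,\chi\cdot c(L) = c(L^\chi)$, up to a sign dictated by Convention~\ref{Conv:multicomplex}. The delicate point is precisely this sign bookkeeping in the triple complex and the verification that the cup-product primitives interact with $d_1, d_2$ so that the two pure cocycles agree in $\vZ^1(W^{[2,2]};\bbC^\ast)$; conceptually this is the assertion that the double transgression \eqref{E:bigerbe_zig_one}--\eqref{E:bigerbe_zig_two} is multiplicative over the cup product, factoring into the $\rho$-transgression $\chi$ in the $\wt X$ direction and the $\alpha$-transgression $c(L)$ in the $Y$ direction (Proposition~\ref{P:bundle_gerbe_double_class}). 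Once the signs are confirmed, Theorem~\ref{T:bigerbe_DD_class} yields $G(L^\chi) = \rho\cup\alpha$, with the orientation of the square (with $\wt X$ as $Y_1$ and $Y$ as $Y_2$) fixing the overall sign.
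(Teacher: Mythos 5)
Your proposal is correct and follows essentially the same route as the paper: the bigerbe structure is obtained exactly as you describe, from $d_1 L^\chi = L^{d_1\chi} = L^0$ being canonically trivial and $d_2 L^\chi = (dL)^\chi$ being trivialized by $s^\chi$, and the class $G(L^\chi)=\rho\cup\alpha$ is identified by observing that the cup product $\chi\cup c(L)$ of the two transgressed cocycles represents the double transgression of $\rho\cup\alpha$ and coincides with $c(L^\chi)= n\,c(L)$ on the components $\chi^{-1}(n)$. Your version merely unpacks the paper's one-line transgression assertion into the explicit primitives and zig-zags of Theorem~\ref{T:bigerbe_representability}, which is a fair (if more laborious) rendering of the same argument.
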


\begin{proof}
We view the covering space $\wt X \to X$ as a principal $\bbZ$ bundle, and then the shift map
\[
	\chi : \wt X^{[2]} \to \bbZ
\]
defines the collective bundle $L^\chi$ on $\wt X^{[2]}\times_X Y^{[2]}$ given by the tensor product $L^n$ over $\chi^{-1}(n)$. 

The bisimplicial space is
\[
	W^{[m,n]} = \wt X^{[m]} \times_X Y^{[n]}.
\]
and the line bundle $L^\chi$ is simplicial in the $d_2$ direction, with
trivializing section of $d_2 (L^\chi) = (d_2L)^\chi$ over $W^{[2,3]}$ given
by $s^\chi$, and the $d_1$ differential of $L^\chi$ is given by
\[
d_1
(L^\chi) = L^{d_1\chi} = L^0
\]
so is canonically trivial. Thus this is indeed a bigerbe.

To see that $G(L^\chi) = \rho \cup \alpha$, observe that representative
cocycles $c(L) \in \vC^2(Y^{[2]}; \bbZ)$ and $\chi \in \vC^0(\wt X^{[2]};
\bbZ)$ pull back to $\vC^\bullet(\wt X^{[2]}\times_X Y^{[2]}; \bbZ)$ by the
fiber product projections, and their cup product $\chi \cup c(L) \in
\vC^2(\wt X^{[2]}\times_X Y^{[2]}; \bbZ)$ represents the transgression
image of $\rho \cup \alpha$ from $X$.
Then $\chi \cup c(L) = nc(L) = [L^n]$ locally on components $\chi^\inv(n)$, so the result follows.
\end{proof}

\subsection{Doubling for bigerbes} \label{S:prod_simp_bigerbe}
As in \S\ref{S:prod_simp}, we may incorporate an additional simplicial
structure with respect to the space of products $EX_\bullet =
X^\bullet$ in order to promote examples of bigerbes involving based loop
spaces to examples involving free loop spaces.

\begin{defn}
A bigerbe $L$ on $X^2$ will be said to be {\em double} if the bigerbe
\[
	\pa L = \pi_0^\ast L \otimes \pi_1^\ast L^\inv \otimes \pi_2^\ast L
\]
is trivial on $X^3$ with respect to the three projection maps $\pi_i : X^3 \to X^2$.
In the absence of additional data, $\pa L$ is defined with respect to the
bisimplicial space over $X^3$ obtained by the fiber products of the three pullbacks
of the bisimplicial space $W_2^{[\bullet,\bullet]}$ over
$X^2$.
However, as for gerbes above, it will typically be the case that $X^3$ carries
a natural split square and induced bisimplicial space $W_3^{[\bullet,\bullet]}$ 
along with maps%
$\begin{tikzcd}[inner sep=0,sep=small] W_3^{[\bullet,\bullet]} \arrthree &W_2^{[\bullet,\bullet]}\end{tikzcd}$ 
over the projections $\begin{tikzcd}[inner sep=0,sep=small] X^3 \arrthree & X^2\end{tikzcd}$.
In this case, it suffices that $\pa L \to W_3^{[2,2]}$, defined by pulling back
along the three maps $W_3^{[2,2]} \to W_2^{[2,2]}$ and taking the alternating
product, admits a bigerbe trivialization.
\label{D:product_simplicial_bigerbe}
\end{defn}

As a special case relevant in our primary example, a bigerbe is a double if
$\pa L$ is itself trivial as a line bundle over $W_3^{[2,2]}$.
Naturality of the bigerbe characteristic class and exactness of the sequence
\eqref{E:product_cohomology_seq} together lead to the following analogue
of Proposition~\ref{P:prod_simp_gerbe}.
\begin{prop}
The characteristic 4-class, $G(L),$ of a double bigerbe $L$ on $X^2$,  descends
from $H^4(X^2; \bbZ)$ to $H^4(X; \bbZ)$.
\label{P:prod_simp_bigerbe}
\end{prop}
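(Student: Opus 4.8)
The plan is to mirror the proof of the gerbe analogue, Proposition~\ref{P:prod_simp_gerbe}, replacing the Dixmier--Douady class with the characteristic 4-class $G$ and invoking its naturality from Theorem~\ref{T:bigerbe_DD_class} together with exactness of the product cohomology sequence \eqref{E:product_cohomology_seq}. The essential point is that a double bigerbe is precisely one whose class is killed by the product simplicial differential, and that this differential is injective out of $H^4(X;\bbZ)$.

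First I would establish the identity $\pa G(L) = G(\pa L)$ in $H^4(X^3;\bbZ)$, where $\pa = \sum_{j=0}^2 (-1)^j \pi_j^\ast$ is the product simplicial differential for the three projections $\pi_j : X^3 \to X^2$. Naturality of $G$ under pullback gives $\pi_j^\ast G(L) = G(\pi_j^\ast L)$, and naturality under products and inverses then yields
\[
\pa G(L) = G(\pi_0^\ast L) - G(\pi_1^\ast L) + G(\pi_2^\ast L) = G\big(\pi_0^\ast L \otimes \pi_1^\ast L^\inv \otimes \pi_2^\ast L\big) = G(\pa L).
\]
Since $L$ is double, $\pa L$ is a trivial bigerbe over $X^3$ by Definition~\ref{D:product_simplicial_bigerbe}, so $G(\pa L) = 0$ by Theorem~\ref{T:bigerbe_DD_class}; in the special case where $\pa L$ is only assumed trivial as a line bundle, triviality as a bigerbe still follows from Lemma~\ref{L:Cech_bigerbe_class}, since $c(\pa L) = 0$ is then a coboundary. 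Hence $\pa G(L) = 0$. Finally, taking $\ell = 4$ and $A = \bbZ$ in \eqref{E:product_cohomology_seq}, exactness at the $H^4(X^2;\bbZ)$ term shows that every class annihilated by $\pa$ lies in the image of the injective map $\pa : H^4(X;\bbZ) \to H^4(X^2;\bbZ)$; thus $G(L)$ is the image of a unique class on $X$, which is the asserted descent.

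The step requiring the most care is the bookkeeping behind $G(\pi_j^\ast L) = \pi_j^\ast G(L)$ and the identification of the two incarnations of $\pa L$. Concretely, $\pa L$ may be built either from the three pullbacks of the bisimplicial space $W_2^{[\bullet,\bullet]}$ to $X^3$ or, when a natural split square on $X^3$ is present, from $W_3^{[\bullet,\bullet]}$ together with the maps $W_3^{[\bullet,\bullet]} \to W_2^{[\bullet,\bullet]}$ over the projections; one must check that the morphism of bigerbes relating these assigns the same characteristic class, which is exactly the equality $f^\ast G(L) = G(L')$ for morphisms recorded in Theorem~\ref{T:bigerbe_DD_class}. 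This is the bigerbe counterpart of the verification carried out for gerbes in \S\ref{S:prod_simp}, and beyond careful tracking of the bisimplicial structure it introduces no new difficulty.
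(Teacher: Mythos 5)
Your proof is correct and follows exactly the argument the paper intends: the paper dispatches this proposition in one sentence by citing naturality of $G$ (Theorem~\ref{T:bigerbe_DD_class}) and exactness of \eqref{E:product_cohomology_seq}, which is precisely the computation $\pa G(L) = G(\pa L) = 0$ and descent that you spell out. Your additional care about the two incarnations of $\pa L$ and the line-bundle-trivial case is a reasonable elaboration of details the paper leaves implicit.
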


In \S\ref{S:Path-bigerbes} we will also consider a similar condition with
respect to the bisimplicial space of products $X^{m,n} = X^{mn}$, with the
two sets of projections $\pi^1_j : X^{m,n} \to X^{m-1,n}$ and $\pi^2_j :
X^{m,n} \to X^{m,n-1}$.
A bigerbe $L$ over $X^4 = X^{2,2}$ is {\em quadruple} if $\pa_1 L$
and $\pa_2 L$ are respectively trivial over $X^{1,2} = X^2$ and $X^{2,1} =
X^2$.
The natural differentials $\pa_1$ and $\pa_2$, defined on cohomology
$\vH^\ell(X^{\bullet,\bullet}, A)$, commute, and from exactness of these we
obtain the following result.

\begin{prop}
For a quadruple bigerbe $L$ on $X^4$, the characteristic 4-class $G(L)$ descends
from $H^4(X^4; \bbZ)$ to $H^4(X; \bbZ)$.
\label{P:prod_bisimp_bigerbe}
\end{prop}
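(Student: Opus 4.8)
The plan is to mimic the proof of Proposition~\ref{P:prod_simp_bigerbe}, replacing the single product-simplicial direction by the two commuting directions $\pa_1$ and $\pa_2$ of the bisimplicial space $X^{\bullet,\bullet}$, and then to descend $G(L)$ one index at a time. The starting point is naturality of the characteristic class from Theorem~\ref{T:bigerbe_DD_class}: since the quadruple hypothesis asserts that the bigerbes $\pa_1 L$ and $\pa_2 L$ are trivial, we obtain at once $\pa_1 G(L) = G(\pa_1 L) = 0$ and $\pa_2 G(L) = G(\pa_2 L) = 0$ in the relevant cohomology groups. Thus $G(L) \in \vH^4(X^{2,2}; \bbZ)$ is annihilated by both $\pa_1$ and $\pa_2$.

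First I would establish the analogue of the exact sequence \eqref{E:product_cohomology_seq} in each direction separately. For fixed $n$ the spaces $X^{\bullet, n} \cong (X^n)^{\bullet}$ are precisely the product-simplicial space of $X^n$, so a choice of basepoint $\ast \to X$ yields, exactly as in \S\ref{S:prod_simp}, a contracting homotopy $s_1^\ast$ with $s_1^\ast \pa_1 + \pa_1 s_1^\ast = \Id$ on $\vH^\bullet(X^{\bullet, n}; \bbZ)$; symmetrically one obtains $s_2^\ast$ for the second index. The key structural point I would verify is that $s_1^\ast$ and $s_2^\ast$ commute, since they insert a basepoint into disjoint sets of tensor factors, and that each commutes with the opposite differential, so that $\pa_2 s_1^\ast = s_1^\ast \pa_2$ and $\pa_1 s_2^\ast = s_2^\ast \pa_1$ (this is the standard fact that the horizontal contraction of a bisimplicial object commutes with the vertical differential).

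The descent then proceeds in two steps. Setting $a = s_1^\ast G(L) \in \vH^4(X^{1,2}; \bbZ)$, the homotopy identity together with $\pa_1 G(L) = 0$ gives $G(L) = \pa_1 a$. Applying $\pa_2$ and using commutativity, $\pa_2 a = \pa_2 s_1^\ast G(L) = s_1^\ast \pa_2 G(L) = 0$, so that $a$ is itself $\pa_2$-closed; putting $b = s_2^\ast a = s_2^\ast s_1^\ast G(L) \in \vH^4(X^{1,1}; \bbZ) = \vH^4(X; \bbZ)$ yields $a = \pa_2 b$ and hence $G(L) = \pa_1 \pa_2 b$. This exhibits $G(L)$ as the image of a class $b$ on $X$ under the pullback $\pa_1 \pa_2 \colon \vH^4(X;\bbZ) \to \vH^4(X^{2,2};\bbZ)$, which is injective because each augmented complex begins with $\vH^4$ of a point; this is the asserted descent, with $b$ the well-defined descended class.

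I expect the main obstacle to be the \emph{simultaneous} second descent rather than the first: after descending in the $\pa_1$ direction one must know that the resulting class on $X^{1,2}$ is still annihilated by $\pa_2$, and that descending it further does not disturb the first relation. Both are handled uniformly by the commutation of the two homotopy operators with each other and with the opposite differentials, so the crux is really that verification, after which the two-step descent and the uniqueness of $b$ are formal. Signs from Convention~\ref{Conv:multicomplex} play no essential role here, as we work throughout at the level of cohomology in a single total degree.
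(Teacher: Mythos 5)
Your argument is correct and is exactly the route the paper intends: the paper gives no separate proof of Proposition~\ref{P:prod_bisimp_bigerbe}, deducing it in one line from the commutativity of $\pa_1$ and $\pa_2$ on $\vH^\bullet(X^{\bullet,\bullet};A)$ and the exactness of the product-simplicial complex in each direction, which is precisely the two-step descent you carry out. Your verification that the basepoint contraction in one index commutes with the differential in the other is the only substantive point, and it holds for the reason you give.
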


\subsection{Brylinski-McLaughlin bigerbes} \label{S:bm-bigerbe}

Next we turn to our main application. The loop space of a principal $G$
bundle over a manifold is a principal bundle over the loop space with
structure group the loop group of $G.$ The Brylinski-McLaughlin bigerbe
captures the obstruction to lifting this bundle to a (loop-fusion) principal
bundle for a central extension of the loop group.
While the version involving based path and loop spaces is simpler, we
focus from the beginning on the doubled version involving free path and
loop spaces, as this gives the results of primary interest.
Note that this theory most naturally involves $\UU(1)$ principal bundles in place of line bundles, 
which we shall use for the remainder of the section without further comment.

Let $G$ be a compact, simple, connected and simply connected group.
As is well-known (see for instance \cite{Pressley-Segal1}), there is a classification of $\UU(1)$ central extensions
\begin{equation}
	1 \to \UU(1) \to \wh{LG} \to LG \to 1
	\label{E:LG_ce}
\end{equation}
of the loop group $LG$ by $H^3(G; \bbZ) \cong H_G^3(G; \bbZ) \cong \bbZ$.
These extensions descend to the quotient $LG/G \cong \Omega G$ and so the
classification of central extensions of the based loop group $\Omega G$ is
equivalent.

Forgetting the group structure for the moment, such a central extension may be
viewed as a circle bundle over $LG \cong I^{[2]} G$, the Chern class
$c(\wh{LG}) \in H^2(LG; \bbZ)$ of which is the transgression of the defining class in
$H^3(G; \bbZ)$.
As such, it follows from Theorem~\ref{T:lf_trans} that $c(\wh{LG})$ has a loop-fusion refinement. 
In the equivalent language of the loop-fusion structures of Definition~\ref{D:loop-fusion_bundle}, we may restate this as follows.
\begin{thm}
As a $\UU(1)$-bundle, $\wh{LG}\to LG$ has a canonical loop-fusion structure, meaning a trivialization of $d\wh{LG} \to I^{[3]} G$ inducing
the canonical trivialization of $d^2 \wh{LG} \to I^{[4]} G$ and a trivialization of $\pa \wh{LG} \to L_8 G$.
\label{T:LG_lf}
\end{thm}
\begin{rmk}
In fact, the additional structure that promotes a general $\UU(1)$-principal bundle over $LG$ to a central extension is also a simplicial one.
Indeed, as noted by Brylinski and McLaughlin in \cite{B-MI} and attributed to Grothendieck, a $\UU(1)$ central extension
of any group $H$ is equivalent to a simplicial circle bundle with respect to the simplicial space $BH_\bullet$ defined by %$BH_1 = \set 1$ and 
$BH_k = H^{k-1}$ %for $k \geq 2$
with the face maps $H^{k+1} \to H^k$ given by
\[
	\pi_i : (h_0,h_1,\ldots,h_k) \mapsto \begin{cases} (h_1,\ldots,h_k), & i = 0 \\ (h_0,\ldots, h_{i-1}h_i,h_{i+1},\ldots,h_k), & 1 \leq i \leq k \\ (h_0,\ldots,h_{k-1}), & i = k. \end{cases}
\]
Thus, given a circle bundle $Q \to H = BH_2$, a trivialization of $\pa Q =
\pi_0^\ast Q\otimes \pi_1^\ast Q^\inv \otimes \pi_2^\ast Q$ inducing the canonical
trivialization of $d^2 Q \to H^3$ equips $Q$ with the (associative) multiplicative structure
of a $\UU(1)$ central extension of $H$ and vice versa.

For the groups under consideration, we believe it can be shown that the classes
in $H^3_G(G; \bbZ) = H^4(|BG|; \bbZ)$ are represented by cohomology classes
$\alpha \in H^3(G = BG_2; \bbZ)$ satisfying $\pa \alpha = 0 \in H^3(G^2 = BG_3;
\bbZ)$, and that the corresponding gerbe $(W, PG, G)$, with circle bundle $W
\to \Omega G$,  admits a simplicial structure with respect to $B\Omega
G_\bullet$, and thus a central extension of $\Omega G$. 
Further considering a doubled structure with respect to $G_\bullet =
G^\bullet$ gives rise to the central extensions of $LG$.
For reasons of space, and since the theory of central extensions of $LG$ is
already well-known, we will not elaborate further on this point.
\end{rmk}

To define the Brylinski-McLaughlin bigerbe, let $X$ be a connected manifold with 
principal $G$-bundle $E \to X$.
One case of particular interest is the spin frame bundle over a spin manifold of dimension $\geq 5$.

\begin{lem}\label{L:BM_lssquare} With vertical maps projections and
evaluation at end- and  mid-points in the horizontal directions,
the diagrams
%the maps to the end-point in the vertical directions
%
\begin{equation}\label{E:BM_square}
%\begin{tikzcd}
	%PX \ar[d,"\pi"'] & PE \ar[d,"\pi"] \ar[l]
	%\\ X & E  \ar[l]
%\end{tikzcd}
%\xymatrix{
	%PX \ar[d]_-{\pi} & PE \ar[d]^-{\pi} \ar[l]
	%\\ X & E  \ar[l]
%}
%\quad\text{and}
\begin{tikzcd}
 	E^k \ar[d] & I_{k} E \ar[l,"\pi"] \ar[d] 
	\\ X^k & I_k X \ar[l, "\pi"]
\end{tikzcd}
\end{equation}
for $k \geq 2$ are locally split squares.
\end{lem}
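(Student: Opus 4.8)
The plan is to read the three requirements of Definition~\ref{D:lssquare} off the square and verify each in turn. Matching the notation there, I identify $Y_2 = E^k$, $Y_1 = I_kX$, and $W = I_kE$ over the base $X^k$, with the horizontal maps given by evaluation at the $k$ marked parameters $0 = t_1 < \cdots < t_k = 1$ and the vertical maps induced by $E \to X$. The square commutes because both composites $I_kE \to X^k$ send a path $\tilde\gamma$ to $\big(\pi\tilde\gamma(t_1),\ldots,\pi\tilde\gamma(t_k)\big)$. It then remains to show (a) that $E^k \to X^k$ is locally split, (b) that $I_kX \to X^k$ is locally split, and (c) that the induced map $q : I_kE \to I_kX \times_{X^k} E^k$ is locally split.

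For (a) I would note that $E^k \to X^k$ is a principal $G^k$-bundle, hence locally trivial, with sections over any product $U_1\times\cdots\times U_k$ of trivializing opens; this is also an instance of Lemma~\ref{L:ls_pullback_prod}, writing $E^k$ as the fiber product over $X^k$ of the pullbacks $\mathrm{pr}_i^\ast E$. For (b) I would argue exactly as for the local splitness of the path fibration $PX \to X$ used in \S\ref{S:gerbe_exs}: fixing a base configuration $(x_1^0,\ldots,x_k^0)$ together with a path $\gamma^0 \in I_kX$ realizing it, and choosing charts about each $x_i^0$, a section over a neighborhood of the configuration is obtained by keeping $\gamma^0$ unchanged away from small parameter windows about the $t_i$ and, inside each window, interpolating in the chart so that the value at $t_i$ is the prescribed nearby point $x_i$. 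This depends continuously on $(x_1,\ldots,x_k)$, and such neighborhoods cover $X^k$.

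The hard part is (c), which is a family version of lifting a path so as to pass through prescribed values at the marked points. Given data $(\gamma; e_1,\ldots,e_k)$ near a base configuration, I would lift $\gamma$ segment by segment on each $[t_i,t_{i+1}]$ using local trivializations of $E$ (these stay within the continuous category, unlike horizontal transport, which would require smooth paths): having arranged the lift to equal $e_i$ at $t_i$, its value over $x_{i+1}$ differs from the prescribed $e_{i+1}$ by a group element $a_{i+1} = \tau\big(\cdot, e_{i+1}\big) \in G$, where $\tau : E\times_X E \to G$ is the translation map, and I would correct the lift by a continuous $G$-valued factor running from the identity to $a_{i+1}$, which exists because $G$ is connected. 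Concatenating the corrected segments produces $\tilde\gamma \in I_kE$ with $\pi\tilde\gamma = \gamma$ and $\tilde\gamma(t_i) = e_i$, that is, $q(\tilde\gamma) = (\gamma; e_1,\ldots,e_k)$. The delicate point, and the real crux of the lemma, is making this correction continuous in families: for each base configuration I would fix the correction paths once and extend them to nearby data through the Lie-group logarithm near the base group elements, so that horizontal lifting, $\tau$, and the choice of correction are all continuous and the construction gives a genuine continuous section of $q$ over a neighborhood; these neighborhoods cover the fiber product. With (a)--(c) established, the diagram is a locally split square, and Lemma~\ref{L:lssquare_sections} then upgrades all four maps to locally split maps.
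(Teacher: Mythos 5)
Your proof is correct and follows essentially the same route as the paper: reduce to the three local splitness conditions of Definition~\ref{D:lssquare}, dispose of the first two as (local) fibration statements, and handle the induced map $I_kE \to I_kX\times_{X^k}E^k$ by lifting the path segment by segment between marked points and then correcting within the fiber by a path in $G$ from the identity to the discrepancy element, which exists by connectedness of $G$. The only divergence is the lifting mechanism --- the paper uses horizontal lift with respect to a connection on $E$ where you use local trivializations, a reasonable substitution (arguably more careful, since horizontal transport of a merely continuous path is not defined) --- and your use of the logarithm to make the fiber correction continuous in families fills in the same point the paper summarizes as ``this construction can be carried out locally continuously.''
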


\begin{proof} 
The maps are locally trivial fiber bundles, so all maps are locally split, and
$I_kE$ is likewise a fiber bundle over the fiber product $I_kX \times_{X^k}
E^k$, which is the space of paths in $X$ along with prescribed points in $E$
over the endpoints (for $k= 2$) and midpoint (for $k = 3$) of the path.
A connection on $E$ gives a horizontal lift of each path segment in $X$ given an 
initial point in $E$, and from the connectedness of $G$ this can be concatenated with
a path in the fiber from the endpoint of the lifted path segment to any other prescribed point
in the same fiber; 
this can be done for each segment defined between the $k$ marked points of the path on $X$.
This construction can be carried out locally continuously, so giving a local section of $I_kE$
over $E^k \times_{X^k}I_k X$.
\end{proof}

In the resulting bisimplicial diagrams we may write $IE^{[2]}$, etc., without
risk of confusion in light of the canonical isomorphisms
\[
\begin{gathered}
	 (I E)^{[2]} = I E \times_{I X} I E \cong I (E^{[2]}) = I (E\times_X E),
	\\ (L E)^{[2]} = L E \times_{L X} L E \cong L (E^{[2]}) = L (E\times_X E),
	\quad \text{etc.}
\end{gathered}
\]

Filling out the bisimplicial space for $k = 2$ by fiber products leads to the diagram
\begin{equation}
\begin{tikzcd}[sep=small]
%I^{[4]}X\ardfour&\ar[l]I^{[4]}E\ardfour&I^{[4]}E^{[2]}\ardfour\arltwo
{} \ar[d,no head,thick,dotted] & {} \ar[d,no head,thick,dotted] & {} \ar[d,no head,thick,dotted] & {} \ar[d,no head,thick,dotted]
\\
(E^{[3]})^2 \ardthree & IE^{[3]} \ar[l] \ardthree & LE^{[3]} \arltwo\ardthree&I^{[3]}E^{[3]}\ardthree\arlthree & {} \ar[l,no head, thick,dotted]
\\
(E^{[2]})^2 \ardtwo & I E^{[2]} \ar[l] \ardtwo & L E^{[2]} \arltwo \ardtwo
&I^{[3]} E^{[2]}\ardtwo\arlthree&{} \ar[l,no head, thick,dotted]%\arlfour L E^{[4]}\ardtwo
\\
E^2 \ar[d] & IE \ar[d] \ar[l] & LE \ar[d] \arltwo
&I^{[3]}E\arlthree\ar[d]&{} \ar[l,no head,thick,dotted]%\arlfour I E^{[4]}\ar[d]
	\\
X^2 & IX \ar[l] & LX \arltwo &I^{[3]} X\arlthree&{} \ar[l,no head,thick,dotted]%\arlfour  (E^{[4]})^2.
\end{tikzcd}
	\label{E:BM_diagram}
\end{equation}

The third column of \eqref{E:BM_diagram} is the simplicial space generated by the
fibration $L E \to L X$, itself a principal bundle with structure group $L G$,
and thus supports a lifting bundle gerbe
\[
	Q = \chi^\ast \wh{LG} \to L E^{[2]},
\]
where
\[
	\chi : LE^{[2]} \ni 
	\big(l_1(\theta),l_2(\theta)\big) \mapsto \ell(\theta) \in LG,
	\quad l_2(\theta) = \ell(\theta)l_1(\theta)
\]
is the shift map of the principal bundle, and we consider $\wh{LG} \to LG$ as a $\UU(1)$-bundle.
The other columns are likewise the simplicial spaces of principal bundles, with structure groups $G$ and $I^{[k]} G$ for $k \geq 1$,
and we denote their associated shift maps by the same letter.

\begin{thm}
Given a central extension \eqref{E:LG_ce} of level $\ell \in \bbZ = H_G^3(G; \bbZ)$,
the lifting bundle gerbe $Q \to LE^{[2]}$ is the double bigerbe $(Q, IE, E^2, IX, X^2)$ 
with characteristic class 
\begin{equation}
	G(Q) = \ell\,p_1(E) \in H^4(X; \bbZ),
	\label{E:BM_char_class}
\end{equation}
where $p_1(E)$ is the first Pontryagin class of $E$.
\label{T:BM_bigerbe}
\end{thm}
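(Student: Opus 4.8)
The plan is to verify the bigerbe axioms for $Q$ directly from the canonical loop-fusion structure on $\wh{LG}$ supplied by Theorem~\ref{T:LG_lf}, to obtain the doubling from the figure-of-eight part of that same structure, and finally to compute $G(Q)$ by matching the cochain-level double transgression with the transgression \eqref{rbm.57} of the level-$\ell$ class on $G$. By Lemma~\ref{L:BM_lssquare} the square $(IE, E^2, IX, X^2)$ is locally split with induced bisimplicial space \eqref{E:BM_diagram}, in which $W^{[2,2]} = LE^{[2]}$, $W^{[2,3]} = LE^{[3]}$ and $W^{[3,2]} = I^{[3]}E^{[2]}$, and $Q = \chi^\ast \wh{LG}$ for the shift map $\chi : LE^{[2]} \to LG$ of the principal $LG$-bundle $LE \to LX$. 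To confirm $Q$ is a bigerbe in the sense of Definition~\ref{D:bundle_bigerbe} I must trivialize $d_2 Q$ over $W^{[2,3]}$ and $d_1 Q$ over $W^{[3,2]}$, compatibly over $d_1 d_2 Q$ and inducing the canonical trivializations of $d_i^2 Q$. The $d_2$ direction is the fiber-product direction over $LX$, where $d_2 Q$ is precisely the lifting-gerbe bundle of $LE \to LX$; its trivialization, with $d_2^2 Q$ canonical, is exactly the multiplicativity and associativity of the central extension \eqref{E:LG_ce}. The $d_1$ direction is the fusion of loops in $E^{[2]}$ over $E^2$, and here the trivialization of $d_1 Q$ together with the canonical trivialization of $d_1^2 Q$ is the fusion part of the loop-fusion structure of $\wh{LG}$, pulled back by $\chi$. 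The equality $d_1 d_2 Q = d_2 d_1 Q$ of the two resulting trivializations holds because both are $\chi$-pullbacks of structure carried by the single object $\wh{LG}$; that the fusion isomorphisms intertwine the group multiplication so that these sections agree is the point I expect to check most carefully.

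Next, for the doubling condition of Definition~\ref{D:product_simplicial_bigerbe} I would pass to the $k=3$ square of Lemma~\ref{L:BM_lssquare} over $X^3$, whose relevant corner $W_3^{[2,2]}$ is the figure-of-eight space $L_8 E^{[2]}$, and trivialize $\pa Q$ there by pulling back along $\chi$ the figure-of-eight trivialization of $\pa \wh{LG} \to L_8 G$ from Theorem~\ref{T:LG_lf}. With $\pa Q$ so trivialized, Proposition~\ref{P:prod_simp_bigerbe} makes $Q$ a double bigerbe and forces $G(Q)$ to descend from $H^4(X^2; \bbZ)$ to $H^4(X; \bbZ)$, as required by the statement.

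To identify the descended class I would run the double zig-zag \eqref{E:bigerbe_zig_one}--\eqref{E:bigerbe_zig_two}. Since $c(Q) = \chi^\ast c(\wh{LG})$ and $c(\wh{LG}) \in \vH^2_Z(LG; \bbZ)$ is the transgression of the level-$\ell$ class $\alpha_\ell = \ell\,\alpha \in H^3(G; \bbZ)$, the loop-transgression built into $\wh{LG}$ and the de-transgression effected by the bigerbe zig-zag combine, so that the double transgression of $c(Q) \in \vH^2_Z(W^{[2,2]}; \bbZ)$ back to $X$ reproduces exactly the cochain computation \eqref{rbm.57}: taking $\mu$ with $q^\ast \alpha_\ell = d\mu$ on $E$ and $\gamma$ with $\delta \mu = d\gamma$ on $X$ yields $G(Q) = [\gamma]$, the transgression of $\alpha_\ell$ along the principal bundle $E \to X$. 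It then remains to evaluate this transgression. By naturality of $G$ (Theorem~\ref{T:bigerbe_DD_class}) under a classifying map $f : X \to BG$ one has $G(Q) = f^\ast G(Q_{\mathrm{univ}})$ and $p_1(E) = f^\ast p_1$; since $G$ is compact, simple and simply connected, $H^4(BG; \bbZ) \cong \bbZ$ and the standard transgression isomorphism $H^3(G; \bbZ) \cong H^4(BG; \bbZ)$ sends the basic generator $\alpha$ to $p_1$, whence $\alpha_\ell \mapsto \ell\,p_1$ and therefore $G(Q) = \ell\,p_1(E)$.

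The two genuine obstacles I anticipate are: first, the compatibility of the fusion and multiplicativity trivializations over $d_1 d_2 Q$ in the first paragraph, i.e.\ verifying that the fusion structure of $\wh{LG}$ is compatible with the central multiplication so the two trivializing sections coincide; and second, the careful bookkeeping of signs and normalizations in the double zig-zag needed to show that the composite of the loop-transgression in $\wh{LG}$ with the bigerbe de-transgression is exactly the principal-bundle transgression \eqref{rbm.57} and lands on $\ell\,p_1(E)$ rather than a nonzero multiple thereof; the reduction to the universal bundle over $BG$ and the known value of the universal transgression $H^3(G;\bbZ)\cong H^4(BG;\bbZ)$ is precisely what is designed to pin down this constant.
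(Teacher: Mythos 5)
Your proposal is correct, and for the bigerbe structure and the doubling it coincides with the paper's argument: both simplicial trivializations of $d_iQ$ and the figure-of-eight trivialization of $\pa Q$ are obtained by pulling back the canonical loop-fusion structure on $\wh{LG}$ from Theorem~\ref{T:LG_lf} along the shift map, via $d_1\chi^\ast\wh{LG}=\chi^\ast d\,\wh{LG}$ and $\pa\,\chi^\ast\wh{LG}=\chi^\ast\pa\,\wh{LG}$; the compatibility over $d_1d_2Q$ that you single out is likewise dispatched in the paper by ``naturality of the shift map,'' so you are not missing anything the paper supplies. Where you genuinely diverge is the identification of the class. You run the zig-zag \eqref{E:bigerbe_zig_one}--\eqref{E:bigerbe_zig_two} all the way down, exhibit $G(Q)$ as the bundle transgression \eqref{rbm.57} of $\ell\alpha\in H^3(G;\bbZ)$ along $E\to X$, and pin down the constant by naturality under a classifying map, using the standard fact that the universal transgression $H^3(G;\bbZ)\cong H^4(BG;\bbZ)$ carries the generator to $p_1$. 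The paper instead regresses $c(Q)$ only one step, to $\vH^3_Z(LX;\bbZ)$, recognizes the result as the loop-fusion refinement of the Dixmier-Douady class of the lifting gerbe --- well known to equal the loop-space transgression of $\ell\,p_1(E)$ --- and then applies its own Theorem~\ref{T:lf_trans} to de-transgress back to $H^4(X;\bbZ)$. Both routes outsource the same normalization to an external standard fact; yours avoids reliance on the loop-fusion cohomology isomorphism but requires the sign-sensitive comparison with \eqref{rbm.57} to be carried out over $X^2$ before descending to $X$ (a point you correctly flag), while the paper's route exploits the transgression machinery it has already built and keeps the computation entirely on the loop-space side.
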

\noindent The bigerbe $(Q,IE, E^2, IX, X^2)$ will be called the {\em Brylinski-McLaughlin} bigerbe.

\begin{proof} It follows immediately from the lifting gerbe construction
  that $Q$ is vertically simplicial, and the simplicial condition in the
  horizontal direction follows from naturality of the shift map and
  Theorem~\ref{T:LG_lf}.

Indeed, unwinding the definitions reveals that $d_1 \chi^\ast \wh{LG} =
\chi^\ast d \wh{LG}$, which admits the trivialization noted in
Theorem~\ref{T:LG_lf} inducing the canonical trivialization of $\chi^\ast
d^2 \wh{LG} = d_1^2 \chi^\ast \wh{LG}$.

For doubling, we define $\pa Q$ with respect to the locally split square \eqref{E:BM_square} for $k = 3$,
the induced bisimplicial space of which sits in the diagram
\begin{equation}
\begin{tikzcd}[sep=small]
%
%I^{[4]}X\ardfour&\ar[l]I^{[4]}E\ardfour&I^{[4]}E^{[2]}\ardfour\arltwo
{} \ar[d,no head,thick,dotted] & {} \ar[d,no head,thick,dotted] & {} \ar[d,no head,thick,dotted] & {} \ar[d,no head,thick,dotted]
\\
(E^{[3]})^3 \ardthree & IE^{[3]} \ar[l] \ardthree & L_8 E^{[3]}
        \arltwo\ardthree&I^{[3]}E^{[3]}\ardthree\arlthree & {} \ar[l,no head, thick,dotted]
\\
(E^{[2]})^3 \ardtwo & IE^{[2]} \ar[l] \ardtwo & L_8 E^{[2]} \arltwo \ardtwo
&I^{[3]} E^{[2]}\ardtwo\arlthree&{} \ar[l,no head, thick,dotted]%\arlfour L E^{[4]}\ardtwo
\\
E^3 \ar[d] & IE \ar[d] \ar[l] & L_8 E \ar[d] \arltwo
&I^{[3]} E\arlthree\ar[d]&{} \ar[l,no head,thick,dotted]%\arlfour I E^{[4]}\ar[d]
	\\
X^3 & IX \ar[l] & L_8 X \arltwo &I^{[3]} X\arlthree&{} \ar[l,no head,thick,dotted]%\arlfour  (E^{[4]})^2.
\end{tikzcd}
	\label{E:BM_diagram_8}
\end{equation}
with the obvious maps to \eqref{E:BM_diagram}, and 
where we have omitted the subscript $3$ on the path spaces and made the identification $L_8 X \cong I^{[2]}_{3} X$, etc.
Once again, it follows that $\pa Q = \pa \chi^\ast \wh{LG} = \chi^\ast \pa \wh{LG}$ with
respect to the shift map for the principal $L_8 G$-bundle $L_8 E \to L_8 X$,
which admits a trivialization in light of Theorem~\ref{T:LG_lf}.
Thus $Q$ is product simplicial in the sense of Definition~\ref{D:product_simplicial_bigerbe} and its characteristic class
descends to $H^4(X; \bbZ)$.

Observe that this characteristic class can be obtained in two steps, first by regressing the Chern class $c(Q)$ from
$H^2_Z(LE^{[2]}; \bbZ)$ to $H^3_Z(LX; \bbZ)$ and then to $H^4(X; \bbZ)$.
The image of $c(Q)$ in $H^3_Z(LX; \bbZ)$ is, essentially by definition,
the Dixmier-Douady class of the lifting bundle gerbe (or more precisely,
its loop-fusion refinement), and it is well-known that this class is the
transgression of the Pontryagin class of $E$ on $X$ (multiplied by $\ell$
in the case of higher levels), so from Theorem~\ref{T:lf_trans} we obtain
\eqref{E:BM_char_class}.

\end{proof}

Without the vertical simplicial condition, the gerbe $(Q, LE^{[2]}, LX)$
represents the obstruction to lifting the $LG$ bundle $LE \to LX$ to an $\wh
{LG}$ bundle $\wh{LE} \to LX$.
The enhancement of this data to a bigerbe carries additional information,
which is formalized in the following definition.
\begin{defn}
Let $E \to X$ be a principal $G$-bundle for $G$ a simple, connected and simply connected Lie group, and fix a central extension \eqref{E:LG_ce} of $LG$
of level $\ell \in \bbZ = H_G^3(G; \bbZ)$.
A {\em loop-fusion $\wh{LG}$ lift} of $LE \to LX$ is a principal $\wh{LG}$-bundle $\wh{LE} \to LX$ lifting $LE$ with the property that $\wh{LE} \to LE$ is loop-fusion as a $\UU(1)$-bundle;
in other words, $d\wh{LE} \to I^{[3]} E$ has a trivialization inducing the canonical trivialization of $d^2\wh{LE} \to I^{[4]} E$ and $\pa \wh{LE} \to L_8 E$ admits a trivialization.
\label{D:lf_lift}
\end{defn}
\noindent Without the additional figure-of-eight structure, such fusion lifts have been considered
by Waldorf in \cite{MR3493404}, and with stronger conditions (high regularity and equivariance with respect
to diffeomorphisms of $S^1$) by the authors in \cite{KM-equivalence}.

\begin{thm} Loop fusion $\wh{LG}$ lifts of $LE \to LX$ are in bijection
  with doubled trivializations of the Brylinski-McLaughlin bigerbe; they
  exist if and only if $p_1(E)$ vanishes, and then form a torsor for
  $H^3(X; \bbZ)$.  \label{T:lf_lifts_trivns}
\end{thm}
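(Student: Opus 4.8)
The plan is to read off all three assertions from the bigerbe formalism specialized to diagram~\eqref{E:BM_diagram}, with the final count supplied by the transgression isomorphism of Theorem~\ref{T:lf_trans}. Throughout I use the identification, established in the proof of Theorem~\ref{T:BM_bigerbe}, that the vertical differential $d_2$ is the lifting-gerbe direction (so $W^{[2,\bullet]}$ is the simplicial space $LE^{[\bullet]}$ over $LX$) while the horizontal differential $d_1$ is the fusion direction (so $W^{[\bullet,1]} = I^{[\bullet]}E$). For the bijection I would then match the data clause by clause. A trivialization of $Q$ of the special form~(ii) listed after Definition~\ref{D:bigerbe_trivn}, enhanced by the figure-of-eight condition of \S\ref{S:prod_simp_bigerbe}, is exactly a $\UU(1)$-bundle $R \to W^{[2,1]} = LE$ together with an isomorphism $d_2 R \cong Q$, a trivialization of $d_1 R$ inducing the canonical one on $d_1^2 R$, and a trivialization of $\pa R$ over $L_8 E$. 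By the standard correspondence between trivializations of a lifting gerbe and reductions of the structure group, $d_2 R \cong Q$ says precisely that $R = \wh{LE}$ is a principal $\wh{LG}$-bundle over $LX$ lifting $LE$; the trivialization of $d_1 R$ over $I^{[3]}E$ is the fusion structure, and that of $\pa R$ over $L_8 E$ is the figure-of-eight structure. Comparing with Definition~\ref{D:lf_lift} shows that doubled trivializations and loop-fusion $\wh{LG}$-lifts are literally the same data.

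For existence I appeal to the characteristic class. By Theorem~\ref{T:BM_bigerbe} the descended class is $G(Q) = \ell\,p_1(E) \in \vH^4(X; \bbZ)$, and by the triviality criterion of Theorem~\ref{T:bigerbe_DD_class}, refined through the doubling direction by exactness of~\eqref{E:product_cohomology_seq} exactly as in Proposition~\ref{P:prod_simp_bigerbe}, a doubled trivialization exists if and only if this descended class vanishes; for the basic extension ($\ell = 1$) this is the vanishing of $p_1(E)$.

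For the torsor structure I compute differences. Given two loop-fusion lifts $R, R'$, the bundle $P = R \otimes (R')^{\inv} \to LE$ has $d_2 P \cong Q \otimes Q^{\inv}$ canonically trivial, carries a fusion structure ($d_1 P$ trivial over $I^{[3]}E$), and is figure-of-eight ($\pa P$ trivial over $L_8 E$). By Proposition~\ref{P:gerbe_triv} applied to the principal bundle $LE \to LX$, the class of $P$ is pulled back from $LX$, say $P \cong \pi^\ast N$, and its fusion and figure-of-eight structures descend to a loop-fusion structure on $N \to LX$. Conversely, for any loop-fusion line bundle $N \to LX$ the bundle $R \otimes \pi^\ast N$ is again a loop-fusion lift, since $d_2 \pi^\ast N$ is canonically trivial while $d_1 \pi^\ast N = \pi^\ast d_1 N$ and $\pa\,\pi^\ast N = \pi^\ast \pa N$ are trivial. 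Hence the set of lifts is a torsor for the group of loop-fusion line bundles on $LX$, namely $\vH_{\mathrm{lf}}^2(LX; \bbZ)$, which Theorem~\ref{T:lf_trans} identifies with $\vH^3(X; \bbZ)$.

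I expect the main obstacle to be the descent in this last step: checking that the fusion and figure-of-eight trivializations on $\pi^\ast N$ genuinely descend to $N$ (not merely that the underlying line bundle does), together with the matching freeness of the action. Both reduce to injectivity of $\pi^\ast$ on the relevant groups---on $\vH^2$ of the path and loop spaces over $X$ against those over $E$, equivalently on $\vH^3(X; \bbZ) \to \vH^3(E; \bbZ)$---which I would deduce from the Serre spectral sequence of the $G$-bundle $E \to X$ using $H^1(G) = H^2(G) = 0$ and $H^3(G) = \bbZ$ (equivalently, the $1$-connectivity of the fibre $LG$ for the loop-space bundles, since $G$ is simple and simply connected).
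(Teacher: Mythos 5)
Your proposal follows the same overall strategy as the paper: reduce doubled trivializations to loop-fusion line bundles $R \to LE$ with $d_2 R \cong Q$, identify these with $\wh{LG}$-lifts via the lifting-gerbe correspondence, and obtain the torsor structure by descending differences to $LX$ and invoking Theorem~\ref{T:lf_trans} together with the Serre spectral sequence isomorphism $H^3(X;\bbZ) \cong H^3(E;\bbZ)$. There is, however, one step you skip that the theorem requires. A doubled trivialization is a priori a \emph{pair} $(P_1,P_2)$ with $P_1 \to W^{[1,2]} = IE^{[2]}$ and $P_2 \to W^{[2,1]} = LE$, and you only match lifts against trivializations of the special form~(ii), i.e.\ those with $P_1$ trivial. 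To get a bijection with \emph{all} doubled trivializations you must show that the doubling condition forces $P_1$ to be trivial. The paper does this with the retraction argument of Lemma~\ref{L:prod_simp_LX}: under the deformation retractions $I_k Y \simeq Y$ the three reparameterization maps become the identity, so $\pa$ acts as the identity on (isomorphism classes of) line bundles from $I_2 E^{[2]}$ to $I_3 E^{[2]}$; hence triviality of $\pa P_1$ forces triviality of $P_1$ itself. Without this observation your bijection is only onto a subset of the doubled trivializations.

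On the torsor step your packaging differs slightly from the paper's but lands in the same place. You use Proposition~\ref{P:gerbe_triv} to see that the difference $P = R \otimes (R')^{\inv}$ is pulled back from $LX$, and then must descend the fusion and figure-of-eight trivializations; the paper instead works entirely in loop-fusion cohomology, using Lemma~\ref{L:pa_contraction} (compatibility of the $d_2$-contraction with $\pa$) to conclude directly that the difference class in $\vH_{\mathrm{lf}}^2(LE;\bbZ)$ lies in the image of $\vH_{\mathrm{lf}}^2(LX;\bbZ)$. Your reduction of the descent problem to ``injectivity of $\pi^\ast$'' is not quite the right formulation --- injectivity controls freeness of the action and uniqueness of the descended structure, but transitivity requires knowing that the fusion and figure-of-eight \emph{data} (not just the underlying bundle) are pulled back, which is what the compatible-sections/contraction argument supplies at the cochain level. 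You correctly flag this as the delicate point and identify the Serre spectral sequence (using $H^1(G)=H^2(G)=0$) as the final ingredient, which matches the paper's conclusion that the torsor is $H^3(X;\bbZ)$ rather than a proper image subgroup.
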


\begin{proof}
Here by a doubled trivialization we mean a trivialization
$(P_1,P_2)$ in the sense of Definition~\ref{D:bigerbe_trivn} with the
additional property that $\pa P_i$ is trivial on the bisimplicial space
\eqref{E:BM_diagram_8}. 
As argued in the proof of Lemma~\ref{L:prod_simp_LX}, the retractions for any
space $Y$ of $I_{k} Y$ onto $Y$ itself for each $k$ give a homotopy with
respect to which $\pa = \Id$ as an operator from line bundles on $I_{2} Y$ to
those $I_{3} Y$; in particular the condition that $\pa P_1$ is trivial for
$P_1 \to I_{3} E^{[2]}$ means that $P_1$ itself is trivial.
Thus doubled trivializations for the bigerbe in question are reduced
to loop-fusion line bundles $P \to L E$ satisfying $d_2 P \cong Q$. 
On the one hand, these are clearly equivalent to loop-fusion $\wh{LG}$ lifts of
$LE \to LX$, and on the other, they are classified by those classes in
$\vH_{\lf}^2(LE; \bbZ)$ with image $c(Q)$ under $d_2$.

By Lemma~\ref{L:pa_contraction} below, the difference of any two such classes
descends to a class in $\vH^2_{\lf}(LX; \bbZ)$, and so doubled
trivializations form a torsor for the image of $\vH_{\lf}^2(LX; \bbZ)$ in
$\vH_{\lf}^2(LE; \bbZ)$, which by Theorem~\ref{T:lf_trans} is equivalent to the
image of $H^3(X; \bbZ)$ in $H^3(E; \bbZ)$.
Finally, given the conditions on $G$, it follows by the Serre spectral sequence
for $E \to X$ that $H^3(X; \bbZ) \to H^3(E; \bbZ)$ is an isomorphism in this
case, so the trivializations are classified simply by $H^3(X; \bbZ)$.
\end{proof}

It remains to show that the exactness of $d_2$ in the \v Cech-simplicial double
complex is consistent with $\pa$, which is a consequence of the following.
\begin{lem}
The homotopy chain contraction for $d_2$ in the triple complex
$(\vZ^\bullet(W^{[\bullet,\bullet]}), \delta, d_1, d_2)$ for the locally split
squares $(I_{k}E, E^k, I_{k}X, X^k)$ commutes with the product simplicial
operator $\pa$.
\label{L:pa_contraction}
\end{lem}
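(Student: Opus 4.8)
The claim is that the $d_2$-direction chain homotopy contraction for the triple complex associated to the locally split squares $(I_k E, E^k, I_k X, X^k)$ is compatible with the product-simplicial operator $\pa$. Recall from Proposition~\ref{P:triple_complex_exactness} that this contraction is built from the section maps ${s^2_\ell}^\ast$ of Proposition~\ref{P:ls_Cech_map}, which arise from compatible local sections $s^2$ of the relevant vertical projections $W^{[j,k+1]} \to W^{[j,k]}$. The operator $\pa = \sum_{j}(-1)^j \pi_j^\ast$ is built from the product projections $\pi_j : X^{k+1} \to X^k$ and their lifts $\wt\pi_j$ to the path spaces. So the entire statement reduces to a commutation relation between two families of pullback maps on \v Cech cochains. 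The plan is to exhibit the $d_2$-contraction in explicit form and check that, at the level of the underlying maps of covers, the sections intertwining $d_2$ commute with the reparametrization lifts $\wt\pi_j$ defining $\pa$.

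\textbf{Key steps.} First I would fix the concrete model for the $d_2$-section. As in \eqref{E:simplicial_section_defn}, using a connection as in Lemma~\ref{L:BM_lssquare}, the section $s^2$ of the vertical map inserts a horizontally-lifted path so that $\pi^2_0 s^2 = 1$, and this same construction works uniformly for every $k$ in the product direction because the horizontal lift of paths is defined pointwise and does not interact with the number of marked points $k$ on the base. Second, I would observe that the reparametrization maps $\wt\pi_j$ (which act by restricting/rescaling the interval on which a path is defined) commute with this horizontal-lift section: inserting a horizontal lift and then reparametrizing the path gives the same result as reparametrizing first and then inserting the (reparametrized) horizontal lift, since parallel transport is reparametrization-equivariant. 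This is the geometric heart of the matter. Third, I would translate this geometric commutation into the identity ${s^2_\ell}^\ast \pa = \pa\, {s^2_\ell}^\ast$ on \v Cech cochains, using Proposition~\ref{P:naturality_ls_Cech_map}: each $\wt\pi_j$ is a map intertwining the relevant locally split vertical projections compatibly with the chosen sections, and the cited naturality proposition then gives exactly the desired commutativity of ${s^2_\ell}^\ast$ with each $\wt\pi_j^\ast$, hence with the alternating sum $\pa$.

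\textbf{Main obstacle.} The delicate point is step two, the claimed commutation of the $d_2$-section with the reparametrization maps $\wt\pi_j$. The section $s^2$ is a choice (involving a connection and arbitrary fiber-completion), so the two composites need not agree on the nose as maps of spaces; what one really needs is that they agree \emph{as maps of covers} after passing to a common admissible refinement, which is precisely the framework in which Proposition~\ref{P:naturality_ls_Cech_map} applies. I would therefore phrase the argument so that $E^k \to X^k$ and $E^{k+1} \to X^{k+1}$ are compatible locally split maps in the sense of \S\ref{S:cech} under each pair $(\pi_j, \wt\pi_j)$, with the compatible sections being exactly the horizontal-lift sections used to build the $d_2$-contraction; then the commuting square required by Proposition~\ref{P:naturality_ls_Cech_map} is furnished by the reparametrization-equivariance of horizontal lifting, and the conclusion follows by passage to the direct limit. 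The only real work is checking this compatibility diagram commutes at the level of covers, which is a direct consequence of the fact that reparametrization commutes with evaluation at the marked points that define the product projections.
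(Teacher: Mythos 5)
Your overall architecture (reduce the lemma to the existence of local sections of $E^k \to X^k$ and $I_kE \to I_kX$ compatible with the projection and reparameterization maps, then invoke the naturality of Proposition~\ref{P:naturality_ls_Cech_map}) matches the paper's, and the product-cover argument for $E^k \to X^k$ is fine. The gap is in your second step. The sections of $I_3E \to I_3X$ and $I_2E \to I_2X$ produced by Lemma~\ref{L:BM_lssquare} are \emph{not} intertwined by the reparameterization maps, and the failure is not one that refinement of covers can repair. Concretely, the section over $I_3X$ lifts a path $\gamma$ so as to pass through a prescribed point of $E$ over the midpoint $\gamma(\tfrac12)$ (inserting a fiber-correction path there), whereas the section over $I_2X$ applied to $\wt\pi_1\gamma = \gamma$ only corrects at the two endpoints; these are genuinely different paths in $E$, so $\wt\pi_1\circ s_3 \neq s_2\circ\wt\pi_1$ pointwise. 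Your proposed remedy --- that the two composites need only agree ``as maps of covers after passing to a common admissible refinement'' --- does not help: refinement shrinks domains but does not change values, so two maps that disagree at a point still disagree after any refinement. Reparameterization-equivariance of parallel transport addresses only the purely horizontal part of the lift, not the fiber corrections forced by the prescribed marked points, and in particular says nothing about the map $\wt\pi_1$ that forgets the midpoint.

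The paper circumvents this by never constructing the two families of sections independently. It first builds covers $\cW$, $\cY$ and a section of $I_3E \to I_3X$ lying over the product sections of $E^3 \to X^3$, and then \emph{defines} the covers and section for $I_2E \to I_2X$ as the images of these under the three reparameterization maps, using that the $\wt\pi_j$ are open (and surjective). Compatibility with each $\wt\pi_j$ then holds by construction rather than by a geometric commutation argument. To salvage your approach you would need to replace the independent connection-based construction of the section on $I_2X$ by such a pushforward definition; as written, the central commutation claim is false.
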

\begin{proof}
This follows ultimately from the existence of local sections of $E^k \to X^k$
(resp.\ $IE \to IX$) which are \blue{intertwined by} the three projection
maps $E^3 \to E^2$ and $X^3 \to X^2$ (resp.\ $I_{3} E \to I_{2} E$ and
$I_{3} X \to I_{2} X$), which we proceed to demonstrate.
In the first case, we may fix an admissible pair of covers $(\cV, \cU)$ for
$(E,X)$ and then equip $E^k$ and $X^k$ with the covers $\cV^k$ and $\cU^k$
along with the induced sections $\Et{\cU^k} \to \Et{\cV^k}$, which are then
automatically \blue{intertwined} by the projection maps. 

For the path spaces, we begin with the fact that $(IE, IX, E^3, X^3)$ is a
locally split square, so $I_{3} E$ and $I_{3} X$ admit covers $\cW$ and
$\cY$ and sections $\Et{\cY}\to \Et{\cW}$ lying over the sections $\Et{\cU^3}
\to \Et{\cV^3}$.
In general, the reparameterization maps 
$\begin{tikzcd}[sep=small] I_{3} Y \arrthree& I_{2} Y \end{tikzcd}$ 
are open, so we may equip $I_{2} E$ and $I_{2} X$ with the 
union of the three image covers $\cW' = \wt \pi_0(\cW) \cup \wt \pi_1(\cW) \cup
\wt \pi_2(\cW)$ and $\cY' = \wt \pi_0(\cY) \cup \wt \pi_1(\cY) \cup \wt
\pi_2(\cY)$, along with the induced section
\[
	\wt s' : \Et{\cY'} \to \Et{\cW'},
\]
giving a set of local sections of $I_{2} E \to I_{2} X$ which is compatible
with the three reparameterization maps $\wt \pi_i$, and which covers the local
sections of $E^2 \to X^2$.
\end{proof}

There is a simpler version of this bigerbe using based path and loop spaces, starting with the locally split square $(PE, E, PX, X)$, pulling
back a central extension $\wh{\Omega G} \to \Omega G$ to $\Omega E^{[2]}$, and omitting
the doubling conditions.
We leave the details as an exercise to the reader.

\subsection{Loop spin structures} \label{S:aside}
There is a well-known relationship between string
structures on a spin manifold $X$ of dimension $2n > 4$, and (loop) spin structures
on its loop space $LX$. 

Here, a {\em string structure} is a lift of the principal $\Spin(2n)$
bundle $E \to X$ to a principal bundle with structure group $\String(2n)$,
a 3-connected topological group covering $\Spin(2n)$ in the sequence of
ever more connected groups that form the Whitehead tower for
$O(2n),$ see for instance \cite{stolz_conjecture}.
The string group cannot be a finite dimensional Lie group (having a
subgroup with the topology of $K(\bbZ,2)$), though there are various
realizations as a 2-group \cite{Baez,string2}.
The obstruction to lifting the structure group is $\tfrac 1 2 p_1(X) \in H^4(X;
\bbZ)$ (the Pontryagin class of the $\Spin$ bundle being a refinement of the
Pontryagin class of the oriented frame bundle), and if unobstructed, string
structures are classified by $H^3(X; \bbZ)$ \cite{stolz_teichner_elliptic}.

As originally defined by Killingback in \cite{Killingback} and further developed by McLaughlin in \cite{McLaughlin}, a spin structure on $LX$
is a lift of the $L\Spin$ bundle $LE \to LX$ to the structure group $\wh{L\Spin}$, 
the fundamental $\UU(1)$ central extension of $L\Spin$.
(By analogy, as originally suggested by Atiyah in \cite{atiyah_loop_orientation}, an {\em orientation} on $LX$ is a refinement of the $L\SO(2n)$ 
bundle $LE_{\SO} \to LX$ to have structure group the connected component of the identity, 
$L_+\SO(2n) \cong L\Spin(2n)$, and therefore is typically related to a spin structure on $X$.)
The obstruction to this lift is the 3-class on $LX$ obtained by transgression of $\tfrac 1 2 p_1(X) \in H^4(X; \bbZ)$.

As defined, string structures on $X$ and spin structures on $LX$ are not
necessarily in bijection \cite{pilch}.
In fact, it was Stolz and Teichner in \cite{Stolz-Teichner2005} who first noted
the importance of the fusion structure on $LX$ and showed that string
structures on $X$ were in correspondence with what they called `stringor
bundles' on (the piecewise smooth loop space) $LX$, essentially bundles
associated to a lift $\wh{LE}$ along with a fusion condition. 
It was further proved by the authors in \cite{KM-equivalence} that string
structures in the sense of Redden \cite{Redden2011} correspond with spin
structures on the smooth loop space $LX$ which are both fusion and equivariant
for the group $\Diff^+(S^1)$ of oriented diffeomorphisms of the loop parameter,
and it was independently proved by Waldorf in \cite{MR3493404} that string
structures on $X$ exist if and only if fusion spin structures on (piecewise
smooth) $LX$ exist, using a transgression theory relating the 2-gerbe
obstructing string structures of \cite{CJMSW} and fusion gerbes on loop space.
Waldorf did not obtain a complete correspondence between string structures
and fusion loop spin structures, noting that this would necessitate additional
conditions such as equivariance with respect to thin homotopy; in the version considered here, 
it is the figure-of-eight (i.e., doubling) condition that provides the remedy.
%homotopic' loops (homotopies of paths with rank $\leq 1$ in $X$).

In any case, the bigerbe formulation here leads to the following result.

\begin{cor}
There are natural bijections between the following sets:
\begin{enumerate}
[{\normalfont (i)}]
\item the set of string structures on a spin manifold $X$ of dimension $2n > 4$,
\item the set of loop-fusion spin structures on $LX$, meaning lifts of $LE \to LX$
to the structure group $\wh{L\Spin}$ such that the resultant $\UU(1)$ bundle $\wh{LE} \to LE$
is a loop-fusion bundle according to Definition~\ref{D:loop-fusion_bundle}, and
\item the set of doubled trivializations of the Brylinski-McLaughlin bigerbe $(Q, IE, E^2, IX, X^2)$.
\end{enumerate}
The sets are empty unless $\tfrac 1 2  p_1(E) = 0 \in H^4(X; \bbZ)$ and otherwise are torsors for $H^3(X; \bbZ)$.
\label{C:string_loop_spin}
\end{cor}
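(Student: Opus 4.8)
The plan is to read the corollary as the spin specialization of Theorem~\ref{T:lf_lifts_trivns}, glued to the standard classification of string structures, letting the bigerbe machinery absorb the analytically delicate figure-of-eight condition that obstructed a clean correspondence in earlier treatments. Throughout, the hypothesis $2n > 4$ is used only to guarantee that $\Spin(2n)$ is compact, simple, connected and simply connected, so that Theorems~\ref{T:BM_bigerbe} and~\ref{T:lf_lifts_trivns} apply to $E \to X$ with $G = \Spin(2n)$.

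First I would dispose of the equivalence of (ii) and (iii). Specializing to $G = \Spin(2n)$ and the fundamental central extension $\wh{L\Spin} \to L\Spin$, a loop-fusion spin structure in the sense of (ii) is by definition exactly a loop-fusion $\wh{L\Spin}$ lift of $LE \to LX$ in the sense of Definition~\ref{D:lf_lift}. Theorem~\ref{T:lf_lifts_trivns} then supplies a natural bijection between such lifts and the doubled trivializations of the Brylinski-McLaughlin bigerbe $(Q, IE, E^2, IX, X^2)$, which is (iii). Combining this with Theorem~\ref{T:BM_bigerbe} for the fundamental extension ($\ell = 1$), under the normalization $G(Q) = \tfrac12 p_1(E)$ (the factor of $\tfrac12$ being the customary notation for the first Pontryagin class of the $\Spin$ bundle, which refines that of the oriented frame bundle, as recalled in the introduction), shows that both sets are empty unless $\tfrac12 p_1(E) = 0 \in H^4(X;\bbZ)$ and otherwise form a torsor for $H^3(X;\bbZ)$. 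This already establishes the final two sentences of the corollary.

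It remains to match (i) with (ii). The obstruction to a string structure is $\tfrac12 p_1(E)$, and when it vanishes string structures form a torsor for $H^3(X;\bbZ)$ \cite{stolz_teichner_elliptic}; this agrees verbatim with the obstruction and torsor just computed for (ii). To upgrade this coincidence of classifying data to a canonical bijection, I would construct the transgression map sending a $\String(2n)$ lift of $E \to X$ to its induced loop-fusion spin structure on $LX$, and verify that it intertwines the two $H^3(X;\bbZ)$ torsor actions: the standard action on the string side, and on the loop side the action of Theorem~\ref{T:lf_lifts_trivns} realized through the loop-fusion transgression isomorphism of Theorem~\ref{T:lf_trans}. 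Since any map of torsors over a fixed group is automatically a bijection, only this equivariance need be checked.

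The main obstacle is precisely this last step: showing that the figure-of-eight (doubling) condition is exactly the structure needed to make transgression a bijection, rather than merely surjective or well-defined up to the ambiguities that forced the stronger hypotheses---thin-homotopy equivariance or $\Diff^+(S^1)$-equivariance---in \cite{Stolz-Teichner2005, MR3493404, KM-equivalence}. Concretely, one must check that the doubling condition descends the classifying class from $\vH^2_{\lf}(LE;\bbZ)$ to $H^3(X;\bbZ)$ exactly as in the proof of Theorem~\ref{T:lf_lifts_trivns} via Lemma~\ref{L:pa_contraction}, and that this descent is compatible with the descent of the string-structure torsor; the Serre spectral sequence identification $H^3(X;\bbZ) \cong H^3(E;\bbZ)$ employed there is what pins down naturality on both sides simultaneously.
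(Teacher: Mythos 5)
Your proposal is correct and follows essentially the same route as the paper, which states this corollary without a separate proof as an immediate consequence of Theorem~\ref{T:lf_lifts_trivns} (giving the equivalence of (ii) and (iii) and the obstruction/torsor statement, with $G(Q)=p_1(E)=\tfrac12 p_1$ for the spin frame bundle via Theorem~\ref{T:BM_bigerbe}) together with the cited classification of string structures. Your final paragraph, flagging the equivariance of the transgression map as the point needing verification to make the bijection with (i) natural rather than merely abstract, is a reasonable elaboration of what the paper leaves implicit.
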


%\ck{Don't know if we still want to retain this, since there is no longer much discussion
%of the based BM-bigerbe.}
%Note that there is also a `restricted' version of the Brylinski-McLaughlin
%bigerbe. If, as above, one takes a connection on the principal bundle $E$
%then, base points having been fixed, each path in $X$ has a unique
%lift to a `horizontal' path in $E$ and one can replace the path space
%$PE$ by the restricted path space
%%
%\begin{equation}
%RE=\{p:[0,2]\longrightarrow E,\ p(0)=e,\ p\big|_{[0,1]}\text{ horizontal and
  %}\pi p\big|_{[1,2]}\text{ constant.}\}
%\label{rbm.70}\end{equation}
%%
%Thus, the paths in $RE\subset PE$ consist of a horizontal segment followed
%by a path in the fiber over the end-point in the base. It is still the case
%that the restricted square 
%%
%\begin{equation}
%\begin{tikzcd}
	%PX \ar[d, "\pi"'] & RE \ar[d, "\pi"] \ar[l]
	%\\ X & E  \ar[l]
%\end{tikzcd}
%%\xymatrix{
	%%PX \ar[d]_\pi & RE \ar[d]^{\pi} \ar[l]
	%%\\ X & E  \ar[l]
%%}
%\label{rbm.71}\end{equation}
%%
%is locally split. The induced bisimplicial space has the same basic
%structure as before, with all paths being a combination of a horizontal and
%a vertical segment. The entries in the restricted bisimplicial space are
%principal bundles with the same struture group (really related to the
%previous structure groups by constant segments at the identity) and as a
%result this is an example of a morphism of bigerbes.
%
\subsection{Path bigerbes}\label{S:Path-bigerbes}

If $X$ is a path-connected and simply connected space with basepoint $b$,
from the based double path space
\begin{equation}
  Q X= PPX =\{u:[0,1]^2\longrightarrow X:u\big|_{\{0\}\times[0,1]}=u\big|_{[0,1]\times\{0\}}=b\}
\label{rbm.33}\end{equation}
there are two surjective restriction maps 
\begin{equation}
f_i:Q X\longrightarrow  PX,\
f_1u=u\big|_{[0,1]\times\{1\}}\text{ and }f_2u=u\big|_{\{1\}\times[0,1]}.
\label{rbm.34}\end{equation}
\begin{thm}\label{rbm.36} On a connected, simply connected and locally
  contractible space the end-point maps and restriction maps in
  \eqref{rbm.34} form a locally split square 
\begin{equation}
\begin{tikzcd}
	 PX \ar[d] & Q X \ar[l, "f_1"'] \ar[d, "f_2"]
	\\ X &  PX \ar[l]
\end{tikzcd}
%\xymatrix{
	 %PX \ar[d] & Q X \ar[l]_{f_1} \ar[d]^{f_2} 
	%\\ X &  PX \ar[l]
%}
\label{rbm.35}\end{equation}
and any class $\gamma\in H^4(X,\bbZ)$ arises from a bigerbe corresponding
to \eqref{rbm.35}.
\end{thm}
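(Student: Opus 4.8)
The plan is to verify the two hypotheses of Theorem~\ref{T:bigerbe_representability} for the square \eqref{rbm.35}, namely that $\pi_i^\ast\gamma = 0$ in $H^4(PX;\bbZ)$ for both copies of the path space, and that the transgression obstruction $\Tr[\gamma]$ of Lemma~\ref{L:four_class_transgression} vanishes in $H^3(QX;\bbZ)/\bigl(H^3(PX;\bbZ)\oplus H^3(PX;\bbZ)\bigr)$. First I would check that \eqref{rbm.35} is genuinely a locally split square: the end-point maps $PX\to X$ are locally split because $X$ is locally contractible (as already used in Theorem~\ref{T:univ_gerbe}), and the restriction maps $f_i:QX\to PX$ are fibrations of based mapping spaces, so the induced map $QX\to PX\times_X PX$ is locally split by the same path-lifting argument used in Lemma~\ref{L:BM_lssquare}. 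Once this is in place, the first hypothesis is immediate and essentially free: the path space $PX$ is contractible, so $H^4(PX;\bbZ)=0$ and hence $\pi_i^\ast\gamma=0$ for both $i$.

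The content therefore lies entirely in the vanishing of the transgression class. \emph{This is the hard part.} Since both $PX$ are contractible, the quotient group in \eqref{E:four_class_transgression} is trivial, so it suffices to show $H^3(QX;\bbZ)=0$; then $\Tr[\gamma]$ has no choice but to vanish. To see that $QX=PPX$ has trivial cohomology in the relevant degree, I would exploit that $QX$ is itself a based path space: writing $QX = P(PX)$ with respect to one of the two path coordinates realizes $QX$ as the based path space of the contractible space $PX$, hence $QX$ is contractible, so $H^k(QX;\bbZ)=0$ for all $k\geq 1$. The key structural observation is that the double path space, viewed appropriately, is the path space of a path space and so inherits contractibility; the simple connectivity of $X$ guarantees that the end-point and restriction maps are surjective (so the square is honest), while local contractibility supplies the local sections.

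With both hypotheses of Theorem~\ref{T:bigerbe_representability} verified, that theorem directly produces a bigerbe on \eqref{rbm.35} with characteristic class $G(L)=\gamma$, completing the proof. The main obstacle I anticipate is not the cohomological vanishing itself but checking carefully that $QX\to PX\times_X PX$ is locally split; this requires producing, for a pair of paths $(p_1,p_2)$ with common endpoint, a continuous choice of square $u\in QX$ restricting to $p_1$ and $p_2$ on the two edges, which one builds by a homotopy interpolation (a null-homotopy of the loop $p_1\cdot p_2^{-1}$, available by simple connectivity) depending continuously on $(p_1,p_2)$, carried out locally over a contractible cover of the fiber product. Once the square is established, the argument is formal.
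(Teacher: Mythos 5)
Your proposal is correct and follows essentially the same route as the paper: identify $PX\times_X PX\cong\Omega X$, use simple connectivity for surjectivity and local contractibility for local splitness of $QX\to\Omega X$, and then invoke Theorem~\ref{T:bigerbe_representability}, both of whose hypotheses hold trivially because $PX$ and $QX=P(PX)$ are contractible. The paper's proof is just a compressed version of exactly this argument.
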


\begin{proof} The fiber product of the two copies of $PX$ is the based
loop space of $X$. The simple connectedness of $X$ implies the fiber
product of the two $f_i$ is surjective and from local contractibility it is
locally split. Since $PX$ and $QX$ are both contractible,
Theorem~\ref{T:bigerbe_representability} applies to any 4-class on $X$.
\end{proof}

Since the Eilenberg-Mac\ Lane spaces can be represented by CW complexes Theorem~\ref{rbm.36}
applies in particular to $K(\bbZ, 4)$.

\begin{thm}
There exists a universal bigerbe over $K(\bbZ, 4)$ with respect to the locally split square \eqref{rbm.35} with $X = K(\bbZ, 4)$.
\label{T:univ_bigerbe}
\end{thm}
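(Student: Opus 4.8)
The plan is to obtain this as an essentially immediate corollary of Theorem~\ref{rbm.36}, with the fundamental class of $K(\bbZ,4)$ playing the role of the universal class. First I would verify that $K(\bbZ,4)$ satisfies the hypotheses of Theorem~\ref{rbm.36}. As noted in the preceding remark, there is a model of $K(\bbZ,4)$ which is a CW complex; such a model is path-connected, simply connected --- indeed $3$-connected --- and locally contractible. Consequently the based double path space construction of \eqref{rbm.33} and \eqref{rbm.34} applies verbatim, producing the locally split square \eqref{rbm.35} with $X = K(\bbZ,4)$, in which $PX$ and $QX = PPX$ are contractible and the fiber product of the two copies of $PX$ is $\Omega K(\bbZ,4)$.

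Next I would invoke the representability conclusion of Theorem~\ref{rbm.36}, namely that every class in $H^4(K(\bbZ,4);\bbZ)$ is realized by a bigerbe over this square. Applying this to the fundamental class $\iota \in H^4(K(\bbZ,4);\bbZ) \cong \bbZ$ --- the generator, corresponding to the identity map under the defining property of the Eilenberg-MacLane space --- yields a bigerbe $\mathcal{U}$ over \eqref{rbm.35} with characteristic class $G(\mathcal{U}) = \iota$. This bigerbe $\mathcal{U}$ is the asserted universal bigerbe.

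The justification of the word \emph{universal} is then a naturality argument. Given any suitably nice space $Y$ (path-connected and locally contractible, so that the \v Cech theory and pullbacks of \S\ref{S:lssquares}--\S\ref{S:bigerbe_four_class} behave as required) and any class $\gamma \in H^4(Y;\bbZ)$, the representability of integral cohomology by $K(\bbZ,4)$ furnishes a continuous map $f : Y \to K(\bbZ,4)$, unique up to homotopy, with $f^\ast \iota = \gamma$. By Lemma~\ref{L:pullback_prod_lss} the pullback of \eqref{rbm.35} along $f$ is again a locally split square, and the pullback bigerbe $f^\ast \mathcal{U}$ of Definition~\ref{D:pullback_product} is a bigerbe on $Y$. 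Naturality of the characteristic class (Theorem~\ref{T:bigerbe_DD_class}) then gives $G(f^\ast \mathcal{U}) = f^\ast G(\mathcal{U}) = f^\ast \iota = \gamma$, so every $4$-class on $Y$ is realized by pulling back $\mathcal{U}$.

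I do not expect a genuine obstacle here, since Theorem~\ref{rbm.36} does all of the substantive work; the only points requiring care are bookkeeping ones. The first is confirming that a CW model of $K(\bbZ,4)$ meets the connectivity and local-contractibility hypotheses, which is standard. The second, relevant only to the universality assertion, is that the pullback bigerbe depends only on the homotopy class of the classifying map $f$; this follows because $G(\mathcal{U})$ lives in \v Cech cohomology and $f^\ast$ on \v Cech cohomology is a homotopy invariant on the spaces under consideration, so homotopic maps yield bigerbes with equal characteristic class and hence, by Theorem~\ref{T:bigerbe_DD_class}, stably isomorphic bigerbes.
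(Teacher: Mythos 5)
Your proposal is correct and follows essentially the same route as the paper, which likewise deduces the theorem immediately from Theorem~\ref{rbm.36} by observing that a CW model of $K(\bbZ,4)$ is connected, simply connected and locally contractible. Your additional paragraph justifying the word \emph{universal} via classifying maps and naturality of $G$ (Theorem~\ref{T:bigerbe_DD_class}) is sound and goes slightly beyond what the paper records explicitly, but introduces no new difficulty.
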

Note the
structure of the bisimplicial space in this case, in which $*$ represents
a contractible space:
\begin{equation*}
\begin{tikzcd}[sep=small]
	{}  & {}  & {} 
	\\ K(\bbZ, 3) \ar[u,thick,dotted,no head]\ardtwo & \ar[l] \ardtwo \ar[u,thick,dotted,no head]\ast & \arltwo \ardtwo K(\bbZ, 2)\ar[u,thick,dotted,no head] \ar[r,thick,dotted,no head]& {} 
	\\ \ast \ar[d] & \ar[l] \ar[d] \ast & \arltwo \ar[d] \ast \ar[r,thick,dotted,no head] & {}
	\\ K(\bbZ,4)  & \ar[l] \ast & \arltwo K(\bbZ,3) \ar[r, thick,dotted,no head] & {}
\end{tikzcd}
\label{rbm.62}\end{equation*}

Finally, incorporation of a {\em product-bisimplicial} condition allows any
4-class to be represented as a bigerbe on any connected, locally contractible space $X$, whether simply connected
or not. 
Indeed, consider the locally split square $(IIX, IX^2, IX^2, X^4)$,
%\[
%\begin{tikzcd}
 	%IX^2 \ar[d] & IIX  \ar[d] \ar[l]
	%\\ X^4  & IX^2 \ar[l]
%\end{tikzcd}
%\]
where $IIX = \set{u : [0,1]^2 \to X}$ is the free double path space and the projection maps 
are given by evaluation at both endpoints of a given path factor.
The induced bisimplicial space becomes
\begin{equation}
\begin{tikzcd}[sep=small]
	{} & {} & {} & {}
	\\ LX^2 \ardtwo \ar[u,thick,dotted,no head] & ILX \ar[l] \ardtwo \ar[u,dotted,thick, no head] & LLX \arltwo \ardtwo \ar[u, thick, dotted, no head] \ar[r, thick,dotted, no head]& {}
	\\ IX^2 \ar[d] & IIX \ar[d] \ar[l] & LIX \ar[d] \arltwo \ar[r,thick,dotted,no head] & {}
	\\ X^4 & IX^2 \ar[l] & LX^2 \arltwo \ar[r,thick,dotted,no head] & {}
\end{tikzcd}
	\label{E:path_bisimp_22}
\end{equation}
and in particular $W^{[2,2]} = LLX$ is the double free loop space of $X$.
We may view $X^4$ at the bottom as the factor $X^{2,2}$ in the bisimplicial space $X^{m,n} = X^{mn}$ of products as discussed in \S\ref{S:prod_simp_bigerbe}.
Over $X^{3,2}$ consider the locally split square and induced bisimplicial space
\begin{equation}
\begin{tikzcd}[sep=small]
	{} & {} & {} & {}
	\\ LX^3 \ardtwo \ar[u,thick,dotted,no head] & I_3LX \ar[l] \ardtwo \ar[u,dotted,thick, no head] & L_8LX \arltwo \ardtwo \ar[u, thick, dotted, no head] \ar[r, thick,dotted, no head]& {}
	\\ I_2X^3 \ar[d] & I_3I_2X \ar[d] \ar[l] & L_8I_2X \ar[d] \arltwo \ar[r,thick,dotted,no head] & {}
	\\ X^{3,2} & I_{3}X^2 \ar[l] & L_8X^2 \arltwo \ar[r,thick,dotted,no head] & {}
\end{tikzcd}
	\label{E:path_bisimp_32}
\end{equation}
and likewise for $X^{2,3}$ with factors reversed.
The bisimplicial spaces \eqref{E:path_bisimp_32} map to \eqref{E:path_bisimp_22} over the product maps 
$\begin{tikzcd}[sep=small]X^{3,2} \arrthree &X^{2,2}\end{tikzcd}$ and $\begin{tikzcd}[sep=small]X^{2,3} \arrthree &X^{2,2}\end{tikzcd}$,
and there are associated operators $\pa_1$ and $\pa_2$ on line bundles.
\begin{thm}
For a connected, locally contractible space $X$, every class in $H^4(X; \bbZ)$ is represented by a product-bisimplicial bigerbe with respect to \eqref{E:path_bisimp_22},
that is, a bigerbe $(L,IIX, IX^2,IX^2,X^4)$ having in addition trivializations of the line bundles $\pa_1 L \to L_8 LX$ and $\pa_2 L \to L L_8 X$.
\label{T:free_loop_universal}
\end{thm}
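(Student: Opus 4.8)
The plan is to treat this as the free, product-bisimplicial analogue of the based path bigerbe of Theorem~\ref{rbm.36}, playing the role for bigerbes that Theorem~\ref{T:lf_trans} plays for the loop-space gerbes of Theorem~\ref{T:univ_gerbe}. The obstacle to simply imitating Theorem~\ref{rbm.36} is that the relevant spaces are no longer contractible: $IX^2 \simeq X^2$ and $IIX \simeq X$ only, rather than being contractible as the based spaces $PX$, $QX$ are. I therefore exploit the product-bisimplicial structure over $X^{\bullet,\bullet}$ to descend the class from $X^4 = X^{2,2}$ to $X$. Concretely, by Proposition~\ref{P:prod_bisimp_bigerbe} it suffices to construct a bigerbe $L$ over the square $(IIX, IX^2, IX^2, X^4)$ which is quadruple and whose characteristic class descends to the given $\gamma \in H^4(X; \bbZ)$.

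First I would produce the lift. Using the exactness of the two product-simplicial sequences \eqref{E:product_cohomology_seq} in the $\pa_1$ and $\pa_2$ directions, together with the contraction identities $s_i^\ast \pa_i + \pa_i s_i^\ast = 1$, set $\tilde\gamma = \pa_1\pa_2\gamma \in H^4(X^{2,2}; \bbZ)$. Since $\pa_1$ and $\pa_2$ commute and square to zero, a direct computation gives $\pa_1\tilde\gamma = \pa_2\tilde\gamma = 0$, so $\tilde\gamma \in \ker\pa_1\cap\ker\pa_2$, while $s_1^\ast s_2^\ast\tilde\gamma = \gamma$ (using $s_i^\ast\gamma = 0$ in the augmentation degree), so that $\tilde\gamma$ descends to $\gamma$.

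Next I verify the two hypotheses of Theorem~\ref{T:bigerbe_representability} for $\tilde\gamma$ over the square. For condition (i), under the deformation retraction $IX^2 \simeq X^2$ the map $\pi_i$ becomes the appropriate row- or column-diagonal $\Delta_i : X^2 \to X^4$, and since $\Delta_i^\ast\circ\pa_i = 0$ while $\tilde\gamma \in \img\pa_i$, we obtain $\pi_i^\ast\tilde\gamma = 0$. For condition (ii), the transgression $\Tr\tilde\gamma$ of Lemma~\ref{L:four_class_transgression} lies in $H^3(IIX; \bbZ)/\big(H^3(IX^2; \bbZ)\oplus H^3(IX^2; \bbZ)\big)$; under the retractions $IIX \simeq X$ and $IX^2 \simeq X^2$ the two restriction maps both become the diagonal $\Delta : X \to X^2$, whose pullback $\Delta^\ast$ is already surjective on $H^3$, so the target group vanishes and $\Tr\tilde\gamma = 0$ automatically. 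Thus Theorem~\ref{T:bigerbe_representability} produces a bigerbe $L$ over the square with $G(L) = \tilde\gamma$.

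Finally I must upgrade $L$ to a quadruple bigerbe, that is, trivialize the line bundles $\pa_1 L \to L_8 LX$ and $\pa_2 L \to LL_8 X$. By naturality (Theorem~\ref{T:bigerbe_DD_class}) one has $G(\pa_i L) = \pa_i\tilde\gamma = 0$, so each $\pa_i L$ is trivial as a bigerbe over the square in \eqref{E:path_bisimp_32}. The hard part will be promoting this bigerbe-triviality to genuine \emph{line-bundle} triviality over the double loop spaces: I expect to need the bigerbe analogue of Lemma~\ref{L:prod_simp_LX}, showing that over a path-space square the simplicial factors $d_1Q_1$, $d_2Q_2$ in a trivialization become trivial line bundles under the retractions $I_k(\,\cdot\,) \simeq (\,\cdot\,)$, so that a bigerbe which is trivial over such a square is trivial as a line bundle on $W^{[2,2]}$. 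Granting this, $L$ is quadruple, and Proposition~\ref{P:prod_bisimp_bigerbe} shows $G(L)$ descends to $\gamma$, completing the proof. This last step --- establishing the line-bundle triviality of the $\pa_i L$ and its compatibility with the homotopy contractions, in the spirit of Lemma~\ref{L:pa_contraction} --- is the main technical obstacle.
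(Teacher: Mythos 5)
Your proposal is correct and follows essentially the same route as the paper: lift $\gamma$ to $\pa_1\pa_2\gamma \in H^4(X^{2,2};\bbZ)$, verify the two hypotheses of Theorem~\ref{T:bigerbe_representability} using the retractions $I_kY \simeq Y$, and then convert bigerbe-triviality of $\pa_i L$ into line-bundle triviality before invoking Proposition~\ref{P:prod_bisimp_bigerbe}. Two remarks. First, the step you label the ``main technical obstacle'' and only grant conditionally is exactly what the paper disposes of by citing Lemma~\ref{L:prod_simp_LX} applied in both the horizontal and vertical directions of \eqref{E:path_bisimp_32}: the trivializing bundles $Q_1 \to I_3LX$ and $Q_2 \to L_8 I_2X$ are pulled back along the path-space retractions, so $d_1Q_1$ and $d_2Q_2$ are trivial line bundles; no new bigerbe-level lemma is needed. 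Second, your verification of the transgression condition (ii) --- that the quotient $H^3(IIX;\bbZ)/\bigl(H^3(IX^2;\bbZ)\oplus H^3(IX^2;\bbZ)\bigr)$ vanishes because both restriction maps become the diagonal $\Delta: X \to X^2$ under the retractions and $\Delta^\ast$ is surjective on $H^3$ --- is actually more careful than the paper's one-line justification, which asserts that $IIX$ is contractible even though $IIX = \mathrm{Map}([0,1]^2,X) \simeq X$; your argument is the one that actually closes this point for general $X$.
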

\begin{proof}
Such a bigerbe has characteristic class $G(L) \in H^4(X^{2,2}; \bbZ)$ satisfying
$\pa_1 G(L) = 0 \in H^4(X^{3,2}; \bbZ)$ and $\pa_2 G(L) = 0 \in H^4(X^{2,3};
\bbZ)$, hence by Proposition~\ref{P:prod_bisimp_bigerbe} this descends to a well-defined class 
\[
	G(L) \in H^4(X; \bbZ).
\]

Conversely, given any $\alpha \in H^4(X; \bbZ)$, let $\beta = \pa_1
\pa_2 \alpha \in H^4(X^{2,2}; \bbZ)$.
This evidently satisfies $\pa_i \beta = 0$ for $i = 1,2$, and moreover, denoting by
$\Delta_i : X^2 \hookrightarrow X^{2,2}$ for $i = 1,2$ the diagonal inclusions, 
satisfies $\Delta_i^\ast \beta = 0 \in H^4(X^2; \bbZ)$. 
Under the deformation retractions $I X^2 \simeq X^2$, the evaluation maps $IX^2 \to X^{2,2}$
become identified with these diagonal inclusions, so it follows that $\beta$ lifts to vanish in $H^4(IX^2; \bbZ)$.
Then since $IIX \simeq \ast$ is contractible, Theorem~\ref{T:bigerbe_representability} applies 
and it follows that $\beta$ is represented by a bigerbe $(L, IIX, IX^2, IX^2, X^4)$, such that 
$\pa_i L$ is trivial (as a bigerbe) for $i = 1,2$.
As a consequence of Lemma~\ref{L:prod_simp_LX}, which applies to both horizontal and vertical directions 
in the diagram \eqref{E:path_bisimp_32}, bigerbe triviality of $\pa_i L$ is equivalent to triviality of $\pa_i L$ 
as a line bundle. 
\end{proof}

\section{Multigerbes} \label{S:multi}

We end by sketching out the theory of {\em multigerbes}, the higher degree
generalization of bigerbes.
By contrast to bundle gerbes, this generalization to higher degree is
straightforward, with symmetry of the simplicial conditions replacing the need
for higher and ever more complicated associativity conditions.

Fix a degree $n \in \bbN$, where $n = 1$ and $n = 2$ correspond to bundle gerbes
and bigerbes, respectively.
To establish notation, let $e_j = (0,\ldots, 1,\ldots, 0)$ denote the $j$th standard basis
vector, and for each integer $k$ let $\ul k = (k,\ldots, k)$ denote the vector with constant entries.
For a multiindex $\alpha \in \bbN^n$ we let $\abs \alpha = \alpha_1 + \cdots + \alpha_n \in \bbN$, and
\blue{we use the partial order on multiindices where $\beta<\alpha$ if $\beta_i <\alpha_i$ for each $1 \leq i \leq n$.}
We distinguish the sets of natural numbers starting at $1$ and $0$ respectively by $\bbN_0$ and $\bbN_1$. 
\begin{added}
\begin{defn}
Fix
a set of spaces $X_\alpha$ indexed by $\alpha = (\alpha_1,\ldots,\alpha_n) \in \set{0,1}^n$
along with continuous surjective maps
\begin{equation}
	X_\alpha \to X_{\alpha - e_j}
	\quad \text{whenever $\alpha_j = 1$}.
	\label{E:lscube_str_maps}
\end{equation}
For each $\alpha \in \bbN^n$ define
\[
	X_{<\alpha} = \lim_{\beta <\alpha} X_\beta
\]
to be the limit space of the diagram formed by all spaces to which $X_\alpha$ maps along with the maps between them.

Then the set $\set{X_\alpha : \alpha \in \set{0,1}^n}$ is a {\em locally split $n$-cube} provided each of the induced maps $X_\alpha \to X_{<\alpha}$ is locally split.
\label{D:lscube}
\end{defn}
\end{added}
In particular, a locally split $1$-cube is just a locally split map $X_1 \to X_0$ and a locally
split $2$-cube is a locally split square.
\blue{
It follows, in particular by the proof of Lemma~\ref{L:ad_set_cube_exists}, that all of the maps \eqref{E:lscube_str_maps} are locally split, and that each subcube
determined by freezing some of the coefficients of $\alpha$ is likewise locally split.}

\begin{lem}
A locally split $n$-cube extends naturally by taking fiber products to a set of
spaces $\set{X_\alpha : \alpha \in \bbN_0^n}$ such that $X_{\bullet \geq \ul 1} = \set{X^\alpha : \alpha
\in \bbN_1^n}$ is an $n$-fold multisimplicial space over $X := X_{\ul 0}$; in
particular for each fixed $\alpha = (\alpha_1,\ldots, 0, \ldots, \alpha_n)$
with vanishing $j$th coordinate the sequence
\[
\begin{tikzcd}
	X_\alpha & \ar[l] X_{\alpha + e_j} & \arltwo  X_{\alpha + 2e_j} & \arlthree X_{\alpha + 3e_j} & \ar[l,thick,dotted,no head]
\end{tikzcd}
\]
is the simplicial space of fiber products of the map $X_{\alpha +e_j} \to X_{\alpha}$.
\label{L:multisimplicial_space}
\end{lem}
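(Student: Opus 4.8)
The plan is to proceed by induction on $n$, bootstrapping from the one-dimensional case (the simplicial space of a locally split map from \S\ref{S:bundle_gerbes}) and the two-dimensional case already established for bigerbes. First I would record the local-splitness needed to make every fiber product in sight well-defined and stable: by Lemma~\ref{L:lssquare_sections}, in each $2$-face of the cube all four maps are locally split, and by Lemma~\ref{L:ls_pullback_prod} pullbacks and fiber products of locally split maps remain locally split. Consequently every map $X_{\alpha+e_j} \to X_\alpha$ arising in the extension is locally split, so its $k$-fold fiber product $X_{\alpha + ke_j}$ is defined and the displayed sequence in direction $j$ is genuinely the simplicial space of that map, which is the content of the final assertion.

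For the construction itself, I would extend the $n$-cube one coordinate at a time, defining $X_{\alpha+ke_j}$ as the $k$-fold fiber product of $X_{\alpha+e_j} \to X_\alpha$ over $X_\alpha$. The substance of the lemma is that this is \emph{independent of the order} in which the coordinates are iterated --- equivalently, that fiber products in distinct directions $j \neq k$ commute. This is precisely the higher analogue of Proposition~\ref{P:fiber_quadrant}, and I would prove it by the same device: identify $X_\alpha$, for $\alpha \in \bbN_1^n$, with the set of tuples $(w_I)$ indexed by the box $I \in \prod_{j} \set{1,\ldots,\alpha_j}$, with each $w_I \in X_{\ul 1}$, subject to the compatibility that for every direction $j$ and every fixing of the remaining coordinates, the sub-tuple obtained by varying the $j$th index has common image in $X_{\ul 1 - e_j}$ (and these common images satisfy the lower-dimensional compatibilities recursively, down to a common point of $X = X_{\ul 0}$). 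This description is manifestly symmetric in the directions, so any two iterated fiber products computing $X_\alpha$ are canonically identified and the structure maps in different directions commute.

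It then remains to assemble the face maps and verify the multisimplicial identities. In each direction $j$ the face maps $X_\alpha \to X_{\alpha - e_j}$ are the projections omitting one factor of the $j$th fiber product, so the one-dimensional simplicial identities hold by the standard argument of \S\ref{S:bundle_gerbes}; the tuple description shows that face maps in different directions commute, which is exactly the statement that $X_{\bullet \geq \ul 1}$ is a functor $(\Delta^{\mathrm{op}})^n \to \mathsf{Top}/X$, i.e.\ an $n$-fold multisimplicial space over $X = X_{\ul 0}$. The augmentation by the faces $X_\alpha$ having some vanishing coordinate (down to $X_{\ul 0} = X$) records the ambient base in each direction exactly as in \eqref{E:locally_split_2d_diagram}.

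The main obstacle is the order-independence in the second paragraph; all other steps are formal consequences of the definitions and of the cited lemmas. The tuple characterization handles this cleanly, with no essential new difficulty beyond the $n=2$ case of Proposition~\ref{P:fiber_quadrant}, which is why the extension to multigerbes is, as noted above, straightforward.
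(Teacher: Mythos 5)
Your proposal is correct and follows essentially the same route as the paper: induction on $n$ with the $n=2$ case being Proposition~\ref{P:fiber_quadrant}, and order-independence established by realizing $X_\alpha$ as a symmetric set of tuples in $X_{\ul 1}^{\alpha_1\cdots\alpha_n}$ cut out by compatibility conditions in each direction (the paper phrases this as the mutual preimage of the inductively-defined hypersurfaces $X_{\alpha(j)}$). Your additional remarks on local splitness and the verification of the face-map identities make explicit what the paper leaves implicit, but do not constitute a different argument.
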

\begin{proof}
The proof is by induction on $n$, the case $n = 2$ having been proved as Proposition~\ref{P:fiber_quadrant}.
Assuming therefore that the result holds for $n-1$, the `hypersurfaces' $\set{X_{\alpha} : \alpha_j \equiv 0}$ are well-defined
for $1 \leq j \leq n$, and for general $\alpha \in \bbN_1^n$ define $X_\alpha$ as a subspace of $X_{\ul 1}^{\alpha_1\cdots \alpha_n}$ as follows.
For each $j$, there are $\alpha_j$ projection maps $X_{\ul 1}^{\alpha_1\cdots \alpha_n} \to X_{\ul 1}^{\alpha_1 \cdots \wh{\alpha_j} \cdots \alpha_n}$, 
where the caret denotes omission, and these may be composed with the structure map 
\[
	X_{\ul 1}^{\alpha_1\cdots \wh{\alpha_j}\cdots \alpha_n} \to X_{\ul 1 - e_j}^{\alpha_1\cdots \wh{\alpha_j}\cdots \alpha_n}.
\]
Introducing the notation $\alpha(j)$ to mean the multiindex obtained from $\alpha$ by setting $\alpha_j = 0$,
we may view $X_{\alpha(j)}$ as a subspace of $X_{\ul 1 - e_j}^{\alpha_1\cdots \wh{\alpha_k}\cdots \alpha_n}$, and then $X_\alpha$ is well-defined as the subspace of $X_{\ul 1}^{\alpha_1\cdots\alpha_n}$ 
in the mutual preimage of $X_{\alpha(j)}$ under the $\alpha_j$ maps $X_{\ul 1}^{\alpha_1\cdots\alpha_n} \to X_{\ul 1 - e_j}^{\alpha_1\cdots\wh{\alpha_j}\cdots \alpha_n}$
for each $j$.
\end{proof}

We denote the face maps of the multisimplicial space by $\pi^j_\bullet$, for $1 \leq j \leq n$.
Then for each $\alpha \in \bbN_0^n$, there are $n$ simplicial differentials defined on a line bundle $L \to X_\alpha$ by
\[
	d_j L = \bigotimes_{k=0}^{\alpha_j} (\pi^j_{k})^\ast L^{(-1)^k} \to X_{\alpha + e_j},
	\quad 1 \leq j \leq n,
\]
with the property that $d_j^2 L \to X_{\alpha+2e_j}$ is canonically trivial.
\begin{defn}
A {\em bundle $n$-multigerbe}, or simply {\em multigerbe}, defined with respect to a locally split $n$-cube $X_\alpha$ is a multisimplicial line bundle $L \to X_{\ul 2}$, meaning $d_j L$ is given a trivializing section $s_j$
inducing the canonical trivialization of $d_j^2 L$ for each $j$, and the induced trivializations
of $d_j d_k L \cong d_k d_j L$ are consistent for each pair $j \neq k$.

A set 
$(P_1,\ldots,P_n)$ of 
line bundles $P_j \to X_{\ul 2 - e_j}$ such that each $P_j$ is multisimplicial with respect to $d_i$ for $i \neq j$
determines a multisimplicial line bundle $\bigotimes_{j=1}^n d_j P_j^{(-1)^j} \to X_{\ul 2}$ in the obvious way, and a {\em trivialization} of an $n$-multigerbe $L$
consists of a set $(P_1,\ldots,P_n)$ as above along with
an isomorphism
\[
	L \cong \bigotimes_{j=1}^n d_jP_j^{(-1)^j}
\]
intertwining the multisimplicial structures of both sides.
\label{D:multigerbe}
\end{defn}

Pullbacks, products and morphisms of multigerbes are defined by generalizing in
the obvious way those same operations for bigerbes, and making use of the following result,
which follows immediately from Lemma~\ref{L:pullback_prod_lss}.

\begin{lem}
The pullback by a continuous map $X' \to X_{\ul 0}$ of a locally split $n$-cube $X_\alpha$
is a locally split $n$-cube over $X'$.
Likewise, if $X_\alpha$ and $X'_\alpha$ are locally split $n$-cubes over the same
base $X := X_{\ul 0} = X'_{\ul 0}$, then the fiber products $X_\alpha\times_{X} X'_\alpha$
form a locally split $n$-cube.
\label{L:lsncube_operations}
\end{lem}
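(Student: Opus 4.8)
The plan is to reduce the defining condition of a locally split $n$-cube to its two-dimensional faces and then dispatch each face using the two parts of Lemma~\ref{L:pullback_prod_lss}. By Definition~\ref{D:lscube}, a collection of spaces indexed by $\set{0,1}^n$, together with the structure maps lowering one coordinate, is a locally split $n$-cube exactly when every two-dimensional face --- the square obtained by fixing all coordinates except two indices $j \neq k$ and letting these range over $\set{0,1}$ --- is a locally split square in the sense of Definition~\ref{D:lssquare}. Since both operations in question, pullback along $f \colon X' \to X$ and fiber product over $X$, are functorial, the structure maps of the two candidate cubes exist and the relevant squares commute automatically; so in each case the entire content is to check that every two-dimensional face is a locally split square.

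First I would treat the pullback. Fix a face with top corner $\beta$ (so $\beta_j = \beta_k = 1$) and base $B = X_{\beta - e_j - e_k}$. The corresponding face of $f^\ast X_\bullet$ has corners $f^\ast X_\gamma = X' \times_X X_\gamma$ and base $X' \times_X B$. The observation is that this face is precisely the pullback of the original face square --- a locally split square over $B$ --- along the projection $X' \times_X B \to B$; indeed, $(X' \times_X B) \times_B X_\gamma \cong X' \times_X X_\gamma$ at each of the four corners $\gamma$. Lemma~\ref{L:pullback_prod_lss}\eqref{I:pullback_prod_lss_one} then yields that this face is a locally split square, proving the first assertion.

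For the fiber product the main wrinkle is that the faces of $X_\bullet$ and $X'_\bullet$ live over the intermediate corners $B = X_{\beta - e_j - e_k}$ and $B' = X'_{\beta - e_j - e_k}$ rather than over the common base $X$, so Lemma~\ref{L:pullback_prod_lss}\eqref{I:pullback_prod_lss_two} cannot be invoked verbatim. The remedy is to descend to the common base $C = B \times_X B'$: a direct comparison of fibers shows that the face of $X_\bullet \times_X X'_\bullet$ at corner $\beta$ is isomorphic to the fiber product over $C$ of the two squares obtained by pulling the $X_\bullet$-face back along the projection $C \to B$ and the $X'_\bullet$-face back along $C \to B'$. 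By Lemma~\ref{L:pullback_prod_lss}\eqref{I:pullback_prod_lss_one} each of these pullbacks is a locally split square over $C$, and by Lemma~\ref{L:pullback_prod_lss}\eqref{I:pullback_prod_lss_two} their fiber product over $C$ is again a locally split square over $C$; this is exactly the face in question.

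The hard part will be the base-matching just described, and it is the only nontrivial step: because the two-dimensional faces are squares over the various intermediate corners and not over $X$ itself, one must carefully identify the ambient fiber product over $X$ with a fiber product over $C = B \times_X B'$ before the hypotheses of Lemma~\ref{L:pullback_prod_lss}\eqref{I:pullback_prod_lss_two} are met. Once this identification is verified the whole argument is formal, which accounts for the result following immediately from Lemma~\ref{L:pullback_prod_lss}.
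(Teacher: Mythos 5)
Your proposal is correct and follows exactly the route the paper intends: the paper offers no written proof, asserting only that the lemma ``follows immediately from Lemma~\ref{L:pullback_prod_lss}'', and your argument is precisely the careful reduction of each two-dimensional face of the cube to the two parts of that lemma. Your identification of the fiber-product face as a fiber product over $C = B \times_X B'$ of the two squares pulled back to $C$ is the one step that is not literally immediate, and you handle it correctly.
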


The characteristic class is defined as before in terms of the total cohomology of a 
\v Cech-simplicial multicomplex.

\begin{added}
\begin{defn}
An admissible set of covers for a locally split $n$-cube is a set of covers $\cU_{X_\alpha} \to X_\alpha$ such that each pair $(\cU_{X_{\alpha -e_j}},\cU_{X_\alpha})$ is admissible
and each set $(\cU_{X_{\alpha -e_i-ej}},\cU_{X_{\alpha-e_i}},\cU_{X_{\alpha - e_j}},\cU_{X_\alpha})$ is an admissible set of covers for the associated split square.
\label{D:admissible_set_cube}
\end{defn}

\begin{lem}
Given any covers for the spaces in a locally split $n$-cube, there exists an admissible set refining them.
\label{L:ad_set_cube_exists}
\end{lem}
\begin{proof}
The proof is by induction on $n$ of the the stronger statement that covers of \emph{adjacent} $n$-cubes rooted at $X = X_{\ul 0}$ can be refined to a set which is simultaneously admissible
for each cube. 
Here by adjacent we mean a finite set of $n$-cubes rooted at $X$ which may share subcubes.
The cases $n = 1$ and $n = 2$ are furnished by Lemmas~\ref{Addedrbm} and 
\ref{L:admissible_set_exists}.%, using the observation that $\cU_X^1$ may be mutually refined in step 1 by the pullback by $\wt s_j$ of each of the $\cU_{X_{e_j}}^1$.
%
%The output of the case $n = 1$ forms step 0 of the case $n = 2$, furnished by Lemma~\ref{L:admissible_set_exists} along with the observation that the $\cU^1_{X_{e_j}}$ and then $\cU^1_{X}$ 
%may be mutually refined in step 1 by the pullback by all the sections of maps in which they participate.

For the inductive case, we consider, for each $n$-cube, $X_{\ul 1}$ at the top mapping to the limit space $X_{<\ul 1} := \lim_{\beta<\ul 1} X_{\beta}$, and we fix a cover $\cU^0_{X_{<\ul 1}}$ 
supporting sections, denoted $t$, of the split map $p : X_{\ul 1} \to X_{<\ul 1}$.
The topmost spaces in the diagram over which the limit is taken, namely $X_{\ul 1- e_j}$, are the topmost spaces of a set of adjacent $(n-1)$-cubes rooted at $X$, so by the inductive
hypothesis there are covers $\cU^0_{X_{\bullet}}$, $\bullet < \ul 1$ refining the given covers, which constitute simultaneous admissible sets for the $(n-1)$-cubes.
In addition, at this point the covers $\cU^0_{X_{\ul 1 - e_j}}$ of the penultimate spaces support sections, $\sigma_j$, of the natural maps $\pr_j : X_{<\ul 1} \to X_{\ul 1- e_j}$ from the limit space, which participate in commutative squares along with the maps of covers involving the $\cU^0_{X_{\ul 1 - e_i - e_j}}$.
Finally, we set $\cU^0_{X_{\ul 1}}$ to be the given cover of $X_{\ul 1}$ and this setup constitutes step 0.

\noindent
\textit{Step 1}: We define $\cU^1_{X_{\ul 1}}$ to be the given cover of $X_{\ul 1}$ and then subsequently set $\cU^1_{X_{<\ul 1}} = \cU^0_{X_{<\ul 1}} \cap t^\inv(\cU^1_{X_{\ul 1}})$,
and $\cU^1_{X_{\ul 1 - e_j}} = \cU^0_{X_{\ul 1 - e_j}} \cap \sigma_j^\inv(\cU^0_{X_{< \ul 1}})$; if there are multiple $n$-cubes in which $X_{\ul 1 - e_j}$ participates, then
we take $\cU^1_{X_{\ul 1 - e_j}}$ to be the mutual refinement by the pullback of the step 1 covers of the associated limit spaces for each such $n$-cube.

\noindent
\textit{Step 2}:
Appealing to the inductive step again gives refinements, denoted $\cU^2_\bullet$, of all the spaces up to the $X_{\ul 1 - e_j}$, and then we refine the cover of the limit space by 
\[
	\cU^2_{X_{<\ul 1}} = \cU^1_{X_{<\ul 1}} \cap \lim_{\beta < \ul 1} \cU^2_{X_{\beta}},
\]
the limit here taken over the maps of covers lifting the $n$-cube maps downstairs,
and set $\cU^2_{X_{\ul 1}} = \cU^1_{X_{\ul 1}} \cap p^\inv(\cU^1_{X_{<\ul 1}})$ for the space at the top of each $n$ cube.
That this furnishes an admissible set of covers for the $n$-cube follows along the lines of the proof of Lemma~\ref{L:admissible_set_exists}, and completes the induction.
\end{proof}
\end{added}

\begin{lem}
\blue{For a fixed admissible set of covers,}
the simplicial differentials 
\[
	d_j = \sum_{k = 0}^{\alpha_j} (-1)^k (\pi^j)_{k}^\ast : \vC_{\blue{\ecU}}^\ell(X_\alpha; A) 
	\to \vC_{\blue{\ecU}}^\ell(X_{\alpha + e_j}; A)
\]
on \v Cech cochains form, along with the \v Cech differential $\delta$, an $(n+1)$-multicomplex
$\big(\vC_{\blue{\ecU}}^\bullet(X_{\bullet}; A), \delta, d_1,\ldots, d_n\big)$ with the following properties:
\begin{enumerate}
[{\normalfont (i)}]
\item \label{I:multigerbe_cech_exact} For each $j$, fixing all other indices, the complex $\big(\vC_{\blue{\ecU}}^\ell(X_{\bullet_j}; A), d_j\big)$ is exact,
and for each $k \neq j$ admits a homotopy chain contraction commuting with $d_k$. 
\item \label{I:multigerbe_cech_total} The total cohomology of 
$\big(\vC_{\blue{\ecU}}^\bullet(X_{\bullet \geq \ul 1}; A), \delta, d_1,\ldots, d_n\big)$ is isomorphic to $\vH_{\blue{\ecU}}^\bullet(X; A)$, where $X = X_{\ul 0}$.
\item \label{I:multigerbe_cech_chern} The Chern class of the line bundle $L \to X_{\ul 2}$ of a multigerbe 
is represented by a cocycle $c(L) \in \vC_{\blue{\ecU}}^1(X_{\ul 2}; \bbC^\ast)$ \blue{for some admissible set of covers,} with $d_j c(L) = 0$ for each $j$, and such a multigerbe is trivial
if and only if $c(L)$ is a coboundary in the total $(\delta,d_1,\ldots,d_n)$ complex.
\end{enumerate}
\label{L:multigerbe_cech}
\end{lem}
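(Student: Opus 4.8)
The plan is to prove the three assertions in order, reducing everything by induction on $n$ to the already-established cases $n\le 2$ (Proposition~\ref{T:exact_simplicial}, Theorem~\ref{T:total_cohom_just_X}, Proposition~\ref{P:triple_complex_exactness}, Proposition~\ref{P:triple_complex_trivial}, Lemma~\ref{L:Cech_bigerbe_class}). The only genuinely new ingredient is the simultaneous existence of local sections compatible across all $n$ directions; once this is in hand, parts \eqref{I:multigerbe_cech_total} and \eqref{I:multigerbe_cech_chern} are formal.

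For \eqref{I:multigerbe_cech_exact}, exactness in each fixed direction $j$ is immediate: by Lemma~\ref{L:multisimplicial_space} the slice $X_{\bullet_j}$ (all other indices fixed) is the simplicial space of fiber products of the locally split map $X_{\alpha+e_j}\to X_\alpha$, so Theorem~\ref{T:exact_simplicial} applies directly. The real content is that the contraction ${s^j_\ell}^\ast$ of Proposition~\ref{P:ls_Cech_map} can be chosen to commute with $d_k$ for every $k\neq j$. For this I would first produce, for each $j$, a section $\sigma_j$ of the direction-$j$ base face map, with the various $\sigma_j$ mutually compatible, and then extend each to a section of $\pi^j_0$ at every multilevel by inserting a new first $j$-factor exactly as in \eqref{E:simplicial_section_defn}. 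The compatible base sections are obtained by iterating the two-dimensional construction of Lemma~\ref{L:lssquare_sections} across the $2$-faces of the cube, built up dimension by dimension so that on restriction to any coordinate $2$-plane they reduce to the compatible pairs of the bigerbe case. Granting these, the homotopy identity $d_j{s^j_\ell}^\ast+{s^j_\ell}^\ast d_j=1$ holds verbatim as in Theorem~\ref{T:exact_simplicial}, and the commuting squares relating $\sigma_j$ to the $k$-th face maps give $d_k{s^j_\ell}^\ast={s^j_\ell}^\ast d_k$ precisely as in the proof of Proposition~\ref{P:triple_complex_exactness}.

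Part \eqref{I:multigerbe_cech_total} I would then obtain by an iterated spectral-sequence collapse generalizing Proposition~\ref{P:triple_complex_trivial}. Writing the total differential in the order dictated by Convention~\ref{Conv:multicomplex}, I peel off the last direction: exactness of $d_n$ with a contraction commuting with $\delta,d_1,\ldots,d_{n-1}$ (part \eqref{I:multigerbe_cech_exact}) collapses the $(D_{n-1},d_n)$ spectral sequence onto the face $\alpha_n=0$, since the direction-$n$ complex $X_{(\cdot,0)}\to X_{(\cdot,1)}\to\cdots$ has $d_n$ injective at $\alpha_n=0$ and is exact thereafter. By Lemma~\ref{L:multisimplicial_space} this face is the multisimplicial space of a locally split $(n-1)$-cube with the same base $X$, so the induction hypothesis identifies the total cohomology with $\vH^\bullet(X;A)$; the base case $n=1$ is Theorem~\ref{T:total_cohom_just_X}.

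Finally, for \eqref{I:multigerbe_cech_chern} I would follow Lemma~\ref{L:Cech_bigerbe_class} essentially unchanged. The bundle $L\to X_{\ul 2}$ gives a Chern cocycle $c(L)\in\vC^1(X_{\ul 2};\bbC^\ast)$ with $\delta c(L)=0$; the trivializing sections $s_j$ of $d_jL$ give $\alpha_j\in\vC^0(X_{\ul 2+e_j})$ with $d_j\alpha_j=0$ and $\delta\alpha_j=d_jc(L)$, the consistency of the induced trivializations of $d_jd_kL\cong d_kd_jL$ being exactly the cocycle condition for the tuple $(c(L),\alpha_1,\ldots,\alpha_n)$ in the total complex. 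Using exactness to replace each $\alpha_j$ by a $d_j$-primitive and altering $c(L)$ by the corresponding $\delta$-images yields a pure cocycle with $d_jc(L)=0$ for all $j$; the converse assembly of a multigerbe from a pure cocycle is the standard local construction of Proposition~\ref{P:bundle_gerbe_double_class}. Triviality is then read off by unwinding $D(\alpha,\beta_1,\ldots,\beta_n)=c(L)$ in the truncated complex generalizing \eqref{E:bigerbe_triple_complex}: the $\beta_j\in\vC^1(X_{\ul 2-e_j})$ are the Chern cocycles of the bundles $P_j$ of Definition~\ref{D:multigerbe}, with $\delta\beta_j=0$ and $d_k\beta_j=0$ ($k\neq j$) encoding the multisimplicial structure and the total equation encoding $L\cong\bigotimes_j d_jP_j^{(-1)^j}$. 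The hard part is \eqref{I:multigerbe_cech_exact}, and within it the inductive construction of sections compatible in all $n$ directions simultaneously; some care is needed to ensure the sections chosen on overlapping faces of the cube agree, but everything downstream is a routine consequence.
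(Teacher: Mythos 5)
Your proposal is correct and follows the same overall skeleton as the paper: part (ii) by peeling off the last direction and collapsing the $(D_{n-1},d_n)$ spectral sequence onto the face $\alpha_n=0$ with induction down to Theorem~\ref{T:total_cohom_just_X}, and part (iii) as a direct transcription of Lemma~\ref{L:Cech_bigerbe_class}. The one place you diverge is part (i), and there you take on more than is required. The paper handles (i) by a purely pairwise reduction: for each pair $j\neq k$, freezing all remaining indices exhibits the relevant slice of $X_\bullet$ as the bisimplicial space of a locally split square, so Proposition~\ref{P:triple_complex_exactness} already supplies, for that pair, a contraction of $d_j$ commuting with $d_k$ --- which is all the statement asks for and all that the spectral-sequence collapse in (ii) ever uses (indeed (ii) only needs exactness of $d_n$, not any commuting contraction). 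You instead propose a single contraction per direction $j$ commuting with \emph{all} $d_k$ simultaneously, which forces you into the inductive construction of sections mutually compatible across every face of the $n$-cube; you correctly flag this as the hard part, but it is a harder statement than the lemma claims, and the pairwise reduction sidesteps the overlapping-faces compatibility issue entirely. If you do want the simultaneous version (it is plausible, by iterating Lemma~\ref{L:lssquare_sections} dimension by dimension), it should be stated and proved as a separate strengthening rather than folded into the proof of (i).
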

\begin{proof}
Here \eqref{I:multigerbe_cech_exact} is a consequence of Proposition~\ref{P:triple_complex_exactness}, since for each pair $j \neq k$, the bisimplicial
space obtained from $X_\alpha$ by freezing all but the $j$th and $k$th indices is equivalent to the one obtained from a locally split square.

Part \eqref{I:multigerbe_cech_total} follows by induction, rolling up the $(n+1)$ multicomplex into the double complex $(d_n, D_{n-1})$ where $D_{n-1}$ denotes
the total differential associated to $(\delta,d_1,\ldots,d_{n-1})$.
By exactness of $d_n$, this total cohomology is isomorphic to the total $D_{n-1}$ cohomology of the complex $\vC^\bullet(X_{\bullet(n) \geq \ul 1(n)}; A)$
where again $\alpha(n)$ denotes the index obtained from $\alpha$ by setting $\alpha_n = 0$.

Finally, part \eqref{I:multigerbe_cech_chern} is proved by a straightforward generalization of the proof of Lemma~\ref{L:Cech_bigerbe_class}.
\end{proof}

\begin{defn}
The {\em characteristic class} of a multigerbe $(L, X_\alpha)$ is the class
\[
	G(L) \in H^{n+3}(X; \bbZ),
	\quad X = X_{\ul 0},
\]
given by the Bockstein image of $[c(L)] \in
H^{n+2}\big(\vC_{\blue{\ecU}}^\bullet(X_{\bullet \geq \ul 1};\bbC^\ast), \delta,d_1,\ldots, d_n\big)$
in $\vH_{\blue{\ecU}}^{n+2}(X; \bbC^\ast)$ with respect to the isomorphism of
Lemma~\ref{L:multigerbe_cech}.\eqref{I:multigerbe_cech_total}.
\label{D:multigerbe_class}
\end{defn}

\begin{prop}
The characteristic class is natural with respect to pullback, product and
inverse operations on multigerbes, and a morphism of multigerbes induces an
equality of the (pulled back) characteristic classes on the base spaces.
It vanishes if and only if the multigerbe admits a trivialization.
Moreover, $G(L)$ transforms according to the sign representation of the
symmetric group $\Sigma_n$ acting by permutation of the indices of the locally
split $n$-cube $X_\alpha$.
\label{P:char_class_properties}
\end{prop}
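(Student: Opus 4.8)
The plan is to prove Proposition~\ref{P:char_class_properties} by reducing each assertion to the corresponding facts already established for bigerbes in Theorem~\ref{T:bigerbe_DD_class}, combined with the \v Cech-multicomplex machinery of Lemma~\ref{L:multigerbe_cech}. For naturality with respect to pullback, product, inverse, and morphisms, the argument is a direct transcription of the bigerbe case: a continuous map $f : X' \to X_{\ul 0}$ induces, by Lemma~\ref{L:lsncube_operations}, a pulled-back locally split $n$-cube, and the induced maps on \v Cech cochains commute with $\delta$ and each $d_j$, hence give a morphism of $(n+1)$-multicomplexes. Naturality of the spectral-sequence isomorphism of Lemma~\ref{L:multigerbe_cech}.\eqref{I:multigerbe_cech_total} then yields $G(f^\ast L) = f^\ast G(L)$ and $f^\ast G(L) = G(L')$ for a morphism. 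For products and inverses one takes the representatives $c(L_1\otimes L_2) = c(L_1) + c(L_2)$ and $c(L^\inv) = -c(L)$ at the cochain level, exactly as in the proof of Theorem~\ref{T:bigerbe_DD_class}. The statement that $G(L) = 0$ if and only if $L$ is trivial is immediate from Lemma~\ref{L:multigerbe_cech}.\eqref{I:multigerbe_cech_chern}, since $[c(L)]$ is the class whose Bockstein image is $G(L)$, and this class vanishes precisely when $c(L)$ is a coboundary in the total complex, which is the triviality criterion.

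The genuinely new content is the transformation law under $\Sigma_n$, and I expect this to be the main obstacle. The point is that a permutation $\sigma \in \Sigma_n$ of the axes of the locally split $n$-cube $X_\alpha$ relabels the differentials $d_1,\ldots,d_n$ by $d_{\sigma(1)},\ldots,d_{\sigma(n)}$ while fixing $\delta$, and by Convention~\ref{Conv:multicomplex} the total differential $D_n$ depends on the chosen ordering of the $d_j$ through the signs $(-1)^{p_1+\cdots+p_{k-1}}$. First I would recall from Convention~\ref{Conv:multicomplex} that reordering the indices of a multicomplex is intertwined with the original total differential by an automorphism given in each multidegree by an explicit power of $-1$. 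The computation already carried out for bigerbes in \S\ref{S:bigerbe_four_class} shows that interchanging $d_1$ and $d_2$ multiplies the class by $-1$; the $n$-fold generalization is that transposing two adjacent differentials $d_k, d_{k+1}$ acts by the automorphism $(-1)^{(p_k+1)(p_{k+1}+1)}$ of the multicomplex, which on the relevant summand $\vZ^1(X_{\ul 2}; \bbC^\ast)$ (where each simplicial degree $p_j$ equals $1$) reduces to multiplication by $-1$.

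The key steps, in order, are as follows. First, establish that an adjacent transposition $\tau_k = (k\ \ k{+}1)$ acts on the \v Cech multicomplex by the degree-dependent sign automorphism dictated by Convention~\ref{Conv:multicomplex}, and that this automorphism restricts to $-1$ on the one-dimensional piece carrying the Chern class $c(L)$, where the relevant $d_j$-degrees are all equal to $1$. Second, observe that this automorphism commutes with the spectral-sequence collapse computing the total cohomology as $\vH^\bullet(X; \bbC^\ast)$, so it descends to multiplication by $-1$ on the characteristic class; hence $G(\tau_k \cdot L) = -G(L)$. Third, conclude by the fact that the adjacent transpositions $\tau_1,\ldots,\tau_{n-1}$ generate $\Sigma_n$ and each acts by $-1$, so a general $\sigma$ acts by $\operatorname{sgn}(\sigma)$; this is precisely the sign representation. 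The care required is in tracking the multidegree-dependent signs through Convention~\ref{Conv:multicomplex} and verifying that the net sign on the Chern-class summand is $\operatorname{sgn}(\sigma)$ regardless of the path of transpositions chosen, which follows from well-definedness of the sign representation together with the automorphism being canonical.
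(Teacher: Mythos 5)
Your proposal follows essentially the same route as the paper's own (sketched) proof: naturality and the triviality criterion are read off from Lemma~\ref{L:multigerbe_cech} exactly as in Theorem~\ref{T:bigerbe_DD_class}, and the $\Sigma_n$ sign is extracted from the reordering automorphism of Convention~\ref{Conv:multicomplex}, with your reduction to adjacent transpositions merely making the paper's one-line sign claim explicit. One small bookkeeping slip: if $p_j$ denotes the multicomplex degree (so $p_j=1$ on $X_{\ul 2}$) the transposition automorphism is $(-1)^{p_kp_{k+1}}$ rather than $(-1)^{(p_k+1)(p_{k+1}+1)}$ --- the latter exponent is the paper's bigerbe formula written in terms of the fiber-product indices $m=n=2$ --- but either way the sign on the Chern-class summand is $-1$, so your conclusion stands.
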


\begin{proof}[Proof sketch]
As for (bi)gerbes, the naturality of the characteristic class is a consequence
of the naturality of the multicomplex $\big(\vC_{\blue{\ecU}}^\bullet(X_\bullet; A), \delta,
d_1,\ldots,d_n\big)$ and the naturality of the Chern class of the line bundle
$L \to X_{\ul 2}$, and the equivalence between vanishing of $G(L)$ and multigerbe triviality of $L$
follows from Lemma~\ref{L:multigerbe_cech}.\eqref{I:multigerbe_cech_chern}.
Finally, that $G(L)$ is odd with respect to permutations of the $n$-cube is a consequence
of the sign convention, Convention~\ref{Conv:multicomplex}, since changing the order of the differentials
in the multicomplex by a permutation $\sigma$ involves multiplying the complex by powers of $-1$, and in particular
the sign $(-1)^{\mathrm{sgn}(\sigma)}$ on the term $\vC_{\blue{\ecU}}^1(X_{\ul 2}; \bbC^\ast)$.
\end{proof}

The question of representability of a given $(n+2)$-class by a multigerbe supported by
a given locally split $n$-cube can be addressed along similar lines as for bigerbes in \S\ref{S:rep_four_class}.
Consider the multicomplex $\big(\vC_{\blue{\ecU}}^\bullet(X_\bullet; \bbZ), \delta,d_1,\ldots,d_n\big)$ truncated to involve only the spaces
in the $n$-cube, so the $X_{\alpha}$ with $\alpha \in \set{0,1}^n$.
The $(\delta, D_{1\ldots,n})$ spectral sequence of this complex (with the $d_i$ rolled up into a single differential) has $E_1$ page
consisting of the cohomology complexes
\[
\begin{tikzcd}
	H^k(X; \bbZ) \ar[r,"D_{1\ldots,n}"] 
	& \bigoplus_{\abs \alpha = 1} H^k(X_\alpha; \bbZ) \ar[r, "D_{1\ldots,n}"] 
	& \bigoplus_{\abs \alpha = 2} H^k(X_\alpha; \bbZ) \ar[r, "D_{1\ldots,n}"] & {\cdots}
\end{tikzcd}
\]
for each $k \in \bbN$.
At the bottom level, the $D_{1\ldots,n}$ differential of a class in $H^{n+2}(X;
\bbZ)$ is just the sum of the pullbacks along the $n$-cube face maps to
$\bigoplus_{\abs \alpha = 1} H^{n+2}(X_\alpha; \bbZ)$, and if this vanishes,
then we say the class {\em survives to the $E_2$ page}.
In this case the $E_2$ differential maps the class into the quotient
$\bigoplus_{\abs \alpha = 2}H^{n+1}(X_\alpha; \bbZ)/\bigoplus_{\abs \alpha =1}
H^{n+1}(X_\alpha; \bbZ)$ (the cohomology of the $E_1$ page), and we say the
class {\em survives to the $E_3$ page} if this $E_2$ differential vanishes and so on.
Provided the class survives to the $E_n$ page, the associated differential maps
it into the quotient of $\bigoplus_{\abs \alpha = n} H^2(X_\alpha; \bbZ) =
H^{2}(X_{\ul 1}; \bbZ)$ by some complicated subgroup, and this is the last
nontrivial differential of the spectral sequence, which therefore stabilizes at
$E_{n+1} = E_\infty$. 
We say the class {\em survives to $E_\infty$}, or simply {\em stabilizes}, if it survives to $E_n$
and has vanishing $E_n$ differential.

\begin{prop}
A given locally split $n$-cube $X_\alpha$ supports an $n$-multigerbe representing a given class $\alpha \in H^{n+2}(X; \bbZ)$
if and only if $\alpha$ stabilizes in the above sense.
\label{P:multigerbe_representability}
\end{prop}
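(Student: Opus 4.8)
The plan is to adapt the argument for the bigerbe case, Theorem~\ref{T:bigerbe_representability}, by recognizing that the successive transgressions producing the characteristic class of a multigerbe are precisely the differentials of the $(\delta, D_{1\ldots n})$ spectral sequence described above. By naturality of the Bockstein isomorphism it suffices to work one degree lower with $\bbC^\ast$ coefficients, so I would fix a cocycle $\alpha \in \vC^{n+1}(X; \bbC^\ast)$ representing the given class at the corner $X_{\ul 0}$ of the truncated multicomplex $\big(\vC^\bullet(X_\bullet; \bbC^\ast), \delta, d_1,\ldots, d_n\big)$, exactly as the class $\alpha \in \vC^3(X;\bbC^\ast)$ appears in the $n=2$ proof.

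For sufficiency, assuming $\alpha$ stabilizes, I would construct primitives climbing up the $n$-cube by induction on the page of the spectral sequence. Vanishing of the $E_1$ differential (the sum of face pullbacks of $\alpha$ to the $X_\beta$ with $\abs{\beta} = 1$) yields, via exactness of the $d_j$ complexes in Lemma~\ref{L:multigerbe_cech}.\eqref{I:multigerbe_cech_exact}, primitives on those faces; vanishing of the $E_k$ differential for $2 \leq k \leq n$ then permits these to be corrected so that the relevant alternating $d$-combination on the faces $X_\beta$ with $\abs{\beta} = k$ becomes $\delta$-exact, supplying the next layer of primitives. After $n$ steps this produces $\gamma \in \vC^1(X_{\ul 1}; \bbC^\ast)$ whose \v Cech coboundary is annihilated by $d_1 \cdots d_n$, so that $c(L) := \pm\, d_1 \cdots d_n \gamma \in \vC^1(X_{\ul 2}; \bbC^\ast)$ is a pure cocycle, its purity following from $d_j^2 = 0$ exactly as in the bigerbe case. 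By Lemma~\ref{L:multigerbe_cech}.\eqref{I:multigerbe_cech_chern} this cocycle defines a multigerbe, and the telescoping zig-zag generalizing \eqref{E:bigerbe_zig_one}--\eqref{E:bigerbe_zig_two} identifies its characteristic class with $[\alpha]$.

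For necessity, I would reverse this construction: starting from a multigerbe with pure Chern cocycle $c(L) \in \vZ^1(X_{\ul 2}; \bbC^\ast)$, I would apply the homotopy contractions for $d_n,\ldots, d_1$ in turn, using that each commutes with the remaining simplicial differentials (Lemma~\ref{L:multigerbe_cech}.\eqref{I:multigerbe_cech_exact}), to transgress $c(L)$ down to a representative $\alpha \in \vC^{n+1}(X; \bbC^\ast)$ of $G(L)$. The intermediate primitives produced at each stage are exactly the data witnessing the vanishing of the successive spectral sequence differentials on $[\alpha]$, so any representable class automatically stabilizes.

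The main obstacle I anticipate is the bookkeeping of the higher pages. Unlike the bigerbe case, where only the $E_1$ and $E_2$ differentials occur and the transgression indeterminacy is the single quotient in \eqref{E:four_class_transgression}, for $n \geq 3$ one must track the nested quotients by which the intermediate $E_k$ groups are formed and verify that the freedom in the successive choices of primitives is precisely absorbed upon passing to these quotients --- equivalently, that the ability to continue the lift at stage $k$ is equivalent to vanishing of the $E_k$ differential. Carrying this out as a clean induction on $k$, together with the sign bookkeeping dictated by Convention~\ref{Conv:multicomplex}, is where the care lies; the underlying mechanism, however, is just the standard one whereby exactness of the simplicial differentials converts the vanishing of each spectral sequence differential into the existence of the next required primitive, so no genuinely new phenomenon beyond the $n = 2$ case should arise.
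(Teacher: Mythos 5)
Your proposal follows exactly the route the paper intends: the paper gives no detailed argument here, stating only that the proof is a straightforward generalization of Theorem~\ref{T:bigerbe_representability} and leaving the details as an exercise, and your plan is precisely that generalization --- transgressing via the exactness of the $d_j$ complexes from Lemma~\ref{L:multigerbe_cech}, identifying the successive obstructions with the $E_k$ differentials, and reversing the construction for necessity. Your identification of the higher-page bookkeeping as the only real point of care is also consistent with the paper's claim that no new phenomenon arises beyond the $n=2$ case.
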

We leave the details of the proof, which we claim is a relatively straightforward generalization of the proof of 
Theorem~\ref{T:bigerbe_representability}, as an exercise.

\subsection{Examples} \label{S:multiberbe_examples}
We end with some simple examples of multigerbes which are straightforward generalizations
of the $3+1$ decomposable bigerbes of \S\ref{S:decomp_bigerbes} and the path bigerbes of \S\ref{S:Path-bigerbes}.

First, suppose $(L, X_\alpha)$ is an $n$-multigerbe over $X = X_{\ul 0}$ with characteristic class $\alpha = G(L) \in H^{n+2}(X; \bbZ)$,
and let $[\rho] \in H^1(X; \bbZ) \cong \vH^0(X; \UU(1))$ be a given 1-class represented by a homotopy class of maps $\rho : X \to \UU(1)$. 
We proceed to construct a `decomposable'  $(n+1)$-multigerbe representing the class $[\rho] \cup G(L)$. 
With $\wt X \to X$ the `logarithmic' $\bbZ$-covering of $X$ associated to $\rho$ as in \S\ref{S:decomp_bigerbes}, define the $(n+1)$-cube $\wt X_\beta$ by
\[
	\wt X_{(\alpha,0)} = X_\alpha,
	\quad \wt X_{(\alpha,1)} = \wt X \times_X X_{\alpha}.
\]
Then in the induced multisimplicial simplicial space,
$
	\wt X_{\ul 2} = \wt X^{[2]} \times_X X_{\ul 2}
$
and we define the line bundle by
\[
	L^\chi = (\mathrm{pr_2}^\ast L)^{\otimes \mathrm{pr}_1^\ast\chi} \to \wt X^{[2]} \times_X X_{\ul 2}
\]
where $\chi : \wt X^{[2]} \to \bbZ$ is the fiber shift map with $\wt X \to X$
thought of as a principal $\bbZ$-bundle.

\begin{prop}
With notation as above, $(L^\chi, \wt X_\beta)$ is an $(n+1)$-multigerbe with characteristic class
\[
	G(L^\chi) = [\rho] \cup G(L) \in H^{n+3}(X; \bbZ).
\]
\label{P:decomp_multigerbe}
\end{prop}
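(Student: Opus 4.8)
The plan is to verify the two defining conditions of a multigerbe for $(L^\chi, \wt X_\beta)$ and then compute its characteristic class by reducing to the known class of $(L, X_\alpha)$ together with the shift-map computation of Lemma~\ref{L:cohom_transgr_circle_bundle}. This is a direct generalization of Proposition~\ref{T:geometric_bigerbe_3plus1}, where the case $n = 1$ was handled, so I would follow that template closely and emphasize the points where the higher multiindex bookkeeping enters.

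First I would check that $(L^\chi, \wt X_\beta)$ is a genuine $(n+1)$-multigerbe. By Lemma~\ref{L:multisimplicial_space} applied to the locally split $(n+1)$-cube $\wt X_\beta$, the multisimplicial space satisfies $\wt X_{\ul 2} \cong \wt X^{[2]} \times_X X_{\ul 2}$ and more generally $\wt X_{(\alpha, k)} \cong \wt X^{[k]} \times_X X_\alpha$, so the new $(n+1)$st simplicial direction is exactly the simplicial space of the $\bbZ$-bundle $\wt X \to X$ pulled back over $X_\alpha$. For the original $n$ directions, $L^\chi$ is locally $L^{\otimes m}$ on the component $\chi^\inv(m)$, and since each $d_j$ for $1 \leq j \leq n$ acts fiberwise over a fixed value of $\chi$ (the shift map being constant along those directions), we have $d_j L^\chi = (d_j L)^\chi$, which inherits the trivializing section $s_j^{\otimes m}$ on $\chi^\inv(m)$ from the multigerbe structure on $L$; these induce the canonical trivialization of $d_j^2 L^\chi$ and are mutually consistent since the $s_j$ are. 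For the new direction $d_{n+1}$, the key identity is $d_{n+1}\chi = 0 \in \vC^0(\wt X^{[2]}; \bbZ)$ for the shift map, so that $d_{n+1} L^\chi = L^{d_{n+1}\chi} = L^0$ is canonically trivial; this gives a trivializing section whose square and cross-terms with the $d_j$ are manifestly consistent. Thus all the simplicial conditions of Definition~\ref{D:multigerbe} hold.

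Next I would compute $G(L^\chi)$. The cleanest route is at the level of \v Cech cocycles using Lemma~\ref{L:multigerbe_cech}.\eqref{I:multigerbe_cech_chern}: the Chern cocycle is $c(L^\chi) = \chi \cup c(L) \in \vC^1(\wt X^{[2]} \times_X X_{\ul 2}; \bbZ)$, where $\chi \in \vC^0(\wt X^{[2]}; \bbZ)$ and $c(L) \in \vC^1(X_{\ul 2}; \bbZ)$ are pulled back by the fiber-product projections. On the component $\chi^\inv(m)$ this is $m\, c(L) = [L^m]$, consistent with the previous paragraph. Since the characteristic class $G(L^\chi)$ is obtained (via the isomorphism of Lemma~\ref{L:multigerbe_cech}.\eqref{I:multigerbe_cech_total}) by running the zig-zag of the $(\delta, d_1, \ldots, d_{n+1})$ total complex down to $X$, and since $c(L)$ double-transgresses to $G(L) \in H^{n+2}(X; \bbZ)$ while $\chi$ transgresses to $[\rho] \in H^1(X; \bbZ)$ by Lemma~\ref{L:cohom_transgr_circle_bundle} (applied to $\wt X \to X$ as a $\bbZ$-bundle, the integral analogue of the circle-bundle statement), the cup-product structure of the cocycle descends to the cup product of the two classes. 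Hence $G(L^\chi) = [\rho] \cup G(L)$.

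The main obstacle I anticipate is making the descent-of-the-cup-product argument fully rigorous, i.e.\ tracking that the cup product $\chi \cup c(L)$ at the chain level really descends to $[\rho] \cup G(L)$ under the two independent transgression zig-zags in the enlarged multicomplex. The subtlety is that $G(L)$ involves transgressing $c(L)$ through the $n$ original simplicial directions plus $\delta$, while $[\rho]$ involves transgressing $\chi$ through the single new direction; one must confirm these two zig-zags are compatible so that the total transgression of the product cocycle is the product of the transgressed classes, with the correct sign dictated by Convention~\ref{Conv:multicomplex}. I expect this follows from the factorized form of the cocycle together with the multiplicativity of transgression with respect to cup products, exactly as in the proof of Proposition~\ref{T:geometric_bigerbe_3plus1}, so I would reference that argument rather than repeat the bookkeeping in detail.
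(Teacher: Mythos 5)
Your proposal is correct and follows essentially the same route as the paper, which in fact prints no separate proof of Proposition~\ref{P:decomp_multigerbe} but leaves it as the evident generalization of Proposition~\ref{T:geometric_bigerbe_3plus1}: the identities $d_jL^\chi = (d_jL)^\chi$ for $j\le n$, $d_{n+1}L^\chi = L^{d_{n+1}\chi} = L^0$, and the identification of the Chern cocycle with the cup product $\chi\cup c(L)$ (equal to $m\,c(L)$ on $\chi^\inv(m)$) are exactly the steps the paper uses in the bigerbe case. Your extra care in citing Lemma~\ref{L:cohom_transgr_circle_bundle} and flagging the sign/compatibility of the two transgression zig-zags is a reasonable elaboration of what the paper asserts in one sentence, not a deviation from its method.
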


For the generalization of the path bigerbes of \S\ref{S:Path-bigerbes}, let $X$ be a locally contractible space with a chosen basepoint.
\begin{added}
\begin{lem}
With notation $P^1 Y = PY$ and $P^0 Y = Y$, the iterated (based) path spaces
\[
	X_\alpha = P^{\alpha_1} \cdots P^{\alpha_n}X = P^{\abs \alpha} X
\]
with evaluation maps $X_{\alpha} \to X_{\alpha - e_j}$ form a locally split
$n$-cube over $X$, provided $X$ is $(n-1)$-connected.
\label{L:path_split_cube}
\end{lem}
\begin{proof}
The obstruction to the locally split condition is the surjectivity of the maps $X_\alpha \to X_{<\alpha}$.
However, it can be checked that for each $\abs\alpha \in \bbN$, the limit $X_{<\alpha} \cong \cC_\ast(S^{\abs{\alpha}-1}; X)$
can be identified with the set of basepointed maps from the $(\abs{\alpha}-1)$-sphere into $X$.
Indeed,
the $X_{\alpha-e_j} \cong \cC_\ast([0,1]^{\abs{\alpha}-1}; X)$ are cube mapping spaces of the $(\abs{\alpha}-1)$-cube, and the limit can be realized
as the subspace of the product of these in which the cubes are identified along their boundaries in a way that assembles
into an $(\abs{\alpha}-1)$-sphere, with the image of an element in $X_\alpha = \cC_\ast([0,1]^{\abs{\alpha}}, X)$ in $X_{<\alpha}$
identified with the restriction map to the boundary of the disk $[0,1]^{\abs{\alpha}} \cong D^{\abs{\alpha}}$.
Since $X$ is $(n-1)$ connected, each of these maps is surjective.
\end{proof}
\end{added}
Since in this case all the $X_\alpha$ in the $n$-cube for $\alpha \neq \ul 0$ are contractible spaces, every class in $H^{n+2}(X; \bbZ)$
survives to the $E_\infty$ page in the $(\delta,D_{1\cdots n})$ spectral sequence of the $n$-cube, so in light of Proposition~\ref{P:multigerbe_representability}
we conclude the following.
\begin{prop}
For $X$ \blue{$(n-1)$-connected} and locally contractible, every class in $H^{n+2}(X; \bbZ)$ is represented by an $n$-multigerbe
supported on the iterated path $n$-cube $X_\alpha = P^{\abs{\alpha}} X$; in particular, the multisimplicial line bundle 
of the multigerbe lives on the iterated loop space $X_{\ul 2} = \Omega^n X$.
\label{P:path_multigerbe}
\end{prop}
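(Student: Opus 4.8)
The plan is to obtain the statement as an immediate application of the representability criterion Proposition~\ref{P:multigerbe_representability}, the only genuine inputs being the contractibility of the higher iterated path spaces together with the local splitness already verified above. First I would recall that $X_\alpha = P^{\abs\alpha}X$ does form a locally split $n$-cube: the endpoint evaluations $PY \to Y$ are locally split for $Y$ locally contractible, and the sole square whose diagonal map $PPY \to \Omega Y$ needs surjectivity checked is the bottom one with $Y = X$, where simple connectedness supplies it; every other square has contractible base $Y = P^{k}X$, $k \geq 1$, for which surjectivity is automatic.

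The heart of the argument is a degeneration of the $(\delta, D_{1\cdots n})$ spectral sequence of the truncated cube. Fix $[\alpha] \in H^{n+2}(X;\bbZ)$. Because the based path space of any space is contractible, every $X_\beta$ with $\beta \neq \ul 0$ (so $\abs\beta \geq 1$) is contractible, hence $H^j(X_\beta;\bbZ) = 0$ for all $j \geq 1$. All the successive differentials emanating from $[\alpha]$ have targets assembled from groups $H^j(X_\beta;\bbZ)$ with $\abs\beta \geq 1$ and $2 \leq j \leq n+2$, every one of which therefore vanishes. Thus $[\alpha]$ survives trivially through every page and stabilizes in the sense defined before Proposition~\ref{P:multigerbe_representability}; that proposition then produces an $n$-multigerbe on the cube $X_\alpha$ with characteristic class $[\alpha]$.

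For the final clause I would identify the top interior space. By Lemma~\ref{L:multisimplicial_space} the multisimplicial extension is assembled from fiber products, and passing from index $1$ to index $2$ in a single direction is the two-fold fiber product of the corresponding endpoint fibration, which is a based loop space via the identification $P^{[2]}Y \cong \Omega Y$ of \S\ref{S:gerbe_exs}. Carrying this out independently in all $n$ directions yields $X_{\ul 2} \cong \Omega^n X$, on which the multisimplicial line bundle of the multigerbe lives.

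The main obstacle is conceptual rather than computational: essentially all of the substance sits inside Proposition~\ref{P:multigerbe_representability} (itself only sketched as a generalization of Theorem~\ref{T:bigerbe_representability}), and the present argument is bookkeeping around it. Within this proof the only points demanding care are confirming that \emph{every} $X_\beta$ with $\beta \neq \ul 0$ is contractible --- so that the whole spectral-sequence row collapses and stabilization is forced rather than merely checked --- and applying the fiber-product identification $P^{[2]}Y \cong \Omega Y$ uniformly across the $n$ directions to obtain $\Omega^n X$.
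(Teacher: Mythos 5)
Your proposal is correct and follows essentially the same route as the paper: verify the locally split $n$-cube structure (with simple connectedness needed only for surjectivity of $PPX \to PX\times_X PX \cong \Omega X$ at the bottom level), then observe that contractibility of every $X_\beta$ with $\beta \neq \ul 0$ kills all targets of the successive differentials so that any class in $H^{n+2}(X;\bbZ)$ stabilizes, and invoke Proposition~\ref{P:multigerbe_representability}. The identification $X_{\ul 2} \cong \Omega^n X$ via iterated fiber products is likewise exactly what the paper intends.
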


Finally, as in \S\ref{S:Path-bigerbes} there is a free path/loop version of this multigerbe obtained at the cost of imposing product-multisimplicial conditions. 
Indeed, the set $\set{X^{m_1,\ldots,m_n} = X^{m_1\cdots m_n} : (m_1,\ldots,m_n) \in \bbN^n}$ of $n$-fold iterated products of $X$ along with projections forms a  multisimplicial space,
with induced ``differentials'' $\pa_1,\ldots, \pa_n$ defined on functions, line bundles, gerbes, multigerbes, etc.
A {\em product-multisimplicial} multigerbe is a multigerbe $L$ over $X^{2,\ldots,2} = X^{2^n}$ such that $\pa_i L$ is a trivial multigerbe for $1 \leq i \leq n$, and then
its characteristic class descends from $H^{n+2}(X^{2^n}; \bbZ)$ to $H^{n+2}(X; \bbZ)$.
Again leaving the details of the generalization of Theorem~\ref{T:free_loop_universal} as an exercise, we claim the following result.
\begin{prop}
If $X$ is connected and locally contractible, then every class in $H^{n+2}(X; \bbZ)$ is represented by a product-multisimplicial $n$-multigerbe
supported by the iterated free path $n$-cube $X_\alpha = I^{\abs \alpha} X^{(2-\abs{\alpha})^n}$ with $X_{\ul 0} = X^{2,\ldots,2} = X^{2^n}$; in particular, the 
line bundle of the multigerbe
lives on the free loop space $L^n X$ where it satisfies an $n$-fold fusion condition as well as the multi-figure-of-eight condition that $\pa_i L \to L\cdots L_8 \cdots L X$ are trivial for each $i$.
\label{P:free_path_multigerbe}
\end{prop}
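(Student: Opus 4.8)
The plan is to mirror the structure of the proof of Theorem~\ref{T:free_loop_universal} (the $n=2$ case), combining the product-multisimplicial descent machinery of Proposition~\ref{P:char_class_properties} and the analogue of Proposition~\ref{P:prod_bisimp_bigerbe} with the representability criterion of Proposition~\ref{P:multigerbe_representability}. First I would verify that the iterated free path spaces $X_\alpha = I^{\abs{\alpha}} X^{(2-\abs{\alpha})^n}$ genuinely assemble into a locally split $n$-cube over $X^{2,\ldots,2} = X^{2^n}$, with the structure maps given by endpoint-evaluation on each path factor. As in Lemma~\ref{L:BM_lssquare}, the individual maps are locally trivial fibrations hence locally split; the only place the universal-map condition of Definition~\ref{D:lscube} could fail is at a bottom square, but each such square has the form of a free-path square $(IIY, IY^2, IY^2, Y^4)$, whose splitness follows exactly as in the discussion preceding Theorem~\ref{T:free_loop_universal} since $IIY \simeq \ast$ is contractible.

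Next, for the forward direction, I would take an arbitrary $\alpha \in H^{n+2}(X; \bbZ)$ and set $\beta = \pa_1\cdots\pa_n\,\alpha \in H^{n+2}(X^{2^n}; \bbZ)$, the iterated product-simplicial differential applied $n$ times, generalizing the choice $\beta = \pa_1\pa_2\alpha$ in the $n=2$ case. By construction $\pa_i\beta = 0$ for each $i$ (since $\pa_i^2$ factors through a retraction as in the exactness of \eqref{E:product_cohomology_seq}), and moreover the pullback of $\beta$ under each diagonal-type inclusion $\Delta_j : X^{2^{n-1}} \hookrightarrow X^{2^n}$ vanishes. Under the deformation retraction $I^{\abs\alpha} X^{(2-\abs\alpha)^n} \simeq X^{2^n}$, the evaluation maps defining the lower faces of the $n$-cube become identified with these diagonal inclusions, so $\beta$ lifts to vanish on each space $X_\alpha$ with $\abs\alpha \geq 1$. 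Since $X_{\ul 1} = I^n X \simeq \ast$ is contractible, every face pullback of $\beta$ is trivial and $\beta$ in fact stabilizes to $E_\infty$ in the spectral sequence of Proposition~\ref{P:multigerbe_representability}; that proposition then produces a multigerbe $(L, X_\alpha)$ with $G(L) = \beta$ such that each $\pa_i L$ is trivial as a multigerbe. Invoking Lemma~\ref{L:prod_simp_LX}, applied in each of the $n$ coordinate directions of the product-multisimplicial cube, converts bigerbe/multigerbe triviality of $\pa_i L$ into triviality of $\pa_i L$ as a line bundle over the figure-of-eight space $L\cdots L_8\cdots L X$, giving the stated multi-figure-of-eight condition.

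For the converse, I would observe that a product-multisimplicial multigerbe has characteristic class $G(L) \in H^{n+2}(X^{2^n}; \bbZ)$ satisfying $\pa_i\,G(L) = 0$ for all $i$ (by naturality from Proposition~\ref{P:char_class_properties}), and then the $n$-fold product descent result—the evident generalization of Proposition~\ref{P:prod_bisimp_bigerbe}, following from exactness of \eqref{E:product_cohomology_seq} in each of the $n$ product directions—shows this descends to a well-defined class in $H^{n+2}(X; \bbZ)$. Finally I would note that $W_{\ul 2} = L^n X$ is the $n$-fold free loop space, since each repeated fiber product $I^{[2]}$ over a free path space yields a free loop space, iterated $n$ times.

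The main obstacle I anticipate is purely bookkeeping rather than conceptual: tracking the combinatorics of the $n$ distinct product directions $\pa_1,\ldots,\pa_n$ simultaneously with the $n$ simplicial directions $d_1,\ldots,d_n$ of the cube, and confirming that the retraction identifying lower faces with diagonals is compatible across all coordinate directions at once. In the bigerbe case one checks two such directions by hand; here one must argue uniformly, and the cleanest route is an inductive argument that freezes all but one product direction and appeals to the single-variable exactness of \eqref{E:product_cohomology_seq} together with the already-established $(n-1)$-variable statement. Since the paper explicitly flags this proof as a routine generalization to be left as an exercise, I would present the argument at the level of these structural reductions rather than grinding through the iterated retraction computations.
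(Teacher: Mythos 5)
The paper offers no proof of this proposition --- it is explicitly left as an exercise, described as the evident generalization of Theorem~\ref{T:free_loop_universal} --- and your proposal is precisely that generalization, with the same skeleton: verify the free path $n$-cube is locally split, set $\beta = \pa_1\cdots\pa_n\alpha$, establish representability over the cube, and use Lemma~\ref{L:prod_simp_LX} in each coordinate direction to convert multigerbe triviality of $\pa_i L$ into line-bundle triviality over the figure-of-eight spaces, with the converse handled by product descent. So in approach you are aligned with the paper.

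The one step where your argument as written is too quick is the appeal to Proposition~\ref{P:multigerbe_representability}. Vanishing of the pullbacks of $\beta$ to all faces $X_\alpha$ with $\abs\alpha \geq 1$ only disposes of the $E_1$ differential; stabilization requires in addition that the higher transgression classes vanish, and for $3 \leq n$ the $E_k$ differentials with $2 \leq k \leq n-1$ land in quotients of $\bigoplus_{\abs\alpha = k} H^{n+3-k}(X_\alpha; \bbZ)$ where $X_\alpha \simeq X^{2^{n-k}}$ is \emph{not} contractible (e.g.\ for $n=3$, $k=2$ the target involves $H^4(X^2;\bbZ)$). Contractibility only kills the final differential into the cohomology of $X_{\ul 1} = I^nX$. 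In the $n=2$ case this issue is invisible, since the only higher differential lands on the contractible $IIX$, which is why the bigerbe proof goes through on face-vanishing alone. For general $n$ you must actually exhibit a compatible system of primitives --- for instance by the induction you sketch at the end, transgressing $\alpha$ one loop direction at a time via Theorem~\ref{T:lf_trans} so that at each stage the class is already loop-fusion (simplicially closed and $\pa$-trivial) on the previous loop space --- rather than deducing stabilization from contractibility of $X_{\ul 1}$ together with face-vanishing. Since you flag this and propose the correct remedy, the gap is one of justification rather than of strategy, but it is the one place where the ``straightforward generalization'' genuinely requires an argument not present in the $n=2$ proof.
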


\bibliography{bigerbe}
\bibliographystyle{plain}

\end{document}